\newtheorem{theorem}{Theorem}
\newtheorem{proposition}[theorem]{Proposition}
\newtheorem{lemma}[theorem]{Lemma}
\newtheorem{corollary}[theorem]{Corollary}
\theoremstyle{definition}
\newtheorem{remark}{Remark}
\newtheorem{question}{Question}
\newcommand{\cref}[1]{Corollary~\ref{c.#1}}
\numberwithin{equation}{section}
\numberwithin{theorem}{section}
\newcommand{\R}{\mathbb{R}}
\newcommand{\ol}{\overline}
\newcommand{\eps}{\varepsilon}
\newcommand{\bD}{\mathbb{D}}
\newcommand{\frakH}{\mathfrak{H}}
\newcommand{\cS}{\mathcal{S}}
\newcommand{\bS}{\mathbb{S}}
\newcommand{\bI}{\mathbb{I}}
\newcommand{\bK}{\mathbb{K}}
\newcommand{\cH}{\mathcal{H}}
\newcommand{\cHR}{\mathcal{H}_\mathbf{R}}
\title[Uniform convergence on transmission problems]{Uniform convergence for linear elastostatic systems with periodic high contrast inclusions}
\author{Xin Fu}
\address[X. Fu]{Yau Mathematical Sciences Center, Tsinghua University, Beijing 100084, P.R. China}
\email{fux20@mails.tsinghua.edu.cn}
\author{Wenjia Jing}
\address[W. Jing]{Yau Mathematical Sciences Center, Tsinghua University, Beijing 100084 and Yanqi Lake Beijing Institute of Mathematical Sciences and Applications, Beijing 101407, P.R. China}
\email{wjjing@tsinghua.edu.cn}
\date{\today}
\begin{document}
	\maketitle	

	\begin{abstract}
		We consider the Lam\'{e} system of linear elasticity with periodically distributed inclusions whose elastic parameters have high contrast compared to the background media. We develop a unified method based on layer potential techniques to quantify three convergence results when some parameters of the elastic inclusions are sent to extreme values. More precisely, we study the \emph{incompressible inclusions} limit where the bulk modulus of the inclusions tends to infinity, the \emph{soft inclusions} limit where both the bulk modulus and the shear modulus tend to zero, and the \emph{hard inclusions} limit where the shear modulus tends to infinity. 		Our method yields convergence rates that are independent of the periodicity of the inclusions array, and are sharper than some earlier results of this type. A key ingredient of the proof is the establishment of uniform spectra gaps for the elastic Neumann-Poincar\'{e} operator associated to the collection of periodic inclusions that are independent of the periodicity.
\smallskip

\noindent{\bf Key words}: Linear elastostatics, layer potential theory, high contrast media, periodic homogenization, perforated domains.

\smallskip

\noindent{\bf Mathematics subject classification (MSC 2020)}: 35B27, 35J08, 74G55
	\end{abstract}
	
	\section{Introduction}
	
	In this paper, we study the linear elastostatic behavior of a collection of high contrast inclusions embedded in a homogeneous background. Partial differential equations in high contrast media serve as natural models in physics and engineering, e.g.\;anomalous tissues in biomedical imaging \cite{manduca2001magnetic}, meta-materials exhibiting novel electro-magnetic or elastic properties \cite{sakoda2004optical,MR3769919}, etc. Mathematical studies of such PDEs are valuable both in theory and in practice; see \cite{ammari2009layer,ammari2007polarization,bonnetier2019homogenization} and references therein for more details.

	In the linear elastostatics setting, Lam\'e systems in perforated domains (the part outside the inclusions) with proper boundary conditions are widely used to model high contrast, namely, extremely soft or hard, inclusions; see \cite{MR548785,MR1195131,MR4240768}. In this paper, we rigorously justify several models of this type, by proving that they are the asymptotic limits of the transmission problem in the whole domain when the elastic parameters of the high contrast inclusions are sent to certain extreme values. More importantly, we quantify the convergence rates and study the dependence on the geometric setup of the inclusions.

	To fix ideas, let $\Omega \subset \mathbb{R}^d$ ($d=2,3$) model the domain occupied by the elastic material, and let $D\subset \Omega$ be the part occupied by the high contrast inclusions.

\noindent\emph{Geometric Setup}. For the domains $\Omega$ and $D$, we impose (part of) the following assumptions.

\noindent{(A1)}\;\;$\Omega\subset \R^d$ ($d=2,3$) is open bounded with connected Lipschitz boundary $\partial \Omega$. $D\subset \Omega$ is open with Lipschitz boundary $\partial D$, and $\partial D$ has a finite number $N$ of connected components. Moreover, $\Omega\setminus \ol D$ is connected with Lipschitz boundary $\partial \Omega \cup \partial D$. The connected components of $D$ are enumerated as $D_{i}$, $i=1,\cdots, N$.

\noindent{(A2)}\;\;$\Omega$ is as in (A1). Given $\eps \in (0,1)$, $D = D_\eps$ is part of an $\eps$-periodic array of small inclusions constructed as follows, in several steps.

Let $Y=(-\frac12,\frac12)^d$ be the unit cell,
	and let $\omega \subset Y$ be an open subset with connected Lipschitz boundary such that $\mathrm{dist}(\omega,\partial Y)>0$; for simplicity, assume $\omega$ is simply connected. $\omega$ is then the model inclusion in the unit scale, and $Y_{\rm f} = Y\setminus \ol \omega$ is the model environment in the unit scale. Given $\varepsilon>0$ and $\mathbf{n}\in \mathbb{Z}^d$, we denote $\eps(\mathbf{n}+Y)$ and $\eps(\mathbf{n}+\omega)$ by $Y_{\varepsilon}^{\mathbf{n}}$  and $\omega_{\varepsilon}^{\mathbf{n}}$, respectively. 
	Let $\Pi_{\varepsilon}$ be the set of lattice points $\mathbf{n}$ such that $\overline{Y_{\varepsilon}^{\mathbf{n}} }$ be contained in $\Omega$, i.e.,
	\begin{equation}
		\Pi_{\varepsilon}:=\left\{   \mathbf{n}\in \mathbb{Z}^d:  \overline{Y_{\varepsilon}^{\mathbf{n}} }\subset  \Omega  \right\}.
	\end{equation}
	Then the inclusions set $D = D_\eps$ and the background part $\Omega_\eps$ are defined by
	\begin{equation}
	\label{eq:Depsdef}
		D_{\varepsilon}:=\bigcup_{\mathbf{n}\in \Pi_{\varepsilon} } \omega_{\varepsilon}^{\mathbf{n}} , \quad \Omega_{\varepsilon}:=\Omega \setminus \overline{D_{\varepsilon}}.
	\end{equation}
For each fixed $\eps$, the number of connected components of $D_\eps$ is $N = |\Pi_\eps|$. Moreover, we define $Y_{\varepsilon}$ and $K_\eps$ by: 
	\begin{equation*}
		K_\eps = \Omega\setminus \Big(\bigcup_{\mathbf{n}\in \Pi_{\varepsilon} } \ol Y_{\varepsilon}^{\mathbf{n}}\Big), \qquad Y_\eps = \Omega\setminus \ol K_\eps.
	\end{equation*}
Intuitively, $Y_{\varepsilon}$ is the stack of $\varepsilon$-cells $Y^{\mathbf{n}}_{\varepsilon}$ contained in $\Omega$, and $K_\eps$ is the cushion area; see Fig. \ref{figure}. 
\begin{figure}[h]  
	\label{figure}
	\centering         
	\includegraphics[scale=0.1]{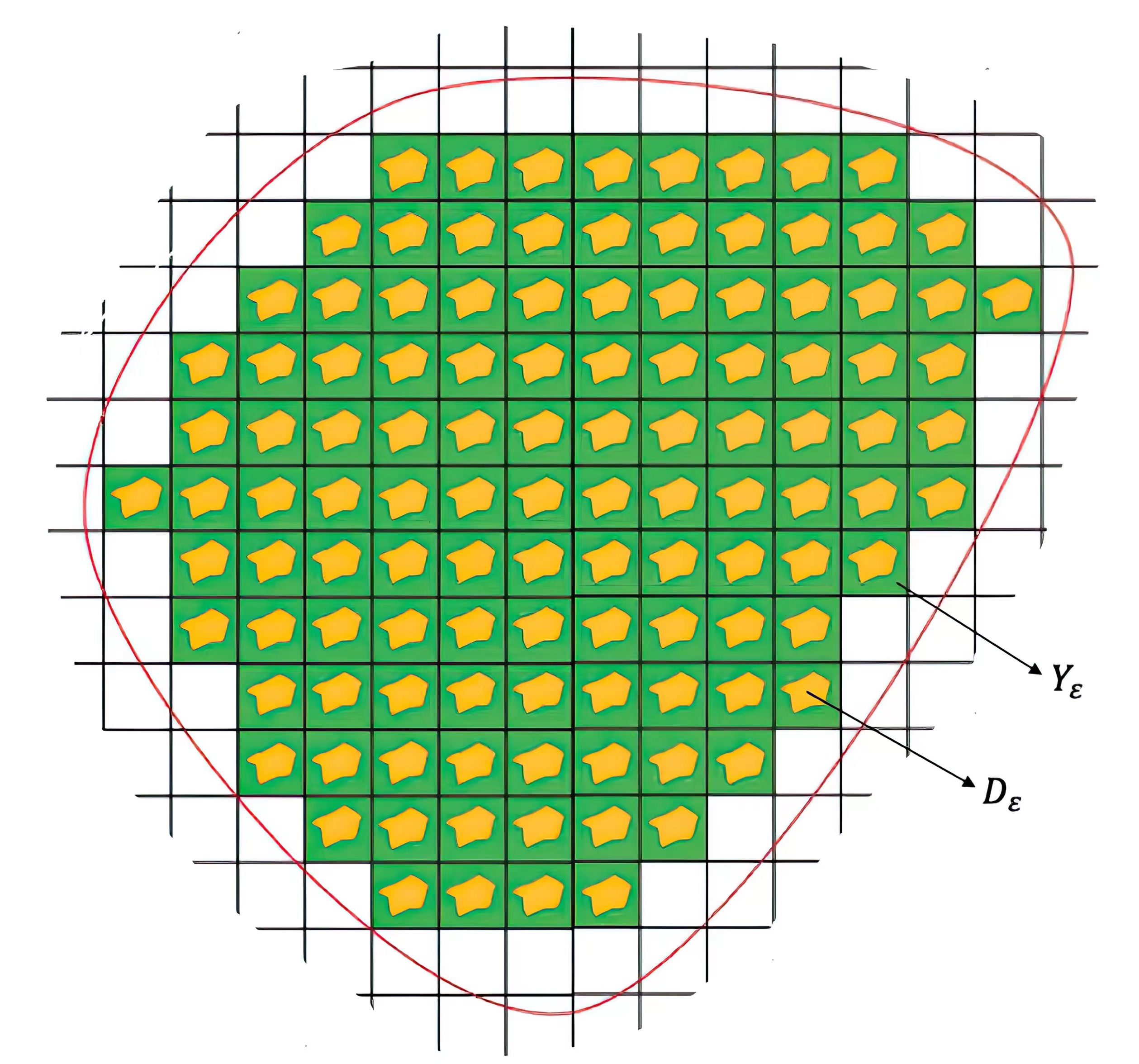}
	\caption{The domains $Y_{\varepsilon}$ and $D_{\varepsilon}$.}
\end{figure}

For each fixed $\eps > 0$, we see that $D= D_\eps$ constructed in (A2) satisfies the conditions in (A1). Throughout the paper we assume (A1) holds, and when we talk about uniform convergence, so the parameter $\eps > 0$ enters, we are considering a family of geometric configurations $(\Omega,D_\eps)$ satisfying (A2), and our aim is to derive results that are uniform in $\eps$.

A pair of real numbers $(\lambda,\mu)$ are called \emph{admissible} and  referred to as a Lam\'e pair, if they satisfy
\begin{equation}\label{elliptic condition}
		\mu>0\quad  \mathrm{and}\quad  d\lambda+2\mu>0.
	\end{equation}
For a Lam\'e pair $(\lambda,\mu)$, the elastostatic system (Lam\'{e} system) reads
	\begin{equation}\label{Lame system}
		\mathcal{L}_{\lambda,\mu}\mathbf{u}:= \mu \Delta \mathbf{u} +(\lambda+\mu)\nabla\mathrm{div}\, \mathbf{u},
	\end{equation}
	where $\mathbf{u} = (u^1,\dots,u^d)$ represents the displacement field. The admissibility condition guarantees that the Lam\'e operator is elliptic, and  
	physical laws ensure that the natural materials always admit this condition; see \cite{landau1986theory}. The Lam\'e operator can also be written as $\nabla \cdot \sigma(\mathbf{u})$ where
	\begin{equation*}
	\sigma(\mathbf{u}) := \lambda(\mathrm{div}\, \mathbf{u})\mathbb{I}_d + 2\mu \bD(\mathbf{u})
  \end{equation*}
  is the stress tensor. Here and below, $\mathbb{I}_d$ is the $d\times d$ identity matrix, $\bD$ denotes the symmetrized differential operator
	\begin{equation}
	\bD(\mathbf{u}) = \frac12( \nabla + \nabla^{\rm T}) \mathbf{u} = \frac12( \partial_i u^j  + \partial_j u^i)_{ij},
	\end{equation}
	and the superscript `${\rm T}$' denotes the transpose of a matrix. 
	The corresponding conormal derivative (boundary traction) at the boundary of a domain $E$ is
	\begin{equation}\label{conormal derivative}
	  \left. \frac{\partial \mathbf{u}}{\partial \nu_{(\lambda,\mu)}}\right|_{\partial E}:= \sigma(\mathbf{u}) \mathbf{N} = \lambda (\mathrm{div}\, \mathbf{u})\mathbf{N}+2\mu \bD(\mathbf{u})  \mathbf{N}\quad \mathrm{on} \ \partial E.
	\end{equation}
	Throughout the paper 
	$\mathbf{N}$ is the outward unit normal vector to $\partial E$.
	
	We work with the standard Sobolev space $H^1(\Omega)$ (vector versions) and its trace $H^{\frac12}(\partial E)$ at the boundary of a domain $E$, and the $L^2$ dual space $H^{-\frac12}(\partial E)$. We consider the space $\mathbf{R}$ of rigid motions in $\R^d$, defined by
	\begin{equation*}
		\mathbf{R}:=\big\{ \mathbf{r}=(r_1,\cdots,r_d)^{\rm T} : \bD(\mathbf{r}) = 0 \ \text{in $\R^d$} \big\}  ,
	\end{equation*}
 It is clear that $\mathbf{R}$ has dimension $d(d+1)/2$ and is spanned by   
 \begin{equation}\label{basis of R}
 	\mathbf{e}_1,\cdots,\mathbf{e}_d,\  x_j\mathbf{e}_i-x_i\mathbf{e}_j , \quad \mathrm{for} \ 1\leq i\neq j \leq d,
  \end{equation}
where $\mathbf{e}_i$ denotes the standard basis vector of $\mathbb{R}^d$. Those basis vectors are referred to as $\mathbf{r}_j$, $j=1,\dots,d(d+1)/2$. We define $H^{-\frac{1}{2}}_{\mathbf{R}}(\partial D)$ as the subspace of $H^{-\frac12}(\partial D)$ that is orthogonal to $\mathbf{R}$, i.e.,
		\begin{equation}\label{orthogonal subspace}
			 H^{-\frac{1}{2}}_{\mathbf{R}}(\partial D):=\left\{  \bm{\phi}\in H^{-\frac{1}{2}}(\partial D): \int_{\partial D_i} \bm{\phi}\cdot \mathbf{r}=0
			 \ \mathrm{for\ all}\ \mathbf{r}\in \mathbf{R } \ \mathrm{and}\ 1\leq i\leq N\right\}.
	\end{equation}
	The integral in the line above actually means the $H^{\frac12}$-$H^{-\frac12}$ pairing. Similarly, $H_{\mathbf{R}}^{\frac{1}{2}}(\partial D)$ is the subspace of $H^{\frac{1}{2}}(\partial D)$ orthogonal to $\mathbf{R}$. The space $H^{\frac12}_\mathbf{R}(\partial \Omega)$ and $H^{-\frac12}(\partial \Omega)$ are similarly defined.

	We consider the following transmission problem
	\begin{equation}\label{eq:transmissionproblem}
		\left\{
		\begin{aligned}
			&		\mathcal{L}_{\lambda,\mu}\mathbf{u}=0 && \mathrm{in}\ \Omega\setminus \ol D,\\
			&		\mathcal{L}_{\widetilde{\lambda},\widetilde{\mu}}\mathbf{u}=0 && \mathrm{in}\  D,\\
			& \mathbf{u}|_-=\mathbf{u}|_+ \quad \text{and} \quad \left. \frac{\partial \mathbf{u}}{\partial \nu_{(\widetilde{\lambda},\widetilde{\mu})}}\right|_-=\left. \frac{\partial \mathbf{u}}{\partial \nu_{(\lambda,\mu)}}\right|_+ && \mathrm{on} \ \partial D,\\
			& \left. \frac{\partial \mathbf{u}}{\partial \nu_{(\lambda,\mu)}}\right|_{\partial \Omega}=\mathbf{g} \in H^{-\frac{1}{2}}_{\mathbf{R}}(\partial \Omega) \quad \text{and} \quad  \mathbf{u}|_{\partial \Omega}\in H^{\frac{1}{2}}_{\mathbf{R}}(\partial \Omega). &&
		\end{aligned}\right.
	\end{equation}
	Here and in the rest of the paper, the background Lam\'e pair is fixed to be $(\lambda,\mu)$, and the one inside the inclusions is set to be $(\widetilde\lambda,\widetilde\mu)$. Both are assumed to be admissible. 
	The symbol $\rvert_+$ means the trace is taken from the exterior part of $D$, i.e, from $\Omega\setminus \ol D$, and $\rvert_-$ means the opposite. It is standard to verify (due to the transmission boundary condition at $\partial D$ above) that this problem is equivalent to 
	\begin{equation*}
	\left\{
	\begin{aligned}
	&\mathcal{L}_{\lambda(x),\mu(x)}\mathbf{u} = \nabla \cdot [\lambda(x) (\mathrm{div}\, \mathbf{u})\mathbb{I}_d + 2\mu(x) \bD(\mathbf{u})] = 0 \quad \text{in } \Omega,\\
	&\left.\frac{\partial \mathbf{u}}{\partial \nu_{(\lambda,\mu)}}\right|_{\partial \Omega}=\mathbf{g} \in H^{-\frac{1}{2}}_{\mathbf{R}}(\partial \Omega), \quad  \mathbf{u}|_{\partial \Omega}\in H^{\frac{1}{2}}_{\mathbf{R}}(\partial \Omega).
	\end{aligned}
	\right.
	\end{equation*}
	Here, $\lambda(x)$ is a piecewise constant function defined by $\lambda \mathbf{1}_{\Omega\setminus D} + \widetilde \lambda\mathbf{1}_{D}$, and $\mu(x)$ is defined in a similar manner. The existence and uniqueness of a weak solution $\mathbf{u}\in H^1(\Omega)$ to this problem follow from a standard application of the Lax-Milgram theorem, thanks to the admissibility of the Lam\'{e} pair $(\lambda(x),\mu(x))$. Note that the restriction imposed on the Neumann data $\mathbf{g}$ at $\partial \Omega$ is a necessary compatibility condition, and the restriction of $\mathbf{u}\rvert_{\partial \Omega}$ is for uniqueness. 
	
We take the form \eqref{eq:transmissionproblem} when considering transmission problems, and use layer potentials to obtain explicit representations for the solutions. In this way, we take the most advantage of the fact that the Lam\'e pair $(\lambda(x),\mu(x))$ are piecewise constants, and those explicit formulas show that the solutions enjoy many finer properties than merely in $H^1(\Omega)$.

Our first main result is a layer potential representation of the solution $\mathbf{u}$ to the transmission problem \eqref{eq:transmissionproblem}. The single-layer potential $\mathcal{S}^{\lambda,\mu}_D$, its boundary trace $\mathbb{S}^{\lambda,\mu}_D$ and the Neumann-Poincar\'{e} operator $\mathbb{K}^{\lambda,\mu,*}_D$ are defined in the next section; see \eqref{eq:singleS}, \eqref{eq:bSD} and \eqref{eq:KDstar}. Throughout the paper $\mathbf{G}\in H^1(\Omega)$ is the background solution defined by
		\begin{equation}\label{background solution}
			\mathcal{L}_{\lambda,\mu}\mathbf{G}=0 \quad \mathrm{in }\ \Omega, \quad \left. \frac{\partial \mathbf{G}}{\partial \nu_{(\lambda,\mu)}}\right|_{\partial \Omega}=\mathbf{g}, \quad \mathbf{G}|_{\partial \Omega}\in H^{\frac{1}{2}}_{\mathbf{R}}(\partial \Omega).
		\end{equation}
As it will be clear, the role of $\mathbf{G}$ is to lift the boundary condition in \eqref{eq:transmissionproblem} at $\partial \Omega$.

	\begin{theorem}\label{thm:reptrans}
		Assume {\upshape (A1)} holds. Then the unique solution $\mathbf{u}$ to \eqref{eq:transmissionproblem} is given by the single-layer potential:
		\begin{equation}
			\mathbf{u}=\left\{
			\begin{aligned}
				&\mathbf{G}+\mathcal{S}^{\lambda,\mu}_{D} \text{\boldmath $\phi$} &&\mathrm{in}\   \Omega\setminus \ol D \\
				&\mathcal{S}^{\widetilde{\lambda},\widetilde{\mu}}_{D}  \text{\boldmath $\psi$} &&\mathrm{in}\  D
			\end{aligned}	\right., 
		\end{equation}
		where $(\text{\boldmath $\psi$}, \text{\boldmath $\phi$})\in H^{-1/2}(\partial D)\times H_{\mathbf{R}}^{-1/2}(\partial D)$ solve the
		boundary integral equations 
			\begin{equation}\label{boundary integral equations}
				\left\{
			\begin{aligned}
				& \mathbf{G}|_{\partial D}+\mathbb{S}^{\lambda,\mu}_{D}\bm{\phi}=\mathbb{S}^{\widetilde{\lambda},\widetilde{\mu}}_{D}\bm{\psi}, \\
				&\left(-\frac{\mathbb{I}}{2}+\mathbb{K}^{\widetilde{\lambda},\widetilde{\mu},*}_{D}\right)\bm{\psi} - \left(\frac{\mathbb{I}}{2}+\mathbb{K}^{\lambda,\mu,*}_{D}\right)\bm{\phi} =\left. \frac{\partial \mathbf{G}}{\partial \nu_{(\lambda,\mu)}}\right|_{\partial D}.
			\end{aligned}\right.
		\end{equation}
		Moreover, the pair $(\bm{\phi},\bm{\psi})$ exists and is unique.
	\end{theorem}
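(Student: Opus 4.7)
The plan is to take the single-layer ansatz in the statement as the starting point, translate the transmission conditions on $\partial D$ into the system \eqref{boundary integral equations} via the continuity of single-layer traces and the jump formula for the conormal derivative, and then prove unique solvability of that system by Fredholm theory combined with the already-known uniqueness of \eqref{eq:transmissionproblem}. By construction (to be fixed in Section~2), $\mathcal{S}^{\lambda,\mu}_{D}\bm{\phi}$ and $\mathcal{S}^{\widetilde\lambda,\widetilde\mu}_{D}\bm{\psi}$ solve the Lam\'e systems in their respective subdomains, and the exterior single-layer potential will be set up (via the Neumann Green's function of $\Omega$ together with suitable projection onto $H^{\frac12}_{\mathbf{R}}$) so that it carries zero Neumann data on $\partial\Omega$ and its Dirichlet trace there lies in $H^{\frac12}_{\mathbf{R}}(\partial\Omega)$. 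In this way, the PDEs in $D$ and $\Omega\setminus\ol D$ and the boundary conditions on $\partial\Omega$ are inherited automatically from $\mathbf{G}$, and only the conditions on $\partial D$ remain.

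\textbf{Reduction to the integral equations.} Since single-layer potentials have continuous traces across $\partial D$, the identity $\mathbf{u}|_{-}=\mathbf{u}|_{+}$ on $\partial D$ collapses to $\mathbb{S}^{\widetilde\lambda,\widetilde\mu}_{D}\bm{\psi}=\mathbf{G}|_{\partial D}+\mathbb{S}^{\lambda,\mu}_{D}\bm{\phi}$, giving the first equation of \eqref{boundary integral equations}. For the conormal condition, I apply the standard jump relation $\partial_{\nu_{(\cdot,\cdot)}}\mathcal{S}_{D}\bm{\alpha}\big|_{\pm}=(\pm\mathbb{I}/2+\mathbb{K}^{*}_{D})\bm{\alpha}$ for each Lam\'e pair: from the exterior side of $\partial D$ the trace is $\partial_{\nu_{(\lambda,\mu)}}\mathbf{G}|_{\partial D}+(\mathbb{I}/2+\mathbb{K}^{\lambda,\mu,*}_{D})\bm{\phi}$, and from the interior side it is $(-\mathbb{I}/2+\mathbb{K}^{\widetilde\lambda,\widetilde\mu,*}_{D})\bm{\psi}$; equating the two yields the second equation of \eqref{boundary integral equations}. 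Conversely, if $(\bm{\phi},\bm{\psi})$ solves the system, the jump relations together with the continuity of $\mathbf{G}$ show that the ansatz produces a function satisfying \eqref{eq:transmissionproblem}.

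\textbf{Existence and uniqueness of $(\bm{\phi},\bm{\psi})$.} I regard \eqref{boundary integral equations} as a block operator
\[
T:\ H^{-\frac12}(\partial D)\times H^{-\frac12}_{\mathbf{R}}(\partial D)\ \longrightarrow\ H^{\frac12}(\partial D)\times H^{-\frac12}_{\mathbf{R}}(\partial D),
\]
where the diagonal blocks are $\mathbb{S}^{\widetilde\lambda,\widetilde\mu}_{D}$ and $-(\mathbb{I}/2+\mathbb{K}^{\lambda,\mu,*}_{D})$, and the off-diagonal blocks are $-\mathbb{S}^{\lambda,\mu}_{D}$ and $-\mathbb{I}/2+\mathbb{K}^{\widetilde\lambda,\widetilde\mu,*}_{D}$. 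Using the known mapping properties of elastic layer potentials on Lipschitz domains (Dahlberg-Kenig-Verchota, Ammari-Kang), the diagonal blocks are Fredholm of index zero on the specified subspaces, while $\mathbb{K}^{\widetilde\lambda,\widetilde\mu,*}_{D}-\mathbb{K}^{\lambda,\mu,*}_{D}$ and the coupling via $\mathbb{S}^{\lambda,\mu}_{D}$ are compact (or at least produce compact perturbations at the matrix level). Hence $T$ is Fredholm of index zero, and it suffices to establish injectivity. If the right-hand side vanishes, the associated $\mathbf{u}$ solves \eqref{eq:transmissionproblem} with $\mathbf{g}=0$; the Lax-Milgram uniqueness argument already recorded for \eqref{eq:transmissionproblem} forces $\mathbf{u}\equiv 0$ in $\Omega$, hence $\mathcal{S}^{\widetilde\lambda,\widetilde\mu}_{D}\bm{\psi}\equiv 0$ in $D$ and $\mathcal{S}^{\lambda,\mu}_{D}\bm{\phi}\equiv 0$ in $\Omega\setminus\ol D$. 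Using the jump relations at $\partial D$ together with the invertibility of $-\mathbb{I}/2+\mathbb{K}^{\widetilde\lambda,\widetilde\mu,*}_{D}$ on $H^{-\frac12}(\partial D)$ and the analogous statement for $\mathbb{I}/2+\mathbb{K}^{\lambda,\mu,*}_{D}$ on $H^{-\frac12}_{\mathbf{R}}(\partial D)$, one concludes $\bm{\psi}=\bm{\phi}=0$.

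\textbf{Main obstacle.} The subtle point is the bookkeeping of rigid-motion kernels and cokernels of the various boundary integral operators, which is exactly what dictates the choice of $H^{-\frac12}_{\mathbf{R}}(\partial D)$ for $\bm{\phi}$ (the operator $\mathbb{I}/2+\mathbb{K}^{\lambda,\mu,*}_{D}$ has a finite-dimensional kernel linked to $\mathbf{R}$ on each connected component of $\partial D$) and the absence of such a restriction on $\bm{\psi}$ (where $-\mathbb{I}/2+\mathbb{K}^{\widetilde\lambda,\widetilde\mu,*}_{D}$ is an isomorphism). Checking that $T$ genuinely maps between the indicated spaces, that the right-hand side $(\mathbf{G}|_{\partial D},\partial_{\nu}\mathbf{G}|_{\partial D})$ lands in the correct target (the $\mathbf{R}$-orthogonality of the conormal trace on each $\partial D_i$ follows from $\mathcal{L}_{\lambda,\mu}\mathbf{G}=0$ in $\Omega$), and that the Fredholm index is zero on these precise subspaces is where the bulk of the technical work lies.
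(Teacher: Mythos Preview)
Your reduction of the transmission conditions to the boundary integral system is correct and matches the paper. The difficulty lies in your existence argument.

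\textbf{The Fredholm step does not go through as written.} You propose to view the diagonal blocks $\mathbb{S}^{\widetilde\lambda,\widetilde\mu}_{D}$ and $-(\tfrac{\mathbb{I}}{2}+\mathbb{K}^{\lambda,\mu,*}_{D})$ as the principal part and claim the off-diagonal blocks, or the difference $\mathbb{K}^{\widetilde\lambda,\widetilde\mu,*}_{D}-\mathbb{K}^{\lambda,\mu,*}_{D}$, are compact. But the elastostatic Neumann--Poincar\'e operator is \emph{not} compact even on smooth $\partial D$ (its essential spectrum accumulates at $\pm\mu/2(\lambda+2\mu)$, which depends on the Lam\'e pair), so neither $-\tfrac{\mathbb{I}}{2}+\mathbb{K}^{\widetilde\lambda,\widetilde\mu,*}_{D}$ nor the difference of two NP operators with distinct parameters is compact in general. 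Likewise $\mathbb{S}^{\lambda,\mu}_{D}:H^{-1/2}\to H^{1/2}$ is an isomorphism, hence not compact. Your ``diagonal plus compact'' decomposition therefore fails, and the index-zero claim is unjustified. This is precisely the point where elasticity differs from the scalar conductivity case.

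\textbf{How the paper proceeds instead.} The paper avoids Fredholm theory entirely. It substitutes the first equation into the second and rewrites the system as
\[
(\Lambda^{\widetilde\lambda,\widetilde\mu,\mathrm{i}}_{D}-\Lambda^{\lambda,\mu,\mathrm{e}}_{D})\,\mathbb{S}^{\lambda,\mu}_{D}\bm{\phi}=\left.\frac{\partial\mathbf{G}}{\partial\nu_{(\lambda,\mu)}}\right|_{\partial D}-\Lambda^{\widetilde\lambda,\widetilde\mu,\mathrm{i}}_{D}(\mathbf{G}|_{\partial D}),
\]
and then proves directly, via the energy identity and Korn's inequality, that $\Lambda^{\widetilde\lambda,\widetilde\mu,\mathrm{i}}_{D}-\Lambda^{\lambda,\mu,\mathrm{e}}_{D}$ is coercive and hence an isomorphism $H^{1/2}\to H^{-1/2}$ (Proposition~\ref{boundary operator isomorphism}). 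Combined with the invertibility of $\mathbb{S}^{\lambda,\mu}_{D}$ and $\mathbb{S}^{\widetilde\lambda,\widetilde\mu}_{D}$, this solves for $(\bm\phi,\bm\psi)$ directly. The constraint $\bm\phi\in H^{-1/2}_{\mathbf{R}}$ is then checked \emph{a posteriori} from Green's identity, not imposed in advance.

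\textbf{A smaller error.} In your injectivity argument you invoke invertibility of $-\tfrac{\mathbb{I}}{2}+\mathbb{K}^{\widetilde\lambda,\widetilde\mu,*}_{D}$ on all of $H^{-1/2}(\partial D)$; this is false, since its kernel contains the densities producing rigid motions in each component of $D$. The fix is easy: once $\mathcal{S}^{\widetilde\lambda,\widetilde\mu}_{D}\bm\psi\equiv 0$ in $D$, take the Dirichlet trace to get $\mathbb{S}^{\widetilde\lambda,\widetilde\mu}_{D}\bm\psi=0$ and use invertibility of $\mathbb{S}$.
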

	
	Layer potential representations for transmission problems are certainly not new. For instance, Escauriaza and Seo \cite{escauriaza1993regularity} obtained a formula for $\mathbf{u}$ 
			using classical single-layer potential; see also \cite{ammari2008method,ammari2007polarization,ammari2013strong}. 
			The extra assumption
			\begin{equation*}
				(\lambda -\widetilde{\lambda})(\mu-\widetilde{\mu})\geq 0 \quad \mathrm{and }\quad 0<\widetilde{\lambda},\widetilde{\mu}<\infty,
			\end{equation*}
			however, was needed in \cite{escauriaza1993regularity}. Our method is based on layer potentials using the so-called Neumann functions rather than the fundamental solution in $\R^d$, and on layer potential representation of the Dirichlet to Neumann maps, and the extra assumption above is not needed.
	
For isotropic elastic materials, the physical significance of the Lam\'{e} pair $(\widetilde{\lambda},\widetilde{\mu})$ is as follows: $\widetilde{\mu}$ is the shear modulus which measures shear stiffness; the Young's modulus $\widetilde E$ and the bulk modulus $\widetilde K$ are related to $(\widetilde{\lambda},\widetilde{\mu})$ by 
	\begin{equation}\label{lame_constant}
		\widetilde E= \frac{2\widetilde{\mu}(d\widetilde{\lambda} + 2\widetilde{\mu})}{(d-1)\widetilde{\lambda}+2\widetilde{\mu}}  \quad \mathrm{and}\quad
		\widetilde K= \frac{d\widetilde{\lambda}+2\widetilde{\mu}}{d}.
	\end{equation}
	They measure, respectively, the stiffness with respect to uniaxial stress and the compressibility under ambient pressure (see \cite{steinbach2007numerical}). We are interested in the following asymptotic settings when one or both of $\widetilde\lambda,\widetilde\mu$ go to the extreme values, namely, $\infty$ and $0$:

\noindent\emph{Case 1: The incompressible inclusions limit, as $\widetilde{\lambda} \rightarrow \infty$ and $\widetilde{\mu}$ fixed}. The bulk mudulus $\widetilde K$ of the inclusions hence tends to infinity, so the inclusions become incompressible. A typical situation is when the inclusions behave like \emph{rubber}, able to change their shapes but not volumes.

\noindent\emph{Case 2: The soft inclusions limit, as $\widetilde{\lambda}$ and $\widetilde{\mu}$ both tend to zero}. Then $\widetilde E$, $\widetilde K$ and $\widetilde \mu$ of the inclusions all vanish in the limit, so the inclusions are soft in the sense that they can barely hold any stress. 

\noindent\emph{Case 3: The hard inclusions limit, as $\widetilde{\mu}\rightarrow \infty$ and $\widetilde{\lambda}$ fixed}. Then $\widetilde E$, $\widetilde K$ and $\widetilde \mu$ of the inclusions all tend to infinity. The inclusions behave like rigid bodies, only able to move or rotate as a whole while keeping their shapes.

	The limiting behaviors of those elastic inclusions in the above asymptotic regimes are easy to figure out \emph{intuitively}. It turns out that, for \emph{Case 1}, the limit of the transmission problem is a coupled Lam\'{e}-Stokes system:
	\begin{equation}\label{eq:limStokes}
		\left\{
		\begin{aligned}
			&		\mathcal{L}_{\lambda,\mu}\mathbf{u}=0  && \mathrm{in}\ \Omega\setminus \ol D,\\
			&	\mathcal{L}_{\infty,\widetilde{\mu}}(\mathbf{u},p)=0 \quad \text{and} \quad \mathrm{div}\,\mathbf{u}=0 \qquad && \mathrm{in}\  D,\\
			& \mathbf{u}|_-=\mathbf{u}|_+ \quad \text{and} \quad \left. \frac{\partial (\mathbf{u},p)}{\partial \nu_{(\infty,\widetilde{\mu})}}\right|_-=\left. \frac{\partial \mathbf{u}}{\partial \nu_{(\lambda,\mu)}}\right|_+  \quad && \mathrm{on} \ \partial D,\\
			& \left. \frac{\partial \mathbf{u}}{\partial \nu_{(\lambda,\mu)}}\right|_{\partial \Omega}=\mathbf{g} \in H^{-\frac{1}{2}}_{\mathbf{R}}(\partial \Omega) \quad \text{and} \quad \mathbf{u}|_{\partial \Omega}\in H^{\frac{1}{2}}_{\mathbf{R}}(\partial \Omega), \qquad &&
		\end{aligned}\right.
	\end{equation}
where $\mathcal{L}_{\infty,\widetilde{\mu}} (\mathbf{u},p)=\widetilde{\mu}\Delta \mathbf{u}+\nabla p$ denotes the Stokes operator with viscosity constant $\widetilde{\mu}$ and $p$ is the pressure field. 

In \emph{Case 2}, the limit problem (one then only cares about the part in $\Omega\setminus \ol D$) is the Neumann boundary value problem with soft (e.g.\,vacuum) inclusions, i.e.,
\begin{equation}\label{eq:limZero}
	\left\{
	\begin{aligned}
		&		\mathcal{L}_{\lambda,\mu}\mathbf{u}=0 && \mathrm{in}\ \Omega\setminus \ol D,\\
		&\left. \frac{\partial \mathbf{u}}{\partial \nu_{(\lambda,\mu)}}\right|_+=0 && \mathrm{on} \ \partial D, \\
		& \left. \frac{\partial \mathbf{u}}{\partial \nu_{(\lambda,\mu)}}\right|_{\partial \Omega}=\mathbf{g} \in H^{-\frac{1}{2}}_{\mathbf{R}}(\partial \Omega)\quad \text{and} \quad \mathbf{u}|_{\partial \Omega}\in H^{\frac{1}{2}}_{\mathbf{R}}(\partial \Omega). &&
	\end{aligned}\right.
\end{equation}
The inclusions now play the role of perforated \emph{holes} where no stress is imposed.

In \emph{Case 3}, the limit problem is the Neumann boundary value problem with rigid inclusions, i.e.,
\begin{equation}\label{eq:limRigid}
	\left\{
	\begin{aligned}
		&\mathcal{L}_{\lambda,\mu} \mathbf{u}=0 \  \text{in } \, \Omega\setminus \ol D \quad \text{and} \quad \mathbf{u}  \in \mathbf{R} \quad \text{in each component of $D$},\\
				& \mathbf{u}|_-=\mathbf{u}|_+ \quad \text{and} \quad \left.\frac{\partial \mathbf{u}}{\partial \nu_{(\lambda,\mu)}} \right|_{+} \in H^{-\frac{1}{2}}_{\mathbf{R}}(\partial D) \quad   \text{on}\ \partial D, \\
		& \left. \frac{\partial \mathbf{u}}{\partial \nu_{(\lambda,\mu)}}\right|_{\partial \Omega}=\mathbf{g}\in H^{-\frac{1}{2}}_{\mathbf{R}}(\partial \Omega) \quad \text{and} \quad \mathbf{u}|_{\partial \Omega} \in H^{\frac{1}{2}}_{\mathbf{R}}(\partial \Omega).
	\end{aligned}\right.
\end{equation}
The asymptotic models in the three cases above are more or less standard, and there are quite a few mathematical studies for Cases 2 and 3; see for instance \cite{MR1195131,MR3296149,Jing-Neumann,MR548785}. Case 1 was less studied, but see related work in \cite{greenleaf2003selected,MR2399553}. Although those models are natural to establish purely using physics, it is very natural to ask the following mathematical question:

\begin{question}\emph{Are \eqref{eq:limStokes}, \eqref{eq:limZero} and \eqref{eq:limRigid} the mathematical limits, in proper senses, of the transmission problem \eqref{eq:transmissionproblem} in the corresponding asymptotic settings?}
\end{question}

The answer is of course affirmative. In fact, Ammari \emph{et.\,al.\;}confirmed this for all three cases in \cite{ammari2013strong,ammari2008method} (for Case 1, they considered the setting where $\lambda,\widetilde\lambda$ are sent to $\infty$ at the same time) using layer potential methods. For Case 3, this was proved also by Bao \emph{et.\,al.\;}in \cite{bao2009gradient} using variational method. The approach by Ammari \emph{et.\,al.\;}also yields convergence rates, of order $O(\widetilde{\lambda}^{-\frac{1}{2}})$, $O((\widetilde{\lambda}+\widetilde{\mu})^{\frac{1}{4}})$ and $O(\widetilde{\mu}^{-\frac{1}{2}})$ for Cases 1, 2 and 3. Among many other things, a recent work \cite{MR4242872} by Craster \emph{et.\,al.\,}studied those convergence rates numerically and showed evidence that the convergence rates obtained in \cite{ammari2013strong,ammari2008method} could be improved. Note also, all of those previous studies are essentially for a fixed number of inclusions, as in our assumption (A1).  

In this paper, we aim to reproduce those convergence results and obtain convergence rates for the three asymptotic settings above. Moreover, we can treat the case when $D = D_\eps$ is a periodic array of small inclusions with periodicity $\eps > 0$ which is another small parameter, as specified in the geometric setup (A2). We establish sharper convergence rates and prove the important fact that the convergence rates are uniform in $\eps$.

Before stating those results, let us confirm that, just as in Theorem \ref{thm:reptrans}, layer potential representation of the solutions to the limit problems \eqref{eq:limStokes}, \eqref{eq:limZero} and \eqref{eq:limRigid} are all available. We state the result for the coupled Lam\'e-Stokes system only; the other two cases are more standard. The layer potential operators $\cS^{\infty,\widetilde\mu}_D$ and $\bK^{\infty,\widetilde\mu,*}_D$ are defined in the next section.

\begin{theorem}\label{thm:limStokes}
  Assume {\upshape(A1)} holds. The problem \eqref{eq:limStokes} has a unique pair of solutions $(\mathbf{u}^{\infty,\widetilde{\mu}},p)$ ($p$ modulus a constant), and $\mathbf{u}^{\infty,\widetilde{\mu}}$ is given by the single-layer potential formula:
	\begin{equation}
		\mathbf{u}^{\infty,\widetilde{\mu}}=\left\{
		\begin{aligned}
			&\mathbf{G}+\mathcal{S}^{\lambda,\mu}_{D} \text{\boldmath $\phi$} &\mathrm{in}\   \Omega\setminus \ol D  \\
			&\mathcal{S}^{\infty,\widetilde{\mu}}_D  \text{\boldmath $\psi$}&\mathrm{in}\  D.
		\end{aligned}	\right.
	\end{equation}
	Here, $(\text{\boldmath $\psi$}, \text{\boldmath $\phi$})\in H^{-1/2}(\partial D)\times H_{\mathbf{R}}^{-1/2}(\partial D)$ solve the boundary integral equations 
	\begin{equation}\label{boundary integral equations for stokes system}
		\left\{
		\begin{aligned}
			& \mathbf{G}|_{\partial D}+\mathbb{S}^{\lambda,\mu}_{D}\bm{\phi}=\mathbb{S}^{\infty,\widetilde{\mu}}_{D}\bm{\psi}, \\
			&\left(-\frac{\mathbb{I}}{2}+\mathbb{K}^{\infty,\widetilde{\mu},*}_{D}\right)\bm{\psi} - \left(\frac{\mathbb{I}}{2}+\mathbb{K}^{\lambda,\mu,*}_{D}\right)\bm{\phi} =\left. \frac{\partial \mathbf{G}}{\partial \nu_{(\lambda,\mu)}}\right|_{\partial D}.
		\end{aligned}\right.
	\end{equation}
	Moreover, the system above has a unique solution.
\end{theorem}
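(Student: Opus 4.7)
\smallskip
\noindent\textbf{Proposal.} The plan is to mimic the proof of Theorem \ref{thm:reptrans}, with the Lam\'e system inside $D$ replaced by the Stokes system. Two new ingredients are needed: a well-posedness theory for the coupled Lam\'e-Stokes problem \eqref{eq:limStokes} itself, and the mapping properties and jump relations for the Stokes single-layer $\mathcal{S}^{\infty,\widetilde\mu}_D$ and its conormal counterpart $\mathbb{K}^{\infty,\widetilde\mu,*}_D$.

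\smallskip
\noindent\textbf{Step 1: well-posedness of \eqref{eq:limStokes}.} I would work on the closed subspace $V := \{\mathbf{v}\in H^1(\Omega):\; \mathrm{div}\,\mathbf{v}=0 \text{ in }D,\; \mathbf{v}|_{\partial \Omega}\in H^{\frac12}_\mathbf{R}(\partial\Omega)\}$ with the bilinear form $a(\mathbf{u},\mathbf{v}) = \int_{\Omega\setminus\ol D}\sigma(\mathbf{u}):\mathbb{D}(\mathbf{v})\,dx + 2\widetilde\mu\int_D \mathbb{D}(\mathbf{u}):\mathbb{D}(\mathbf{v})\,dx$. The orthogonality $\mathbf{v}|_{\partial\Omega}\perp\mathbf{R}$ combined with Korn's inequality gives coercivity on $V$; Lax-Milgram then produces a unique $\mathbf{u}^{\infty,\widetilde\mu}\in V$ solving the weak form of \eqref{eq:limStokes}. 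The pressure $p$ is recovered on each connected component $D_i$ (modulo a constant) from de Rham's lemma, i.e.\ from the LBB inf-sup property applied to the residual linear functional on divergence-free test functions supported in $D$.

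\smallskip
\noindent\textbf{Steps 2 and 3: integral representation and unique solvability.} Setting $\mathbf{u} = \mathbf{G}+\mathcal{S}^{\lambda,\mu}_D\bm{\phi}$ in $\Omega\setminus\ol D$ and $\mathbf{u} = \mathcal{S}^{\infty,\widetilde\mu}_D\bm{\psi}$ in $D$ automatically satisfies the PDEs in each region --- the Stokes single layer comes with its own pressure field, which enforces $\mathrm{div}\,\mathbf{u}=0$ --- and handles the boundary data at $\partial\Omega$ via the Neumann-function convention together with the definition of $\mathbf{G}$. Continuity of single-layer potentials across $\partial D$ gives the first line of \eqref{boundary integral equations for stokes system}; the standard jump formulas for $\partial_\nu^{(\lambda,\mu)}\mathcal{S}^{\lambda,\mu}_D\bm{\phi}|_{\pm}$ and $\partial_\nu^{(\infty,\widetilde\mu)}(\mathcal{S}^{\infty,\widetilde\mu}_D\bm{\psi},p)|_{\pm}$, combined with the transmission of tractions at $\partial D$, yield the second. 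To solve the resulting system, view it as $\mathcal{A}(\bm{\psi},\bm{\phi}) = (\mathbf{G}|_{\partial D},\partial_\nu \mathbf{G}|_{\partial D})$ on $H^{-\frac12}(\partial D)\times H^{-\frac12}_\mathbf{R}(\partial D)$. I would show $\mathcal{A}$ is Fredholm of index zero by decomposing it as an invertible principal part (the Lam\'e and Stokes single layers, which are isomorphisms modulo their rigid-motion kernels) plus a compact perturbation of Neumann-Poincar\'e type. Injectivity follows the standard route: a homogeneous solution $(\bm{\psi},\bm{\phi})$ produces, via the single-layer ansatz with $\mathbf{G}\equiv 0$, a solution of the homogeneous \eqref{eq:limStokes}, which Step 1 forces to vanish in $\Omega$ (with $p$ constant on each $D_i$); peeling back the ansatz using the invertibility of the single layers and carefully accounting for the pressure constants then gives $\bm{\phi}=\bm{\psi}=0$.

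\smallskip
\noindent\textbf{Main obstacle.} The delicate part, compared to Theorem \ref{thm:reptrans}, lies in the Stokes single-layer $\mathcal{S}^{\infty,\widetilde\mu}_D$: one must correctly set up its conormal derivative (which carries a pressure contribution), determine the kernel of $\mathbb{S}^{\infty,\widetilde\mu}_D$ together with the rigid-motion compatibility conditions attached to each component $D_i$, and verify that $H^{-\frac12}_\mathbf{R}(\partial D)$ is exactly the target space that makes $\mathcal{A}$ bijective, so that the reconstructed $\mathbf{u}$ automatically satisfies the compatibility conditions inherent in \eqref{eq:limStokes}.
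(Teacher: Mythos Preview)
Your Steps 1 and 2 are sound and largely parallel the paper (the paper does not carry out a separate variational well-posedness argument as in your Step 1, but it is a reasonable addition). The genuine gap is in Step 3.

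You propose to show that the block operator $\mathcal{A}$ is Fredholm by writing it as an invertible principal part plus a \emph{compact} perturbation ``of Neumann--Poincar\'e type.'' This does not work here: unlike the scalar (Laplace) case, the elastostatic and Stokes Neumann--Poincar\'e operators $\mathbb{K}^{\lambda,\mu,*}_D$ and $\mathbb{K}^{\infty,\widetilde\mu,*}_D$ are \emph{not compact}, even on smooth $\partial D$; they carry essential spectrum (for the Lam\'e NP operator, at $\pm\mu/2(\lambda+2\mu)$). The paper flags exactly this obstruction in the Introduction. So the compact-perturbation route to Fredholmness collapses, and with it your invertibility argument for $\mathcal{A}$. (A pseudodifferential ellipticity argument could in principle replace it, but only under extra smoothness on $\partial D$ that the statement does not assume.)

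The paper avoids Fredholm theory entirely. It rewrites the system \eqref{boundary integral equations for stokes system} in terms of Dirichlet-to-Neumann maps: using the identities $\Lambda^{\lambda,\mu,\mathrm{e}}_D=(\tfrac{\mathbb{I}}{2}+\mathbb{K}^{\lambda,\mu,*}_D)(\mathbb{S}^{\lambda,\mu}_D)^{-1}$ and $\Lambda^{\infty,\widetilde\mu,\mathrm{i}}_D=(-\tfrac{\mathbb{I}}{2}+\mathbb{K}^{\infty,\widetilde\mu,*}_D)(\mathbb{S}^{\infty,\widetilde\mu}_D)^{-1}$, the traction equation becomes
\[
(\Lambda^{\infty,\widetilde\mu,\mathrm{i}}_{D}-\Lambda^{\lambda,\mu,\mathrm{e}}_{D})\,\mathbb{S}^{\lambda,\mu}_{D}\bm{\phi}
= \left.\tfrac{\partial \mathbf{G}}{\partial \nu_{(\lambda,\mu)}}\right|_{\partial D}-\Lambda^{\infty,\widetilde\mu,\mathrm{i}}_{D}(\mathbf{G}|_{\partial D}).
\]
Proposition~\ref{boundary operator isomorphism stokes} shows, by a direct energy/coercivity computation (no compactness), that $\Lambda^{\infty,\widetilde\mu,\mathrm{i}}_{D}-\Lambda^{\lambda,\mu,\mathrm{e}}_{D}:H^{1/2}\to H^{-1/2}$ is an isomorphism. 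Since $\mathbb{S}^{\lambda,\mu}_D$ and $\mathbb{S}^{\infty,\widetilde\mu}_D$ are isomorphisms $H^{-1/2}\to H^{1/2}$ (Propositions~\ref{prop:SDclassical}(iv) and \ref{prop:cSStokes}(iv); note they have no rigid-motion kernel, contrary to what you suggest), one solves first for $\bm{\phi}\in H^{-1/2}$, then for $\bm{\psi}$ from the trace equation. The membership $\bm{\phi}\in H^{-1/2}_{\mathbf{R}}$ is obtained a posteriori from the jump relation and Green's identity (Remark~\ref{rem:orthR}), not imposed in advance. This DtN reformulation is the key idea your proposal is missing.
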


The proofs of Theorems \ref{thm:reptrans} and \ref{thm:limStokes} are presented in the next section as standard applications of layer potential theory. We emphasize that that those results hold as long as the inclusions set $D$ satisfies the condition (A1), which contains the situation of (A2), for each fixed $\eps \in (0,1)$.   

For the next main theorem, we consider the geometric setup (A2) and hence treat a family of inclusions set $D = D_\eps$, $\eps \in (0,1)$. We prove not only the transmission problem \eqref{eq:transmissionproblem} converges to the corresponding limits in the three asymptotic settings, but also the convergence rates can be made uniform in $\eps$. To simplify notations, we fix a positive number $\delta > 0$, and say a pair of Lam\'e coefficients $(\lambda',\mu')$ is \emph{uniformly admissible} if
\begin{equation}
  \label{eq:unifadm}
  \delta \le \min\{d\lambda'+2\mu',\mu'\}, \qquad \max\{d\lambda'+2\mu',\mu'\} \le \delta^{-1}.
\end{equation}
We say a bounding constant $C >0$ in an estimate is \emph{universal} if it depends on the data $d,\Omega,\omega,\delta$ and the background Lam\'e pair $(\lambda,\mu)$, but is independent of $\eps$ or other asymptotic parameters, namely, one or both of $\widetilde\lambda,\widetilde\mu$ in each of the asymptotic settings. 

\begin{theorem}\label{main result}
  Assume that {\upshape(A2)} holds; assume further for Case 3 that, as $\widetilde\mu$ goes to infinity, the rescaled pair $(\widetilde\lambda/\widetilde\mu,1)$ is uniformly admissible (this is the case for sufficiently large $\widetilde\mu$ if $\widetilde\lambda$ is fixed). In each asymptotic setting, let $\mathbf{u}$ be the solution to \eqref{eq:transmissionproblem}, and $\mathbf{u}_{\lim}$ be the solution to the limit problems \eqref{eq:limStokes}, \eqref{eq:limZero} or \eqref{eq:limRigid}; then there exists a universal constant $C > 0$ such that 
  \begin{equation}\|\mathbf{u}-\mathbf{u}_{\lim}\|_{H^1(\Omega_{\varepsilon})} \leq \left\{
		\begin{aligned}
		& \frac{C}{\widetilde{\lambda}} \|\mathbf{g}\|_{H^{-\frac{1}{2}}(\partial \Omega)}  & \text{in Case 1},\\
		& C(|\widetilde{\lambda}|+\widetilde{\mu})\|\mathbf{g}\|_{H^{-\frac{1}{2}}(\partial \Omega)}  & \text{in Case 2}, \\
		&\frac{C}{\widetilde{\mu}} \|\mathbf{g}\|_{H^{-\frac{1}{2}}(\partial \Omega)}  & \text{in Case 3}.
		\end{aligned}\right.
	\end{equation}
\end{theorem}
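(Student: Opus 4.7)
The plan is to use the layer potential representations of Theorems~\ref{thm:reptrans} and~\ref{thm:limStokes} (together with analogous representations for the limits in Cases 2 and 3, obtained by the same arguments) to express the difference of the solutions on the perforated domain $\Omega_\eps$ as a single-layer potential, and then to control the density through the associated boundary integral system. In each asymptotic setting, both $\mathbf{u}$ and $\mathbf{u}_{\lim}$ take the form $\mathbf{G}+\cS^{\lambda,\mu}_{D_\eps}\bm\phi_{\#}$ on $\Omega_\eps$, so that
\[
\|\mathbf{u}-\mathbf{u}_{\lim}\|_{H^1(\Omega_\eps)} \le C\,\|\bm\phi-\bm\phi_{\lim}\|_{H^{-1/2}(\partial D_\eps)},
\]
once one knows that $\cS^{\lambda,\mu}_{D_\eps}:H^{-1/2}_{\mathbf{R}}(\partial D_\eps)\to H^1(\Omega_\eps)$ is bounded uniformly in $\eps$. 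That uniform mapping property is obtained by working one cell at a time, using the standard estimate on the reference inclusion $\omega$, the $\eps$-periodic scaling, and the fixed cushion of width $\sim\eps$ separating neighboring cells to absorb cross contributions.

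Next, subtracting the systems of the form \eqref{boundary integral equations} and \eqref{boundary integral equations for stokes system} satisfied by $(\bm\phi,\bm\psi)$ and $(\bm\phi_{\lim},\bm\psi_{\lim})$ produces a coupled system for the differences whose right-hand side involves \emph{operator differences} acting on the limiting density $\bm\psi_{\lim}$: in Case~1 the pieces are $(\bS^{\widetilde\lambda,\widetilde\mu}_{D_\eps}-\bS^{\infty,\widetilde\mu}_{D_\eps})\bm\psi_{\lim}$ and $(\bK^{\widetilde\lambda,\widetilde\mu,*}_{D_\eps}-\bK^{\infty,\widetilde\mu,*}_{D_\eps})\bm\psi_{\lim}$, with analogous pieces in Cases~2 and~3. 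Expanding the Lamé fundamental tensor in the appropriate small parameter yields, pointwise, the rates $O(1/\widetilde\lambda)$ in Case~1 and $O(\widetilde\lambda+\widetilde\mu)$ in Case~2; Case~3 is first rescaled by $\widetilde\mu$, which is legitimate since $(\widetilde\lambda/\widetilde\mu,1)$ is assumed uniformly admissible, and then the same expansion gives an $O(1/\widetilde\mu)$ rate. A uniform bound on the Neumann tensor of $\mathcal{L}_{\lambda,\mu}$ on $\Omega$ together with $\eps$-scaling converts these pointwise rates into operator-norm bounds on $\partial D_\eps$ that are independent of $\eps$.

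The main obstacle, and the technical heart of the proof, is to show that the limiting coupled operator is invertible on $H^{-1/2}(\partial D_\eps)\times H^{-1/2}_{\mathbf{R}}(\partial D_\eps)$ with operator norm independent of $\eps$. This ultimately reduces to a uniform spectral gap for the elastic Neumann--Poincaré operator $\bK^{\lambda,\mu,*}_{D_\eps}$, i.e.\ the requirement that its spectrum stay strictly away from $\pm 1/2$ uniformly in $\eps$. I would prove this by decomposing $\bK^{\lambda,\mu,*}_{D_\eps}$ as the direct sum of the single-inclusion operators on each $\omega_\eps^{\mathbf{n}}$ plus a cross-interaction term; the latter is small owing to the uniform separation $\mathrm{dist}(\omega,\partial Y)>0$ and the decay of the Neumann tensor across the cushion, so the classical single-cell spectral gap on the reference inclusion $\omega$ lifts to the full periodic array by a Neumann-series/perturbation argument. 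Invertibility of the specific limit system in each case (Lamé--Stokes for Case~1, exterior Neumann for Case~2, rigid-body constrained for Case~3) then follows either by direct Fredholm analysis on the limit system or as a small additional perturbation. Combining these three ingredients gives $\|\bm\phi-\bm\phi_{\lim}\|_{H^{-1/2}(\partial D_\eps)}$ of the claimed order, and hence the theorem, with constants depending only on $d,\Omega,\omega,\delta,(\lambda,\mu)$.
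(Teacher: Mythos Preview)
Your general framework matches the paper's: represent both $\mathbf{u}$ and $\mathbf{u}_{\lim}$ on $\Omega_\eps$ as $\mathbf{G}+\cS^{\lambda,\mu}_{D_\eps}\bm\phi_{\#}$, reduce to a density estimate, and invoke a uniform spectral gap for $\bK^{\lambda,\mu,*}_{D_\eps}$. However, the execution diverges in two substantial ways, and one of them contains a genuine gap.

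\textbf{The spectral gap argument.} Your proposal to write $\bK^{\lambda,\mu,*}_{D_\eps}$ as a direct sum of single-inclusion operators plus a ``small'' cross-interaction handled by a Neumann series does not work as stated. The separation between inclusions is only of order $\eps$, and the number of inclusions grows like $\eps^{-d}$; the cross-interaction kernel $\partial_\nu\bm\Gamma^{\mathrm N}(x,z)$ for $x\in\partial\omega_\eps^{\mathbf m}$, $z\in\partial\omega_\eps^{\mathbf n}$ with $\mathbf m\neq\mathbf n$ is not uniformly small in any operator norm as $\eps\to 0$. The paper avoids this entirely. It equips $H^{-1/2}(\partial D_\eps)$ with the inner product $(\bm\phi,\bm\psi)_{\bS^{\mathrm N}}:=-\langle\bS^{\mathrm N}\bm\psi,\bm\phi\rangle=J^\Omega_{\lambda,\mu}(\cS^{\mathrm N}\bm\phi,\cS^{\mathrm N}\bm\psi)$, in which $\bK^{\mathrm N,*}$ becomes \emph{self-adjoint} via the Calder\'on identity. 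The spectral endpoints then have the variational characterization
\[
m_\eps^{\mathrm N}+\tfrac12=\inf_{\mathbf u\in\frakH^{\mathrm N}_{\mathbf R}\setminus\{0\}}\frac{J^{\Omega_\eps}_{\lambda,\mu}(\mathbf u)}{J^\Omega_{\lambda,\mu}(\mathbf u)},\qquad
M_\eps^{\mathrm N}+\tfrac12=\sup_{\mathbf u\in\frakH^{\mathrm N}_{\mathbf R}\setminus\{0\}}\frac{J^{\Omega_\eps}_{\lambda,\mu}(\mathbf u)}{J^\Omega_{\lambda,\mu}(\mathbf u)},
\]
and the paper bounds these by the corresponding single-cell quantities $m^{\mathrm N}(\omega)$, $M^{\mathrm D}(\omega)$ via a cell-by-cell energy comparison (using the elementary inequality $\min a_i/b_i\le\sum a_i/\sum b_i\le\max a_i/b_i$), a comparison between the Neumann and Dirichlet NP operators, and auxiliary rigid-inclusion problems. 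No smallness of cross-terms is ever invoked.

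\textbf{The operator-difference estimates.} Your plan to expand the Lam\'e fundamental tensor in $1/\widetilde\lambda$, $\widetilde\lambda+\widetilde\mu$, or $1/\widetilde\mu$ is plausible for the free-space kernel but awkward for the Neumann functions $\bm\Gamma^{\mathrm N}$, whose regular part depends nontrivially on the parameters. The paper instead recasts everything through the DtN maps and the operators $\Lambda^{\cdot,\mathrm i}_{D_\eps}\bS^{\lambda,\mu}_{D_\eps}$ acting on $(\cH,(\cdot,\cdot)_{\bS^{\mathrm N}})$, which are self-adjoint and whose quadratic forms are energies $J^{D_\eps}_{\cdot}(\mathbf u)$. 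The rates then follow from the Dirichlet principle and elementary energy inequalities (e.g.\ $J^{D_\eps}_{\widetilde\lambda,\widetilde\mu}(\mathbf u)\ge(\widetilde\lambda+2\widetilde\mu/d)\|\mathrm{div}\,\mathbf u\|^2_{L^2}$ for Case~1), with uniformity in $\eps$ inherited directly from the energy comparison $J^{D_\eps}\le C J^{\Omega_\eps}$ established alongside the spectral gap. This bypasses any kernel analysis. Your route may be salvageable, but as written it does not explain how the Neumann-function correction and the $\eps$-scaling combine to give $\eps$-independent operator bounds.
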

As mentioned earlier, the convergence results in Theorem \ref{main result} improve those obtained by Ammari \emph{et.\,al.}\,in \cite{ammari2008method,ammari2013strong}. In fact, our convergence rates, in Case 2 and 3, match very well with the numerical computations carried out by Craster \emph{et.\,al.\,}in \cite[Section 6]{MR4242872}). This theorem is proved in Section \ref{Strong convergence to the extreme}, and the key ingredient is the establishment of the following uniform spectral gaps for the Neumann-Poincar\'{e} (NP) operator:
\begin{theorem}\label{thm:specgap}
  Assume that {\upshape(A2)} holds. Then there exists a universal constant $\delta_1\in (0,1)$ so that, for any $\varepsilon>0$, the spectrum of the Neumann-Poincar\'{e} operator 
		$\mathbb{K}_{D_{\varepsilon}}^{\lambda,\mu,*}:H^{-\frac{1}{2}}_{\mathbf{R}}(\partial D_{\varepsilon}) \rightarrow H^{-\frac{1}{2}}_{\mathbf{R}}(\partial D_{\varepsilon})$, defined in \eqref{eq:KDstar}, 
	 is contained in $(-\frac{1}{2}+\delta_1 , \frac{1}{2}-\delta_1)$.
\end{theorem}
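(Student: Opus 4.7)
The plan is to invoke Plemelj's symmetrization to recast the spectral gap as a two-sided comparability between the interior and exterior elastic energies of the single-layer potential, and then to establish that comparability via cell-wise extensions on the periodic geometry. For $\phi \in H^{-\frac{1}{2}}_{\mathbf{R}}(\partial D_\eps)$, set $\mathbf{u} := \cS^{\lambda,\mu}_{D_\eps}\phi$ and define
\begin{equation*}
E_-(\phi) := \int_{D_\eps} \sigma(\mathbf{u}){:}\bD(\mathbf{u})\,dx, \qquad E_+(\phi) := \int_{\Omega\setminus \ol{D_\eps}} \sigma(\mathbf{u}){:}\bD(\mathbf{u})\,dx.
\end{equation*}
Green's identity combined with the jump formulas for the conormal derivative of $\cS_{D_\eps}^{\lambda,\mu}\phi$ and the Neumann-function structure (which kills the $\partial\Omega$-contribution modulo rigid motions) yields $E_-(\phi) = \langle \mathbb{S}^{\lambda,\mu}_{D_\eps}\phi, (-\tfrac{1}{2}\mathbb{I} + \mathbb{K}^{\lambda,\mu,*}_{D_\eps})\phi\rangle$ and $E_+(\phi) = -\langle \mathbb{S}^{\lambda,\mu}_{D_\eps}\phi, (\tfrac{1}{2}\mathbb{I} + \mathbb{K}^{\lambda,\mu,*}_{D_\eps})\phi\rangle$, so that $-\mathbb{S}^{\lambda,\mu}_{D_\eps}$ induces a positive-definite inner product on $H^{-\frac{1}{2}}_{\mathbf{R}}(\partial D_\eps)$ with norm-squared $E_+(\phi)+E_-(\phi)$, relative to which $\mathbb{K}^{\lambda,\mu,*}_{D_\eps}$ is self-adjoint with Rayleigh quotient $(E_+(\phi)-E_-(\phi))/(2(E_+(\phi)+E_-(\phi)))$. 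Consequently the desired spectral inclusion is equivalent to the universal two-sided estimate $C^{-1}E_-(\phi) \leq E_+(\phi) \leq CE_-(\phi)$ for all $\phi \in H^{-\frac{1}{2}}_{\mathbf{R}}(\partial D_\eps)$ and all $\eps \in (0,1)$.

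The direction $E_-(\phi) \leq C E_+(\phi)$ reduces to producing a trial extension $\tilde{\mathbf{u}}$ of $\mathbf{u}|_+$ from $\Omega\setminus\ol{D_\eps}$ into $D_\eps$ with $\int_{D_\eps}\sigma(\tilde{\mathbf{u}}){:}\bD(\tilde{\mathbf{u}})\,dx \leq CE_+(\phi)$, after which the minimality of $\mathbf{u}|_-$ as the Lam\'e-harmonic interior extension of the common trace closes the bound. The construction is cell-wise: on each shell $Y^{\mathbf{n}}_\eps \setminus \ol{\omega^{\mathbf{n}}_\eps}$ one subtracts from $\mathbf{u}|_+$ its rigid-motion $L^2$-projection, applies the rescaled Sobolev extension from $Y\setminus \ol{\omega}$ to $Y$ (with $\omega$-dependent absolute constant), and then adds the rigid motion back, a step that is free since the elastic-energy functional is invariant under $\mathbf{R}$. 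Korn and Korn--Poincar\'e on the model shell $Y\setminus\ol\omega$, both with absolute constants, convert the $H^1$-bound of the extension into an elastic-energy bound, and summation over cells gives the desired inequality.

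The reverse inequality $E_+(\phi) \leq CE_-(\phi)$ is the principal difficulty and is precisely where the component-wise orthogonality built into $H^{-\frac{1}{2}}_{\mathbf{R}}(\partial D_\eps)$ is essential. Writing the rigid-motion projection of $\mathbf{u}|_-$ on each $D_{\eps, i}$ as $\mathbf{r}_i$, a naive outward cell extension followed by cutoff to zero on $\partial Y^{\mathbf{n}}_\eps$ produces a cutoff-gradient term of order $\eps^{-1}\|\mathbf{r}_i\|_{L^2(\omega^{\mathbf{n}}_\eps)}$, for which $E_-(\phi)$ provides no individual control. The plan is to activate the defining orthogonality $\int_{\partial D_{\eps, i}} \phi\cdot\mathbf{r}' = 0$, which---together with the automatic $\int_{\partial D_{\eps, i}} \partial_\nu \mathbf{u}|_- \cdot \mathbf{r}' = 0$ coming from the interior Green identity on each $D_{\eps, i}$---yields the component-wise cancellation $\int_{\partial D_{\eps, i}} \partial_\nu \mathbf{u}|_+\cdot\mathbf{r}' = 0$ for every $\mathbf{r}'\in\mathbf{R}$. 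Testing this family of cancellations against piecewise-rigid $H^1$-fields on $\Omega\setminus\ol{D_\eps}$ that equal $\mathbf{r}_i$ on a neighborhood of $D_{\eps,i}$, and invoking a uniform Korn inequality on the periodic perforated domain $\Omega_\eps$ (classical in this setting), transfers the component-wise vanishing into a quantitative rigidity estimate of the form $\sum_i \|\mathbf{r}_i - \mathbf{r}^*\|^2_{L^2(D_{\eps,i})} \leq C\eps^2 E_-(\phi)$ for a suitably chosen global $\mathbf{r}^*\in\mathbf{R}$. Subtracting $\mathbf{r}^*$ globally preserves both $E_\pm$, and the cell-wise extension-plus-cutoff construction, now applied to $\mathbf{u}|_--\mathbf{r}^*$, yields a trial extension into $\Omega\setminus\ol{D_\eps}$ with exterior elastic energy bounded by $CE_-(\phi)$; the minimality of $\mathbf{u}|_+$ then closes the estimate. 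The technically delicate core is this quantitative rigidity estimate, which genuinely requires simultaneously the component-wise orthogonality structure of $H^{-\frac{1}{2}}_{\mathbf{R}}(\partial D_\eps)$ and the uniform Korn inequality on $\Omega_\eps$; combining the two one-sided bounds with the reduction from the first paragraph produces the universal $\delta_1 \in (0,1)$.
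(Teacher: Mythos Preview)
Your reduction to the two-sided energy comparison $C^{-1}E_-(\phi)\le E_+(\phi)\le CE_-(\phi)$ is correct and coincides with the paper's variational characterization of the spectral endpoints (their $m_\eps^{\mathrm N}$ and $M_\eps^{\mathrm N}$). The direction $E_-\le CE_+$ via cell-wise Sobolev extension and interior minimality is fine and essentially equivalent to the paper's argument for the lower bound of $m_\eps^{\mathrm N}$.

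The gap is in the direction $E_+\le CE_-$, and it lies exactly in the ``quantitative rigidity estimate'' $\sum_i\|\mathbf r_i-\mathbf r^*\|_{L^2(D_{\eps,i})}^2\le C\eps^2E_-(\phi)$. This estimate is false in general. Take any nontrivial $\phi\in H^{-\frac12}_{\mathbf R}(\partial D_\eps)$ for which $\mathbf u=\cS_{D_\eps}^{\lambda,\mu}\phi$ varies at the macroscopic scale across $\Omega$ (such $\phi$ exist: e.g.\ the density arising in the representation of the hard-inclusion limit with generic outer data $\mathbf g$). Then the rigid projections $\mathbf r_i$ track the values of $\mathbf u$ at the cell centers and hence differ from any global $\mathbf r^*$ by $O(1)$ in $L^\infty$, giving $\sum_i\|\mathbf r_i-\mathbf r^*\|_{L^2(D_{\eps,i})}^2\sim 1$, while $E_-(\phi)\sim 1$ as well; the claimed $\eps^2$ factor cannot hold. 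The mechanism you sketch---orthogonality $\int_{\partial D_{\eps,i}}\partial_\nu\mathbf u|_+\cdot\mathbf r'=0$ together with uniform Korn on $\Omega_\eps$---naturally produces control only by $E_+$, since Korn on the exterior sees only $\|\bD\mathbf u\|_{L^2(\Omega_\eps)}$; feeding $E_+$ back into the trial-extension bound is circular.

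The paper handles $E_+\le CE_-$ by a route that avoids any global rigidity comparison. It first passes from the Neumann NP operator to the Dirichlet one by showing $M_\eps^{\mathrm N}\le M_\eps^{\mathrm D}$ (add to $\mathbf u\in\frakH^{\mathrm N}_{\mathbf R}$ a correction $\mathbf v$ that is piecewise rigid in $D_\eps$ and cancels the $\partial\Omega$ trace). It then bounds $M_\eps^{\mathrm D}$ by constructing, for each $\mathbf u\in\frakH^{\mathrm D}_{\mathbf R}$, a second correction $\mathbf v$ cell by cell: on each $Y^{\mathbf n}_\eps$ one solves a local rigid-inclusion problem with $\mathbf v=-\mathbf u$ on $\partial Y^{\mathbf n}_\eps$ and traction-free inner boundary. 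The key is that $\mathbf u+\mathbf v$ then vanishes on all cell interfaces and on the cushion $K_\eps$, so the ratio $J^{D_\eps}(\mathbf u)/J^\Omega(\mathbf u)$ localizes to a single cell via the elementary inequality $\min_i(a_i/b_i)\le (\sum a_i)/(\sum b_i)$, yielding $M_\eps^{\mathrm D}\le M^{\mathrm D}(\omega)<\tfrac12$ where $M^{\mathrm D}(\omega)$ is the spectral upper endpoint for the model pair $(Y,\omega)$. This sidesteps entirely the need to compare rigid motions across cells.
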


Theorem \ref{thm:specgap} is proved in Section \ref{sec:unifesti}, and our proof is partially inspired by the work of \cite{MR2308861,bonnetier2019homogenization,bunoiu2020homogenization}, where some uniform estimates for the spectra of the NP operator associated to electrostatic and electromagnetic systems were proved. A salient feature of the elastostatic setting considered in this paper is that the operator $\bK^{\lambda,\mu,*}_D$ is non-compact even for smooth $\partial D$. Previous works in \cite{dahlberg1988boundary,MR3769919,MR3973113} have revealed nice properties of the elastostatic NP operator; in particular, for a fixed domain $D$ satisfying (A1), results in \cite{MR3769919,MR3973113} showed the spectra of $\bK^{\lambda,\mu,*}$ consist of (countably many) eigenvalues and their accumulate points $\pm \mu/2(\lambda+2\mu)$; in particular, there are gaps between the spectra and the points $\{-\frac12,\frac12\}$. While those results are for a fixed domain, our result in Theorem \ref{thm:specgap} shows that those gaps are uniform in $\eps$, when $D_\eps$ is a family of $\eps$-periodic array of small sets.

Before concluding this introduction, let us comment first that imposing Neumann boundary condition at $\partial \Omega$ in \eqref{eq:transmissionproblem} is just to fix notations. The method of this paper works equally well if Dirichlet data are imposed at $\partial \Omega$. In fact, Theorem \ref{thm:specgap} (and Theorem \ref{unif m M}) remain the key steps to prove this fact. Finally, we put the present work in the following framework concerning homogenization with high contrast inclusions, which is illustrated in the diagram:
\begin{equation}
\begin{CD}
u^{\eps,\kappa} @>{\kappa \to \kappa_\infty}>> u^{\eps}\\
@V{\eps\to 0}VV  @VV{\eps\to 0}V\\
u^{\kappa} @>{\kappa \to \kappa_\infty}>> u
\end{CD}
\end{equation}
Here, $\eps > 0$ stands for the periodicity of the geometric setup of the high contrast inclusions $D_\eps$ and is sent to zero, $\kappa$ stands for the high contrast parameter of the inclusions and is sent to some extreme value $\kappa_\infty$. More generally, both $\eps$ and $\kappa$ (like considered in this paper) could be a set of parameters. We expect that the convergence results implied by the four arrows all hold, and the limit function in each arrow can be characterized. In this paper, we studied the upper arrow and showed that the limit $u^\eps$ is modeled by equations in perforated domains or that coupled with equations posed inside the inclusions, and we obtained uniform convergence rates. The downward arrow on the right, hence, corresponds to homogenization in perforated domains, and there are a lot of studies in the literature; see e.g.\,\cite{MR4075336,Jing-Neumann,MR4240768}. The downward arrow on the left corresponds to homogenization of high contrast inclusions before taking the limit $\kappa \to \kappa_\infty$, and it seems a more difficult task to establish uniform in $\kappa$ quantitative homogenization results. We aim to address this problem in future works; see \cite{MR4267502}, however, for related considerations. The lower arrow in the diagram is somehow easier and can be treated by the method of this paper; and this convergence can be viewed as a continuity property of the homogenized coefficients with respect to the high contrast parameter $\kappa$.

The rest of the paper is organized as follows. In section \ref{Layer potentials for Lame system}, we review the layer potential theory for Lam\'{e} system and for Stokes system, and study some properties of the corresponding Dirichlet to Neumann maps. In section \ref{Single layer potential representations for the solutions}, we use those theories to derive  single-layer potential representations for solutions to the transmission problem, and to the limit models. In section \ref{sec:unifesti}, we study the spectral gaps of elastostatic Neumann-Poincar\'{e} operators associated to the periodic array $D_\eps$ and establish some uniform boundedness and invertibility of related operators. Those results are used in section \ref{Strong convergence to the extreme} to prove the uniform convergence rates of Theorem \ref{main result}.

\subsection*{Notations} For two $m\times m$ matrices $A,B$, the Frobenius product of them is denoted by $A:B$, and $|A|$ denotes the Frobenius norm $(A:A)^{\frac12}$. When $A$ is symmetric, the basic trace identity $(\mathrm{tr}\,A)^2 \le m|A|^2$ will be used. Suppose $H$ is a Hilbert space and $H^*$ is the dual space. The inner product on $H$ is denoted by $(\cdot,\cdot)_{H}$, and the $H$-$H^*$ pairing, e.g.\;for $f\in H$ and $\varphi \in H^*$, is denoted by $\langle f,\varphi\rangle_{H,H^*} = \varphi(f)$; the subscript is omitted when doing so causes no confusion. The notation $\mathscr{L}(H)$ stands for the space of bounded linear transformations of $H$.

\section{Preliminaries on layer potential theory}\label{Layer potentials for Lame system}
	
In this section, we review the layer potential theories for the Lam\'e system and for the Stokes system in $\Omega$ associated to (the surface of) a subset $D\subset \Omega$. To fix ideas, assume $\Omega$ and $D$ satisfy the geometric setup (A1). 


Let $(\lambda,\mu)$ be a fixed constant Lam\'e pair. The bilinear energy form for the Lam\'{e} system \eqref{Lame system} on a domain $E$ (usually taken as $\Omega$, $\Omega\setminus \ol D$ or components of $D$) is defined, for $\mathbf{u},\mathbf{v}\in H^1(E)$, by
	\begin{equation}
	  \label{eq:JDlammu}
		J^E_{\lambda,\mu}(\mathbf{u},\mathbf{v}):=\lambda \int_E (\mathrm{div}\,\mathbf{u})(\mathrm{div}\,\mathbf{v}) + 
		2 \mu \int_E \bD(\mathbf{u}) : \bD(\mathbf{v}).
	\end{equation}
The Green's identity with conormal derivative \eqref{conormal derivative} then reads
	\begin{equation}\label{eq:GreenLame}
		J^E_{\lambda,\mu}(\mathbf{u},\mathbf{v}) = - \int_E \mathbf{v}\cdot (\mathcal{L}_{\lambda,\mu}\mathbf{u}) 
		+ \int_{\partial E} 
		\frac{\partial \mathbf{u}}{\partial \nu_{(\lambda,\mu)}} \cdot   \mathbf{v}.
	\end{equation}
Clearly, $J^E_{\lambda,\mu}(\cdot,\cdot)$ is symmetric in $\mathbf{u}$ and $\mathbf{v}$ so their roles can be exchanged in the above identity. Moreover, if one of $\{\mathbf{u},\mathbf{v}\}$ solves the homogeneous Lam\'e system, the formula above simplifies to boundary integrals only. For convenience,  we denote $J^E_{\lambda,\mu}(\mathbf{u},\mathbf{u})$ by $J^E_{\lambda,\mu}(\mathbf{u})$. 

\begin{remark}
\label{rem:equinorm}
Due to the trace inequality of symmetric matrices, we have
\begin{equation}
\label{eq:Jadm}
\min\{d\lambda + 2\mu, 2\mu\}\int_E \bD(\mathbf{u}):\bD(\mathbf{u}) \le J^E_{\lambda,\mu}(\mathbf{u}) \le \max\{d\lambda + 2\mu, 2\mu\}\int_E \bD(\mathbf{u}):\bD(\mathbf{u}).
\end{equation}
It follows that for admissible $(\lambda,\mu)$, $\|\mathbf{u}\|_{J} := (J^E_{\lambda,\mu}(\mathbf{u}))^{\frac12}$ is a seminorm in $H^1(\Omega)$ and only functions in $\mathbf{R}$ are eliminated by it. By the Korn's inequality, if $H \subset H^1(E)$ is a subspace satisfying $H\cap \mathbf{R} = \{0\}$, then $\|\cdot\|_J$ is a norm equivalent to the standard $H^1$ norm; furthermore, if $(\lambda,\mu)$ is uniformly admissible, then the two norms bound each other by universal bounding constants.
\end{remark}

We also have the useful \emph{Dirichlet principle} stated below. 
\begin{lemma}\label{lem:diriprin}
Suppose that $(\lambda',\mu')$ is an admissible Lam\'e pair and $D$ is a domain with Lipschitz boundary. Suppose that $\mathbf{u} \in H^1(D)$ satisfies $\mathcal{L}_{\lambda',\mu'}(\mathbf{u}) = 0$. Then, for any $\mathbf{v} \in H^1(D)$ such that $\mathbf{u} = \mathbf{v}$ on $\partial D$, it holds
\begin{equation}
\label{eq:diriprin}
J^D_{\lambda',\mu'}(\mathbf{u}) \le J^D_{\lambda',\mu'}(\mathbf{v}).
\end{equation}
\end{lemma}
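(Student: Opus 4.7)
The plan is to reduce to a standard orthogonality-and-positivity argument, using the bilinear energy form $J^D_{\lambda',\mu'}$ introduced in \eqref{eq:JDlammu} and its Green's identity \eqref{eq:GreenLame}.

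First, I would set $\mathbf{w} := \mathbf{v} - \mathbf{u} \in H^1(D)$. The boundary hypothesis $\mathbf{u}|_{\partial D} = \mathbf{v}|_{\partial D}$ in the sense of traces yields $\mathbf{w} \in H^1_0(D)$. Using the bilinearity and symmetry of $J^D_{\lambda',\mu'}$, I would expand
\begin{equation*}
J^D_{\lambda',\mu'}(\mathbf{v}) = J^D_{\lambda',\mu'}(\mathbf{u} + \mathbf{w}) = J^D_{\lambda',\mu'}(\mathbf{u}) + 2 J^D_{\lambda',\mu'}(\mathbf{u},\mathbf{w}) + J^D_{\lambda',\mu'}(\mathbf{w}).
\end{equation*}
The target inequality \eqref{eq:diriprin} therefore reduces to showing (i) $J^D_{\lambda',\mu'}(\mathbf{u},\mathbf{w}) = 0$ and (ii) $J^D_{\lambda',\mu'}(\mathbf{w}) \ge 0$.

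For (i), I would invoke the Green's identity \eqref{eq:GreenLame} with $E = D$:
\begin{equation*}
J^D_{\lambda',\mu'}(\mathbf{u},\mathbf{w}) = -\int_D \mathbf{w}\cdot (\mathcal{L}_{\lambda',\mu'}\mathbf{u}) + \int_{\partial D} \frac{\partial \mathbf{u}}{\partial \nu_{(\lambda',\mu')}} \cdot \mathbf{w}.
\end{equation*}
The volume integral vanishes because $\mathcal{L}_{\lambda',\mu'}\mathbf{u} = 0$, and the boundary integral vanishes because $\mathbf{w}$ has zero trace on $\partial D$. (Equivalently, this is precisely the weak formulation of $\mathcal{L}_{\lambda',\mu'}\mathbf{u}=0$ tested against $\mathbf{w}\in H^1_0(D)$, so no regularity beyond $H^1$ is needed for $\mathbf{u}$.) For (ii), I would apply the lower bound in \eqref{eq:Jadm} of Remark~\ref{rem:equinorm}: since $(\lambda',\mu')$ is admissible we have $\min\{d\lambda'+2\mu',2\mu'\}>0$, hence $J^D_{\lambda',\mu'}(\mathbf{w}) \ge \min\{d\lambda'+2\mu',2\mu'\}\int_D |\bD(\mathbf{w})|^2 \ge 0$. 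Combining (i) and (ii) in the expansion yields $J^D_{\lambda',\mu'}(\mathbf{v}) = J^D_{\lambda',\mu'}(\mathbf{u}) + J^D_{\lambda',\mu'}(\mathbf{w}) \ge J^D_{\lambda',\mu'}(\mathbf{u})$, which is \eqref{eq:diriprin}.

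There is no real obstacle here; the statement is the classical Dirichlet principle for a symmetric coercive quadratic functional, and both ingredients (orthogonality of $\mathbf{u}$ to $H^1_0$-perturbations and nonnegativity of the quadratic form) are immediate from results already recorded above. The only mild point worth being careful about is that $\mathbf{u}$ is assumed only to be an $H^1$ weak solution, so one should either read the boundary integral in \eqref{eq:GreenLame} via the standard $H^{-1/2}$--$H^{1/2}$ pairing (which vanishes because $\mathbf{w}|_{\partial D}=0$), or bypass \eqref{eq:GreenLame} altogether by using the weak formulation directly.
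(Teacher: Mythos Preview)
Your proof is correct and follows essentially the same route as the paper: both use Green's identity to obtain the orthogonality $J^D_{\lambda',\mu'}(\mathbf{u},\mathbf{u}-\mathbf{v})=0$. The only cosmetic difference is the final step: the paper rewrites this as $J^D_{\lambda',\mu'}(\mathbf{u}) = J^D_{\lambda',\mu'}(\mathbf{u},\mathbf{v})$ and applies Cauchy--Schwarz, whereas you expand the quadratic form and invoke nonnegativity of $J^D_{\lambda',\mu'}(\mathbf{w})$; these are equivalent.
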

\begin{proof} Apply the Green's identity \eqref{eq:GreenLame}, we get
\begin{equation*}
J^{D}_{\lambda',\mu'}(\mathbf{u},\mathbf{u}-\mathbf{v}) = 0, \qquad \text{i.e.,}\quad J^{D}_{\lambda',\mu'}(\mathbf{u}) = J^{D}_{\lambda',\mu'}(\mathbf{u},\mathbf{v}). 
\end{equation*}
By the Cauchy-Schwarz inequality, we get \eqref{eq:diriprin}.
\end{proof}

	\subsection{Layer potential for Lam\'{e} system} \label{Prelim lame}
	We review some well-known layer potential theory of Lam\'{e} system, with more details referred to \cite{kupradze2012three,ammari2007polarization,MR3769919,MR4172687}. The Kelvin matrix of fundamental solution for Lam\'{e} system $\mathcal{L}_{\lambda,\mu}$ is the $d \times d$ matrix $\bm{\Gamma}(x,z)$, for $x\neq z$, with entries
	\begin{equation*}\Gamma_{ij}(x,z)=\left\{
		\begin{aligned}
			& -\frac{c_1}{4\pi }\frac{\delta_{ij}}{|x-z|}-\frac{c_2}{4\pi} \frac{(x_i-z_i)(x_j-z_j)}{|x-z|^3},\quad  & d=3, \\
			& \frac{c_1}{2\pi} \delta_{ij}\log |x-z|-\frac{c_2}{2\pi} \frac{(x_i-z_i)(x_j-z_j)}{|x-z|^2},\quad & d=2.
		\end{aligned}\right.
	\end{equation*}
	which satisfies $\mathcal{L}_{\lambda,\mu}\bm{\Gamma}(\cdot,z)=\delta_z(\cdot) \mathbb{I}_d$. Here, $\delta_z$ is the Dirac mass centered at $z\in \R^d$, and $c_1,c_2$ are two constants defined by
	\begin{equation*}
		c_1 =\frac{1}{2}\left( \frac{1}{\mu}+\frac{1}{2\mu+\lambda} \right)\quad \mathrm{and}\quad c_2=\frac{1}{2}\left( \frac{1}{\mu}-\frac{1}{2\mu+\lambda} \right).
	\end{equation*}
We will use the so-called Neumann function for 
the Lam\'{e} system in $\Omega$. For each $z\in \Omega$, it is the unique solution $\text{\boldmath $\Gamma$}^{\mathrm{N}}(\cdot,z)$ to the following problem: 
\begin{equation}
\label{eq:GammaN}
\left\{
	\begin{aligned}
		& (\mathcal{L}_{\lambda,\mu})_x\,\text{\boldmath $\Gamma$}^{\mathrm{N}}(x,z)=\delta_z(x)\mathbb{I}_d  & & \mathrm{in}\  \Omega, \\
		&\left. \frac{\partial \text{\boldmath $\Gamma$}^{\mathrm{N}}}{\partial \nu_x}\right|_{\partial \Omega}=\frac{1}{|\partial \Omega|}, \ \int_{\partial \Omega} \text{\boldmath $\Gamma$}^{\mathrm{N}}(x,z)\mathbf{r}(x)\,d\sigma(x)=0 & & \mathrm{for} \ z\in \Omega\ \mathrm{and\ all}\ \mathbf{r}\in \mathbf{R}.
	\end{aligned}\right.
\end{equation}
We emphasize that the Neumann function $\text{\boldmath $\Gamma$}^{\mathrm{N}}(x,z)$ is defined as a function of $x\in \overline{\Omega}$ for each fixed $z\in \Omega$. The superscript `$\mathrm{N}$' is to emphasize ``Neumann" condition is imposed at $\partial \Omega$, and is to highlight the contrast with the so-called Dirichlet function ({\it i.e.}, the Poisson kernel) $\text{\boldmath $\Gamma$}^{\mathrm{D}}(x,z)$. The latter, for each $z\in \Omega$, is the unique solution to
\begin{equation}
\label{eq:GammaD}
\left\{
	\begin{aligned}
		& (\mathcal{L}_{\lambda,\mu})_x\,\text{\boldmath $\Gamma$}^{\mathrm{D}}(x,z)=\delta_z(x)\mathbb{I}_d  \quad \mathrm{in}\  \Omega, \\
		&\bm{\Gamma}^{\mathrm{D}}|_{\partial \Omega}=0.
	\end{aligned}\right.
\end{equation}
Comparing the definitions of the Neumann and Dirichlet functions with the Kelvin matrix, we check that $\text{\boldmath $\Gamma$}^{\mathrm{N}}$ and $\text{\boldmath $\Gamma$}^{\mathrm{D}}$ are \emph{perturbations} of $\text{\boldmath $\Gamma$}$ in the sense that
\begin{equation}
\label{eq:GammaNpert}
\text{\boldmath $\Gamma$}^{\mathrm{N}}(\cdot,z) = \text{\boldmath $\Gamma$}(\cdot-z) + R^{\mathrm{N}}(\cdot,z), \qquad \text{\boldmath $\Gamma$}^{\mathrm{D}}(\cdot,z) = \text{\boldmath $\Gamma$}(\cdot-z) + R^{\mathrm{D}}(\cdot,z),
\end{equation}
where $R^{\mathrm{N}}$ and $R^{\mathrm{D}}$ solve the Lam\'e systems in $\Omega$ with Neumann, respectively, Dirichlet boundary data at $\partial \Omega$, and they are regular functions. 

For most part of the paper, we use single-layer potentials associated to the Neumann function $\text{\boldmath $\Gamma$}^{\mathrm{N}}$: given a \emph{density} (or, \emph{moment}) function $\bm{\phi}$ on $\partial D$, the single-layer potential $\mathcal{S}_D\bm{\phi}$  is 
\begin{equation}
\label{eq:singleS}
	\mathcal{S}_D\bm{\phi}(x):=\int_{\partial D} \bm{\Gamma}^{\mathrm{N}}(x,z)\bm{\phi}(z)\,d\sigma(z)\quad \mathrm{for}\ x \in \Omega \setminus \partial D.
\end{equation}
The boundary trace $\mathbb{S}_D\text{\boldmath $\phi$}$ on $\partial D $ is defined as the non-tangential limit
\begin{equation}\label{eq:bSD}
	\mathbb{S}_D\text{\boldmath $\phi$}(x):=\lim\limits_{y\rightarrow x\atop y\in C(x)}\mathcal{S}_D\text{\boldmath $\phi$}(y)\quad \mathrm{for}\ x\in \partial D,
\end{equation}
where $C(x)$ is the non-tangential cone at $x$, see \cite{10.1007/BF02545747}. The Neumann-Poincar\'{e} operator $\mathbb{K}_D^*$  is defined by 
\begin{equation}\label{eq:KDstar}
	\mathbb{K}_D^*\text{\boldmath $\phi$}(x):=\mathrm{p.v.}\int_{\partial D}  \frac{\partial \bm{\Gamma}^{\mathrm{N}}}{\partial_x \nu}(x,z)\text{\boldmath $\phi$}(z)\,d\sigma(z)\quad \mathrm{for} \ x \in  \partial D,
\end{equation}
where $\mathrm{p.v.}$ stands for the Cauchy principal value. Note that, we have omitted the reference to the parameters $(\lambda,\mu)$ in related operators as they are fixed and can be read from the context; we continue using this simplification below.

\smallskip

The following well known results make single-layer potentials extremely useful for solving Lam\'e systems in the setting of this paper. 
\begin{proposition}\label{prop:SDclassical}
Under the assumption {\upshape(A1)} for the domains $\Omega$ and $D$, the following results hold. 
\begin{enumerate}
	\item [{\upshape(i)}] For any $\bm{\phi} \in H^{-\frac{1}{2}}(\partial D)$, $
	\mathcal{S}_D\bm{\phi} \in H^1(\Omega)$ and
	\begin{equation}\label{eq:normalpO}
		\left\{
		\begin{aligned}
			& \mathcal{L}_{\lambda,\mu}\mathcal{S}_D\bm{\phi}=0 \quad \mathrm{in}\ \Omega\setminus \partial D, \\
			& \mathcal{S}_D\bm{\phi}|_{\partial \Omega} \in H^{\frac{1}{2}}_{\mathbf{R}}(\partial \Omega), \\
			& \left.\frac{\partial \mathcal{S}_D\bm{\phi}}{\partial \nu}\right|_{\partial \Omega} = \frac{1}{|\partial \Omega|}\int_{\partial D}\bm{\phi}.
		\end{aligned}\right.
	\end{equation}
	In particular, if $\bm{\phi} \in H_{\mathbf{R}}^{-\frac{1}{2}}(\partial D)$, then $ \left.\frac{\partial \mathcal{S}_D\bm{\phi}}{\partial \nu}\right|_{\partial \Omega} =0$.
	\item [{\upshape(ii)}]  $\mathbb{K}^*_D:H^{-\frac{1}{2}}(\partial D) \rightarrow H^{-\frac{1}{2}}(\partial D)$ is a bounded operator, furthermore, $\pm \frac{\mathbb{I}}{2}+\mathbb{K}^*_D:H_{\mathbf{R}}^{-\frac{1}{2}}(\partial D) \rightarrow H_{\mathbf{R}}^{-\frac{1}{2}}(\partial D)$ are isomorphisms. Here and through the paper $\mathbb{I}$ denotes the identity map.
	\item [{\upshape(iii)}] $\mathrm{(Jump\ relation)}$ For any $\bm{\phi}\in H^{-\frac{1}{2}}(\partial D)$,
	\begin{equation}\label{jump relation}
		\left. \frac{\partial \mathcal{S}_D\bm{\phi}}{\partial \nu}\right|_{D\pm }=\left( \pm \frac{\mathbb{I}}{2} +\mathbb{K}_D^* \right)\bm{\phi} ,
	\end{equation}
	here and through the paper the subscript $D_+$ and $D_-$ indicate the limit taken outside $D$ and inside $D$, respectively. 
	\item [{\upshape(iv)}]  $\mathbb{S}_D:H^{-\frac{1}{2}}(\partial D) \rightarrow H^{\frac{1}{2}}(\partial D)$ is an isomorphism.
	\item[(v)] The \emph{Calder\'on's identity} $\bK^{\mathrm{N}}\bS^{\mathrm{N}} = \bS^{\mathrm{N}} \bK^{\mathrm{N},*}$ holds.
\end{enumerate}
\end{proposition}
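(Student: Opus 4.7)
The plan is to reduce everything to classical single-layer theory for the Kelvin matrix $\bm{\Gamma}$ via the decomposition $\bm{\Gamma}^{\mathrm{N}}(\cdot,z) = \bm{\Gamma}(\cdot-z) + R^{\mathrm{N}}(\cdot,z)$ from \eqref{eq:GammaNpert}. Since $R^{\mathrm{N}}$ is a smooth Lam\'e solution on $\Omega$, it contributes only a smoothing operator to every boundary operator, and all singular behavior is inherited from the Kelvin single-layer $\mathcal{S}_D^{\mathrm{K}}$. In particular, the $H^1(\Omega)$ membership in (i), the PDE off $\partial D$, the jump relation (iii), boundedness of $\mathbb{K}_D^*$ on $H^{-\frac12}(\partial D)$ in (ii), and the Fredholm index-zero property of both $\mathbb{S}_D$ in (iv) and $\pm\frac{\mathbb{I}}{2}+\mathbb{K}_D^*$ in (ii) all follow from the classical Lipschitz-domain theory (Dahlberg-Kenig-Verchota). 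The boundary conditions on $\partial\Omega$ in (i) are then read off from \eqref{eq:GammaN}: differentiating under the integral gives $\frac{\partial \mathcal{S}_D\bm{\phi}}{\partial \nu}\big|_{\partial\Omega} = |\partial\Omega|^{-1}\int_{\partial D}\bm{\phi}$; the normalization of $\bm{\Gamma}^{\mathrm{N}}$ against $\mathbf{R}$ gives $\mathcal{S}_D\bm{\phi}|_{\partial\Omega}\in H^{\frac12}_{\mathbf{R}}(\partial\Omega)$; and for $\bm{\phi}\in H^{-\frac12}_{\mathbf{R}}(\partial D)$ the orthogonality against constants (which lie in $\mathbf{R}$) forces $\int_{\partial D_i}\bm{\phi}=0$ on each component, so the conormal vanishes.

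The main work, and the main obstacle, is injectivity for (ii) and (iv), since $\mathbb{K}_D^*$ is \emph{not} compact even for smooth $\partial D$, which precludes a direct Riesz-Schauder argument on the whole space. The route is via uniqueness for boundary value problems of $\mathcal{L}_{\lambda,\mu}$. For $\bm{\phi}$ in the kernel of $\frac{\mathbb{I}}{2}+\mathbb{K}_D^*$ on $H^{-\frac12}_{\mathbf{R}}(\partial D)$, set $\mathbf{u}=\mathcal{S}_D\bm{\phi}$; by (iii) and (i), $\mathbf{u}|_{\Omega\setminus\ol D}$ is a Lam\'e solution with vanishing conormal on both $\partial D$ and $\partial\Omega$ and $\mathbf{R}$-orthogonal trace on $\partial\Omega$. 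Green's identity \eqref{eq:GreenLame} yields $J^{\Omega\setminus\ol D}_{\lambda,\mu}(\mathbf{u})=0$, hence $\mathbf{u}\in\mathbf{R}$ on the connected exterior (by (A1)), and the $\mathbf{R}$-orthogonality of the trace pins $\mathbf{u}\equiv 0$ on $\Omega\setminus\ol D$. Continuity of $\mathcal{S}_D\bm{\phi}$ across $\partial D$ then gives zero Dirichlet data inside each component of $D$, so $\mathbf{u}|_D=0$ by uniqueness for the interior Dirichlet problem, and the jump \eqref{jump relation} gives $\bm{\phi}=0$. The case $-\frac{\mathbb{I}}{2}+\mathbb{K}_D^*$ is dual: $\mathbf{u}=\mathcal{S}_D\bm{\phi}$ now solves a homogeneous Neumann problem on each $D_i$ and equals some $\mathbf{r}_i\in\mathbf{R}$ there; applying Green's identity on the exterior together with the $\mathbf{R}$-orthogonality of $\bm{\phi}$ against the $\mathbf{r}_i$ (which controls the $\partial D$-boundary contribution) forces $\mathbf{u}=0$ outside, hence $\mathbf{r}_i=0$ by continuity, and the jump closes the argument. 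Injectivity of $\mathbb{S}_D$ in (iv) follows the same pattern with Dirichlet data on both sides of $\partial D$; surjectivity in (ii) and (iv) is then the Fredholm index zero obtained above.

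Finally, (v) follows by applying Green's identity \eqref{eq:GreenLame} to $\mathcal{S}_D\bm{\phi}$ and $\mathcal{S}_D\bm{\psi}$ on $D$ and on $\Omega\setminus\ol D$ separately, combining via the jump relation (iii) on $\partial D$ and the boundary data \eqref{eq:normalpO} on $\partial\Omega$, and reading off the symmetric energy form $J_{\lambda,\mu}(\mathcal{S}_D\bm{\phi},\mathcal{S}_D\bm{\psi})$ on both sides of $\bK^{\mathrm{N}}\bS^{\mathrm{N}}=\bS^{\mathrm{N}}\bK^{\mathrm{N},*}$. Equivalently, the identity is the reciprocity $\bm{\Gamma}^{\mathrm{N}}(x,z)=\bm{\Gamma}^{\mathrm{N}}(z,x)^{\mathrm{T}}$, itself a direct consequence of applying Green's identity to the defining system \eqref{eq:GammaN}.
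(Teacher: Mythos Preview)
Your proposal is correct and follows essentially the same approach as the paper: reduce to the classical Kelvin single-layer theory via the smooth perturbation $R^{\mathrm{N}}$ in \eqref{eq:GammaNpert}, invoke the Lipschitz-domain results of Dahlberg--Kenig--Verchota for boundedness and Fredholm index zero, and obtain Calder\'on's identity from Green's identity. The paper itself does not give a proof but refers to \cite{MR3769919,dahlberg1988boundary} and notes in the subsequent Remark that ``$\mathcal{S}_D$ defined here is a compact perturbation to the classical one and the above results still hold''; your write-up simply fleshes out the injectivity steps (via the energy identity and uniqueness for the Dirichlet/Neumann problems) that the paper leaves to the references.

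One small caveat: your final sentence in (v), that the Calder\'on identity is ``equivalently'' just the reciprocity $\bm{\Gamma}^{\mathrm{N}}(x,z)=\bm{\Gamma}^{\mathrm{N}}(z,x)^{\mathrm{T}}$, overstates things slightly---reciprocity is an ingredient, but the identity also requires the jump structure of the conormal derivatives. Your first derivation via Green's identity on $D$ and $\Omega\setminus\ol D$ with the jump relation is the correct one, and it matches the paper's sketch (which phrases it as the equality of a double-layer and a single-layer representation of the same function in $\Omega\setminus\ol D$).
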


We refer to \cite{MR3769919,dahlberg1988boundary} for the proofs of those results. In plain words, the above says, the single-layer potential $\mathcal{S}_D \bm{\phi}$ solves the homogeneous Lam\'e system in $D$ and in $\Omega\setminus \ol D$, with certain normalization condition at the exterior boundary $\partial \Omega$ as shown in \eqref{eq:normalpO}; $\mathcal{S}_D \bm{\phi}$ is continuous across $\partial D$, but $\partial \mathcal{S}_D\bm{\phi}/\partial \nu$
has a jump across $\partial D$, and the jump is represented through the Neumann-Poincar\'e operator acting on $\bm{\phi}$. Moreover, in view of item (iii) (respectively, item (iv)), the Neumann (respectively, Dirichlet) boundary value problems in $D$ and in $\Omega\setminus \ol D$ can be solved by single-layer potentials. The Calder\'on identity is equivalent to
\begin{equation*}
\mathcal{D}_D [(\cS_D\bm\phi)\rvert_-](x) = \cS_D[(\textstyle\frac{\partial \cS_D\bm\phi}{\partial \nu}\rvert_-)](x), \qquad x\in \Omega\setminus \ol D,
\end{equation*}
where $\mathcal{D}_D$ is the double-layer potential with Schwartz kernel $\bm\Gamma^{\rm N}$. The identity above basically follows from the Green's identity and we omit the details. 

\begin{remark}
	In this paper we use the Neumann function $\bm{\Gamma}^{\mathrm{N}}$ rather than the more frequently used Kelvin matrix $\bm{\Gamma}$ as the Schwartz kernel to define the single-layer potential. The resulted $\mathcal{S}_D$ is hence different from the classical layer potential operators, for which results in Proposition \ref{prop:SDclassical} were standard. However, due to the relation \eqref{eq:GammaNpert}, $\mathcal{S}_D$ defined here is a compact perturbation to the classical one and the above results still hold. 
	
	Our choice makes it easier to deal with the Neumann conditions set at the exterior boundary $\partial \Omega$. Needless to say, if Dirichlet type data are imposed there, it is better to use layer potentials defined by the Dirichlet function $\bm{\Gamma}^{\mathrm{D}}$; if $\Omega = \R^d$ and far field limits of the solution is imposed, it is better to use the classical layer potentials. 
\end{remark}

\subsection{Other related layer potentials}

We review several other layer potential related theories that will be useful in later parts of the paper.

\subsubsection{Layer potentials for Lam\'e system with Dirichlet boundary} 

In section \ref{sec:unifesti} we need to study the single-layer potential operator with Schwartz kernel $\bm{\Gamma}^{\rm D}$. Let $\mathcal{S}^{\mathrm{D}}$ denote this operator, together with its boundary trace $\mathbb{S}^{\mathrm{D}}$ and the Neumann-Poincar\'{e} operator $\mathbb{K}^{\mathrm{D},*}$. They are all defined in the same way as their Neumann counterpart in \eqref{eq:singleS}, \eqref{eq:bSD} and \eqref{eq:KDstar}, simply by replacing the Neumann function $\bm{\Gamma}^{\mathrm{N}}$ there by $\bm{\Gamma}^{\mathrm{D}}$. As before, the reference to the Lam\'e pair $(\lambda,\mu)$ and to (the surface of) the domain $D$ are omitted. 
The corresponding NP operator $\mathbb{K}^{\mathrm{D},*}$ was studied in details in \cite{ammari2007polarization}, and the following results hold as analogues of those in Proposition \ref{prop:SDclassical}.

\begin{proposition}\label{prop:SDDirichlet}
	Under the assumption {\upshape(A1)}, we have the following properties:
	\begin{enumerate}
	  \item [{\upshape(i)}] For any $\bm{\phi} \in H^{-\frac{1}{2}}(\partial D)$, $
		\mathcal{S}^{\mathrm{D}}\bm{\phi} \in H^1(\Omega)$ and
		\begin{equation}
			\left\{
			\begin{aligned}
				& \mathcal{L}_{\lambda,\mu}\mathcal{S}^{\mathrm{D}}\bm{\phi}=0 \quad \mathrm{in}\ \Omega\setminus \partial D, \\
				& \mathcal{S}^{\mathrm{D}}\bm{\phi}|_{\partial \Omega} =0.
			\end{aligned}\right.
		\end{equation}
	  \item [{\upshape (ii)}] $\mathbb{K}^{\mathrm{D},*}:H^{-\frac{1}{2}}(\partial D) \rightarrow H^{-\frac{1}{2}}(\partial D)$ is a bounded operator, furthermore, $\pm \frac{\mathbb{I}}{2}+\mathbb{K}^{\mathrm{D},*}:H_{\mathbf{R}}^{-\frac{1}{2}}(\partial D) \rightarrow H_{\mathbf{R}}^{-\frac{1}{2}}(\partial D)$ are isomorphisms.
	  \item [{\upshape (iii)}] $\mathrm{(Jump\ relation)}$ For any $\bm{\phi}\in H^{-\frac{1}{2}}(\partial D_{\varepsilon})$,
	\begin{equation}
		\left. \frac{\partial \mathcal{S}^{\mathrm{D}}\bm{\phi}}{\partial \nu}\right|_{D_\pm }=\left( \pm \frac{\mathbb{I}}{2} +\mathbb{K}^{\mathrm{D},*} \right)\bm{\phi} .
	\end{equation}
  \item [{\upshape (iv)}] $\mathbb{S}^{\mathrm{D}}:H^{-\frac{1}{2}}(\partial D) \rightarrow H^{\frac{1}{2}}(\partial D)$ is an isomorphism.
  \item[(v)] The \emph{Calder\'on's identity} $\bK^{{\rm D}} \bS^{\rm D} = \bS^{\rm D} \bK^{{\rm D},*}$ holds.
	\end{enumerate}
\end{proposition}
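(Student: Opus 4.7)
The core strategy is to exploit the decomposition $\text{\boldmath $\Gamma$}^{\mathrm{D}}(\cdot,z) = \text{\boldmath $\Gamma$}(\cdot - z) + R^{\mathrm{D}}(\cdot,z)$ recorded in \eqref{eq:GammaNpert}, where $R^{\mathrm{D}}$ is a smooth (in fact real-analytic in the interior) corrector solving the Lam\'e system with Dirichlet data on $\partial\Omega$. Consequently, $\mathcal{S}^{\mathrm{D}}$, $\mathbb{S}^{\mathrm{D}}$, and $\mathbb{K}^{\mathrm{D},*}$ differ from their classical analogues (built from the Kelvin matrix $\text{\boldmath $\Gamma$}$) only by integral operators whose Schwartz kernels are smooth. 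Since the classical results analogous to items (i)--(v) are well known (see \cite{MR3769919,dahlberg1988boundary,ammari2007polarization}), the plan is to transfer each property through this smooth perturbation.

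Items (i) and (iii) are essentially built into the definition: the Lam\'e equation $\mathcal{L}_{\lambda,\mu}\mathcal{S}^{\mathrm{D}}\bm{\phi}=0$ in $\Omega\setminus\partial D$ and the Dirichlet condition $\mathcal{S}^{\mathrm{D}}\bm{\phi}|_{\partial\Omega}=0$ follow from differentiating under the integral using \eqref{eq:GammaD}; the jump relation across $\partial D$ sees only the singularity of the kernel, which coincides with that of the classical potential, so the standard proof of the jump formula applies verbatim. For the boundedness parts of (ii) and (iv), write $\mathbb{K}^{\mathrm{D},*} = \mathbb{K}^* + \mathrm{Rem}_1$ and $\mathbb{S}^{\mathrm{D}} = \mathbb{S} + \mathrm{Rem}_2$, where $\mathrm{Rem}_1$ and $\mathrm{Rem}_2$ have smooth kernels and are therefore compact on the relevant trace spaces. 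Boundedness is inherited immediately. Invertibility of $\pm \frac{\mathbb{I}}{2}+\mathbb{K}^{\mathrm{D},*}$ on $H^{-1/2}_{\mathbf{R}}(\partial D)$ and of $\mathbb{S}^{\mathrm{D}}$ on $H^{-1/2}(\partial D)$ then reduces, via the Fredholm alternative, to injectivity.

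The heart of the proof is thus the three injectivity statements, which I would settle by potential-theoretic arguments. If $\mathbb{S}^{\mathrm{D}}\bm{\phi}=0$, then $u := \mathcal{S}^{\mathrm{D}}\bm{\phi}$ satisfies the homogeneous Lam\'e system in both $D$ and $\Omega\setminus\overline{D}$ with zero trace on $\partial D$ and on $\partial\Omega$; uniqueness for the Dirichlet problem gives $u\equiv 0$, so the jump \eqref{jump relation} analogue forces $\bm{\phi}=0$. If $(\frac{\mathbb{I}}{2}+\mathbb{K}^{\mathrm{D},*})\bm{\phi}=0$, then $\partial u/\partial\nu|_+=0$ on $\partial D$; pairing this against $u$ in Green's identity on $\Omega\setminus\overline{D}$ and using $u|_{\partial\Omega}=0$ kills both boundary terms, so the Korn inequality gives $u\equiv 0$ in $\Omega\setminus\overline{D}$, and then continuity across $\partial D$ plus uniqueness in $D$ yields $u\equiv 0$ in $D$, whence $\bm{\phi}=0$. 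If $(-\frac{\mathbb{I}}{2}+\mathbb{K}^{\mathrm{D},*})\bm{\phi}=0$, then $\partial u/\partial\nu|_-=0$, so $u|_{D_i}$ is a rigid motion $\mathbf{r}_i\in\mathbf{R}$; applying Green's identity on $\Omega\setminus\overline{D}$ with $u|_{\partial D_i}=\mathbf{r}_i$ and the jump relation gives $J^{\Omega\setminus\overline{D}}_{\lambda,\mu}(u) = -\sum_i \int_{\partial D_i}\bm{\phi}\cdot \mathbf{r}_i$, which vanishes by the assumption $\bm{\phi}\in H^{-1/2}_{\mathbf{R}}(\partial D)$; Korn then forces $u\equiv 0$ in $\Omega\setminus\overline{D}$ (using $u|_{\partial\Omega}=0$), hence $\mathbf{r}_i=0$ by continuity, and $\bm{\phi}=0$ follows.

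The Calder\'on identity (v) is obtained by applying Green's identity twice: representing a harmonic field in $D$ both as a single-layer and a double-layer of its boundary data, and matching, exactly as in the classical derivation. The main obstacle I anticipate is not conceptual but bookkeeping: keeping track of the compatibility conditions that distinguish $H^{-1/2}(\partial D)$ from $H^{-1/2}_{\mathbf{R}}(\partial D)$ at each step of the Fredholm argument, and ensuring that the perturbation $R^{\mathrm{D}}$ does not reintroduce the rigid-motion degeneracies that the Dirichlet condition at $\partial\Omega$ is supposed to eliminate. Once the injectivity of $\pm \frac{\mathbb{I}}{2}+\mathbb{K}^{\mathrm{D},*}$ is secured on the correct subspace, the rest is standard compact-perturbation machinery.
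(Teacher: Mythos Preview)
Your proposal is correct and follows exactly the strategy the paper indicates: the paper does not give a self-contained proof of this proposition but simply refers to \cite{ammari2007polarization} and states that the results ``hold as analogues of those in Proposition~\ref{prop:SDclassical},'' relying on the compact-perturbation observation \eqref{eq:GammaNpert} already made in the remark following Proposition~\ref{prop:SDclassical}. Your sketch is in fact considerably more detailed than what the paper supplies, and the injectivity arguments you outline (energy identities combined with the Dirichlet condition $u|_{\partial\Omega}=0$ and Korn's inequality) are the standard and correct way to close the Fredholm argument.
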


\subsubsection{Layer potential for Stokes system}
For the incompressible inclusions limit modeled by \eqref{eq:limStokes}, we need to investigate Stokes system in $D$. We recall here some layer potential theory for Stokes system; for more details (and applications in hydrostatics), see \cite{dautray1999mathematical,Vooren1965OAL,JLP-stokes}.
	
	Let $\mathbf{{u}}$ be the displacement field and $p$ be the pressure field, the Stokes system in linear hydrostatics with viscosity parameter $\mu$ reads
	\begin{equation}
	  \mathcal{L}_{\infty,\mu}(\mathbf{u},p):=\mu \Delta \mathbf{u}+\nabla p \quad \text{and} \quad \mathrm{div}\,\mathbf{u}=0, \qquad \mathrm{in}\ \Omega.
	\end{equation}
	The corresponding conormal derivative at the surface $\partial E$, where $E\subset \Omega$ is either $\Omega$ or $D$, is 
	\begin{equation}
	  \label{eq:conormalStokes}
		\left. \frac{\partial (\mathbf{u},p)}{\partial \nu_{(\infty,\mu)}}\right|_{\partial E}:=p\mathbf{N}+2\mu \bD(\mathbf{u}) \mathbf{N} \quad \mathrm{on} \ \partial E.
	\end{equation}
	Consider the energy bilinear form $J_{\infty,\mu}^E(\mathbf{u},\mathbf{v})$ defined by
	\begin{equation}
	  \label{eq:JStokes}
		J_{\infty,\mu}^E(\mathbf{u},\mathbf{v}):=2 \mu \int_E \bD(\mathbf{u}) :\bD(\mathbf{v}) -\mu\int_E (\mathrm{div}\,\mathbf{u})(\mathrm{div}\,\mathbf{v}) .
	\end{equation}
It is easy to check the following Green's identity: for any $\mathbf{u},\mathbf{v},p\in H^1(E)$, 
	\begin{equation}\label{eq:GreenStokes}
J_{\infty,\mu}^E(\mathbf{u},\mathbf{v}) = 
			\int_{\partial E} \frac{\partial (\mathbf{u},p)}{\partial \nu_{(\infty,\mu)}}\cdot \mathbf{v} - \int_E \mathcal{L}_{\infty,\mu}(\mathbf{u},p)\cdot \mathbf{v} -
			\mu \int_{\partial E} (\mathrm{div}\,\mathbf{u})(\mathbf{v}\cdot \mathbf{N}) - \int_E p(\mathrm{div}\,\mathbf{v}).
	\end{equation}
	Again, the above is symmetric in $\mathbf{u}$ and $\mathbf{v}$, and the identity simplifies if one of the functions satisfies the homogeneous Stokes system.

	Let $(\bm{\Gamma}^{\mu},Q^{\mu})$ be the Neumann functions of the Stokes system in $\Omega$, i.e., the unique solution of
	\begin{equation}\left\{
		\begin{aligned}
			& \mathcal{L}_{\infty,\mu}(\bm{\Gamma}^{\mu},Q^{\mu})(\cdot,z)=\delta_z\mathbb{I}_d, \quad \mathrm{div}\,\bm{\Gamma}^{\mu}(\cdot,z)=0, \quad \int_{\Omega} Q^{\mu}(\cdot,z)=0  \quad \mathrm{in}\  \Omega, \\
			&\left. \frac{\partial (\bm{\Gamma}^{\mu},Q^{\mu})}{\partial \nu_{(\infty,\mu)}}\right|_{\partial \Omega}=\frac{1}{|\partial \Omega|}, \ \int_{\partial \Omega} \bm{\Gamma}^{\mu}(x,z)\mathbf{r}(x)\,d\sigma(x)=0 \quad \mathrm{for} \ z\in \Omega\ \mathrm{and\ all}\ \mathbf{r}\in \mathbf{R}.
		\end{aligned}\right.
	\end{equation}
Given a density $\bm{\phi}$ on $\partial D$, the single-layer potentials pair $(\mathcal{S}_D^{\infty,\mu}\bm{\phi},\mathcal{P}^{\mu}_D\bm{\phi})$  by
	\begin{equation}
	  \label{eq:cSStokes}
	  \left\{
		\begin{aligned}
			& \mathcal{S}^{\infty,\mu}_D\bm{\phi}(x):=\int_{\partial D} \bm{\Gamma}^{\mu}(x,z)\bm{\phi}(z)\,d\sigma(z) \\
			& \mathcal{P}^{\mu}_D\text{\boldmath $\phi$}(x):=\int_{\partial D}Q^{\mu}(x,z)\text{\boldmath $\phi$}(z)\,d\sigma(z) 
		\end{aligned}\right. \quad\quad \mathrm{for}\ x \in \Omega \setminus \partial D.
	\end{equation}
The boundary trace pair $(\mathbb{S}^{\infty,\mu}_D\text{\boldmath $\phi$},\mathbb{P}_D^{\mu}\bm{\phi})$ on $\partial D $ is defined by
\begin{equation}
	(\mathbb{S}^{\infty,\mu}_D\text{\boldmath $\phi$}(x),\mathbb{P}^{\mu}_D\bm{\phi}(x)):=\lim\limits_{y\rightarrow x\atop y\in C(x)}(\mathcal{S}_D^{\infty,\mu}\bm{\phi}(y),\mathcal{P}^{\mu}_D\bm{\phi}(y))\quad \mathrm{for}\ x\in \partial D.
\end{equation}
The Neumann-Poincar\'{e} operator $\mathbb{K}_D^{\infty,\mu,*}$  is defined by 
\begin{equation}
	\mathbb{K}_D^{\infty,\mu,*}\text{\boldmath $\phi$}(x):=\mathrm{p.v.}\int_{\partial D}  \frac{\partial ( \bm{\Gamma}^{\mu},Q^{\mu})}{\partial_x \nu_{(\infty,\mu)}}(x,z)\text{\boldmath $\phi$}(z)\,d\sigma(z)\quad \mathrm{for} \ x \in  \partial D.
\end{equation}

The following fundamental properties of the single-layer potential for the Stokes system, as analogues of Proposition \ref{prop:SDclassical}, hold:
	
	\begin{proposition}\label{prop:cSStokes} 
		Assume {\upshape(A1)} and $\mu > 0$. We have the following well-known results:
		\begin{enumerate}
		  \item [{\upshape(i)}] For any $\bm{\phi} \in H^{-\frac{1}{2}}(\partial D)$, $
			\mathcal{S}^{\infty,\mu}_D\bm{\phi} \in H^1(\Omega)$ and $\mathcal{P}^{\mu}_D\text{\boldmath $\phi$}\in L^2(\Omega)$ and
			\begin{equation}\label{eq:cSStokes_system}
				\left\{
				\begin{aligned}
				  & \mathcal{L}_{\infty,\mu}(\mathcal{S}^{\infty,\mu}_D\bm{\phi}, \mathcal{P}^{\mu}_D\text{\boldmath $\phi$} )=0 \quad \text{and} \quad \mathrm{div}\,\mathcal{S}^{\infty,\mu}_D \bm{\phi} =0, \quad \mathrm{in}\ \Omega\setminus \partial D, \\
					& \mathcal{S}^{\infty,\mu}_D\bm{\phi}|_{\partial \Omega} \in H^{\frac{1}{2}}_{\mathbf{R}}(\partial \Omega), \\
					& \left.\frac{\partial (\mathcal{S}^{\infty,\mu}_D\bm{\phi}, \mathcal{P}^{\mu}_D\text{\boldmath $\phi$} )}{\partial \nu_{(\infty,\mu)}}\right|_{\partial \Omega} = \frac{1}{|\partial \Omega|}\int_{\partial D}\bm{\phi}.
				\end{aligned}\right.
			\end{equation}
		  \item [{\upshape(ii)}] $\mathbb{K}_D^{\infty,\mu,*}:H^{-\frac{1}{2}}(\partial D) \rightarrow H^{-\frac{1}{2}}(\partial D)$ is a bounded operator.
		  \item [{\upshape(iii)}] $\mathrm{(Jump\ relation)}$ For any $\bm{\phi}\in H^{-\frac{1}{2}}(\partial D)$,
			\begin{equation}\label{jump relation for Stokes system}
				\left.\frac{\partial (\mathcal{S}^{\infty,\mu}_D\bm{\phi}, \mathcal{P}^{\mu}_D\text{\boldmath $\phi$} )}{\partial \nu_{(\infty,\mu)}}\right|_{D_\pm} =\left( \pm \frac{\mathbb{I}}{2} +\mathbb{K}_D^{\infty,\mu,*} \right)\bm{\phi} .
			\end{equation}
		  \item [{\upshape(iv)}] $\mathbb{S}^{\infty,\mu}_D:H^{-\frac{1}{2}}(\partial D) \rightarrow H^{\frac{1}{2}}(\partial D)$ is an isomorphism.
		\end{enumerate}
	\end{proposition}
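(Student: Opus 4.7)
The plan is to follow the same blueprint as in the Lamé case (Proposition \ref{prop:SDclassical}), exploiting the fact that the Neumann function $(\bm{\Gamma}^{\mu},Q^{\mu})$ of the Stokes system is a regular perturbation of the classical fundamental solution $(\bm{\Gamma}^{\mu}_{\mathrm{cl}},Q^{\mu}_{\mathrm{cl}})$ on $\R^d$. More precisely, I would first write $\bm{\Gamma}^{\mu}(x,z) = \bm{\Gamma}^{\mu}_{\mathrm{cl}}(x-z) + \bm{R}(x,z)$ and $Q^{\mu}(x,z) = Q^{\mu}_{\mathrm{cl}}(x-z) + S(x,z)$, where $(\bm{R},S)$ is a smooth solution of the homogeneous Stokes system in $\Omega$ (in the $x$ variable) with Neumann boundary data on $\partial\Omega$ chosen to realize the defining conditions in the system for $(\bm{\Gamma}^{\mu},Q^{\mu})$. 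This reduces most issues to the classical Stokes layer potentials, for which the required mapping properties, jump relations, and boundedness are standard (see Dautray--Lions and Ladyzhenskaya).

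For (i), applying the decomposition term-by-term shows that $\mathcal{S}^{\infty,\mu}_D\bm{\phi}\in H^1(\Omega)$ and $\mathcal{P}^{\mu}_D\bm{\phi}\in L^2(\Omega)$ (the classical part has the same singularity as on $\R^d$, and the regular remainder convolved with $\bm{\phi}\in H^{-1/2}(\partial D)$ is smooth). The Stokes PDE away from $\partial D$ is immediate by linearity and the defining equation of $(\bm{\Gamma}^{\mu},Q^{\mu})$. The boundary conditions at $\partial\Omega$ follow by Fubini-type manipulations from the defining properties of the Neumann function; in particular, the conormal trace on $\partial\Omega$ collapses to the prescribed constant times $\int_{\partial D}\bm{\phi}$.

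For (ii) and (iii), boundedness of $\mathbb{K}_D^{\infty,\mu,*}$ on $H^{-1/2}(\partial D)$ and the jump relation \eqref{jump relation for Stokes system} follow from the corresponding facts for the classical Stokes layer potentials on Lipschitz domains (Fabes--Kenig--Verchota type theory), since the perturbation $(\bm R,S)$ contributes a kernel that is smooth up to $\partial D$ and therefore yields a continuous (indeed compact) operator across $\partial D$. Thus the jump of $\partial(\mathcal{S}^{\infty,\mu}_D\bm{\phi},\mathcal{P}^{\mu}_D\bm{\phi})/\partial\nu_{(\infty,\mu)}$ is inherited verbatim from the classical case.

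The main obstacle is (iv), the isomorphism property of $\mathbb{S}^{\infty,\mu}_D$. For injectivity, if $\mathbb{S}^{\infty,\mu}_D\bm{\phi}=0$, set $\mathbf{v}=\mathcal{S}^{\infty,\mu}_D\bm{\phi}$ and $q=\mathcal{P}^{\mu}_D\bm{\phi}$. Inside $D$, $(\mathbf{v},q)$ solves the homogeneous Stokes system with zero Dirichlet data on $\partial D$, so $\mathbf{v}\equiv 0$ and $q$ is a constant in each component of $D$. Outside $D$, $(\mathbf{v},q)$ solves the homogeneous Stokes system in $\Omega\setminus\ol D$ with zero Dirichlet trace on $\partial D$ and the normalized Neumann condition \eqref{eq:cSStokes_system} on $\partial\Omega$; the Green identity \eqref{eq:GreenStokes} together with the admissibility condition $\mu>0$ and the normalization $\mathbf{v}|_{\partial\Omega}\in H^{1/2}_{\mathbf{R}}(\partial\Omega)$ forces $\mathbf{v}\equiv 0$ and $q$ constant outside. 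The jump relation \eqref{jump relation for Stokes system} then yields $\bm{\phi} = [\partial(\mathbf{v},q)/\partial\nu_{(\infty,\mu)}]_{D_+} - [\partial(\mathbf{v},q)/\partial\nu_{(\infty,\mu)}]_{D_-} = 0$ (the pressure jumps are constants that balance via the normal component). For surjectivity, write $\mathbb{S}^{\infty,\mu}_D = \mathbb{S}^{\infty,\mu,\mathrm{cl}}_D + \mathcal{R}$ where $\mathcal{R}$ is compact; the classical operator is known to be a Fredholm isomorphism in the present setting, and injectivity of the sum combined with Fredholm alternative gives the result. The subtle point here is bookkeeping the pressure field's contribution to the conormal jump on multiply connected $\partial D$, which I would handle component by component using the compatibility of the constant pressure on each $D_i$.
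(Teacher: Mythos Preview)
Your proposal is correct and follows essentially the same approach as the paper, which in fact gives no detailed proof at all: the paper simply remarks that (i) and (iii) are standard, that (ii) follows from (iv), and that (iv) was essentially proved in \cite{fabes1988dirichlet} (the $L^2\to H^1$ isomorphism, extended to $H^{-1/2}\to H^{1/2}$ by duality). Your perturbation-of-the-classical-kernel argument and your Fredholm-plus-injectivity treatment of (iv) are exactly the kind of elaboration the paper is implicitly deferring to the literature; the one point you flag as subtle---the piecewise-constant pressure contribution to the conormal jump when showing injectivity on multiply connected $\partial D$---is indeed the only place requiring care, and the normalization $\int_\Omega Q^\mu(\cdot,z)=0$ built into the Neumann function is what ultimately pins those constants down.
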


	As in the case of Proposition \ref{prop:SDclassical}, items (i) and (iii) are more or less standard and item (ii) follows from (iv). The last item was essentially proved in \cite{fabes1988dirichlet}, where $\mathbb{S}^{\infty,\mu}$ was proved to be an isomorphism from $L^2(\partial D)$ to $H^1(\partial D)$. The results above follow by extending the domain of $\mathcal{S}^{\infty,\mu}_D$, and those of the related operators, to $H^{-\frac{1}{2}}(\partial D)$ and duality arguments.

\subsection{Dirichlet to Neumann maps} A key ingredient in our method is the layer potential representation of the Dirichlet to Neumann (DtN) operators associated to the Lam\'e and Stokes systems. 

We focus first on Lam\'e systems. Given a pair of Lam\'e coefficients $(\lambda,\mu)$ on $\Omega\setminus \ol D$ 
and another (could be the same) pair $(\widetilde\lambda,\widetilde\mu)$ on $D$, we consider two DtN maps, an exterior one associated to the outer domain $\Omega\setminus \ol D$ and an inner one associated to $D$. They are denoted, respectively, by $\Lambda^{\lambda,\mu,\mathrm{e}}_D$ and $\Lambda^{\widetilde\lambda,\widetilde\mu,\mathrm{i}}_D$, and they are defined by
	\begin{equation}\label{eq:DtNdef}
		\Lambda^{\lambda,\mu,\mathrm{e}}_D \bm{\phi} :=\left. \frac{\partial \mathbf{u}}{\partial \nu_{(\lambda,\mu)}}\right|_{+},\quad \Lambda^{\widetilde\lambda,\widetilde\mu,\mathrm{i}}_D \bm{\phi} :=\left. \frac{\partial \mathbf{u}}{\partial \nu_{(\widetilde\lambda,\widetilde\mu)}}\right|_{-}, \qquad \bm{\phi} \in H^{\frac12}(\partial D),
	\end{equation} 
and are functions on $\partial D$. Here $\mathbf{u}$ is the unique solution of the Lam\'e system in $\Omega\setminus \ol D$ and in $D$ with the corresponding coefficients, and with Dirichlet data $\mathbf{u} = \bm{\phi}$ on $\partial D$. More precisely, $\mathbf{u}$ solves the following problem:
	\begin{equation}\label{eq:DtNproblem}
		\left\{
		\begin{aligned}
			& \mathcal{L}_{\lambda,\mu}\mathbf{u}=0 \quad  \mathrm{in}\ \Omega\setminus \overline{D}, \\
			& \mathcal{L}_{\widetilde{\lambda},\widetilde{\mu}}\mathbf{u}=0 \quad  \mathrm{in}\ D, \\
			& \mathbf{u}|_{\partial D}=\bm{\phi}, \quad \mathbf{u}|_{\partial \Omega} \in H^{\frac{1}{2}}_{\mathbf{R}}(\partial \Omega), \\
			& \left.\frac{\partial \mathbf{u}}{\partial \nu}\right|_{\partial \Omega}\mathrm{\ is \ a \ constant \ vector}.
		\end{aligned}\right.
	\end{equation}
The above system combines an exterior and an interior problems related by the same Dirichlet data along $\partial D$. The particular boundary data at $\partial \Omega$ for the exterior (to $D$) problem is consistent with the Neumann condition at $\partial \Omega$ in \eqref{eq:transmissionproblem}. As an immediate consequence of the following result, the DtN maps are bounded operators from $H^{\frac{1}{2}}(\partial D)$ to $ H^{-\frac{1}{2}}(\partial D)$.

	\begin{proposition}\label{prop:DtNLameSol}
		Assume that {\upshape(A1)} holds. Then, for any $\bm{\phi}\in H^{\frac{1}{2}}(\partial D)$, the problem \eqref{eq:DtNproblem} has a unique solution $\mathbf{u}\in H^1(\Omega)$. 
	\end{proposition}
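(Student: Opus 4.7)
The plan is to construct $\mathbf{u}$ by solving the inner and outer Lamé systems independently and pasting them along $\partial D$, using the single-layer machinery of Proposition \ref{prop:SDclassical} to absorb the boundary constraints at $\partial \Omega$. The inner piece is immediate: on each connected component $D_i$ of $D$, the classical Dirichlet problem for $\mathcal{L}_{\widetilde\lambda,\widetilde\mu}$ with boundary data $\bm\phi|_{\partial D_i}$ has a unique $H^1$ solution by Lax--Milgram and Korn's inequality, with admissibility of $(\widetilde\lambda,\widetilde\mu)$ providing coercivity via Remark \ref{rem:equinorm}.

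For the outer piece, I would make the ansatz $\mathbf{u}|_{\Omega\setminus\ol D}=\mathcal{S}^{\lambda,\mu}_D\bm\phi_e$. The point of this choice is that, thanks to our use of $\bm\Gamma^{\rm N}$ as Schwartz kernel, Proposition \ref{prop:SDclassical}(i) automatically delivers $\mathcal{L}_{\lambda,\mu}(\mathcal{S}^{\lambda,\mu}_D\bm\phi_e)=0$, a trace in $H^{1/2}_{\mathbf{R}}(\partial\Omega)$, and a \emph{constant} conormal derivative on $\partial\Omega$ equal to $\frac{1}{|\partial\Omega|}\int_{\partial D}\bm\phi_e$. In other words, the outer ansatz is pre-fitted to the required boundary behaviour at $\partial\Omega$. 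The only remaining freedom is the density, which I pin down by demanding $\mathbb{S}^{\lambda,\mu}_D\bm\phi_e=\bm\phi$ on $\partial D$ so that the exterior trace matches the interior piece; Proposition \ref{prop:SDclassical}(iv), the isomorphism property of $\mathbb{S}^{\lambda,\mu}_D$, provides existence and uniqueness of such a density $\bm\phi_e\in H^{-1/2}(\partial D)$. The two pieces then glue into an $H^1(\Omega)$ function by continuity of traces across $\partial D$.

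For uniqueness I take $\bm\phi=0$, which zeros out the inner piece automatically. Applying the Green's identity \eqref{eq:GreenLame} on $\Omega\setminus\ol D$ leaves two boundary contributions: the $\partial D$ term is killed by the vanishing trace there, and the $\partial\Omega$ term reduces to $\mathbf{c}\cdot\int_{\partial\Omega}\mathbf{u}$ for the (unknown) constant conormal vector $\mathbf{c}$. This last quantity vanishes precisely because $\mathbf{u}|_{\partial\Omega}\in H^{1/2}_{\mathbf{R}}(\partial\Omega)$ is orthogonal to the translational rigid motions $\mathbf{e}_i$. Hence $\bD(\mathbf{u})\equiv 0$ in $\Omega\setminus\ol D$, so $\mathbf{u}$ is a rigid motion there, and its vanishing trace on the Lipschitz surface $\partial D$ forces $\mathbf{u}\equiv 0$.

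The subtle point — and arguably the main obstacle — is verifying that a single ansatz can simultaneously absorb the Dirichlet condition on $\partial D$ and the unusual pairing ``constant vector conormal plus $H^{1/2}_{\mathbf{R}}$ trace'' at $\partial\Omega$. Had we used the classical Kelvin matrix $\bm\Gamma$ as kernel, a separate harmonic correction would be needed to tune the boundary contribution at $\partial\Omega$; the regular perturbation $R^{\rm N}$ built into $\bm\Gamma^{\rm N}$ already does this work, which is why passing from $\bm\Gamma$ to $\bm\Gamma^{\rm N}$ — together with the joint use of Propositions \ref{prop:SDclassical}(i) and (iv) — is the decisive input.
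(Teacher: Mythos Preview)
Your argument is correct and matches the paper's proof almost exactly: the outer piece is built as $\mathcal{S}^{\lambda,\mu}_D(\mathbb{S}^{\lambda,\mu}_D)^{-1}\bm\phi$ in both, and uniqueness is obtained from Green's identity plus the constraint at $\partial\Omega$ in both (the paper kills the resulting rigid motion via $\mathbf{u}|_{\partial\Omega}\in H^{1/2}_{\mathbf{R}}(\partial\Omega)$ rather than via the vanishing trace on $\partial D$, but either works). The one substantive difference is the interior piece: you solve the Dirichlet problem in $D$ by Lax--Milgram, whereas the paper writes the interior solution as $\mathcal{S}^{\widetilde\lambda,\widetilde\mu}_D(\mathbb{S}^{\widetilde\lambda,\widetilde\mu}_D)^{-1}\bm\phi$, using the Neumann-function single layer for the $(\widetilde\lambda,\widetilde\mu)$ system. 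Both produce the same $\mathbf{u}|_D$, of course, but the paper's choice is not gratuitous: the layer-potential form of the interior solution is what, combined with the jump relation, immediately yields the closed formula $\Lambda^{\widetilde\lambda,\widetilde\mu,\mathrm{i}}_D=(-\tfrac{\mathbb{I}}{2}+\mathbb{K}^{\widetilde\lambda,\widetilde\mu,*}_D)(\mathbb{S}^{\widetilde\lambda,\widetilde\mu}_D)^{-1}$ in Proposition~\ref{DtN representation}, which is the real payoff of this proposition.
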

	\begin{proof}
	\emph{Uniqueness}. Suppose that $\mathbf{u}\in H^1(\Omega)$ solves \eqref{eq:DtNproblem} with $\bm{\phi}=0$. In view of the Green's identity \eqref{eq:GreenLame}, we get
	\begin{equation*}
		\int_{\partial D} \left(\left.\frac{\partial \mathbf{u}}{\partial \nu_{(\widetilde{\lambda},\widetilde{\mu})}}\right|_- -\left.\frac{\partial \mathbf{u}}{\partial \nu_{(\lambda,\mu)}}\right|_+ \right)\cdot \mathbf{u} +\int_{\partial \Omega} \frac{\partial \mathbf{u}}{\partial \nu_{(\lambda,\mu)}}\cdot \mathbf{u} = J_{\lambda,\mu}^{\Omega \setminus \ol D}(\mathbf{u}) + J_{\widetilde{\lambda},\widetilde{\mu}}^D(\mathbf{u}).
	\end{equation*}
	Using the boundary conditions for $\mathbf{u}$, we obtain $J_{\lambda,\mu}^{\Omega \setminus \ol D}(\mathbf{u})+J_{\widetilde{\lambda},\widetilde{\mu}}^D(\mathbf{u})=0$. This implies that $\mathbf{u}\in \mathbf{R}$ in $\Omega\setminus \ol D$ and in each component of $D$. By $\mathbf{u}|_{\partial \Omega} \in H^{\frac{1}{2}}_{\mathbf{R}}(\partial \Omega)$, we deduce that $\mathbf{u}=0$ in $\Omega\setminus \ol D$. By continuity across $\partial D$, we get $\mathbf{u} = 0$ in $\Omega$.
	
	\emph{Existence}. In view of Proposition \ref{prop:SDclassical} (iv), $\mathbb{S}^{\lambda,\mu}_D$ and $\mathbb{S}^{\widetilde{\lambda},\widetilde{\mu}}_D$ are isomorphisms from $H^{-\frac{1}{2}}(\partial D)$ to $H^{\frac{1}{2}}(\partial D)$. We then check that
	\begin{equation}\label{eq:DtNexplicit}
		\mathbf{u}=\left\{
		\begin{aligned}
			& \mathcal{S}^{\lambda,\mu}_D(\mathbb{S}^{\lambda,\mu}_D)^{-1}\bm{\phi} \quad \mathrm{in}\ \Omega\setminus \ol D \\
			& \mathcal{S}^{\widetilde{\lambda},\widetilde{\mu}}_D(\mathbb{S}^{\widetilde{\lambda},\widetilde{\mu}}_D)^{-1}\bm{\phi} \quad \mathrm{in}\ D 
		\end{aligned}\right.
	\end{equation}
	 is well-defined and satisfies $\mathbf{u} = \bm{\phi}$ on $\partial D$. By Proposition \ref{prop:SDclassical} (i), we see that $\mathbf{u}$ solves \eqref{eq:DtNproblem}.
\end{proof}

Using the explicit formula \eqref{eq:DtNexplicit}, the \emph{jump relation} in Proposition \ref{prop:SDclassical} (iii) and the definition  \eqref{eq:DtNdef}, we get the following representation of DtN maps.

\begin{proposition}\label{DtN representation}
	The DtN maps are represented by layer potentials as follows.
		\begin{equation}
		\label{eq:elastDtN}
			\Lambda^{\lambda,\mu,\mathrm{e}}_D=\left(  \frac{\mathbb{I}}{2}  + \mathbb{K}^{\lambda,\mu,*}_D \right)(\mathbb{S}^{\lambda,\mu}_D)^{-1}, \quad \Lambda^{\widetilde{\lambda},\widetilde{\mu},\mathrm{i}}_D=\left(  -\frac{\mathbb{I}}{2}  + \mathbb{K}^{\widetilde{\lambda},\widetilde{\mu},*}_D \right)(\mathbb{S}^{\widetilde{\lambda},\widetilde{\mu}}_D)^{-1}.
		\end{equation}
\end{proposition}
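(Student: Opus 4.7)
The plan is to use directly the explicit single-layer representation of the solution to the DtN problem obtained in \eqref{eq:DtNexplicit} together with the jump relation for the conormal derivative of single-layer potentials.

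First, I would recall that Proposition \ref{prop:DtNLameSol} already provides an explicit solution to \eqref{eq:DtNproblem} for any Dirichlet datum $\bm{\phi}\in H^{\frac12}(\partial D)$, namely
\begin{equation*}
\mathbf{u} = \mathcal{S}^{\lambda,\mu}_D(\mathbb{S}^{\lambda,\mu}_D)^{-1}\bm{\phi} \quad \text{in } \Omega\setminus\ol D, \qquad \mathbf{u} = \mathcal{S}^{\widetilde{\lambda},\widetilde{\mu}}_D(\mathbb{S}^{\widetilde{\lambda},\widetilde{\mu}}_D)^{-1}\bm{\phi} \quad \text{in } D,
\end{equation*}
where the inverses make sense since $\mathbb{S}^{\lambda,\mu}_D$ and $\mathbb{S}^{\widetilde{\lambda},\widetilde{\mu}}_D$ are isomorphisms from $H^{-\frac12}(\partial D)$ to $H^{\frac12}(\partial D)$ by Proposition \ref{prop:SDclassical}(iv).

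Next, I would set $\bm{\psi}_1 := (\mathbb{S}^{\lambda,\mu}_D)^{-1}\bm{\phi}$ and $\bm{\psi}_2 := (\mathbb{S}^{\widetilde{\lambda},\widetilde{\mu}}_D)^{-1}\bm{\phi}$, and apply the definition of the exterior DtN map \eqref{eq:DtNdef} together with the jump relation \eqref{jump relation} from Proposition \ref{prop:SDclassical}(iii):
\begin{equation*}
\Lambda^{\lambda,\mu,\mathrm{e}}_D\bm{\phi} = \left.\frac{\partial (\mathcal{S}^{\lambda,\mu}_D\bm{\psi}_1)}{\partial \nu_{(\lambda,\mu)}}\right|_+ = \left(\frac{\mathbb{I}}{2} + \mathbb{K}^{\lambda,\mu,*}_D\right)\bm{\psi}_1 = \left(\frac{\mathbb{I}}{2} + \mathbb{K}^{\lambda,\mu,*}_D\right)(\mathbb{S}^{\lambda,\mu}_D)^{-1}\bm{\phi}.
\end{equation*}
The interior DtN map is handled identically, with $\bm{\psi}_2$ in place of $\bm{\psi}_1$ and the opposite sign from the jump relation, giving the second identity in \eqref{eq:elastDtN}.

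There is essentially no obstacle here: the proposition is a direct corollary of the representation \eqref{eq:DtNexplicit} and the jump formula, the only subtle point being making sure that the $+$ (exterior) side of $\partial D$ corresponds to the $+$ sign in the jump relation and that the operators involved are bounded, which is guaranteed by items (ii) and (iv) of Proposition \ref{prop:SDclassical}. I would write the proof as two short displays, one per formula.
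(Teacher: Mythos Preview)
Your proposal is correct and follows exactly the paper's approach: the paper itself states just before the proposition that the result follows from the explicit formula \eqref{eq:DtNexplicit}, the jump relation in Proposition \ref{prop:SDclassical}(iii), and the definition \eqref{eq:DtNdef}. Your write-up simply spells out these three ingredients in the expected order.
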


It is clear that $\mathrm{ker}\,\Lambda^{\widetilde{\lambda},\widetilde{\mu},\mathrm{i}}_D$ is non-empty; in fact, it consists of functions that belong to $\mathbf{R}$ in each components of $D$. The following result shows that the difference operator $\Lambda_D^{\widetilde{\lambda},\widetilde{\mu},\mathrm{i}}-\Lambda_D^{\lambda,\mu,\mathrm{e}}$ is an isomorphism from $H^{\frac{1}{2}}(\partial D)$ to $H^{-\frac{1}{2}}(\partial D)$. This plays a key role in our method.

		\begin{proposition}\label{boundary operator isomorphism}
		Assume that {\upshape(A1)} holds. Then the boundary operator $\Lambda_D^{\widetilde{\lambda},\widetilde{\mu},\mathrm{i}}-\Lambda_D^{\lambda,\mu,\mathrm{e}}$ is an isomorphism from $H^{\frac{1}{2}}(\partial D)$ to $H^{-\frac{1}{2}}(\partial D)$.
	\end{proposition}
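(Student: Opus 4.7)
The plan is to apply the Lax--Milgram theorem to the bilinear form
\[
  a(\bm{\phi}, \bm{\eta}) := \big\langle (\Lambda^{\widetilde\lambda,\widetilde\mu,\mathrm{i}}_D - \Lambda^{\lambda,\mu,\mathrm{e}}_D)\bm{\phi}, \bm{\eta} \big\rangle_{H^{-\frac12}(\partial D),\, H^{\frac12}(\partial D)}
\]
on the Hilbert space $H^{\frac12}(\partial D)$. Boundedness of $a$ is immediate from \eqref{eq:elastDtN} combined with Proposition~\ref{prop:SDclassical} (ii)(iv), so the whole issue reduces to coercivity; once that is in hand, Lax--Milgram yields simultaneously both the injectivity and the surjectivity of $\Lambda^{\widetilde\lambda,\widetilde\mu,\mathrm{i}}_D - \Lambda^{\lambda,\mu,\mathrm{e}}_D$ from $H^{\frac12}(\partial D)$ onto $H^{-\frac12}(\partial D)$.

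The key identity comes from Green's formula. Let $\mathbf{u}\in H^1(\Omega)$ be the unique solution to \eqref{eq:DtNproblem} with Dirichlet data $\bm{\phi}$ on $\partial D$, provided by Proposition~\ref{prop:DtNLameSol}. Applying \eqref{eq:GreenLame} separately in $D$ (using $\mathcal{L}_{\widetilde\lambda,\widetilde\mu}\mathbf{u}=0$) and in $\Omega\setminus \ol D$ (using $\mathcal{L}_{\lambda,\mu}\mathbf{u}=0$), and noting that the $\partial\Omega$ boundary integral vanishes because the Neumann data there is a constant vector (hence in $\mathbf{R}$) while $\mathbf{u}|_{\partial\Omega}\in H^{\frac12}_{\mathbf{R}}(\partial\Omega)$, one obtains the central identity
\[
  a(\bm{\phi}, \bm{\phi}) = J^{\Omega\setminus \ol D}_{\lambda,\mu}(\mathbf{u}) + J^{D}_{\widetilde\lambda,\widetilde\mu}(\mathbf{u}).
\]

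The plan for coercivity is to discard the non-negative $J^{D}_{\widetilde\lambda,\widetilde\mu}(\mathbf{u})$ and extract $\|\bm{\phi}\|_{H^{\frac12}(\partial D)}$ from the exterior energy. Choose $\mathbf{r}\in \mathbf{R}$ so that $\mathbf{u}-\mathbf{r}$ is $L^2$-orthogonal to $\mathbf{R}$ on $\Omega\setminus \ol D$; since $\bD(\mathbf{r})=0$ and $\mathrm{div}\,\mathbf{r}=0$, one has $J^{\Omega\setminus \ol D}_{\lambda,\mu}(\mathbf{u}) = J^{\Omega\setminus \ol D}_{\lambda,\mu}(\mathbf{u}-\mathbf{r})$. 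The Korn inequality on the connected Lipschitz set $\Omega\setminus \ol D$ (applicable thanks to the orthogonality just built in), together with Remark~\ref{rem:equinorm} and the admissibility of $(\lambda,\mu)$, yields $\|\mathbf{u}-\mathbf{r}\|^2_{H^1(\Omega\setminus \ol D)} \le C\,J^{\Omega\setminus \ol D}_{\lambda,\mu}(\mathbf{u})$. To also control $\mathbf{r}$, I would exploit $\mathbf{u}|_{\partial\Omega}\in H^{\frac12}_{\mathbf{R}}(\partial\Omega)$: testing this orthogonality against $\mathbf{r}\in \mathbf{R}$ gives $\int_{\partial\Omega}\mathbf{r}\cdot\mathbf{r} = -\int_{\partial\Omega}(\mathbf{u}-\mathbf{r})\cdot\mathbf{r}$, and since all norms on the finite-dimensional space $\mathbf{R}$ are equivalent, $\|\mathbf{r}\|_{H^1(\Omega)}$ (or equivalently $\|\mathbf{r}|_{\partial D}\|_{H^{\frac12}(\partial D)}$) is controlled by $\|\mathbf{u}-\mathbf{r}\|_{L^2(\partial\Omega)} \le C\|\mathbf{u}-\mathbf{r}\|_{H^1(\Omega\setminus \ol D)}$. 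Combining with the trace theorem $\|(\mathbf{u}-\mathbf{r})|_{\partial D}\|_{H^{\frac12}(\partial D)} \le C\|\mathbf{u}-\mathbf{r}\|_{H^1(\Omega\setminus \ol D)}$ and writing $\bm{\phi}=(\mathbf{u}-\mathbf{r})|_{\partial D}+\mathbf{r}|_{\partial D}$ gives
\[
  \|\bm{\phi}\|_{H^{\frac12}(\partial D)} \le C\,J^{\Omega\setminus \ol D}_{\lambda,\mu}(\mathbf{u})^{\frac12} \le C\,a(\bm{\phi},\bm{\phi})^{\frac12},
\]
which is the required coercivity.

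The hardest step is the third one: without carefully exploiting the condition $\mathbf{u}|_{\partial\Omega}\in H^{\frac12}_{\mathbf{R}}(\partial\Omega)$ to pin down the rigid-motion component $\mathbf{r}$, the kernel $\mathbf{R}$ of $\mathbf{v}\mapsto J^{\Omega\setminus \ol D}_{\lambda,\mu}(\mathbf{v})$ obstructs any attempt to pass from Korn's inequality to a bound on $\bm{\phi}$. Once this rigid-motion ambiguity is handled, everything else reduces to standard trace theorems and the Korn inequality, and the proposition follows from Lax--Milgram.
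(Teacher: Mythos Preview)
Your proof is correct and follows essentially the same strategy as the paper: derive the energy identity $a(\bm{\phi},\bm{\phi})=J^{\Omega\setminus\ol D}_{\lambda,\mu}(\mathbf{u})+J^{D}_{\widetilde\lambda,\widetilde\mu}(\mathbf{u})$ via Green's formula, establish coercivity through Korn's inequality, and conclude by Lax--Milgram (the paper's Lemma~\ref{coercive theorem II}).

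The one difference is in how coercivity is extracted. You discard the interior energy, apply Korn on $\Omega\setminus\ol D$ modulo $\mathbf{R}$, and then separately recover the rigid-motion component $\mathbf{r}$ from the condition $\mathbf{u}|_{\partial\Omega}\in H^{\frac12}_{\mathbf{R}}(\partial\Omega)$. The paper instead keeps \emph{both} energy terms, bounds their sum below by $C\int_\Omega |\bD(\mathbf{u})|^2$, and applies Korn directly on the whole domain $\Omega$ in the closed subspace $\{\mathbf{v}\in H^1(\Omega):\mathbf{v}|_{\partial\Omega}\in H^{\frac12}_{\mathbf{R}}(\partial\Omega)\}$, whose intersection with $\mathbf{R}$ is trivial. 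This avoids the detour of isolating and estimating $\mathbf{r}$, making the argument a bit shorter; on the other hand, your route shows that the interior energy is not actually needed for coercivity, so the constant you obtain is independent of $(\widetilde\lambda,\widetilde\mu)$, whereas the paper's displayed constant $C(\lambda,\mu,\widetilde\lambda,\widetilde\mu)$ nominally depends on the interior pair.
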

	
	\begin{proof}
		Let $\bm{\phi}\in H^{\frac{1}{2}}(\partial D)$. Using Green's identity we have
		\begin{equation*}
			\langle \bm{\phi}, (\Lambda_D^{\widetilde{\lambda},\widetilde{\mu},\mathrm{i}}-\Lambda_D^{\lambda,\mu,\mathrm{e}})\bm{\phi}\rangle_{H^{\frac{1}{2}},H^{-\frac{1}{2}}}=J^D_{\widetilde{\lambda},\widetilde{\mu}}(\mathbf{u})+J^{\Omega\setminus \ol D}_{\lambda,\mu}(\mathbf{u}),
		\end{equation*}
		where $\mathbf{u}$ is the solution of problem \eqref{eq:DtNproblem}. By the Cauchy-Schwarz inequality, the elliptic condition \eqref{elliptic condition} and the trace inequality of symmetric matrices, we have
		\begin{equation*}
			\begin{aligned}
				J^D_{\widetilde{\lambda},\widetilde{\mu}}(\mathbf{u})+J^{\Omega\setminus \ol D}_{\lambda,\mu}(\mathbf{u}) &\geq \min \{ 2\widetilde{\mu}, d\widetilde{\lambda}+2\widetilde{\mu}\} \int_D |\bD(\mathbf{u})|^2 +\min \{ 2\mu, d\lambda+2\mu \}\int_{\Omega\setminus \ol D} |\bD(\mathbf{u})|^2 \\
				&\geq C(\lambda,\mu,\widetilde{\lambda},\widetilde{\mu})\int_{\Omega } |\bD(\mathbf{u})|^2.
			\end{aligned}
		\end{equation*}
		Since $\mathbf{u}|_{\partial \Omega}\in H^{\frac{1}{2}}_{\mathbf{R}}(\partial \Omega)$, by the argument in Remark \ref{rem:equinorm} and thanks to the Korn's inequality (Lemma \ref{second korn}), we obtain
		\begin{equation*}
			\langle \bm{\phi},(\Lambda^{\widetilde{\lambda},\widetilde{\mu},\mathrm{i}}-\Lambda^{\lambda,\mu,\mathrm{e}})\bm{\phi}\rangle_{H^{\frac{1}{2}},H^{-\frac{1}{2}}} \geq C \|\mathbf{u}\|_{H^1(\Omega)}^2\geq C \|\mathbf{u}|_{\partial D} \|_{H^{\frac{1}{2}}}^2=C \|\bm{\phi} \|_{H^{\frac{1}{2}}}^2.
		\end{equation*}
		Note that $C$ depends on $D$ and is not universal. In view of Lemma \ref{coercive theorem II}, we get the conclusion.
	\end{proof}

	\begin{remark}
	If the boundary $\partial D$ is $C^{\infty}$ (this can be further relaxed to $C^{1,\alpha}$), one can use pseudo-differential calculus to prove that $\Lambda_D^{\widetilde{\lambda},\widetilde{\mu},\mathrm{i}}-\Lambda_D^{\lambda,\mu,\mathrm{e}}$ is a Fredholm operator. For instance, assume $d=3$ and choose a suitable coordinate, it is not difficult to calculate that $\Lambda_D^{\widetilde{\lambda},\widetilde{\mu},\mathrm{i}}-\Lambda_D^{\lambda,\mu,\mathrm{e}}$ is a pseudo-differential operator with order $-1$ and the eigenvalues of its principal symbol are
	\begin{equation*}
		p(0)\ \ \ \mathrm{and} \ \ \ p(\pm i|\xi'|),
	\end{equation*}
	where $\xi' \in \mathbb{R}^2$ is the new coordinate and $p(t)$ is the polynomial
	\begin{equation*}
		p(t)= (\mu+\widetilde{\mu})|\xi'|+2i\left( \frac{ \mu^2 }{\lambda+3\mu} -\frac{ \widetilde{\mu}^2 }{\widetilde{\lambda}+3\widetilde{\mu}} \right)  t -\left(  \frac{(\lambda+\mu)\mu }{\lambda+3\mu}+\frac{(\widetilde{\lambda}+\widetilde{\mu})\widetilde{\mu} }{\widetilde{\lambda}+3\widetilde{\mu}}  \right)|\xi'|^{-1}t^2 ,
	\end{equation*}
	the elliptic condition \eqref{elliptic condition} implies that $p(0)$ and $p(\pm i|\xi'|)$ are nonzero, as $\Lambda^{\widetilde{\lambda},\widetilde{\mu},\mathrm{i}}-\Lambda^{\lambda,\mu,\mathrm{e}}$ is a Fredholm operator with index zero, see \cite{Agranovich1999SpectralLS,taylor2013partial}.
\end{remark}

\subsubsection{Dirichlet to Neumann map for the coupled Lam\'e-Stokes system}

Given an admissible Lam\'e pair $(\lambda,\mu)$ in $\Omega\setminus \ol D$ and a viscosity constant $\widetilde\mu > 0$ in $D$, we also consider the DtN maps associated to the coupled Lam\'e-Stokes system related to the limit problem \eqref{eq:limStokes}. More precisely, let $\mathbf{u} \in H^1(\Omega)$ and $p\in L^2(D)$ so that $(\mathbf{u},p)$ solves 
the following coupled Lam\'e-Stokes problem: 
\begin{equation}\label{DtN problem stokes}
	\left\{
	\begin{aligned}
		& \mathcal{L}_{\lambda,\mu}\mathbf{u}=0, \qquad \mathrm{in}\ \Omega\setminus \overline{D}, \\
		& \mathcal{L}_{\infty,\widetilde{\mu}}(\mathbf{u},p)=0 \quad \text{and} \quad  \mathrm{div}\,\mathbf{u}=0, \qquad \mathrm{in}\ D, \\
		& \mathbf{u}|_{\partial D}=\bm{\phi}, \quad \mathbf{u}|_{\partial \Omega} \in H^{\frac{1}{2}}_{\mathbf{R}}(\partial \Omega), \\
		& \left.\frac{\partial \mathbf{u}}{\partial \nu}\right|_{\partial \Omega}\mathrm{\ is \ a \ constant \ vector}.
	\end{aligned}\right.
\end{equation}
Then the associated exterior DtN map $\Lambda^{\lambda,\mu,\mathrm{e}}_D$ is defined as before in \eqref{eq:DtNdef}, and the interior DtN map $\Lambda^{\infty,\widetilde{\mu},\mathrm{i}}_D$ is defined by
\begin{equation}\label{DtN definition stokes} \Lambda^{\infty,\widetilde{\mu},\mathrm{i}}_D \bm{\phi} :=\left. \frac{\partial (\mathbf{u},p)}{\partial \nu_{(\infty,\widetilde{\mu})}}\right|_{\partial D_-}.
\end{equation} 

As in the previous subsection, we prove that the problem above is well-posed, derive a layer potential representation for the solution and identify the DtN map.

\begin{proposition}\label{solva for DtN Stokes}
	For any $\bm{\phi}\in H^{\frac{1}{2}}(\partial D)$, problem \eqref{DtN problem stokes} has a unique solution $\mathbf{u}\in H^1(\Omega)$.
\end{proposition}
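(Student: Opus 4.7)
The plan is to mirror the proof of Proposition \ref{prop:DtNLameSol}, replacing the interior Lam\'e layer potential by the Stokes single-layer potential pair supplied by Proposition \ref{prop:cSStokes}. Uniqueness will come from an energy identity that combines the Green's identity \eqref{eq:GreenLame} on $\Omega\setminus\ol D$ and the Green's identity \eqref{eq:GreenStokes} on $D$, and existence will come from an explicit single-layer ansatz on each side of $\partial D$ matched through the Dirichlet data $\bm{\phi}$.

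For uniqueness, suppose $(\mathbf{u},p)$ solves \eqref{DtN problem stokes} with $\bm{\phi}=0$. Applying \eqref{eq:GreenLame} on $\Omega\setminus \ol D$ with $\mathcal{L}_{\lambda,\mu}\mathbf{u}=0$ yields
\begin{equation*}
J^{\Omega\setminus \ol D}_{\lambda,\mu}(\mathbf{u}) = \int_{\partial\Omega} \tfrac{\partial \mathbf{u}}{\partial\nu_{(\lambda,\mu)}}\cdot \mathbf{u} - \int_{\partial D}\tfrac{\partial \mathbf{u}}{\partial\nu_{(\lambda,\mu)}}\Big|_+\!\cdot\mathbf{u}|_+.
\end{equation*}
The second boundary integral vanishes since $\mathbf{u}|_+=0$, and the first vanishes because the conormal data at $\partial\Omega$ is a constant vector, hence a rigid motion, while $\mathbf{u}|_{\partial\Omega}\in H^{1/2}_{\mathbf{R}}(\partial\Omega)$ is orthogonal to $\mathbf{R}$. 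On the $D$ side, applying \eqref{eq:GreenStokes} with $\mathcal{L}_{\infty,\widetilde\mu}(\mathbf{u},p)=0$, $\mathrm{div}\,\mathbf{u}=0$ and $\mathbf{u}|_-=0$ gives $J^D_{\infty,\widetilde\mu}(\mathbf{u},\mathbf{u})=0$, which by $\mathrm{div}\,\mathbf{u}=0$ simplifies to $2\widetilde\mu\int_D|\bD(\mathbf{u})|^2=0$. Combined, $\bD(\mathbf{u})=0$ in $\Omega\setminus\ol D$ and in each component of $D$, so $\mathbf{u}\in\mathbf{R}$ piecewise. The zero trace at $\partial D$ forces $\mathbf{u}=0$ in $D$, and the condition $\mathbf{u}|_{\partial\Omega}\in H^{1/2}_{\mathbf{R}}(\partial\Omega)$ forces the rigid motion on $\Omega\setminus\ol D$ to vanish on $\partial\Omega$ (pairing it with itself), hence to be identically zero.

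For existence, I will set
\begin{equation*}
\mathbf{u} = \begin{cases} \mathcal{S}^{\lambda,\mu}_D (\mathbb{S}^{\lambda,\mu}_D)^{-1}\bm{\phi} & \text{in } \Omega\setminus\ol D, \\ \mathcal{S}^{\infty,\widetilde\mu}_D (\mathbb{S}^{\infty,\widetilde\mu}_D)^{-1}\bm{\phi} & \text{in } D, \end{cases} \qquad p = \mathcal{P}^{\widetilde\mu}_D (\mathbb{S}^{\infty,\widetilde\mu}_D)^{-1}\bm{\phi} \ \text{in } D.
\end{equation*}
The invertibility of $\mathbb{S}^{\lambda,\mu}_D$ and $\mathbb{S}^{\infty,\widetilde\mu}_D$ from $H^{-1/2}(\partial D)$ to $H^{1/2}(\partial D)$ is guaranteed by Proposition \ref{prop:SDclassical}(iv) and Proposition \ref{prop:cSStokes}(iv), so the two densities are well-defined. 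By construction both interior and exterior traces of $\mathbf{u}$ on $\partial D$ equal $\bm{\phi}$, hence $\mathbf{u}\in H^1(\Omega)$. Propositions \ref{prop:SDclassical}(i) and \ref{prop:cSStokes}(i) then immediately give $\mathcal{L}_{\lambda,\mu}\mathbf{u}=0$ in $\Omega\setminus\ol D$ and $\mathcal{L}_{\infty,\widetilde\mu}(\mathbf{u},p)=0$ with $\mathrm{div}\,\mathbf{u}=0$ in $D$; they also supply the required $\partial\Omega$ conditions, namely $\mathbf{u}|_{\partial\Omega}\in H^{1/2}_{\mathbf{R}}(\partial\Omega)$ and the conormal derivative at $\partial\Omega$ being the constant vector $|\partial\Omega|^{-1}\int_{\partial D}(\mathbb{S}^{\lambda,\mu}_D)^{-1}\bm{\phi}$.

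There is no serious obstacle, as the argument is a direct assembly of the established layer potential properties; the only point requiring minor care is that, in the Stokes Green's identity, the terms carrying $\mathrm{div}\,\mathbf{u}$ and the pressure $p$ must be shown to drop out using the divergence-free constraint and the vanishing boundary trace---this is exactly where using $\mathbb{S}^{\infty,\widetilde\mu}_D$ (rather than a Lam\'e layer potential) is essential, since it provides an $H^1$-extension that is both divergence-free and realizes the prescribed trace $\bm{\phi}$.
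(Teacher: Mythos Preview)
Your proof is correct and follows essentially the same approach as the paper's: uniqueness via the combined Lam\'e and Stokes energy identities (you treat the two domains separately using $\mathbf{u}|_{\partial D}=0$ to kill the $\partial D$ boundary terms, while the paper writes a single combined identity, but the content is identical), and existence via the explicit single-layer ansatz $\mathcal{S}^{\lambda,\mu}_D(\mathbb{S}^{\lambda,\mu}_D)^{-1}\bm{\phi}$ outside and $(\mathcal{S}^{\infty,\widetilde\mu}_D,\mathcal{P}^{\widetilde\mu}_D)(\mathbb{S}^{\infty,\widetilde\mu}_D)^{-1}\bm{\phi}$ inside.
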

\begin{proof}
  \emph{Uniqueness}. Suppose that $(\mathbf{u},p)$ is the solution of problem \eqref{DtN problem stokes} with $\bm{\phi}=0$. Then by the Green's identities \eqref{eq:GreenLame} and \eqref{eq:GreenStokes}, we have
	\begin{equation*}
		\int_{\partial D} \left(\left.\frac{\partial (\mathbf{u},p)}{\partial \nu_{(\infty,\widetilde{\mu})}}\right|_- -\left.\frac{\partial \mathbf{u}}{\partial \nu_{(\lambda,\mu)}}\right|_+ \right)\cdot \mathbf{u} + \int_{\partial \Omega} \frac{\partial \mathbf{u}}{\partial \nu_{(\lambda,\mu)}}\cdot \mathbf{u} = J_{\infty,\widetilde{\mu}}^D(\mathbf{u}) + J^{\Omega \setminus \ol D}_{\lambda,\mu}(\mathbf{u}).
	\end{equation*}
In view of the boundary conditions of $\mathbf{u}$ at $\partial D$ and at $\partial \Omega$, we obtain 
$J_{\infty,\widetilde{\mu}}^D(\mathbf{u})+J^{\Omega \setminus \ol D}_{\lambda,\mu}(\mathbf{u})=0$. Following the same argument in the proof of Proposition \ref{prop:DtNLameSol}, we see $\mathbf{u}=0$. 
	
\emph{Existence}. Since $\mathbb{S}^{\lambda,\mu}_D$ and $\mathbb{S}_D^{\infty,\widetilde{\mu}}$ are isomorphisms from $H^{-\frac{1}{2}}(\partial D) $ to $ H^{\frac{1}{2}}(\partial D)$, the functions
	\begin{equation}\label{explicit DtN stokes}
		\mathbf{u}=\left\{
		\begin{aligned}
			& \mathcal{S}^{\lambda,\mu}_D(\mathbb{S}^{\lambda,\mu}_D)^{-1}\bm{\phi} \quad \mathrm{in}\ \Omega\setminus D \\
			& \mathcal{S}^{\infty,\widetilde{\mu}}_D(\mathbb{S}^{\infty,\widetilde{\mu}}_D)^{-1}\bm{\phi} \quad \mathrm{in}\ D 
		\end{aligned}\right. \quad \mathrm{and} \quad p=\mathcal{P}^{\widetilde{\mu}}_D(\mathbb{S}^{\infty,\widetilde{\mu}}_D)^{-1}\bm{\phi} \quad \mathrm{in}\ D
	\end{equation}
	are well-defined. By Proposition \ref{prop:cSStokes} and Proposition \ref{prop:SDclassical}, we see that $(\mathbf{u},p)$ solve \eqref{DtN problem stokes}.
\end{proof}

Using the explicit formula \eqref{explicit DtN stokes}, the jump relation in Proposition \ref{prop:cSStokes}, and by repeating the proof of Proposition \ref{boundary operator isomorphism}, we get the following results.
\begin{proposition}\label{boundary operator isomorphism stokes}
  The DtN map $\Lambda^{\infty,\widetilde{\mu},\mathrm{i}}_D$ is represented by layer potentials:
	\begin{equation}
	 \Lambda^{\infty,\widetilde{\mu},\mathrm{i}}_D=\left(  -\frac{\mathbb{I}}{2}  + \mathbb{K}^{\infty,\widetilde{\mu},*}_D \right)(\mathbb{S}^{\infty,\widetilde{\mu}}_D)^{-1}.
	\end{equation}
	Moreover, the boundary operator $\Lambda_D^{\infty,\widetilde{\mu},\mathrm{i}}-\Lambda_D^{\lambda,\mu,\mathrm{e}}:H^{\frac{1}{2}}(\partial D)\rightarrow H^{-\frac{1}{2}}(\partial D)$ is an isomorphism.
\end{proposition}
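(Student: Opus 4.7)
My plan is to follow the blueprint set by Proposition \ref{boundary operator isomorphism} in the pure Lamé case, adapting each step to accommodate the presence of the pressure field and the incompressibility constraint in the Stokes block. The two assertions split naturally: the first is an explicit formula obtained by inverting single-layer operators and applying a jump relation, while the second is a coercivity-plus-bounded argument culminating in an application of Lemma \ref{coercive theorem II}.

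For the representation of $\Lambda^{\infty,\widetilde\mu,\mathrm{i}}_D$, I would start from the explicit solution formula \eqref{explicit DtN stokes}, which expresses the interior part as $\mathbf{u}|_D = \mathcal{S}^{\infty,\widetilde\mu}_D (\mathbb{S}^{\infty,\widetilde\mu}_D)^{-1}\bm\phi$ and $p = \mathcal{P}^{\widetilde\mu}_D (\mathbb{S}^{\infty,\widetilde\mu}_D)^{-1}\bm\phi$; this makes sense because Proposition \ref{prop:cSStokes}(iv) ensures that $\mathbb{S}^{\infty,\widetilde\mu}_D$ is an isomorphism $H^{-\frac12}(\partial D)\to H^{\frac12}(\partial D)$. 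Setting $\bm\psi = (\mathbb{S}^{\infty,\widetilde\mu}_D)^{-1}\bm\phi$ and invoking the interior jump relation \eqref{jump relation for Stokes system} gives directly
\begin{equation*}
\Lambda^{\infty,\widetilde\mu,\mathrm{i}}_D\bm\phi = \left.\frac{\partial(\mathcal{S}^{\infty,\widetilde\mu}_D\bm\psi,\mathcal{P}^{\widetilde\mu}_D\bm\psi)}{\partial\nu_{(\infty,\widetilde\mu)}}\right|_{-} = \Bigl(-\tfrac{\mathbb{I}}{2}+\mathbb{K}^{\infty,\widetilde\mu,*}_D\Bigr)\bm\psi,
\end{equation*}
which is the claimed factorization.

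For the isomorphism statement, I would repeat the Green's identity computation used in Proposition \ref{boundary operator isomorphism}. Testing $\Lambda^{\infty,\widetilde\mu,\mathrm{i}}_D\bm\phi - \Lambda^{\lambda,\mu,\mathrm{e}}_D\bm\phi$ against $\bm\phi$ and using \eqref{eq:GreenLame} on $\Omega\setminus\overline D$ together with \eqref{eq:GreenStokes} on $D$, the boundary contribution at $\partial\Omega$ vanishes because $\mathbf{u}|_{\partial\Omega}\in H^{1/2}_{\mathbf R}(\partial\Omega)$ while $\partial\mathbf{u}/\partial\nu|_{\partial\Omega}$ is constant. The extra boundary term $-\widetilde\mu\int_{\partial D}(\mathrm{div}\,\mathbf{u})(\mathbf{v}\cdot\mathbf N)$ and the bulk term $-\int_D p\,\mathrm{div}\,\mathbf{v}$ in \eqref{eq:GreenStokes} both drop out since $\mathrm{div}\,\mathbf{u}=0$ in $D$ and we test with $\mathbf{v}=\mathbf{u}$. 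This yields
\begin{equation*}
\langle\bm\phi,(\Lambda^{\infty,\widetilde\mu,\mathrm{i}}_D-\Lambda^{\lambda,\mu,\mathrm{e}}_D)\bm\phi\rangle = J^{D}_{\infty,\widetilde\mu}(\mathbf{u}) + J^{\Omega\setminus\overline D}_{\lambda,\mu}(\mathbf{u}) \ge 2\widetilde\mu\!\int_D|\bD(\mathbf u)|^2 + \min\{2\mu,d\lambda+2\mu\}\!\int_{\Omega\setminus\overline D}|\bD(\mathbf u)|^2.
\end{equation*}
Since $\mathbf{u}|_{\partial\Omega}\in H^{1/2}_{\mathbf R}(\partial\Omega)$, Korn's inequality together with the trace inequality converts the right side into $C\|\mathbf{u}\|^2_{H^1(\Omega)}\ge C\|\bm\phi\|^2_{H^{1/2}(\partial D)}$, giving coercivity. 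Boundedness of the difference of DtN maps is already automatic from the layer potential formulas. Then Lemma \ref{coercive theorem II} closes the isomorphism statement.

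The main delicate point, compared with the pure Lamé case, is verifying that the Stokes Green's identity \eqref{eq:GreenStokes} simplifies correctly when applied to the interior solution of \eqref{DtN problem stokes}; one must check that the divergence-free condition kills both the pressure-divergence term and the surface term on $\partial D$, so that the resulting energy is indeed positive (note that \eqref{eq:JStokes} carries a minus sign in front of $\int(\mathrm{div}\,\mathbf u)^2$, which is harmless here precisely because $\mathrm{div}\,\mathbf u=0$). Everything else is parallel to the Lamé argument.
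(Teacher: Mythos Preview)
Your proposal is correct and follows exactly the approach the paper indicates: derive the layer-potential formula from \eqref{explicit DtN stokes} and the jump relation in Proposition \ref{prop:cSStokes}, then repeat the coercivity argument of Proposition \ref{boundary operator isomorphism} verbatim, with the Stokes Green's identity \eqref{eq:GreenStokes} in place of \eqref{eq:GreenLame} on $D$. Your explicit check that the extra Stokes terms (the surface divergence term and the pressure--divergence bulk term) vanish under the incompressibility constraint is precisely the one adaptation needed, and is handled correctly.
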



\subsection{The space of single-layer potentials and its decompositions}

Let $D$ be a sub-domain of $\Omega$, and let $(\lambda,\mu)$ be an admissible Lam\'e pair. Partially inspired by the work of \cite{MR2308861} and \cite{bonnetier2019homogenization}, we define the space
\begin{equation}
\label{eq:Espace}
	\mathcal{E}:=\{\mathbf{u}\in H^1(\Omega): \mathcal{L}_{\lambda,\mu} \mathbf{u}=0 \ \mathrm{in}\ D \cup (\Omega\setminus \ol D)\},
\end{equation}
which is a closed subspaces of $H^1(\Omega)$ and consists of functions that solve the Lam\'e system with the given coefficients both in $D$ and in $\Omega\setminus \ol D$ (but not necessarily in $\Omega$). We further define the following subspaces of $\mathcal{E}$ that are formed by single-layer potentials:
\begin{equation}
\label{eq:frakH}
\begin{aligned}
	&\frakH^{\mathrm{N}}:=\{\mathcal{S}^{\mathrm{N}}\bm{\phi}:\bm{\phi}\in H^{-\frac{1}{2}}(\partial D) \}, \qquad 
	&\frakH_{\mathbf{R}}^{\mathrm{N}}:=\{\mathcal{S}^{\mathrm{N}}\bm{\phi}:\bm{\phi}\in H_{\mathbf{R}}^{-\frac{1}{2}}(\partial D) \}, \\
&\frakH^{\mathrm{D}}:=\{\mathcal{S}^{\mathrm{D}}\bm{\phi}:\bm{\phi}\in H^{-\frac{1}{2}}(\partial D) \}, \qquad 
	&\frakH_{\mathbf{R}}^{\mathrm{D}}:=\{\mathcal{S}^{\mathrm{D}}\bm{\phi}:\bm{\phi}\in H_{\mathbf{R}}^{-\frac{1}{2}}(\partial D) \}.
	\end{aligned}
\end{equation}
Because of the boundary conditions at $\partial \Omega$, we verify that $\frakH^{\rm N} \cap \mathbf{R} = \{0\}$ and $\frakH^{\rm N} \cap \mathbf{R} = \{0\}$.  On $\frakH^{\mathrm{N}}$ and $\frakH^{\mathrm{D}}$, we define the bilinear and symmetric product
\begin{equation}
	(\mathbf{u},\mathbf{v})_{\frakH}:=J^{\Omega}_{\lambda,\mu}(\mathbf{u},\mathbf{v}).
\end{equation}
In view of Remark \ref{rem:equinorm}, in the space $\frakH$, $\mathbf{u}\mapsto (\mathbf{u},\mathbf{u})_{\frakH}^{\frac12}$ is a norm equivalent to $\|\cdot\|_{H^1(\Omega)}$, and $(\cdot,\cdot)_{\frakH}$ is an inner product.

We work mainly with the Hilbert space $H^{-\frac{1}{2}}(\partial D)$, since the density functions in the layer potential operators, e.g.\,$\cS, \mathbb{S},\mathbb{K}^*, \Lambda^{\rm i}, \Lambda^{\rm e}$, belong to this space. The usual norm on $H^{-\frac{1}{2}}(\partial D)$ is the dual norm, i.e., $\|\bm{\phi}\|_{H^{-\frac12}} = \sup\{\langle\mathbf{h},\bm{\phi}\rangle \,:\, \|\mathbf{h}\|_{H^{\frac12}} = 1\}$. Inspired by \cite{MR2308861}, we introduce the following inner products on $H^{-\frac12}$:
\begin{equation}\label{new inner product}
	(\bm{\phi},\bm{\psi})_{\mathbb{S}^{\mathrm{N}}}:=-\int_{\partial D_{\varepsilon}} \bm{\phi}\cdot \mathbb{S}^{\mathrm{N}}\bm{\psi}=J^{\Omega}_{\lambda,\mu}(\mathcal{S}^{\mathrm{N}}\bm{\phi},\mathcal{S}^{\mathrm{N}}\bm{\psi}) = (\mathcal{S}^{\mathrm{N}}\bm{\phi},\mathcal{S}^{\mathrm{N}}\bm{\psi})_{\frakH}.
\end{equation}
\begin{equation}\label{new inner product 1}
	(\bm{\phi},\bm{\psi})_{\mathbb{S}^{\mathrm{D}}}:=-\int_{\partial D_{\varepsilon}} \bm{\phi}\cdot \mathbb{S}^{\mathrm{D}}\bm{\psi}=J^{\Omega}_{\lambda,\mu}(\mathcal{S}^{\mathrm{D}}\bm{\phi},\mathcal{S}^{\mathrm{D}}\bm{\psi}) = (\mathcal{S}^{\mathrm{D}}\bm{\phi},\mathcal{S}^{\mathrm{D}}\bm{\psi})_{\frakH}.
\end{equation}
Here, the integrals are understood as pairings. They are indeed inner products because we have seen that $J^\Omega_{\lambda,\mu}(\cdot,\cdot)$ defines an inner product on $\frakH$. 
\begin{remark}\label{rem:NPselfadj}
The salient feature of those new inner products is: the Neumann-Poincar\'e operator $\mathbb{K}^{{\rm N},*}$ becomes self-adjoint as a bounded linear transformation on $(H^{-\frac12},(\cdot,\cdot)_{\bS^{\rm N}})$. To check this, take any $\bm{\phi},\bm{\psi}\in H^{-\frac12}(\partial D)$, we compute
\begin{equation*}
\begin{aligned}
  ((\frac{\bI}{2} + \bK^{{\rm N},*})\bm{\phi},\bm{\psi})_{\bS^{\rm N}} &= \langle \bS^{\rm N}(\frac{\bI}{2} + \bK^{{\rm N},*})\bm{\phi},\bm{\psi} \rangle_{H^{\frac12},H^{-\frac12}} = \langle (\frac{\bI}{2} + \bK^{{\rm N}})\bS^{\rm N}\bm{\phi},\bm{\psi}\rangle_{H^{\frac12},H^{-\frac12}}\\
  &= \langle \bS^{\rm N}\bm{\phi},(\frac{\bI}{2} + \bK^{{\rm N},*})\bm{\psi}\rangle_{H^{\frac12},H^{-\frac12}} = (\bm{\phi},(\frac{\bI}{2} + \bK^{{\rm N},*})\bm{\psi})_{\bS^{\rm N}}.
  \end{aligned}  
\end{equation*}  
The second equality sign holds due to the Calder\'on identity (see Proposition \ref{prop:SDclassical}). 
\end{remark}

The following characterizations and decompositions of $\frakH^{\rm N}$ and $\frakH^{\rm D}$ are important.

\begin{proposition}
	Assume that {\upshape(A1)} holds. We have the basic properties:
	\begin{enumerate}
		\item [{\upshape (i)}] The space $\frakH^{\rm N}$ and its subspace $\frakH^{\rm N}_{\mathbf{R}}$ are characterized by
		\begin{equation}\label{eq:HNspace}
		\begin{aligned}
			&\frakH^{\mathrm{N}}=\left\{\mathbf{u}\in \mathcal{E}: \mathbf{u}|_{\partial \Omega}\in H^{\frac{1}{2}}_{\mathbf{R}}(\partial \Omega),\left.\frac{\partial \mathbf{u}}{\partial \nu} \right|_{\partial \Omega}\mathrm{\ is \ a\ constant \ vector}\right\},\\
&\frakH^{\mathrm{N}}_{\mathbf{R}}= \left\{ \mathbf{u}\in \mathcal{E}:    \mathbf{u}|_{\partial \Omega}\in H^{\frac{1}{2}}_{\mathbf{R}}(\partial \Omega),\left.\frac{\partial \mathbf{u}}{\partial \nu} \right|_{\partial \Omega}=0 , \left. \frac{\partial \mathbf{u}}{\partial \nu} \right|_{\partial D, +} \in H^{-\frac{1}{2}}_{\mathbf{R}}(\partial D)  \right\}.
\end{aligned}
		\end{equation}
		
		\item [{\upshape (ii)}] The space $\frakH^{\rm D}$ and its subspace $\frakH^{\rm D}_{\mathbf{R}}$ are characterized by 
		\begin{equation}
		\label{eq:HDspace}
		\begin{aligned}
			&\frakH^{\mathrm{D}}=\left\{\mathbf{u}\in \mathcal{E}: \mathbf{u}|_{\partial \Omega} =0\right\},\\
			&\frakH^{\mathrm{D}}_{\mathbf{R}}= \left\{ \mathbf{u}\in \mathcal{E}:   \mathbf{u}|_{\partial \Omega}=0 , \left. \frac{\partial \mathbf{u}}{\partial \nu} \right|_{\partial D, +} \in H^{-\frac{1}{2}}_{\mathbf{R}}(\partial D)  \right\}.
			\end{aligned}
		\end{equation}
			
	\item [{\upshape(iii)}] The space $\frakH^{\rm D}$ satisfies the following decomposition:
	\begin{equation}
	\label{eq:HDdecom}
		\frakH^{\mathrm{D}} = \frakH^{\mathrm{D}}_{\mathbf{R}} 
		\oplus \mathcal{S}^{\mathrm{D}}\,\mathrm{ker}\,\left(-\frac{\mathbb{I}}{2} +\mathbb{K}^{\mathrm{D},*} \right),
	\end{equation}
	and this is an orthogonal decomposition in $\frakH^{\rm D}$ with respect to the inner product $(\cdot,\cdot)_\frakH$. Moreover, the second space in the decomposition \ref{eq:HDdecom} is characterized by
	\begin{equation}
	\label{eq:kerD}
    \mathcal{S}^{\mathrm{D}}\,\mathrm{ker}\,\left(-\frac{\mathbb{I}}{2} +\mathbb{K}^{\mathrm{D},*} \right) = \{\mathbf{u} \in \frakH^{\rm D} \,:\, \mathbf{u} \in \mathbf{R} \ \text{in each component of $D$}\}.
	\end{equation}
	\item [{\upshape (iv)}] The space $\frakH^{\rm N}$ satisfies the following orthogonal decomposition with respect to $(\cdot,\cdot)_{\frakH}$:
	\begin{equation}
	\label{eq:HNdecom}
		\frakH^{\mathrm{N}} = \frakH^{\mathrm{N}}_{\mathbf{R}} 
		\oplus \mathcal{S}^{\mathrm{N}}\,\mathrm{ker}\,\left( -\frac{\mathbb{I}}{2} +\mathbb{K}^{\mathrm{N},*} \right).
	\end{equation}
	Moreover, the second space in the decomposition \eqref{eq:HNdecom} is
    \begin{equation}
	\label{eq:kerN}
    \mathcal{S}^{\mathrm{N}}\,\mathrm{ker}\,\left(-\frac{\mathbb{I}}{2} +\mathbb{K}^{\mathrm{N},*} \right) = \{\mathbf{u} \in \frakH^{\rm N} \,:\, \mathbf{u} \in \mathbf{R} \ \text{in each component of $D$}\}
	\end{equation}
	\end{enumerate}
\end{proposition}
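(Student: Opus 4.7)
The four statements split naturally into two groups: the space characterizations (i)--(ii) and the orthogonal decompositions (iii)--(iv). In each group the Neumann and Dirichlet versions will be proved in parallel, differing only in the boundary behavior at $\partial \Omega$ (constant Neumann trace versus zero Dirichlet trace). Throughout I will freely use the mapping properties and jump relations from Propositions~\ref{prop:SDclassical} and \ref{prop:SDDirichlet}. For (i)--(ii), the inclusions ``$\subseteq$'' are immediate from Propositions~\ref{prop:SDclassical}(i) and \ref{prop:SDDirichlet}(i). For the reverse inclusions ``$\supseteq$'', given $\mathbf{u}\in\mathcal{E}$ with the listed boundary conditions, I will set
\begin{equation*}
  \bm{\phi} := \left.\frac{\partial \mathbf{u}}{\partial \nu}\right|_+ - \left.\frac{\partial \mathbf{u}}{\partial \nu}\right|_-  \;\in\; H^{-\frac12}(\partial D),
\end{equation*}
form $\mathbf{w} := \mathbf{u} - \mathcal{S}^{\mathrm{N}}\bm{\phi}$ (respectively $\mathbf{u} - \mathcal{S}^{\mathrm{D}}\bm{\phi}$), and argue by uniqueness of a global problem on $\Omega$. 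By the jump relations, $\mathbf{w}$ has no conormal jump across $\partial D$, so $\mathcal{L}_{\lambda,\mu}\mathbf{w}=0$ in all of $\Omega$. In the Dirichlet case, $\mathbf{w}|_{\partial\Omega}=0$ and $\mathbf{w}$ vanishes by Dirichlet uniqueness. In the Neumann case, Green's identity \eqref{eq:GreenLame} applied with constant test vectors $\mathbf{e}_i$ separately in $\Omega\setminus\overline{D}$ and in $D$ gives $\int_{\partial D}\bm{\phi}\cdot\mathbf{e}_i = \int_{\partial \Omega}(\partial_\nu\mathbf{u})\cdot\mathbf{e}_i$; together with the hypothesis that $(\partial_\nu\mathbf{u})|_{\partial\Omega}$ is a constant vector, this forces that constant to equal $\tfrac{1}{|\partial\Omega|}\int_{\partial D}\bm{\phi} = (\partial_\nu \mathcal{S}^{\mathrm{N}}\bm{\phi})|_{\partial\Omega}$, so $(\partial_\nu \mathbf{w})|_{\partial\Omega}=0$; combined with $\mathbf{w}|_{\partial\Omega}\in H^{\frac12}_{\mathbf{R}}(\partial \Omega)$, Neumann uniqueness yields $\mathbf{w}=0$. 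The refinement to $\frakH^{\mathrm{N}}_{\mathbf{R}}$ and $\frakH^{\mathrm{D}}_{\mathbf{R}}$ reduces to the equivalence $\bm{\phi}\in H^{-\frac12}_{\mathbf{R}}(\partial D) \Leftrightarrow (\partial_\nu\mathbf{u})|_+\in H^{-\frac12}_{\mathbf{R}}(\partial D)$, which holds because $(\partial_\nu\mathbf{u})|_-$ is automatically $\mathbf{R}$-orthogonal on each $\partial D_i$ (apply Green's identity in $D_i$ against $\mathbf{r}\in\mathbf{R}$, using $\bD(\mathbf{r})=0$).

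For (iii)--(iv), the central claim is that $\ker(-\tfrac{\mathbb{I}}{2}+\mathbb{K}^{\mathrm{D},*})$ coincides with the $(\cdot,\cdot)_{\mathbb{S}^{\mathrm{D}}}$-orthogonal complement of $H^{-\frac12}_{\mathbf{R}}(\partial D)$ in $H^{-\frac12}(\partial D)$, and analogously for the Neumann case. Since $(\bm{\phi},\bm{\psi})_{\mathbb{S}^{\mathrm{D}}} = -\langle\bm{\phi},\mathbb{S}^{\mathrm{D}}\bm{\psi}\rangle$, the orthogonality of $\bm{\psi}$ to every $\bm{\phi}\in H^{-\frac12}_{\mathbf{R}}$ is equivalent to $\mathbb{S}^{\mathrm{D}}\bm{\psi}$ lying in the annihilator of $H^{-\frac12}_{\mathbf{R}}$, i.e.\ restricting to a rigid motion on each $\partial D_i$; Dirichlet uniqueness in $D_i$ then makes $\mathcal{S}^{\mathrm{D}}\bm{\psi}$ equal to that rigid motion throughout $D_i$, so $(\partial_\nu \mathcal{S}^{\mathrm{D}}\bm{\psi})|_-=0$ and $\bm{\psi}\in\ker(-\tfrac{\mathbb{I}}{2}+\mathbb{K}^{\mathrm{D},*})$. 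The converse runs through the same steps in reverse, using Proposition~\ref{prop:SDDirichlet}(ii) to ensure that $(\partial_\nu\mathcal{S}^{\mathrm{D}}\bm{\phi})|_+ \in H^{-\frac12}_{\mathbf{R}}$ for $\bm{\phi}\in H^{-\frac12}_{\mathbf{R}}$. This argument simultaneously yields the geometric description \eqref{eq:kerD} of the second summand. Decomposition \eqref{eq:HDdecom} then follows by applying the isomorphism $\mathcal{S}^{\mathrm{D}}$ of Proposition~\ref{prop:SDDirichlet}(iv) to the orthogonal direct sum of $H^{-\frac12}_{\mathbf{R}}(\partial D)$ with its complement in the Hilbert space $(H^{-\frac12}(\partial D),(\cdot,\cdot)_{\mathbb{S}^{\mathrm{D}}})$; positive definiteness of this inner product is verified by observing that $J^{\Omega}_{\lambda,\mu}(\mathcal{S}^{\mathrm{D}}\bm{\phi})=0$ makes $\mathcal{S}^{\mathrm{D}}\bm{\phi}$ rigid on both sides of $\partial D$, the zero Dirichlet trace at $\partial\Omega$ kills the exterior piece, trace continuity kills the interior piece, and $\mathbb{S}^{\mathrm{D}}$-injectivity then forces $\bm{\phi}=0$. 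Statements \eqref{eq:HNdecom}--\eqref{eq:kerN} follow by the same argument with $\mathcal{S}^{\mathrm{D}},\mathbb{S}^{\mathrm{D}},\mathbb{K}^{\mathrm{D},*}$ replaced by their Neumann analogues.

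\textbf{Main obstacle.} The most delicate point is the Neumann-trace matching in (i): verifying that the a priori constant $(\partial_\nu\mathbf{u})|_{\partial\Omega}$ agrees with the automatic constant $\tfrac{1}{|\partial\Omega|}\int_{\partial D}\bm{\phi}$ produced by $\mathcal{S}^{\mathrm{N}}\bm{\phi}$, so that Neumann uniqueness can be applied to $\mathbf{w}$. This is resolved by the Green's identity computation with constant test vectors sketched above, which crucially uses that $\mathbf{u}$ solves the Lam\'e system in both subdomains. Everything else amounts to routine bookkeeping with jump relations, $H^{\pm\frac12}$ duality, and the single-layer isomorphism properties already established.
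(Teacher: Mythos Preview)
Your proof is correct and follows essentially the same strategy as the paper: for (i)--(ii) the paper likewise invokes Propositions~\ref{prop:SDclassical} and~\ref{prop:SDDirichlet} (though more tersely, omitting the $\mathbf{w}=\mathbf{u}-\mathcal{S}\bm{\phi}$ uniqueness argument you spell out), and for (iii)--(iv) the paper also identifies the kernel space with functions rigid on each component of $D$ and establishes orthogonality via Green's identity, only phrasing the computation on the $\frakH$ side with explicit test functions $\mathbf{w}\in\frakH^{\mathrm{N}}$ (equal to $\mathbf{r}_j$ on one component, zero on the others) rather than via your $H^{\pm\frac12}$-annihilator characterization. The two presentations are equivalent reformulations of the same Green's identity pairing.
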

\begin{remark}
\label{rem:H-1/2decomp}
By definition, $\frakH^{\rm N} = \cS^{\rm N}H^{-\frac12}$ and $\frakH^{\rm N}_\mathbf{R} = \cS^{\rm N}H^{-\frac12}_{\mathbf{R}}$; similar identifications hold for $\frakH^{\rm D}$ and $\frakH^{\rm D}_{\mathbf{R}}$. In view of \eqref{new inner product}, the orthogonal decomposition \eqref{eq:HNdecom} is equivalent to
	\begin{equation}
	\label{eq:H-1/2decomp}
		H^{-\frac{1}{2}}(\partial D)=H_{\mathbf{R}}^{-\frac{1}{2}}(\partial D) \oplus \mathrm{ker}\,\left( -\frac{\mathbb{I}}{2} +\mathbb{K}^{\mathrm{N},*}\right),
	\end{equation}
	and the two subspaces on the right hand side are orthogonal with respect to the inner product $(\cdot,\cdot)_{\bS^{\rm N}}$ on $H^{-\frac12}$. Similarly, the orthogonal decomposition \eqref{eq:HDdecom} is equivalent to
\begin{equation}
	\label{eq:H-1/2decompD}
		H^{-\frac{1}{2}}(\partial D)=H_{\mathbf{R}}^{-\frac{1}{2}}(\partial D) \oplus \mathrm{ker}\,\left( -\frac{\mathbb{I}}{2} +\mathbb{K}^{\mathrm{D},*}\right),
	\end{equation}
	and this is an orthogonal decomposition in $(H^{-\frac12},(\cdot,\cdot)_{\bS^{\rm D}})$. 
	
	We further claim  $(-\frac{\bI}{2}+\bK^{{\rm N},*})H^{-\frac12} = H^{-\frac12}_{\mathbf{R}}$. Indeed, in view of the Green's identity (used in each component of $D$), we see that the left side is contained in the right. The other direction is also true as stated in Proposition \ref{prop:SDclassical} (ii). The decompositions \eqref{eq:H-1/2decomp} then just says
	\begin{equation}
	  \label{eq:H-1/2decomp2}
H^{-\frac12}(\partial D) = \mathrm{ran}(-\frac{\bI}{2} + \bK^{{\rm N},*}) \oplus \mathrm{ker}(-\frac{\bI}{2} + \bK^{{\rm N},*}).
 	\end{equation}  
\end{remark}
\begin{proof}
	We only outline the proofs for the propositions concerning $\frakH^{\rm N}$; the case of $\frakH^{\rm D}$ is similar. Item (i) follows from Propositions \ref{prop:SDclassical} directly. Indeed, by \eqref{eq:normalpO} we see $\cS^{\rm N}\bm{\phi} \in \mathcal{E}$ for any $\bm{\phi} \in H^{-\frac12}(\partial D)$ and satisfies the boundary conditions at $\partial \Omega$. If we have further $\bm{\phi} \in H^{-\frac12}_{\mathbf{R}}(\partial D)$, then $\cS^{\rm N}\bm{\phi}$ also satisfies the condition at $\partial D$. Conversely, by Proposition \ref{prop:SDclassical}, the functions in the right hand sides of \eqref{eq:HNspace} can be realized by single-layer potentials of the form $\cS^{\rm N}\bm{\phi}$. 

	For \eqref{eq:kerN}, by the jump relation \eqref{jump relation} it is clear that the right hand side is a subset of the left. For the other direction, we note if $\bm{\phi} \in \mathrm{ker}(-\frac{\mathbb{I}}{2} + \mathbb{K}^{{\rm N},*})$, then $\mathbf{u} := \cS^{\rm N}\bm{\phi}$ satisfies $\mathcal{L}_{\lambda,\mu}\mathbf{u} = 0$ in $D$ and $\partial \mathbf{u}/\partial \nu = 0$ in each component of $D$. It follows that $\mathbf{u} \in \mathbf{R}$ in each component of $D$.

	Finally we prove \eqref{eq:HNdecom}. Suppose $\mathbf{u} = \cS^{\rm N} \bm{\phi}$ is orthogonal to the space in \eqref{eq:kerN} with respect to the inner product $(\cdot,\cdot)_{\frakH}$. For each component $E$ of $D$ and for each basis vector $\mathbf{r}_j$ of $\mathbf{R}$, there exists $\mathbf{w} \in \frakH^{\rm N}$ so that $\mathbf{w} = \mathbf{r}_j$ in $E$ and $\mathbf{w} = 0$ in all other components of $D$. In view of the Green's identity on $\Omega\setminus D$ and the facts that $\partial \mathbf{u}/\partial \nu\rvert_{\partial \Omega}$ is a constant and $\mathbf{w}\rvert_{\partial \Omega} \in H^{\frac12}_{\mathbf{R}}$, we see that 
	\begin{equation*}
	0 = J^{\Omega\setminus \ol D}(\mathbf{u},\mathbf{w}) = - \langle\mathbf{r}_j, \partial \mathbf{u}/\partial \nu\rvert_+\rangle_{H^{\frac12}(\partial E),H^{-\frac12}(\partial E)}.
	\end{equation*}
	This shows $\partial \mathbf{u}/\partial \nu\rvert_+ \in H^{-\frac12}_{\mathbf{R}}(\partial D)$. By definition we already know that $\partial \mathbf{u}/\partial \nu\rvert_{\partial \Omega} = \mathbf{c}$ is a constant. Using Green's identity again, we see that
	\begin{equation*}
	0 = J^{\Omega\setminus \ol D}(\mathbf{u},\mathbf{c}) = \int_{\partial \Omega} |\mathbf{c}|^2.
	\end{equation*}
	It follows that $\mathbf{c} = 0$ and $\mathbf{u} \in \frakH^{\rm N}_{\mathbf{R}}$.
\end{proof}

\section{Layer potential representations for solutions of the transmission problems}\label{Single layer potential representations for the solutions}
	
In this section, we establish representation formulas for the transmission problem \eqref{eq:transmissionproblem}, \eqref{eq:limStokes}, \eqref{eq:limZero} and \eqref{eq:limRigid} and, in particular, prove Theorems \ref{thm:reptrans} and \ref{thm:limStokes}. We always assume that $\Omega$ and $D$ satisfies (A1).

	\subsection{The solution to problem \eqref{eq:transmissionproblem}} 
	
	The aim here is to prove Theorem \ref{thm:reptrans}. Recall the definition of the background solution $\mathbf{G}$ in \eqref{background solution}. Using layer potentials, we seek for solutions of the form	
	\begin{equation*}
		\mathbf{u}=\left\{
		\begin{aligned}
			&\mathbf{G}+\mathcal{S}^{\lambda,\mu}_{D} \text{\boldmath $\phi$} &\mathrm{in}\   \Omega\setminus \ol D\\
			&\mathcal{S}^{\widetilde{\lambda},\widetilde{\mu}}_{D}  \text{\boldmath $\psi$}&\mathrm{in}\  D.
		\end{aligned}	\right. 
	\end{equation*}
	where $\bm{\phi}$ and $\bm{\psi}$ are densities defined on $\partial D$ to be found. In view of Proposition \ref{prop:SDclassical}, for $\mathbf{u}$ to be a solution of \eqref{eq:transmissionproblem} and, in particular, to satisfy the boundary conditions at $\partial D$ and $\partial \Omega$, it is necessary and sufficient to impose that $(\text{\boldmath $\psi$}, \text{\boldmath $\phi$})\in H^{-1/2}(\partial D)\times H_{\mathbf{R}}^{-1/2}(\partial D)$ and that they solve the boundary integral equations \eqref{boundary integral equations}. 
Using the layer potential representations for DtN maps Proposition \ref{DtN representation}, we can rewrite \eqref{boundary integral equations} to the following form:
\begin{equation}\label{boundary integral equations III}
	\left\{
		\begin{aligned}
		& \mathbf{G}|_{\partial D}+\mathbb{S}^{\lambda,\mu}_{D}\bm{\phi}=\mathbb{S}^{\widetilde{\lambda},\widetilde{\mu}}_{D}\bm{\psi}, \\
		&(\Lambda^{\widetilde{\lambda},\widetilde{\mu},\mathrm{i}}_{D}-\Lambda^{\lambda,\mu,\mathrm{e}}_{D})\mathbb{S}_{D}^{\lambda,\mu}\bm{\phi} =\left. \frac{\partial \mathbf{G}}{\partial \nu_{(\lambda,\mu)}}\right|_{\partial D}-\Lambda^{\widetilde{\lambda},\widetilde{\mu},\mathrm{i}}_{D}(\mathbf{G}|_{\partial D}).
	\end{aligned}\right.
\end{equation}
Because  $\mathbb{S}^{\lambda,\mu}_{D}$, $\mathbb{S}^{\widetilde{\lambda},\widetilde{\mu}}_{D}$ are isomorphisms from $H^{-\frac{1}{2}}(\partial D) $ to $H^{\frac{1}{2}}(\partial D)$ (by Proposition \ref{prop:SDclassical}.(iv)), and because  $\Lambda^{\widetilde{\lambda},\widetilde{\mu},\mathrm{i}}_{D}-\Lambda^{\lambda,\mu,\mathrm{e}}_{D}$ is an isomorphism from $H^{\frac{1}{2}}(\partial D)$ to $H^{-\frac{1}{2}}(\partial D) $ (by Proposition \ref{boundary operator isomorphism}), there exists a unique pair $(\text{\boldmath $\psi$}, \text{\boldmath $\phi$})\in H^{-1/2}(\partial D)\times H^{-1/2}(\partial D)$ solving equations \eqref{boundary integral equations III}. In view of the jump relation of $\mathcal{S}^{\lambda,\mu}_D \bm{\phi}$ and the second equation in \eqref{boundary integral equations}, we also get
\begin{equation*}
	\bm{\phi}=\left. \frac{\partial \mathcal{S}^{\widetilde{\lambda},\widetilde{\mu}}_{D}\bm{\psi}}{\partial \nu_{(\widetilde{\lambda},\widetilde{\mu})}}
	\right|_{-}
	-\left.   \frac{\partial \mathcal{S}^{\lambda,\mu}_{D}\bm{\phi}}{\partial \nu_{(\lambda,\mu)}}   \right|_{-}
	-\left.\frac{\partial \mathbf{G}}{\partial \nu_{(\lambda,\mu)}}\right|_{\partial D}.
\end{equation*}
For any $\mathbf{r} \in \mathbf{R}$ and for any component $E = D_i$ of $D$, we compute and get
\begin{equation*}
  \int_{\partial E} \bm{\phi}\cdot \mathbf{r} = \int_{\partial E} \left(\frac{\partial \mathcal{S}^{\widetilde \lambda,\widetilde \mu}_{D}\bm{\psi}}{\partial \nu_{(\widetilde \lambda,\widetilde\mu)}} -  \frac{\partial \mathcal{S}^{\lambda, \mu}_{D}\bm{\phi}}{\partial \nu_{(\lambda,\mu)}} - \mathbf{G}\right)\cdot \mathbf{r} = J^{E}_{\widetilde \lambda,\widetilde\mu}(\mathcal{S}^{\widetilde\lambda,\widetilde\mu}_D \bm{\psi},\mathbf{r}) -  J^{E}_{\lambda,\mu}(\mathcal{S}^{\lambda,\mu}_D\bm{\phi},\mathbf{r}) - J^{E}_{\lambda,\mu}(\mathbf{G},\mathbf{r}).
\end{equation*}
In the last step we applied the Green's identity \eqref{eq:GreenLame}. Note that for $\mathbf{r} \in \mathbf{R}$, $\mathrm{div}\,\mathbf{r} = \mathrm{tr} \bD(\mathbf{r}) = 0$; we deduce that the right hand side of the above equality vanishes. Hence, automatically, we get $\bm{\phi} \in H^{-\frac{1}{2}}_{\mathbf{R}}(\partial D)$. The proof of  Theorem \ref{thm:reptrans} is hence complete.

\begin{remark}
  \label{rem:orthR}
  The argument in the last paragraph actually shows, if $\mathbf{v}$ solves the Lam\'e system in $D$, the conormal derivative of $\mathbf{v}$ on $\partial D$ automatically belongs to $H^{-\frac12}_{\mathbf{R}}(\partial D)$. Similarly, this is also the case for any solution $(\mathbf{v},p)$ of the Stokes system in $D$ when the conormal derivative is understood as \eqref{eq:conormalStokes}. 
\end{remark}
\subsection{The solution to problem \eqref{eq:limStokes}} In this subsection we solve the problem \eqref{eq:limStokes} by single-layer potentials and prove Theorem \ref{thm:limStokes}. We use the ansatz
\begin{equation*}
		\mathbf{u}=\left\{
		\begin{aligned}
			&\mathbf{G}+\mathcal{S}^{\lambda,\mu}_{D} \text{\boldmath $\phi$} &\mathrm{in}\   \Omega\setminus \ol D \\
			&\mathcal{S}^{\infty,\widetilde{\mu}}_{D}  \text{\boldmath $\psi$}&\mathrm{in}\  D
		\end{aligned}	\right.  \quad \mathrm{and } \quad p=\mathcal{P}^{\widetilde{\mu}}_{D} \bm{\psi} \quad \mathrm{in} \ D
	\end{equation*}
	where the background solution $\mathbf{G}$ is defined in \eqref{background solution}. By the layer potential theory, solving \eqref{eq:limStokes} is equivalent to finding densities $\bm{\phi}$ and $\bm{\psi}$ on $\partial D$ so that $(\text{\boldmath $\psi$}, \text{\boldmath $\phi$})\in H^{-1/2}(\partial D)\times H_{\mathbf{R}}^{-1/2}(\partial D)$ and they satisfy the integral equations \eqref{boundary integral equations for stokes system}. 
		Again, using the DtN maps and thanks to Propositions \ref{DtN representation} and \ref{boundary operator isomorphism stokes}, we can rewrite this system as 
	\begin{equation}\label{boundary integral equations III stokes}
		\left\{
		\begin{aligned}
			& \mathbf{G}|_{\partial D}+\mathbb{S}^{\lambda,\mu}_{D}\bm{\phi}=\mathbb{S}^{\infty,\widetilde{\mu}}_{D}\bm{\psi}, \\
			&(\Lambda^{\infty,\widetilde{\mu},\mathrm{i}}_{D}-\Lambda^{\lambda,\mu,\mathrm{e}}_{D})\mathbb{S}_{D}^{\lambda,\mu}\bm{\phi} =\left. \frac{\partial \mathbf{G}}{\partial \nu_{(\lambda,\mu)}}\right|_{\partial D}-\Lambda^{\infty,\widetilde{\mu},\mathrm{i}}_{D}(\mathbf{G}|_{\partial D}).
		\end{aligned}\right.
	\end{equation}
	By item (iv) of Proposition \ref{prop:SDclassical}, $\mathbb{S}^{\lambda,\mu}_{D}$, $\mathbb{S}^{\widetilde{\lambda},\widetilde{\mu}}_{D}$ are isomorphisms from $H^{-\frac{1}{2}}(\partial D) $ to $H^{\frac{1}{2}}(\partial D)$, and by Proposition \ref{boundary operator isomorphism}, $\Lambda^{\widetilde{\lambda},\widetilde{\mu},\mathrm{i}}_{D}-\Lambda^{\lambda,\mu,\mathrm{e}}_{D}$ is an isomorphism from $H^{\frac{1}{2}}(\partial D)$ to $H^{-\frac{1}{2}}(\partial D)$. It follows that there exists a unique pair $(\text{\boldmath $\psi$}, \text{\boldmath $\phi$})\in H^{-1/2}(\partial D)\times H^{-1/2}(\partial D)$ solving equations \eqref{boundary integral equations III}. The jump relation of $\mathcal{S}^{\lambda,\mu}_D \bm{\phi}$ and the second equation in \eqref{boundary operator isomorphism stokes} then yield
	\begin{equation*}
	\bm{\phi}=\left.\frac{\partial (\mathcal{S}^{\infty,\widetilde{\mu}}_{D}\bm{\psi}, \mathcal{P}^{\widetilde{\mu}}_{D}\text{\boldmath $\psi$} )}{\partial \nu_{(\infty,\widetilde{\mu})}}\right|_{D-}
	-\left.   \frac{\partial\mathcal{S}^{\lambda,\mu}_{D}\bm{\phi}}{\partial \nu_{(\lambda,\mu)}}   \right|_{\partial D-}
	-\left.\frac{\partial \mathbf{G}}{\partial \nu_{(\lambda,\mu)}}\right|_{\partial D}.
\end{equation*}
In view of Remark \ref{rem:orthR}, we deduce that $\bm{\phi} \in H^{-\frac{1}{2}}_{\mathbf{R}}(\partial D)$. The proof of Theorem \ref{thm:limStokes} is then complete.

\subsection{The solution to problem \eqref{eq:limZero}} The layer potential representation for the solution of \eqref{eq:limZero} is most standard. The following result holds.
\begin{theorem}\label{representation formula for zero zero}
	Assume that {\upshape (A1)} holds. Then the problem \eqref{eq:limZero} has a unique solution $\mathbf{u}^{0,0}\in H^1(\Omega\setminus \ol D)$ and it is represented by the single-layer potential:
	\begin{equation}
		\mathbf{u}^{0,0}=\mathbf{G}+\mathcal{S}^{\lambda,\mu}_{D}\bm{\phi}\quad \mathrm{in}\  \Omega\setminus \ol D,
	\end{equation}
	and $\bm{\phi}\in H^{-\frac{1}{2}}_{\mathbf{R}}(\partial D)$ is the unique solution to
	\begin{equation}\label{equation 0,0}
		\left( \frac{\mathbb{I}}{2} +\mathbb{K}_{D}^{\lambda,\mu,*} \right)\bm{\phi} = \Lambda^{\lambda,\mu,{\rm e}}_D \mathbb{S}^{\lambda,\mu}_D \bm{\phi} = -\left.\frac{\partial \mathbf{G}}{\partial \nu_{(\lambda,\mu)}}\right|_{\partial D}.
	\end{equation}
\end{theorem}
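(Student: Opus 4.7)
The plan is to first establish well-posedness of \eqref{eq:limZero} by a standard application of the Lax--Milgram theorem on the subspace of $H^1(\Omega\setminus\ol D)$ orthogonal (modulo rigid motions) to $\mathbf{R}$, using the Korn inequality and the compatibility $\mathbf{g}\in H^{-\frac12}_{\mathbf{R}}(\partial\Omega)$; the constraint $\mathbf{u}|_{\partial\Omega}\in H^{\frac12}_{\mathbf{R}}(\partial\Omega)$ selects the unique representative. Alternatively, as in Proposition~\ref{prop:DtNLameSol}, uniqueness follows by testing with $\mathbf{u}$ itself and applying Green's identity \eqref{eq:GreenLame}, which yields $J^{\Omega\setminus\ol D}_{\lambda,\mu}(\mathbf{u})=0$ and hence $\mathbf{u}\in\mathbf{R}$; the boundary condition at $\partial\Omega$ then forces $\mathbf{u}=0$.

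Next, I make the ansatz $\mathbf{u}=\mathbf{G}+\mathcal{S}^{\lambda,\mu}_D\bm{\phi}$ for some density $\bm{\phi}\in H^{-\frac12}_{\mathbf{R}}(\partial D)$ to be determined. By Proposition~\ref{prop:SDclassical}(i), this $\mathbf{u}$ automatically satisfies $\mathcal{L}_{\lambda,\mu}\mathbf{u}=0$ in $\Omega\setminus\ol D$, and the trace $\mathcal{S}^{\lambda,\mu}_D\bm{\phi}|_{\partial\Omega}\in H^{\frac12}_{\mathbf{R}}(\partial\Omega)$; combined with the corresponding property of $\mathbf{G}$, this gives $\mathbf{u}|_{\partial\Omega}\in H^{\frac12}_{\mathbf{R}}(\partial\Omega)$. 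For the Neumann trace at $\partial\Omega$, the restriction $\bm{\phi}\in H^{-\frac12}_{\mathbf{R}}(\partial D)$ ensures $\partial\mathcal{S}^{\lambda,\mu}_D\bm{\phi}/\partial\nu|_{\partial\Omega}=0$, so $\partial\mathbf{u}/\partial\nu|_{\partial\Omega}=\mathbf{g}$. All that remains is the traction-free condition $\partial\mathbf{u}/\partial\nu|_+=0$ on $\partial D$, which by the jump relation \eqref{jump relation} becomes exactly
\begin{equation*}
\left(\tfrac{\mathbb{I}}{2}+\mathbb{K}^{\lambda,\mu,*}_D\right)\bm{\phi}=-\left.\frac{\partial \mathbf{G}}{\partial\nu_{(\lambda,\mu)}}\right|_{\partial D}.
\end{equation*}
The equivalence with the form involving $\Lambda^{\lambda,\mu,\mathrm{e}}_D\mathbb{S}^{\lambda,\mu}_D\bm{\phi}$ is then immediate from Proposition~\ref{DtN representation}.

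The core step is showing this boundary integral equation has a unique solution in $H^{-\frac12}_{\mathbf{R}}(\partial D)$. The operator $\frac{\mathbb{I}}{2}+\mathbb{K}^{\lambda,\mu,*}_D$ is an isomorphism on $H^{-\frac12}_{\mathbf{R}}(\partial D)$ by Proposition~\ref{prop:SDclassical}(ii), so the only point to verify is that the right-hand side $-\partial\mathbf{G}/\partial\nu_{(\lambda,\mu)}|_{\partial D}$ lies in $H^{-\frac12}_{\mathbf{R}}(\partial D)$. This is precisely the content of Remark~\ref{rem:orthR}: since $\mathbf{G}$ solves $\mathcal{L}_{\lambda,\mu}\mathbf{G}=0$ in all of $\Omega$ and in particular in $D$, its conormal derivative on $\partial D$ is automatically orthogonal to $\mathbf{R}$ on each connected component $D_i$, as follows from \eqref{eq:GreenLame} with $\mathbf{v}\in\mathbf{R}$ and $\bD(\mathbf{v})=0$, $\mathrm{div}\,\mathbf{v}=0$. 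With existence of the density $\bm{\phi}$ secured, the ansatz produces a solution of \eqref{eq:limZero}; by uniqueness of the limit problem, this must coincide with $\mathbf{u}^{0,0}$. The uniqueness of $\bm{\phi}$ follows directly from the isomorphism property. The main (and only non-routine) obstacle is the verification that $\partial\mathbf{G}/\partial\nu|_{\partial D}\in H^{-\frac12}_{\mathbf{R}}(\partial D)$, and this is cleanly handled by Remark~\ref{rem:orthR}.
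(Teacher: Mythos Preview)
Your proposal is correct and follows essentially the same approach as the paper: both rely on the isomorphism property of $\tfrac{\mathbb{I}}{2}+\mathbb{K}^{\lambda,\mu,*}_D$ on $H^{-\frac12}_{\mathbf{R}}(\partial D)$ from Proposition~\ref{prop:SDclassical}(ii) to solve the boundary integral equation, then verify that the single-layer ansatz yields the solution. Your version is simply more explicit, spelling out the verification of the boundary conditions at $\partial\Omega$ and the membership $\partial\mathbf{G}/\partial\nu|_{\partial D}\in H^{-\frac12}_{\mathbf{R}}(\partial D)$ via Remark~\ref{rem:orthR}, whereas the paper's proof is a terse two-line invocation of these standard facts.
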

\begin{proof}
	From Proposition \ref{prop:SDclassical}, $\frac{\mathbb{I}}{2} +\mathbb{K}_{D}^{\lambda,\mu,*}$ is an isomorphism on $H^{-\frac{1}{2}}_{\mathbf{R}}(\partial D)$, therefore \eqref{equation 0,0} is uniquely solvable, and the solution gives the desired representation.
\end{proof}

\subsection{The solution to problem \eqref{eq:limRigid}} 
We consider the ansatz 
\begin{equation}
\label{eq:uRigid}
	\mathbf{u}=\begin{cases} \mathbf{G}+\mathcal{S}^{\lambda,\mu}_{D} \bm{\phi}, \qquad &\text{in } \Omega \setminus D,\\
	\mathbf{w} \in \mathbf{R}, \qquad &\text{in } D.
	\end{cases}
\end{equation}
Then $\mathbf{u}$ solves \eqref{eq:limRigid} if and only if $\bm{\phi}\in H_{\mathbf{R}}^{-\frac{1}{2}}(\partial D)$, and $\mathbf{w} = \mathbf{G} + \mathbb{S}^{\lambda,\mu}_D\bm{\phi}$ on $\partial D$. Note that $\mathbf{w} \in \mathbf{R}$ in $D$ if and only if, for any admissible Lam\'e pair $(\lambda',\mu')$, $\mathbf{w}$ solves 
\begin{equation*}
\mathcal{L}_{\lambda',\mu'}\mathbf{w} = 0 \quad \text{in $D$}, \qquad \text{and}\qquad \frac{\partial \mathbf{w}}{\partial \nu_{(\lambda',\mu')}} = 0 \quad \text{on $\partial D$}.
\end{equation*}
Moreover, the above identities hold true for all admissible Lam\'e pairs as long as they hold for one such pair. Hence, a necessary and sufficient condition for $\bm{\phi}$ (so that \eqref{eq:uRigid} solves \eqref{eq:limRigid}) is 
\begin{equation}\label{last equation deterministic}
\Lambda^{\lambda',\mu',i}_D (\mathbb{S}^{\lambda,\mu}_D \bm{\phi} + \mathbf{G}\rvert_{\partial D}) = 0, \qquad \text{for some (and hence, for all) admissible}\; (\lambda',\mu').
\end{equation}
If we choose $(\lambda',\mu') = (\lambda, \mu)$, the above reduces to
\begin{equation}
\label{eq:urigidphi}
(-\frac{\mathbb{I}}{2} + \mathbb{K}^{\lambda,\mu,*}_D)\bm{\phi} = -\frac{\partial \mathbf{G}}{\partial \nu}.
\end{equation}
Note that $\mathcal{L}_{\lambda,\mu}\mathbf{G} = 0$ in $D$ implies that $\partial \mathbf{G}/\partial \nu \in H^{-\frac12}_{\mathbf{R}}(\partial D)$. Due to Proposition \ref{prop:SDclassical}, there exists a unique $\bm{\phi} \in H^{-\frac12}_{\mathbf{R}}(\partial D)$ that solves the equation above, and, consequently, \eqref{eq:uRigid} solves \eqref{eq:limRigid} and is the unique solution.

In view of the definition of the DtN operators, we easily check that
\begin{equation}
\label{eq:Lambdaiscale}
\Lambda^{\widetilde\lambda,\widetilde\mu,{\rm i}}_D = \widetilde\mu \Lambda^{\frac{\widetilde \lambda}{\widetilde \mu},1,{\rm i}}_D.
\end{equation}
To quantify the convergence of \eqref{eq:transmissionproblem} to \eqref{eq:limRigid} when $\widetilde\mu \to \infty$ while the other parameters are fixed, it is useful to view the right hand side above as $\widetilde\mu \Lambda^{\lambda',\mu',{\rm i}}_D$ with $\lambda' = \widetilde\lambda/\widetilde\mu$ and $\mu' = 1$. 

To summarize, we have proved the following result:
\begin{theorem}\label{repre lambda infty}
	The transmission problem \eqref{eq:limRigid} has a unique solution $\mathbf{u}\in H^1(\Omega)$ and is represented by \eqref{eq:uRigid}, where $\bm{\phi} \in H^{-\frac12}_{\mathbf{R}}(\partial D)$ is uniquely determined by \eqref{eq:urigidphi}. Moreover, 
	$\bm{\phi}$ satisfies
	\begin{equation}\label{boundary integral equation lambda infty 2}
	\Lambda^{\frac{\widetilde \lambda}{\widetilde \mu}, 1, \mathrm{i}}_{D} \mathbb{S}_D \bm{\phi} = - \Lambda^{\frac{\widetilde\lambda}{\widetilde\mu},1,{\rm i}}_D(\mathbf{G}|_{\partial D}).
	\end{equation}
\end{theorem}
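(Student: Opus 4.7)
The plan is to take the single-layer potential ansatz
\begin{equation*}
\mathbf{u}=\mathbf{G}+\mathcal{S}^{\lambda,\mu}_{D}\bm{\phi}\ \text{in}\ \Omega\setminus\overline{D},\qquad \mathbf{u}=\mathbf{w}\in\mathbf{R}\ \text{in each component of}\ D,
\end{equation*}
which automatically handles $\mathcal{L}_{\lambda,\mu}\mathbf{u}=0$ in $\Omega\setminus\overline{D}$ and, by Proposition~\ref{prop:SDclassical}(i) combined with \eqref{background solution}, realizes the Neumann condition on $\partial\Omega$ as soon as $\bm{\phi}\in H^{-\frac12}_{\mathbf{R}}(\partial D)$. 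What remains is to convert the continuity $\mathbf{u}|_{-}=\mathbf{u}|_{+}$ on $\partial D$ together with the rigidity constraint on $\mathbf{w}$ into an integral equation for $\bm{\phi}$, and then invoke the isomorphism theory of Section~\ref{Layer potentials for Lame system}.

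The crux is the following equivalence, which I expect to be the main conceptual (but not technical) obstacle: for $\bm{h}\in H^{\frac12}(\partial D)$, the unique Lam\'e solution in $D$ with Dirichlet data $\bm{h}$ lies in $\mathbf{R}$ on each component if and only if $\Lambda^{\lambda',\mu',\mathrm{i}}_D\bm{h}=0$ for some (equivalently, every) admissible pair $(\lambda',\mu')$. One direction is immediate because $\bD(\mathbf{r})=0$ forces the conormal derivative to vanish for every Lam\'e system; the converse follows from the Green identity \eqref{eq:GreenLame}, which yields $J^D_{\lambda',\mu'}(\mathbf{w})=0$ and hence $\mathbf{w}\in\mathbf{R}$ by Remark~\ref{rem:equinorm} and Korn's inequality. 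Applying this equivalence with $\bm{h}=\mathbf{G}|_{\partial D}+\mathbb{S}^{\lambda,\mu}_D\bm{\phi}$ and $(\lambda',\mu')=(\lambda,\mu)$, together with the layer-potential representation in Proposition~\ref{DtN representation}, reduces the problem to \eqref{eq:urigidphi}.

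To close the argument I observe that $\partial\mathbf{G}/\partial\nu|_{\partial D}\in H^{-\frac12}_{\mathbf{R}}(\partial D)$ by Remark~\ref{rem:orthR} applied to $\mathbf{G}$ on each component of $D$, so Proposition~\ref{prop:SDclassical}(ii) provides a unique solution $\bm{\phi}\in H^{-\frac12}_{\mathbf{R}}(\partial D)$ of \eqref{eq:urigidphi}; the resulting $\mathbf{u}$ then verifies every condition in \eqref{eq:limRigid}, the compatibility $\partial\mathbf{u}/\partial\nu|_{+}\in H^{-\frac12}_{\mathbf{R}}(\partial D)$ being again furnished by Remark~\ref{rem:orthR}. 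Uniqueness of $\mathbf{u}\in H^1(\Omega)$ proceeds by the energy argument of Proposition~\ref{prop:DtNLameSol}: for the difference of two solutions, the boundary contributions in Green's identity on $\Omega\setminus\overline{D}$ vanish (the conormal trace lies in $H^{-\frac12}_{\mathbf{R}}$, the interior trace is rigid, and the Neumann datum at $\partial\Omega$ is zero), forcing the difference to be rigid outside $D$ and hence identically zero by the $H^{\frac12}_{\mathbf{R}}(\partial\Omega)$ constraint and continuity across $\partial D$. Finally, \eqref{boundary integral equation lambda infty 2} is obtained by instead choosing $(\lambda',\mu')=(\widetilde\lambda/\widetilde\mu,1)$ in the equivalence recorded above and invoking the homogeneity \eqref{eq:Lambdaiscale}; the factor $\widetilde\mu$ cancels, and we recover the claimed identity in a form suited to the asymptotic analysis of Case~3 in Section~\ref{Strong convergence to the extreme}.
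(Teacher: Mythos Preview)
Your proposal is correct and follows essentially the same route as the paper: the same single-layer ansatz \eqref{eq:uRigid}, the same equivalence ``$\mathbf{w}\in\mathbf{R}$ in each component of $D$ iff $\Lambda^{\lambda',\mu',\mathrm{i}}_D(\mathbf{w}|_{\partial D})=0$ for some/all admissible $(\lambda',\mu')$'', the same reduction to \eqref{eq:urigidphi} via the choice $(\lambda',\mu')=(\lambda,\mu)$, and the same derivation of \eqref{boundary integral equation lambda infty 2} via $(\lambda',\mu')=(\widetilde\lambda/\widetilde\mu,1)$ and the scaling \eqref{eq:Lambdaiscale}. You additionally spell out the uniqueness of $\mathbf{u}$ by an energy argument, which the paper leaves implicit; one small remark is that your invocation of Remark~\ref{rem:orthR} for the condition $\partial\mathbf{u}/\partial\nu|_{+}\in H^{-\frac12}_{\mathbf{R}}(\partial D)$ is slightly indirect (that remark concerns the interior conormal), but the conclusion follows immediately since \eqref{eq:urigidphi} gives $\partial\mathbf{u}/\partial\nu|_{+}=\bm{\phi}\in H^{-\frac12}_{\mathbf{R}}(\partial D)$.
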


The formulas we obtained in this section, for the problems \eqref{eq:transmissionproblem}, \eqref{eq:limStokes}, \eqref{eq:limZero} and \eqref{eq:limRigid}, are the starting points to establish quantitative convergence estimates; see \eqref{eq:disform} for explicit formulas. They work as long as $D$ satisfies (A1). When $D= D_\eps$ is a family satisfying (A2), to obtain convergence rates that are independent of $\eps$, we need to bound various operators uniformly, and this is the main task of the next section.
\section{Uniform bounds on layer-potential related operators} 
\label{sec:unifesti}

In this section, the inclusions set $D = D_\eps$ is assumed to satisfy the periodic structure specified in (A2). We establish uniform spectra gaps for the Neumann-Poincar\'e operator $\bK^{\lambda,\mu,*}_{D_\eps}$ with respect to the periodicity $\eps$, and uniform bounds for various operators and their inverse.

For notational simplifications, we omit the references to the Lam\'e pair $(\lambda,\mu)$ and to the domain $D_\eps$ in the notations of layer potential related operators, namely, $\cS,\mathbb{S},\mathbb{K}^{*}, \Lambda^{\rm e}$ (and Neumann function $\bm\Gamma^{\rm N}$ is used in the definition), as those parameters are always fixed in this section. On the other hand, when the Dirichlet function $\bm\Gamma^{\rm D}$ is used for the kernel of the single-layer potential, we denote the corresponding operators by $\cS^{\rm D}, \mathbb{S}^{\rm D}, \mathbb{K}^{{\rm D},*}$, etc. If other Lam\'e pair $(\lambda',\mu')$ are used instead, we will specify them in the notations of those operators.

We denote the Hilbert space $H^{-\frac12}(\partial D_\eps)$ equipped with the inner product $(\cdot,\cdot)_{\bS^{\rm N}}$ by $\cH$; note that the inner product is defined by \eqref{new inner product} using the background Lam\'e pair $(\lambda,\mu)$. Similarly, the subspace $H^{-\frac12}_\mathbf{R}(\partial D_\eps)$ is denoted by $\cHR$. 
\subsection{Uniform spectral gap of Neumann-Poincar\'{e} operator}

In this subsection we prove Theorem \ref{thm:specgap}, i.e., establishing uniform spectra gaps for the Neumann-Poincar\'e operators associated to the single-layer potential $\mathcal{S}^{\lambda,\mu}_D$. This is the key part of our paper, and our analysis is partially inspired by the works of \cite{MR2308861,bonnetier2019homogenization,bunoiu2020homogenization}. 

We study simultaneously the spectral properties of $\mathbb{K}^{\mathrm{N},*}$ and $\mathbb{K}^{\mathrm{D},*}$. As pointed out in Remark \ref{rem:NPselfadj}, the operator $\bK^{{\rm N},*}$ is self-adjoint as a bounded linear operator in $\cH$. It is well known that $\frac12$ is an eigen-value of $\bK^{{\rm N},*}$; in fact, $\mathrm{ker}(-\frac{\bI}{2}+\bK^{{\rm N},*})$ is characterized in \eqref{eq:HNdecom} and \eqref{eq:H-1/2decomp} and its dimension is $|\Pi_\eps|\times d(d+1)/2$ where $|\Pi_\eps|$ is the number of connected components in $D_\eps$. Note that, $|\Pi_\eps|$ increase to infinity as $\eps \to 0$.  

We argue first that to prove Theorem \ref{thm:specgap} it suffices to restrict $\bK^{{\rm N},*}$ to the subspace $\cHR$. Indeed, in view of the orthogonal decompositions \eqref{eq:HNdecom}  and \eqref {eq:H-1/2decomp}, $\ker(-\frac{\bI}{2} + \bK^{{\rm N},*})$ and $\cHR$ are orthogonal and invariant subspaces of $\cH$. Then if a number $\rho\in \mathbb{C}$ is in the resolvent of $\bK^{{\rm N},*}$ restricted to each of the two invariant subspaces, it is also in the resolvent of the whole operator. 


Now the restriction of $\bK^{{\rm N},*}$ in $\cHR$ remains self-adjoint and bounded. By standard theory, its spectra are real and contained in an interval $[m_\eps,M_\eps]$, the end points of which belong to the spectra and are determined by the extrema of the Rayleigh quotient
\begin{equation*}
\frac{(\bK^* \bm{\phi},\bm{\phi})_{\bS^{\rm N}}}{(\bm{\phi},\bm{\phi})_{\bS^{\rm N}}} = \frac{-\langle \bS\bm{\phi}, \bK^* \bm{\phi} \rangle_{H^{\frac12},H^{-\frac12}}}{-\langle \bS\bm{\phi},\bm{\phi}\rangle_{H^{\frac12},H^{-\frac12}}} = \frac{-\langle \bS\bm{\phi}, (-\frac{\bI}{2} + \bK^*) \bm{\phi} \rangle_{H^{\frac12},H^{-\frac12}}}{-\langle \bS\bm{\phi},\bm{\phi}\rangle_{H^{\frac12},H^{-\frac12}}} + \frac12 = \frac{J^{\Omega_{\varepsilon}}_{\lambda,\mu}(\cS \bm{\phi})}{J^\Omega_{\lambda,\mu}(\cS \bm{\phi})} - \frac12.
\end{equation*}
Recall that $\Omega_{\varepsilon}=\Omega \setminus D_{\varepsilon}$. More precisely, since $\cS^{\mathrm{N}} H^{-\frac12} = \frakH^{\rm N}$, we also have
\begin{equation}\label{def m M n}
	m_{\varepsilon}^{\mathrm{N}}:=\inf_{\mathbf{u}\in  \frakH^{\mathrm{N}}_{\mathbf{R}} \setminus \{0\}} \frac{J^{\Omega_{\varepsilon}}_{\lambda,\mu}(\mathbf{u})}{J^{\Omega}_{\lambda,\mu}(\mathbf{u})}-\frac{1}{2} , \qquad
	M_{\varepsilon}^{\mathrm{N}}:=\sup_{\mathbf{u}\in  \frakH^{\mathrm{N}}_{\mathbf{R}} \setminus \{0\}} \frac{J^{\Omega_{\varepsilon}}_{\lambda,\mu}(\mathbf{u})}{J^{\Omega}_{\lambda,\mu}(\mathbf{u})} -\frac{1}{2}.
\end{equation}
Similarly, the spectra of $\bK^{{\rm D},*}$ is real and contained in $[m^{\rm D}_\eps,M^{\rm D}_\eps]$ with
\begin{equation}\label{def m M d}
	m_{\varepsilon}^{\mathrm{D}}:=\inf_{\mathbf{u}\in  \frakH^{\mathrm{D}}_{\mathbf{R}} \setminus \{0\}} \frac{J^{\Omega_{\varepsilon}}_{\lambda,\mu}(\mathbf{u})}{J^{\Omega}_{\lambda,\mu}(\mathbf{u})}-\frac{1}{2} ,\qquad
	M_{\varepsilon}^{\mathrm{D}}:=\sup_{\mathbf{u}\in  \frakH^{\mathrm{D}}_{\mathbf{R}} \setminus \{0\}} \frac{J^{\Omega_{\varepsilon}}_{\lambda,\mu}(\mathbf{u})}{J^{\Omega}_{\lambda,\mu}(\mathbf{u})} -\frac{1}{2}.
\end{equation}
In view of Propositions \ref{prop:SDclassical} and \ref{prop:SDDirichlet}, $\pm \frac{\mathbb{I}}{2}+\mathbb{K}^{\mathrm{N},*}$ and $\pm \frac{\mathbb{I}}{2}+\mathbb{K}^{\mathrm{D},*}$ are isomorphisms on $H_{\mathbf{R}}^{-\frac{1}{2}}(\partial D)$, which implies that 
\begin{equation}
	-\frac{1}{2}<m_{\varepsilon}^{\mathrm{N}}\leq M_{\varepsilon}^{\mathrm{N}}<\frac{1}{2},\quad -\frac{1}{2}<m_{\varepsilon}^{\mathrm{D}}\leq M_{\varepsilon}^{\mathrm{D}}<\frac{1}{2}.
\end{equation}
So for each fixed $\eps > 0$, there is a gap between $-\frac12$ and $m^{\rm N}_\eps$, and a gap between $M^{\rm N}_\eps$ and $\frac12$. To show that those gaps are uniform in $\eps$, we need to explore deeper relations among the points $m_{\varepsilon}^{\mathrm{N}},M_{\varepsilon}^{\mathrm{N}},m_{\varepsilon}^{\mathrm{D}},M_{\varepsilon}^{\mathrm{D}}$.

Recall that $\mathbf{R}$ is the space of rigid motions in $\R^d$ and $\{\mathbf{r}_1,\cdots,\mathbf{r}_{d(d+1)/2}\}$ form a basis for $\mathbf{R}$; see \eqref{basis of R}. The following two lemmas (see sections \ref{appendix C} and \ref{appendix D} for proofs) will be useful. 

\begin{lemma}\label{appendix lemma 1}
	Assume that {\upshape(A2)} holds. Given $c_{\mathbf{n}}^j \in \mathbb{R}$ for $\mathbf{n}\in \Pi_{\varepsilon}$ and $j\in \{1,\cdots,\frac{d(d+1)}{2}\}$,
	and $\mathbf{g}\in H^{-\frac{1}{2}}(\partial \Omega)$ satisfying 
	\begin{equation}\label{concij}
		\sum_{\mathbf{n}\in \Pi_{\varepsilon}} c_{\mathbf{n}}^j =\int_{\partial \Omega} \mathbf{g}\cdot \mathbf{r}_j\quad \text{for all $j$}.
	\end{equation}
	Then there exists a unique solution $\mathbf{u} \in H^1(\Omega)$ for the problem
	\begin{equation}\label{ann}
		\left\{
		\begin{aligned}
			& \mathcal{L}_{\lambda,\mu} \mathbf{u} =0 \quad \text{in $\Omega_{\varepsilon}$}  \quad \text{and } \quad \mathbf{u} \in \mathbf{R} \quad \text{in each component of $D_{\varepsilon}$},\\
			& \mathbf{u}|_- =\mathbf{u}|_+ \quad \text{on $\partial D_{\varepsilon}$}, \\
			& \int_{\partial \omega^{\mathbf{n}}_{\varepsilon}} \left.\frac{\partial \mathbf{u}}{\partial \nu_{(\lambda,\mu)}} \right|_+\cdot \mathbf{r}_j=c_{\mathbf{n}}^j \quad 
\text{for all $\mathbf{n}$ and $j$},\\
			& \left. \frac{\partial \mathbf{u}}{\partial \nu_{(\lambda,\mu)}}\right|_{\partial \Omega}=\mathbf{g}, \quad \text{and} \quad \mathbf{u}|_{\partial \Omega}\in H^{\frac{1}{2}}_{\mathbf{R}}(\partial \Omega)
		\end{aligned}
		\right.
	\end{equation}
\end{lemma}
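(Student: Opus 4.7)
To prove the lemma, I will use a variational argument based on the Lax-Milgram theorem. Define the closed subspace of $H^1(\Omega)$
\[
V:=\Big\{\mathbf{v}\in H^1(\Omega):\ \mathbf{v}\in \mathbf{R}\text{ in each component of }D_\eps,\ \mathbf{v}|_{\partial \Omega}\in H^{\frac12}_{\mathbf{R}}(\partial \Omega)\Big\}.
\]
For $\mathbf{v}\in V$, write $\mathbf{v}|_{\omega_\eps^\mathbf{n}}=\sum_j a_\mathbf{n}^j(\mathbf{v})\mathbf{r}_j$; the coefficients $a_\mathbf{n}^j(\mathbf{v})$ are continuous linear functionals on $V$ by the trace theorem. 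The weak formulation I propose is: find $\mathbf{u}\in V$ such that, for every $\mathbf{v}\in V$,
\[
J^{\Omega_\eps}_{\lambda,\mu}(\mathbf{u},\mathbf{v})=\int_{\partial \Omega}\mathbf{g}\cdot \mathbf{v}-\sum_{\mathbf{n}\in \Pi_\eps}\sum_j a_\mathbf{n}^j(\mathbf{v})\,c_\mathbf{n}^j.
\]

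The coercivity step goes as follows. If $\mathbf{v}\in V$ and $J^{\Omega_\eps}_{\lambda,\mu}(\mathbf{v})=0$, then $\bD(\mathbf{v})=0$ on the connected domain $\Omega_\eps$, so $\mathbf{v}\in \mathbf{R}$ there; the constraint $\mathbf{v}|_{\partial \Omega}\in H^{\frac12}_{\mathbf{R}}$ then forces $\mathbf{v}\equiv 0$ on $\Omega_\eps$, and continuity across $\partial D_\eps$ together with the rigidity inside each $\omega_\eps^\mathbf{n}$ yields $\mathbf{v}=0$ in $\Omega$. By the argument of Remark \ref{rem:equinorm} (Korn's inequality), $\mathbf{v}\mapsto (J^{\Omega_\eps}_{\lambda,\mu}(\mathbf{v}))^{\frac12}$ is a norm on $V$ equivalent to the full $H^1$ norm, so $V$ is a Hilbert space under it, the bilinear form is bounded and coercive, and the right-hand side is clearly a bounded linear functional. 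Lax-Milgram produces a unique $\mathbf{u}\in V$.

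Next I extract the boundary conditions. Testing against $\mathbf{v}\in C_c^\infty(\Omega_\eps)$ (such functions belong to $V$) gives $\mathcal{L}_{\lambda,\mu}\mathbf{u}=0$ in $\Omega_\eps$, so the conormal traces $(\partial \mathbf{u}/\partial \nu)|_+\in H^{-\frac12}(\partial D_\eps)$ and $(\partial \mathbf{u}/\partial \nu)|_{\partial \Omega}\in H^{-\frac12}(\partial \Omega)$ are well defined. The Green's identity \eqref{eq:GreenLame} on $\Omega_\eps$ then rewrites the weak equation as
\[
\int_{\partial \Omega}\Big(\frac{\partial \mathbf{u}}{\partial \nu}-\mathbf{g}\Big)\cdot \mathbf{v}=\sum_{\mathbf{n},j} a_\mathbf{n}^j(\mathbf{v})\Big(\int_{\partial \omega_\eps^\mathbf{n}}\frac{\partial \mathbf{u}}{\partial \nu}\Big|_+\cdot \mathbf{r}_j - c_\mathbf{n}^j\Big),\qquad \mathbf{v}\in V.
\]
Choosing $\mathbf{v}$ with $\mathbf{v}|_{\partial \Omega}=0$, equal to a prescribed $\mathbf{r}_j$ on a single inclusion and zero on all the others (possible because (A2) keeps the inclusions well-separated from each other and from $\partial \Omega$), isolates each moment condition $\int_{\partial \omega_\eps^\mathbf{n}}(\partial \mathbf{u}/\partial \nu)|_+\cdot \mathbf{r}_j=c_\mathbf{n}^j$.

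With the moment conditions established, the previous identity collapses to $\int_{\partial \Omega}(\partial \mathbf{u}/\partial \nu-\mathbf{g})\cdot \mathbf{v}=0$ for every $\mathbf{v}\in V$, which only yields $\partial \mathbf{u}/\partial \nu=\mathbf{g}+\sum_k \alpha_k \mathbf{r}_k$ on $\partial \Omega$ for some real constants $\alpha_k$, since the trace $\mathbf{v}|_{\partial \Omega}$ ranges precisely over $H^{\frac12}_{\mathbf{R}}(\partial \Omega)$ as $\mathbf{v}$ varies in $V$ with all $a_\mathbf{n}^j=0$. This is the step where the compatibility \eqref{concij} is crucial. Applying Green's identity \eqref{eq:GreenLame} on $\Omega_\eps$ to the known solution $\mathbf{u}$ against the test function $\mathbf{r}_j\in H^1(\Omega)$ (legitimate, even though $\mathbf{r}_j\notin V$, since $\mathcal{L}_{\lambda,\mu}\mathbf{u}=0$ in $\Omega_\eps$ and the conormal traces are now available) and using $\bD(\mathbf{r}_j)=0$ together with the moment conditions gives
\[
0=\int_{\partial \Omega}\Big(\mathbf{g}+\sum_k \alpha_k\mathbf{r}_k\Big)\cdot \mathbf{r}_j - \sum_\mathbf{n} c_\mathbf{n}^j = \sum_k \alpha_k\int_{\partial \Omega}\mathbf{r}_k\cdot \mathbf{r}_j,
\]
where the compatibility \eqref{concij} cancels the $\mathbf{g}$ and $c_\mathbf{n}^j$ contributions. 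The Gram matrix $(\int_{\partial \Omega}\mathbf{r}_k\cdot \mathbf{r}_j)_{k,j}$ is invertible because rigid motions restricted to the Lipschitz boundary $\partial \Omega$ are linearly independent; therefore $\alpha_k=0$ for all $k$. The delicate point of the proof, precisely the one addressed by the compatibility hypothesis, is the elimination of this rigid-motion ambiguity of the Neumann data on $\partial \Omega$.
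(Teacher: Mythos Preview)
Your proof is correct and takes a genuinely different route from the paper. The paper argues constructively: it first builds a particular solution $\mathbf{v}$ (Lam\'e in $\Omega_\eps$, zero on $D_\eps$, Neumann data $\mathbf{g}$ on $\partial\Omega$) and a basis $\{\mathbf{v}_\mathbf{m}^l\}$ of ``rigid-bump'' solutions (Lam\'e in $\Omega_\eps$, equal to $\delta_{\mathbf{mn}}\mathbf{r}_l$ on $\omega_\eps^\mathbf{n}$, zero Neumann data on $\partial\Omega$), then writes $\mathbf{u}=\mathbf{v}+\sum a_\mathbf{m}^l\mathbf{v}_\mathbf{m}^l$ and reduces the moment conditions to a finite linear system $A\mathbf{a}=\mathbf{b}$. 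Solvability is obtained by showing $\mathrm{ran}\,A=\mathcal{X}$ (the codimension-$d(d+1)/2$ subspace cut out by the compatibility relations) and checking $\mathbf{b}\in\mathcal{X}$ via \eqref{concij}; the kernel of $A$ is identified with $\mathbf{R}$. Your approach instead poses a single variational problem on the constrained space $V$, gets coercivity in one stroke from Korn's inequality (noting $J^{\Omega_\eps}_{\lambda,\mu}=J^\Omega_{\lambda,\mu}$ on $V$ since $\bD$ vanishes inside the inclusions), and recovers the boundary/moment conditions by judicious choice of test functions; the compatibility condition enters only at the end to kill the rigid-motion defect in the Neumann data. Your argument is shorter and avoids the auxiliary basis construction and the linear-algebra analysis of $A$; the paper's approach, on the other hand, is more explicit and makes visible the finite-dimensional structure (kernel $\cong\mathbf{R}$, range $=\mathcal{X}$), which connects naturally to the decompositions \eqref{eq:HNdecom}--\eqref{eq:kerN} used elsewhere.
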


\begin{lemma}\label{ann dirichlet}
	Assume that {\upshape(A2)} holds. Given $c_{\mathbf{n}}^j \in \mathbb{R}$ for $\mathbf{n}\in \Pi_{\varepsilon}$ and $j\in \{1,\cdots,\frac{d(d+1)}{2}\}$, and $\mathbf{f}\in H^{\frac{1}{2}}(\partial \Omega)$, there exists a unique solution $\mathbf{u} \in H^1(\Omega)$ for the problem
	\begin{equation}\label{ann dirichlet equation}
		\left\{
		\begin{aligned}
			& \mathcal{L}_{\lambda,\mu} \mathbf{u} =0 \quad \text{in $\Omega_{\varepsilon}$}  \quad \text{and } \quad \mathbf{u} \in \mathbf{R} \quad \text{in each component of $D$},\\
			& \mathbf{u}|_- =\mathbf{u}|_+ \quad \text{on $\partial D_{\varepsilon}$}, \\
			& \int_{\partial\omega^{\mathbf{n}}_{\varepsilon}} \left.\frac{\partial \mathbf{u}}{\partial \nu_{(\lambda,\mu)}} \right|_+\cdot \mathbf{r}_j=c_{\mathbf{n}}^j, \quad
			\text{for all $\mathbf{n}$ and $j$},\\
			& \mathbf{u}|_{\partial \Omega}=\mathbf{f}
		\end{aligned}
		\right.
	\end{equation}
\end{lemma}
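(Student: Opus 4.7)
My plan is to prove Lemma \ref{ann dirichlet} by a variational argument based on the Lax--Milgram theorem, running in close parallel with the approach used for Lemma \ref{appendix lemma 1}. The crucial structural advantage here is that the prescribed Dirichlet trace at $\partial \Omega$ removes the rigid-motion null space and supplies Poincar\'e's inequality, so no compatibility condition analogous to \eqref{concij} is required.

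First I would reduce to homogeneous Dirichlet data by choosing a cut-off supported near $\partial \Omega$ (available since $\mathrm{dist}(D_{\varepsilon},\partial \Omega) > 0$ under {\upshape(A2)}) and extending $\mathbf{f}$ to some $\tilde{\mathbf{f}} \in H^1(\Omega)$ with $\tilde{\mathbf{f}}|_{\partial \Omega} = \mathbf{f}$ and $\tilde{\mathbf{f}} \equiv 0$ in a neighborhood of $D_{\varepsilon}$. Then I would look for $\mathbf{u} = \mathbf{w} + \tilde{\mathbf{f}}$ with $\mathbf{w}$ in the closed subspace
\[
V_0 := \bigl\{\mathbf{v} \in H^1(\Omega) : \mathbf{v}|_{\partial \Omega} = 0,\ \mathbf{v} \in \mathbf{R}\ \text{in each component of}\ D_{\varepsilon}\bigr\},
\]
on which every element admits the unique expansion $\mathbf{v}|_{D^{\mathbf{n}}_{\varepsilon}} = \sum_{j} \alpha_{\mathbf{n}}^{j}(\mathbf{v})\, \mathbf{r}_j$, with $D^{\mathbf{n}}_{\varepsilon}$ the component corresponding to $\omega_{\varepsilon}^{\mathbf{n}}$.

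The natural weak formulation, motivated by the Green's identity \eqref{eq:GreenLame} and the conormal derivative \eqref{conormal derivative}, is to find $\mathbf{w} \in V_0$ with
\[
J^{\Omega_{\varepsilon}}_{\lambda,\mu}(\mathbf{w}, \mathbf{v}) \;=\; -\sum_{\mathbf{n} \in \Pi_{\varepsilon},\, j} c_{\mathbf{n}}^{j}\, \alpha_{\mathbf{n}}^{j}(\mathbf{v}) \;-\; J^{\Omega_{\varepsilon}}_{\lambda,\mu}(\tilde{\mathbf{f}}, \mathbf{v}),\qquad \forall\, \mathbf{v} \in V_0.
\]
Continuity of both sides on $V_0$ follows from \eqref{eq:Jadm} together with the trace and Sobolev estimates (the sum over $\mathbf{n} \in \Pi_{\varepsilon}$ is finite for each fixed $\varepsilon$). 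For coercivity, since $\mathbf{v}|_{\partial \Omega} = 0$ and $\bD(\mathbf{v}) \equiv 0$ in each rigid component, the first Korn inequality on $H^1_0(\Omega)^d$ combined with Poincar\'e's inequality yields $\|\mathbf{v}\|_{H^1(\Omega)}^{2} \le C_{\varepsilon}\, J^{\Omega_{\varepsilon}}_{\lambda,\mu}(\mathbf{v})$. Lax--Milgram then produces a unique $\mathbf{w} \in V_0$, and $\mathbf{u} = \mathbf{w} + \tilde{\mathbf{f}}$ is the candidate solution.

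Finally, I would verify the four listed conditions. The Dirichlet trace and rigid-motion behaviour inside $D_{\varepsilon}$ are built into the affine space. Testing against $\mathbf{v} \in C_{c}^{\infty}(\Omega_{\varepsilon})$ (which lies in $V_0$ with $\alpha_{\mathbf{n}}^{j}(\mathbf{v}) = 0$) yields $\mathcal{L}_{\lambda,\mu} \mathbf{u} = 0$ in $\Omega_{\varepsilon}$. To extract the integral condition on a prescribed component $D^{\mathbf{n}_0}_{\varepsilon}$, I would build, by means of a smooth cut-off supported away from $\partial \Omega$ and from the other inclusions, a function $\mathbf{v}_{\mathbf{n}_{0}, j_{0}} \in V_0$ equal to $\mathbf{r}_{j_{0}}$ in $D^{\mathbf{n}_0}_{\varepsilon}$ and vanishing in all other components; substituting into the weak formulation and invoking \eqref{eq:GreenLame} on $\Omega_{\varepsilon}$ gives exactly $\int_{\partial \omega^{\mathbf{n}_{0}}_{\varepsilon}} (\partial \mathbf{u}/\partial \nu_{(\lambda,\mu)})|_{+} \cdot \mathbf{r}_{j_{0}} = c_{\mathbf{n}_{0}}^{j_{0}}$. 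Uniqueness reduces to the homogeneous problem and follows from the same coercivity estimate. The only step requiring any genuine care is the construction of the localized rigid-motion test functions $\mathbf{v}_{\mathbf{n}_{0}, j_{0}}$, but this is straightforward thanks to the strict separation of the inclusions from one another and from $\partial \Omega$ guaranteed by {\upshape(A2)}.
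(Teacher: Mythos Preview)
Your argument is correct, but it follows a genuinely different route from the paper's. The paper does \emph{not} prove Lemma \ref{appendix lemma 1} variationally either; both lemmas are handled by a superposition construction: one writes $\mathbf{u} = \mathbf{v} + \sum_{\mathbf{m},l} a_{\mathbf{m}}^{l}\mathbf{v}_{\mathbf{m}}^{l}$, where $\mathbf{v}$ lifts the outer boundary data with $\mathbf{v}|_{D_\varepsilon}=0$, and $\mathbf{v}_{\mathbf{m}}^{l}$ solves the Lam\'e system in $\Omega_\varepsilon$ with $\mathbf{v}_{\mathbf{m}}^{l}|_{\omega_{\mathbf{n}}^\varepsilon}=\delta_{\mathbf{m}\mathbf{n}}\mathbf{r}_l$ and $\mathbf{v}_{\mathbf{m}}^{l}|_{\partial\Omega}=0$. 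The integral conditions then become a finite linear system $A\mathbf{a}=\mathbf{b}$; the point of the Dirichlet version is that here $A$ is actually invertible (its kernel corresponds to $\frakH_{\mathbf{R}}^{\mathrm{D}}\cap \mathcal{S}^{\mathrm{D}}\,\mathrm{ker}(-\tfrac{\mathbb{I}}{2}+\mathbb{K}^{\mathrm{D},*})=\{0\}$), whereas in the Neumann version $A$ has a $\tfrac{d(d+1)}{2}$-dimensional kernel and one needs the compatibility condition \eqref{concij} to land in its range. Your Lax--Milgram argument on the constrained space $V_0$ is more direct and self-contained for this lemma, since the zero Dirichlet trace immediately kills $\mathbf{R}$ and gives coercivity via Korn and Poincar\'e; the paper's approach, on the other hand, makes the structural distinction between the Neumann and Dirichlet cases explicit through the rank of the stiffness matrix $A$, and ties it back to the layer-potential decomposition \eqref{eq:HDdecom}.
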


Next, we relax the definition of $m^{\rm N}_\eps$, establish comparisons of $m^{\rm N}_\eps$ with $m^{\rm D}_\eps$, and of $M^{\rm N}_\eps$ with $M^{\rm D}_\eps$.

\begin{proposition}\label{ao cri MN}
	The definition of $m_{\varepsilon}^{\mathrm{N}}$ can be relaxed to
	\begin{equation}
		m_{\varepsilon}^{\mathrm{N}}=\inf_{\mathbf{u}\in \mathcal{E}\setminus \{0\}} \frac{J^{\Omega_{\varepsilon}}_{\lambda,\mu}(\mathbf{u})}{J^{\Omega}_{\lambda,\mu}(\mathbf{u})} -\frac{1}{2}.
	\end{equation}
\end{proposition}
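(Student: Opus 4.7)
Since $\frakH^{\mathrm{N}}_{\mathbf{R}} \subset \mathcal{E}$, enlarging the admissible class can only lower the infimum, so
\begin{equation*}
\inf_{\mathbf{u}\in \mathcal{E}\setminus\{0\}} \frac{J^{\Omega_{\varepsilon}}_{\lambda,\mu}(\mathbf{u})}{J^{\Omega}_{\lambda,\mu}(\mathbf{u})} - \tfrac{1}{2} \le m^{\mathrm{N}}_\varepsilon
\end{equation*}
is immediate (with the implicit convention that those $\mathbf{u}\in \mathcal{E}$ with $J^{\Omega}_{\lambda,\mu}(\mathbf{u})=0$, i.e.\;globally rigid motions, are excluded). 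For the reverse inequality, my plan is to associate to any $\mathbf{u}\in\mathcal{E}$ with $J^{\Omega}_{\lambda,\mu}(\mathbf{u}) > 0$ a nonzero $\mathbf{v}_1\in \frakH^{\mathrm{N}}_{\mathbf{R}}$ whose Rayleigh quotient is at most that of $\mathbf{u}$. The degenerate case $J^{D_\varepsilon}_{\lambda,\mu}(\mathbf{u}) = 0$ gives ratio $1 > m^{\mathrm{N}}_\varepsilon + 1/2$ and is set aside, so I may assume $J^{D_\varepsilon}_{\lambda,\mu}(\mathbf{u}) > 0$. The construction proceeds in two stages.

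\textbf{Step A: project $\mathcal{E}$ onto $\frakH^{\mathrm{N}}$.} I keep $\mathbf{u}$ unchanged on $D_\varepsilon$ and replace it on $\Omega_\varepsilon$ by the unique minimizer $\mathbf{w}$ of $J^{\Omega_\varepsilon}_{\lambda,\mu}(\cdot)$ over the affine set $\{\mathbf{w}'\in H^1(\Omega_\varepsilon) : \mathbf{w}'|_{\partial D_\varepsilon} = \mathbf{u}|_{\partial D_\varepsilon}\}$, with no constraint at $\partial\Omega$. Existence and uniqueness follow from the direct method, coercivity on $\{\mathbf{z}\in H^1(\Omega_\varepsilon): \mathbf{z}|_{\partial D_\varepsilon} = 0\}$ resting on a Korn-type inequality together with the observation that no nonzero element of $\mathbf{R}$ vanishes on $\partial D_\varepsilon$. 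The Euler--Lagrange conditions give $\mathcal{L}_{\lambda,\mu}\mathbf{w} = 0$ in $\Omega_\varepsilon$ and the natural boundary condition $\partial \mathbf{w}/\partial \nu_{(\lambda,\mu)} = 0$ on $\partial \Omega$. Gluing $\mathbf{w}$ on $\Omega_\varepsilon$ with $\mathbf{u}$ on $D_\varepsilon$ produces $\mathbf{v}\in \mathcal{E}$ satisfying $J^{D_\varepsilon}_{\lambda,\mu}(\mathbf{v}) = J^{D_\varepsilon}_{\lambda,\mu}(\mathbf{u})$ and $J^{\Omega_\varepsilon}_{\lambda,\mu}(\mathbf{v}) \le J^{\Omega_\varepsilon}_{\lambda,\mu}(\mathbf{u})$, the latter because $\mathbf{u}|_{\Omega_\varepsilon}$ is an admissible competitor. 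After subtracting a rigid motion to enforce $\mathbf{v}|_{\partial \Omega}\in H^{1/2}_{\mathbf{R}}(\partial\Omega)$---an operation leaving both energies invariant---the characterization \eqref{eq:HNspace} places $\mathbf{v}$ in $\frakH^{\mathrm{N}}$. The elementary inequality $a/(a+b)\le A/(A+b)$ for $0\le a\le A$ and $b>0$, applied with $a = J^{\Omega_\varepsilon}_{\lambda,\mu}(\mathbf{v})$, $A = J^{\Omega_\varepsilon}_{\lambda,\mu}(\mathbf{u})$ and $b = J^{D_\varepsilon}_{\lambda,\mu}(\mathbf{u})$, then yields $J^{\Omega_\varepsilon}_{\lambda,\mu}(\mathbf{v})/J^{\Omega}_{\lambda,\mu}(\mathbf{v}) \le J^{\Omega_\varepsilon}_{\lambda,\mu}(\mathbf{u})/J^{\Omega}_{\lambda,\mu}(\mathbf{u})$.

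\textbf{Step B: project $\frakH^{\mathrm{N}}$ onto $\frakH^{\mathrm{N}}_{\mathbf{R}}$.} Invoking the orthogonal decomposition \eqref{eq:HNdecom}, I write $\mathbf{v} = \mathbf{v}_1 + \mathbf{v}_2$ with $\mathbf{v}_1\in \frakH^{\mathrm{N}}_{\mathbf{R}}$ and $\mathbf{v}_2\in \mathcal{S}^{\mathrm{N}}\,\mathrm{ker}(-\tfrac{\mathbb{I}}{2} + \mathbb{K}^{\mathrm{N},*})$. By \eqref{eq:kerN}, $\mathbf{v}_2$ is rigid in each connected component of $D_\varepsilon$, so $\mathrm{div}\,\mathbf{v}_2 = 0$ and $\bD(\mathbf{v}_2) = 0$ throughout $D_\varepsilon$; this forces $J^{D_\varepsilon}_{\lambda,\mu}(\mathbf{v}_2) = J^{D_\varepsilon}_{\lambda,\mu}(\mathbf{v}_1,\mathbf{v}_2) = 0$. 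Combined with the $J^{\Omega}_{\lambda,\mu}$-orthogonality $(\mathbf{v}_1,\mathbf{v}_2)_{\frakH} = 0$, this forces $J^{\Omega_\varepsilon}_{\lambda,\mu}(\mathbf{v}_1,\mathbf{v}_2) = 0$, whence
\begin{equation*}
\frac{J^{\Omega_\varepsilon}_{\lambda,\mu}(\mathbf{v})}{J^{\Omega}_{\lambda,\mu}(\mathbf{v})} = \frac{J^{\Omega_\varepsilon}_{\lambda,\mu}(\mathbf{v}_1) + J^{\Omega}_{\lambda,\mu}(\mathbf{v}_2)}{J^{\Omega}_{\lambda,\mu}(\mathbf{v}_1) + J^{\Omega}_{\lambda,\mu}(\mathbf{v}_2)} \ge \frac{J^{\Omega_\varepsilon}_{\lambda,\mu}(\mathbf{v}_1)}{J^{\Omega}_{\lambda,\mu}(\mathbf{v}_1)},
\end{equation*}
the last inequality using $J^{\Omega_\varepsilon}_{\lambda,\mu}(\mathbf{v}_1)\le J^{\Omega}_{\lambda,\mu}(\mathbf{v}_1)$. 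Non-triviality $\mathbf{v}_1\neq 0$ follows from $J^{D_\varepsilon}_{\lambda,\mu}(\mathbf{v}_1) = J^{D_\varepsilon}_{\lambda,\mu}(\mathbf{v}) = J^{D_\varepsilon}_{\lambda,\mu}(\mathbf{u}) > 0$. The one delicate point in the whole argument is the well-posedness of the mixed Dirichlet--Neumann variational problem in Step~A; once Korn's inequality is in place, the remainder is bookkeeping built on the Dirichlet principle (Lemma~\ref{lem:diriprin}) and the already-established orthogonal decomposition.
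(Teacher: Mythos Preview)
Your argument is correct but follows a genuinely different route from the paper. The paper reaches $\frakH^{\mathrm N}_{\mathbf R}$ in a single step: given $\mathbf u\in\mathcal E$, it invokes Lemma~\ref{appendix lemma 1} to build a function $\mathbf w$ that is rigid in each component of $D_\varepsilon$, matches the Neumann data of $\mathbf u$ on $\partial\Omega$, and matches the rigid moments of $\partial\mathbf u/\partial\nu|_+$ on each $\partial\omega^{\mathbf n}_\varepsilon$; after a rigid correction, $\mathbf u-\mathbf w$ lands directly in $\frakH^{\mathrm N}_{\mathbf R}$, and the Pythagorean identity $J^\Omega_{\lambda,\mu}(\mathbf u)=J^\Omega_{\lambda,\mu}(\mathbf u-\mathbf w)+J^\Omega_{\lambda,\mu}(\mathbf w)$ together with $J^{D_\varepsilon}_{\lambda,\mu}(\mathbf u)=J^{D_\varepsilon}_{\lambda,\mu}(\mathbf u-\mathbf w)$ yields the comparison of Rayleigh quotients.

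By contrast, you proceed in two stages: Step~A replaces $\mathbf u$ on $\Omega_\varepsilon$ by the mixed Dirichlet--Neumann energy minimizer (keeping $\mathbf u$ on $D_\varepsilon$), which lowers $J^{\Omega_\varepsilon}_{\lambda,\mu}$ while fixing $J^{D_\varepsilon}_{\lambda,\mu}$ and lands the result in $\frakH^{\mathrm N}$; Step~B then applies the orthogonal decomposition \eqref{eq:HNdecom} to pass from $\frakH^{\mathrm N}$ to $\frakH^{\mathrm N}_{\mathbf R}$. Your route avoids Lemma~\ref{appendix lemma 1} entirely for this proposition, relying only on a standard variational principle and on the decomposition \eqref{eq:HNdecom} already established in the paper; the price is the extra Step~A and the Korn-based well-posedness check you flag. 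The paper's route is shorter here but leans on the appendix construction, which it needs elsewhere (Proposition~\ref{quan m M}) anyway. Both approaches exploit the same underlying mechanism---subtracting off a piece that is rigid on $D_\varepsilon$ and $J^\Omega_{\lambda,\mu}$-orthogonal to the remainder---just packaged differently.
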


\begin{proof}
	Since $\frakH^{\rm N}\subset \mathcal{E}$, the right hand side is not larger than $m^{\rm N}_\eps$. To prove the other direction, take any $\mathbf{u} \in \mathcal{E}\setminus \{0\}$, let $\mathbf{v}$ solve \eqref{ann} with data $c_{\mathbf{n}}^j=\int_{\partial \omega^{\mathbf{n}}_{\varepsilon}} \left.\frac{\partial \mathbf{u}}{\partial \nu_{(\lambda,\mu)}} \right|_+\cdot \mathbf{r}_j$ and $\mathbf{g}=\left. \frac{\partial \mathbf{u}}{\partial \nu}\right|_{\partial \Omega}$, and define
	\begin{equation*}
		\mathbf{w} := \mathbf{v}+\sum_{j=1}^{\frac{d(d+1)}{2}}  \frac{\int_{\partial \Omega} \mathbf{u}\cdot \mathbf{r}_j }{\int_{\partial \Omega}|\mathbf{r}_j|^2 } \mathbf{r}_j.
	\end{equation*}
	Then it is easy to verify that $\mathbf{u}-\mathbf{w}\in \frakH^{\mathrm{N}}_{\mathbf{R}}$ and $\mathbf{w} \in \mathbf{R}$ in each component of $D_\eps$. It follows that $J^{\Omega}_{\lambda,\mu}(\mathbf{u}-\mathbf{w} ,\mathbf{w})=0$ and $J_{\lambda,\mu}^{D_\eps}(\mathbf{h},\mathbf{w}) = 0$ for all $\mathbf{h}\in H^1(D_\eps)$. As a result,
	\begin{equation*}
		\frac{J^{\Omega_{\varepsilon}}_{\lambda,\mu}(\mathbf{u})}{J^{\Omega}_{\lambda,\mu}(\mathbf{u})} = 1- \frac{J^{D_{\varepsilon}}_{\lambda,\mu}(\mathbf{u})}{J^{\Omega}_{\lambda,\mu}(\mathbf{u})} =1- \frac{J^{D_{\varepsilon}}_{\lambda,\mu}(\mathbf{u}-\mathbf{w})}{J^{\Omega}_{\lambda,\mu}(\mathbf{u}-\mathbf{w})+J^{\Omega}_{\lambda,\mu}(\mathbf{w})} \geq \frac{J^{\Omega_{\varepsilon}}_{\lambda,\mu}(\mathbf{u}-\mathbf{w})}{J^{\Omega}_{\lambda,\mu}(\mathbf{u}-\mathbf{w})}\geq m_{\varepsilon}^{\mathrm{N}}+\frac{1}{2},
	\end{equation*}
	Take the infimum over $\mathbf{u}$, we get the desired result.
\end{proof}

\begin{proposition} \label{quan m M} The following comparison results hold:
	\begin{equation}
	\label{eq:endpoints}
		m_{\varepsilon}^{\mathrm{N}}\leq m_{\varepsilon}^{\mathrm{D}} ,\quad M_{\varepsilon}^{\mathrm{N}}\leq M_{\varepsilon}^{\mathrm{D}}.
	\end{equation}
\end{proposition}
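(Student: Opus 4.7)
The first inequality $m^{\mathrm{N}}_{\eps}\le m^{\mathrm{D}}_{\eps}$ is an immediate consequence of Proposition \ref{ao cri MN}: since $\frakH^{\mathrm{D}}_{\mathbf{R}}\subset \mathcal{E}$, the infimum defining $m^{\mathrm{N}}_{\eps}$ over the larger class $\mathcal{E}\setminus\{0\}$ is bounded above by the corresponding infimum over $\frakH^{\mathrm{D}}_{\mathbf{R}}\setminus\{0\}$, which is $m^{\mathrm{D}}_{\eps}$.

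For the second inequality, the plan is to produce, for each $\mathbf{u}\in\frakH^{\mathrm{N}}_{\mathbf{R}}\setminus\{0\}$, a function $\mathbf{w}\in\frakH^{\mathrm{D}}_{\mathbf{R}}\setminus\{0\}$ whose Rayleigh quotient is at least that of $\mathbf{u}$. I would apply Lemma \ref{ann dirichlet} with Dirichlet data $\mathbf{f}=\mathbf{u}|_{\partial\Omega}$ and constants $c^{j}_{\mathbf{n}}=\int_{\partial\omega^{\mathbf{n}}_{\eps}}\frac{\partial\mathbf{u}}{\partial\nu_{(\lambda,\mu)}}\big|_{+}\cdot\mathbf{r}_{j}$; these constants vanish identically because $\mathbf{u}\in\frakH^{\mathrm{N}}_{\mathbf{R}}$ forces $\frac{\partial\mathbf{u}}{\partial\nu}|_{+}\in H^{-\frac{1}{2}}_{\mathbf{R}}(\partial D_\eps)$. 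Let $\mathbf{v}$ be the resulting solution---rigid on each component of $D_\eps$, Lam\'e-harmonic in $\Omega_\eps$, matching $\mathbf{u}$ on $\partial\Omega$, and with vanishing integrated conormal derivative against $\mathbf{R}$ on each $\partial D_i$. Setting $\mathbf{w}:=\mathbf{u}-\mathbf{v}$, routine verifications yield $\mathbf{w}|_{\partial\Omega}=0$, $\mathbf{w}\in\mathcal{E}$, and $\frac{\partial\mathbf{w}}{\partial\nu}|_{+}\in H^{-\frac{1}{2}}_{\mathbf{R}}(\partial D_\eps)$, so $\mathbf{w}\in\frakH^{\mathrm{D}}_{\mathbf{R}}$.

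The crucial computation is the orthogonality $J^{\Omega}_{\lambda,\mu}(\mathbf{u},\mathbf{v})=0$. The contribution from $D_\eps$ vanishes since $\bD(\mathbf{v})=0$ on each component, while Green's identity on $\Omega_\eps$ reduces the $\Omega_\eps$ contribution to boundary integrals whose $\partial\Omega$ piece vanishes (using $\frac{\partial\mathbf{u}}{\partial\nu}|_{\partial\Omega}=0$) and whose $\partial D_i$ piece vanishes (pairing $\frac{\partial\mathbf{u}}{\partial\nu}|_{+}\in H^{-\frac{1}{2}}_{\mathbf{R}}$ against the rigid motion $\mathbf{v}|_{\partial D_i}$). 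This orthogonality yields $J^{\Omega}_{\lambda,\mu}(\mathbf{w})=J^{\Omega}_{\lambda,\mu}(\mathbf{u})+J^{\Omega}_{\lambda,\mu}(\mathbf{v})\ge J^{\Omega}_{\lambda,\mu}(\mathbf{u})$, and since $\mathbf{u}-\mathbf{w}=\mathbf{v}$ is rigid on $D_\eps$ we also get $J^{D_\eps}_{\lambda,\mu}(\mathbf{w})=J^{D_\eps}_{\lambda,\mu}(\mathbf{u})$. Consequently
\[
\frac{J^{\Omega_\eps}_{\lambda,\mu}(\mathbf{w})}{J^{\Omega}_{\lambda,\mu}(\mathbf{w})}=1-\frac{J^{D_\eps}_{\lambda,\mu}(\mathbf{u})}{J^{\Omega}_{\lambda,\mu}(\mathbf{w})}\ge 1-\frac{J^{D_\eps}_{\lambda,\mu}(\mathbf{u})}{J^{\Omega}_{\lambda,\mu}(\mathbf{u})}=\frac{J^{\Omega_\eps}_{\lambda,\mu}(\mathbf{u})}{J^{\Omega}_{\lambda,\mu}(\mathbf{u})},
\]
and taking the supremum over $\mathbf{u}\in\frakH^{\mathrm{N}}_{\mathbf{R}}\setminus\{0\}$ gives $M^{\mathrm{N}}_{\eps}\le M^{\mathrm{D}}_{\eps}$. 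The only remaining subtlety is ensuring $\mathbf{w}\neq 0$: if $\mathbf{w}=0$ then $\mathbf{u}=\mathbf{v}$ would be rigid on each component of $D_\eps$, placing $\mathbf{u}$ in $\mathcal{S}^{\mathrm{N}}\,\mathrm{ker}(-\tfrac{\mathbb{I}}{2}+\mathbb{K}^{\mathrm{N},*})$, but this subspace intersects $\frakH^{\mathrm{N}}_{\mathbf{R}}$ only at $0$ by the orthogonal decomposition \eqref{eq:HNdecom}. The main obstacle in executing this plan is the careful bookkeeping of boundary orientations and the use of the $H^{-\frac{1}{2}}_{\mathbf{R}}$ constraint on each connected component of $\partial D_\eps$ to make all unwanted boundary terms disappear.
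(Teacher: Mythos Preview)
Your proof is correct and follows essentially the same approach as the paper's. For the second inequality you construct the same auxiliary function via Lemma \ref{ann dirichlet} (the paper takes $\mathbf{f}=-\mathbf{u}|_{\partial\Omega}$ and works with $\mathbf{u}+\mathbf{v}$, you take $\mathbf{f}=\mathbf{u}|_{\partial\Omega}$ and work with $\mathbf{u}-\mathbf{v}$, which is equivalent), and you even address the nondegeneracy $\mathbf{w}\neq 0$ via the orthogonal decomposition \eqref{eq:HNdecom}, a point the paper leaves implicit. For the first inequality your shortcut via Proposition \ref{ao cri MN} is a clean simplification: the paper instead gives a direct argument with Lemma \ref{appendix lemma 1} that essentially re-derives that proposition in this special case.
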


\begin{proof}
For the second inequality, take an arbitrary nonzero $\mathbf{u} \in \frakH^{\mathrm{N}}_{\mathbf{R}}$, let $\mathbf{v}\in H^1(\Omega)$ solve \eqref{ann dirichlet equation} with data $c_{\mathbf{n}}^j=0$ and $\mathbf{f}=-\mathbf{u}|_{\partial \Omega}$. Then we verify easily that $\mathbf{u}+\mathbf{v}\in \frakH^{\mathrm{D}}_{\mathbf{R}} $, and $\mathbf{v} \in \mathbf{R}$ in each component of $D_\eps$. The last point and $\partial \mathbf{u}/\partial \nu\rvert_{\partial \Omega} = 0$ also imply that $J^{\Omega}_{\lambda,\mu}(\mathbf{u}, \mathbf{v})=0$ (they are orthogonal with respect to this inner product). Hence,
		\begin{equation}\label{m d inff}
			\frac{J^{\Omega_{\varepsilon}}_{\lambda,\mu}(\mathbf{u})}{J^{\Omega}_{\lambda,\mu}(\mathbf{u})}= 1- \frac{J^{D_{\varepsilon}}_{\lambda,\mu}(\mathbf{u}+\mathbf{v})}{J^{\Omega}_{\lambda,\mu}(\mathbf{u})}\leq 1- \frac{J^{D_{\varepsilon}}_{\lambda,\mu}(\mathbf{u}+\mathbf{v})}{J^{\Omega}_{\lambda,\mu}(\mathbf{u})+J^{\Omega}_{\lambda,\mu}(\mathbf{v})}=\frac{J^{\Omega_{\varepsilon}}_{\lambda,\mu}(\mathbf{u}+\mathbf{v})}{J^{\Omega}_{\lambda,\mu}(\mathbf{u}+\mathbf{v})}\leq M_{\varepsilon}^{\mathrm{D}} + \frac12.
		\end{equation}
		Take the supremum over $\mathbf{u}$, we obtain $M^{\rm N}_\eps \le M^{\rm D}_\eps$.

	The proof for the first inequality in \eqref{eq:endpoints} is similar. Taken any nonzero $\mathbf{u} \in \frakH^{\mathrm{D}}_{\mathbf{R}}$, let $\mathbf{v}\in H^1(\Omega)$ be the solution to \eqref{ann} with $c_{\mathbf{n}}^j=\int_{\partial \omega^{\mathbf{n}}_{\varepsilon}} \left.\frac{\partial \mathbf{u}}{\partial \nu_{(\lambda,\mu)}} \right|_+\cdot \mathbf{r}_j$ and $\mathbf{g}=\left. \frac{\partial \mathbf{u}}{\partial \nu}\right|_{\partial \Omega}$. Then we verify easily that $\mathbf{u}-\mathbf{v}\in \frakH^{\mathrm{N}}_{\mathbf{R}}$ and $\mathbf{v}\in \mathbf{R}$ in each component of $D_\eps$. It follows that $J^{\Omega}_{\lambda,\mu}(\mathbf{u}-\mathbf{v},\mathbf{v})=0$, and
	\begin{equation}\label{M N sss}
		\frac{J^{\Omega_{\varepsilon}}_{\lambda,\mu}(\mathbf{u})}{J^{\Omega}_{\lambda,\mu}(\mathbf{u})} = 1- \frac{J^{D_{\varepsilon}}_{\lambda,\mu}(\mathbf{u})}{J^{\Omega}_{\lambda,\mu}(\mathbf{u})}= 1-\frac{J^{D_{\varepsilon}}_{\lambda,\mu}(\mathbf{u}-\mathbf{v})}{J^{\Omega}_{\lambda,\mu}(\mathbf{u}-\mathbf{v}) + J^\Omega(\mathbf{v})} \geq \frac{J^{\Omega_{\varepsilon}}_{\lambda,\mu}(\mathbf{u}-\mathbf{v})}{J^{\Omega}_{\lambda,\mu}(\mathbf{u}-\mathbf{v})}\geq m_{\varepsilon}^{\mathrm{N}} + \frac12.
	\end{equation}
	Take the infimum over $\mathbf{u}$ and we obtain the desired inequality.
\end{proof}

Now we are ready to establish the gaps between $-\frac12$ and $m^{\rm N}_\eps$ and between $M^{\rm N}_\eps$ and $\frac12$. Combined with the arguments at the beginning of this section, this also completes the proof of Theorem \ref{thm:specgap}.

\begin{theorem}\label{unif m M}
Assume that {\upshape(A2)} holds. Then:
	\begin{equation}
	\label{eq:gapMm}
			-\frac12 < \inf_{\varepsilon >0} m_{\varepsilon}^{\mathrm{N}} \le  \inf_{\varepsilon >0} m_{\varepsilon}^{\mathrm{D}}, \qquad \text{and} \qquad \sup_{\varepsilon >0} M_{\varepsilon}^{\mathrm{N}} \le \sup_{\varepsilon >0} M_{\varepsilon}^{\mathrm{D}} <\frac{1}{2}.
	\end{equation}
\end{theorem}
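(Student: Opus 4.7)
\emph{Reduction.} By Proposition \ref{quan m M} it suffices to show $\inf_\eps m^{\rm N}_\eps > -\tfrac12$ and $\sup_\eps M^{\rm D}_\eps < \tfrac12$. Using Proposition \ref{ao cri MN}, the former is equivalent to finding a universal constant $C_1$ with $J^{D_\eps}_{\lambda,\mu}(\mathbf{u})\le C_1\,J^{\Omega_\eps}_{\lambda,\mu}(\mathbf{u})$ for every $\mathbf{u}\in \mathcal{E}$, and the latter to a universal $C_2$ with $J^{\Omega_\eps}_{\lambda,\mu}(\mathbf{u})\le C_2\,J^{D_\eps}_{\lambda,\mu}(\mathbf{u})$ for every $\mathbf{u}\in \frakH^{\rm D}_{\mathbf{R}}$. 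Both will be proved via the Dirichlet principle (Lemma \ref{lem:diriprin}) together with uniform cell-by-cell test functions built from two $\bD$-controlled extension operators on the reference unit cell $(Y,\omega)$: $E_1\colon H^1(Y\setminus \ol \omega)\to H^1(\omega)$ preserving the trace on $\partial \omega$, and $E_2\colon \{\mathbf{w}\in H^1(\omega):\mathbf{w}\perp \mathbf{R}\ \mathrm{in}\ L^2(\omega)\}\to H^1(Y\setminus \ol \omega)$ preserving the trace on $\partial \omega$ and vanishing on $\partial Y$, both satisfying $\|\bD(E_j\mathbf{w})\|_{L^2}\le C\|\bD(\mathbf{w})\|_{L^2}$. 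The standard recipe is to first subtract the $L^2$-projection of $\mathbf{w}$ onto $\mathbf{R}$, invoke Korn's inequality to control the residue in $H^1$, and then apply an $H^1$ lifting of the boundary trace (for $E_2$, the condition $\mathrm{dist}(\omega,\partial Y)>0$ from (A2) permits the vanishing boundary value on $\partial Y$).

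\emph{Lower bound on $m^{\rm N}_\eps$.} Fix $\mathbf{u}\in \mathcal{E}$. In each $\omega_\eps^\mathbf{n}$, since $\mathcal{L}_{\lambda,\mu}\mathbf{u}=0$, Lemma \ref{lem:diriprin} gives $J^{\omega_\eps^\mathbf{n}}_{\lambda,\mu}(\mathbf{u})\le J^{\omega_\eps^\mathbf{n}}_{\lambda,\mu}(\mathbf{v}_\mathbf{n})$ for any $\mathbf{v}_\mathbf{n}$ sharing the trace of $\mathbf{u}$ on $\partial \omega_\eps^\mathbf{n}$. Taking $\mathbf{v}_\mathbf{n}$ to be the rescaled $E_1$-extension of $\mathbf{u}|_{Y_\eps^\mathbf{n}\setminus\ol{\omega_\eps^\mathbf{n}}}$, scaling back and summing over $\mathbf{n}\in\Pi_\eps$ yields $J^{D_\eps}_{\lambda,\mu}(\mathbf{u})\le C\,J^{Y_\eps\setminus\ol{D_\eps}}_{\lambda,\mu}(\mathbf{u})\le C\,J^{\Omega_\eps}_{\lambda,\mu}(\mathbf{u})$.

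\emph{Upper bound on $M^{\rm D}_\eps$.} Fix $\mathbf{u}\in \frakH^{\rm D}_\mathbf{R}$ and, for each $\mathbf{n}$, let $\mathbf{r}_\mathbf{n}\in \mathbf{R}$ be the $L^2(\omega_\eps^\mathbf{n})$-projection of $\mathbf{u}|_{\omega_\eps^\mathbf{n}}$ onto $\mathbf{R}$. Construct $\mathbf{v}\in H^1(\Omega_\eps)$ by placing, on each annulus $Y_\eps^\mathbf{n}\setminus \ol{\omega_\eps^\mathbf{n}}$, the rescaled $E_2$-extension of $\mathbf{u}-\mathbf{r}_\mathbf{n}$, and setting $\mathbf{v}=0$ on the cushion $K_\eps$. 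Since $E_2$-extensions vanish on $\partial Y$, the pieces match across each $\partial Y_\eps^\mathbf{n}$, so $\mathbf{v}\in H^1(\Omega_\eps)$ and $\mathbf{v}=0$ on $\partial \Omega$. On each $\partial \omega_\eps^\mathbf{n}$ (from the $\Omega_\eps$ side) we have $\mathbf{u}-\mathbf{v}=\mathbf{r}_\mathbf{n}\in \mathbf{R}$; combining this with $\mathcal{L}_{\lambda,\mu}\mathbf{u}=0$ in $\Omega_\eps$ and the orthogonality $\partial \mathbf{u}/\partial \nu|_+\in H^{-\frac12}_\mathbf{R}(\partial D_\eps)$ (which annihilates $\mathbf{r}_\mathbf{n}$ componentwise), Green's identity \eqref{eq:GreenLame} yields $J^{\Omega_\eps}_{\lambda,\mu}(\mathbf{u},\mathbf{u}-\mathbf{v})=0$, hence $J^{\Omega_\eps}(\mathbf{u})\le J^{\Omega_\eps}(\mathbf{v})$ by Cauchy--Schwarz. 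The $E_2$ bound and rescaling then give $J^{\Omega_\eps}(\mathbf{v})\le C\,J^{D_\eps}(\mathbf{u})$, and the conclusion follows.

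\emph{Main obstacle.} The principal technical point is constructing $E_1,E_2$ with estimates in $\|\bD(\cdot)\|_{L^2}$ rather than $\|\cdot\|_{H^1}$: removing the rigid-motion component before extending is essential, since otherwise the rescaling from $(Y,\omega)$ to $(Y_\eps^\mathbf{n},\omega_\eps^\mathbf{n})$ would introduce $\eps^{-1}$ factors that destroy uniformity. The other crucial ingredient, used only in the upper bound, is the orthogonality of the conormal derivative to $\mathbf{R}$ on each component of $\partial D_\eps$; this is exactly the restriction encoded by the subscript $\mathbf{R}$ in $\frakH^{\rm D}_\mathbf{R}$, and explains why the spectral gap analysis localises to this subspace.
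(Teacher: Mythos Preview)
Your proof is correct and takes a genuinely different route from the paper's. After the same reduction via Proposition~\ref{quan m M}, the paper does \emph{not} build extension operators; instead, for the upper bound on $M^{\rm D}_\eps$ it constructs $\mathbf{v}$ by solving, in each cell $Y^{\mathbf{n}}_\eps$, the rigid-inclusion problem of Lemma~\ref{ann dirichlet} with boundary data $-\mathbf{u}$, so that $\mathbf{u}+\mathbf{v}$ lands exactly in $\frakH^{\rm D}_\mathbf{R}(Y^{\mathbf{n}}_\eps,\omega^{\mathbf{n}}_\eps)$; combining orthogonality with the elementary inequality $\sum a_i/\sum b_i \ge \min a_i/b_i$ and rescaling yields the \emph{explicit} bound $M^{\rm D}_\eps \le M^{\rm D}(\omega)$, the top of the spectrum of the unit-cell NP operator. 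For the lower bound the paper is even more direct: it simply bounds $J^{D_\eps}(\mathbf{u})/J^\Omega(\mathbf{u})$ by $\max_{\mathbf{n}} J^{\omega^{\mathbf{n}}_\eps}(\mathbf{u})/J^{Y^{\mathbf{n}}_\eps}(\mathbf{u})$ and invokes Proposition~\ref{ao cri MN} at the unit scale to recognise this as $\tfrac12 - m^{\rm N}(\omega)$. Your argument replaces the auxiliary PDEs and the spectral identification by the two $\bD$-controlled extension operators $E_1,E_2$ (built via Korn plus trace lifting), together with the Dirichlet principle; this is closer in spirit to classical perforated-domain homogenization and avoids Lemmas~\ref{appendix lemma 1}--\ref{ann dirichlet} altogether, at the cost of losing the sharp identification of the gap with the single-cell NP spectrum that the paper emphasises. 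Both approaches hinge on the same orthogonality mechanism (the condition $\partial\mathbf{u}/\partial\nu|_+\in H^{-\frac12}_\mathbf{R}$) for the upper bound.
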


Note that in the proof below, we actually provides rather explicit bounds for $\inf\{m^{\rm N}_\eps\}$ and for $\sup\{M^{\rm D}_\eps\}$. They are given by the NP operators associated to the model inclusion $\omega$ inside the cell $Y$ at the unit scale.
\begin{proof}
	Thanks to Proposition \ref{quan m M}, we only need to establish a uniform lower bound for $\{m_{\varepsilon}^{\mathrm{N}}\}$ and an upper bound for $\{M_{\varepsilon}^{\mathrm{D}}\}$. Note also, for each fixed $\eps > 0$, by the setting of $D = D_\eps$, each component $\omega^{\mathbf{n}}_\eps$, $\mathbf{n}\in \Pi_\eps$, is contained in an $\eps$-cube $Y^{\mathbf{n}}_\eps$ that is completely contained in $\Omega$. The cushion area $\Omega\setminus (\cup_{\mathbf{n}\in \Pi_\eps} \ol Y^\mathbf{n}_\eps)$ is denoted by $K_\eps$ below, and $Y_\eps$ denotes the remaining part $\Omega\setminus \ol K_\eps$.

	To establish an upper bound of $\{M^{\rm D}_\eps\}$, take an arbitrary nonzero $\mathbf{u} \in \frakH^{\rm D}_\mathbf{R}$, and construct a function $\mathbf{v}$ piecewise, as follows: let $\mathbf{v} = -\mathbf{u}$ in $K_\eps$, and on each $\eps$-cube $Y^\mathbf{n}_\eps$, let $\mathbf{v}$ solve
	\begin{equation}
		\mathcal{L}_{\lambda,\mu}\mathbf{v} = 0 \; \text{in $Y^\mathbf{n}_\eps \setminus \bar{\omega}_{\varepsilon}^{\mathbf{n}}$ and }\; \mathbf{v} \in \mathbf{R} \;\text{in $\ol\omega^\mathbf{n}_\eps$}, \quad \mathbf{v}\rvert_{\partial Y^\mathbf{n}_\eps} = -\mathbf{u}\rvert_{\partial Y^\mathbf{n}_\eps}, \quad \frac{\partial \mathbf{v}}{\partial \nu}\Big\rvert_+ = 0 \; \text{on $\partial \omega^\mathbf{n}_\eps$}.
	\end{equation}
	The existence and uniqueness of $\mathbf{v}$ in each $\eps$-cube is guaranteed by an application of Lemma \ref{ann dirichlet}. By construction, $\mathbf{v}\in H^1_0(\Omega)$ (since $\frakH^{\rm D}_\mathbf{R}\subset H^1_0(\Omega)$), $\mathbf{v} + \mathbf{u}$ vanishes in $K_\eps$, and $\mathbf{v} \in \mathbf{R}$ in each components of $D_\eps$. It follows that $J_{\lambda,\mu}^{\Omega}(\mathbf{u}, \mathbf{v}) = J_{\lambda,\mu}^{\Omega_\eps}(\mathbf{u},\mathbf{v})=0$, so
	\begin{equation}
	\label{eq:comp1}
		\frac{J^{D_{\varepsilon}}_{\lambda,\mu}(\mathbf{u})}{J^{\Omega}_{\lambda,\mu}(\mathbf{u})}=\frac{J^{D_{\varepsilon}}_{\lambda,\mu}(\mathbf{u}+\mathbf{v})}{J^{\Omega}_{\lambda,\mu}(\mathbf{u})}\geq \frac{J^{D_{\varepsilon}}_{\lambda,\mu}(\mathbf{u}+\mathbf{v})}{J^{\Omega}_{\lambda,\mu}(\mathbf{u})+J^{\Omega}_{\lambda,\mu}(\mathbf{v})}=\frac{J^{D_{\varepsilon}}_{\lambda,\mu}(\mathbf{u}+\mathbf{v})}{J^{\Omega}_{\lambda,\mu}(\mathbf{u}+\mathbf{v} )}=
		\frac{\sum_{\mathbf{n}\in \Pi_{\varepsilon} } J_{\lambda,\mu}^{ \omega_{\varepsilon}^{\mathbf{n}} }(\mathbf{u}+\mathbf{v})}{\sum_{\mathbf{n}\in \Pi_{\varepsilon} } J_{\lambda,\mu}^{Y_{\varepsilon}^{\mathbf{n}}}(\mathbf{u}+\mathbf{v})}.
	\end{equation}
	Observe the following elementary inequality: for any positive integer $N$, positive numbers $\{b_i\}_{i=1}^N$ and non-negative numbers $\{a_i\}_{i=1}^N$, it holds 
	\begin{equation}
	\label{eq:elemineq}
	\min_{1\le i\le N} \frac{a_i}{b_i} \le \frac{\sum_{i=1}^N a_i}{\sum_{i=1}^N b_i}\leq \max_{1\le i \le N}\frac{a_i}{b_i}.
	\end{equation}
	By inspecting we see the very right hand side of \eqref{eq:comp1} is a quotient of the form $(\sum_i a_i)/(\sum_i b_i)$. Moreover, we may assume $J^{Y^{\mathbf{n}}_\eps}_{\lambda,\mu}(\mathbf{u}) > 0$ for all $\mathbf{n}$ because, if this fails for some $\mathbf{n}$, the corresponding term on the nominator, i.e., $J^{\omega^\mathbf{n}_\eps}_{\lambda,\mu}(\mathbf{u})$ also vanishes. Apply the above inequality to \eqref{eq:comp1}, we get
	\begin{equation*}
		\frac{J^{D_{\varepsilon}}_{\lambda,\mu}(\mathbf{u})}{J^{\Omega}_{\lambda,\mu}(\mathbf{u})} \geq  \min_{\mathbf{n}\in \Pi_{\varepsilon}} \frac{ J_{\lambda,\mu}^{ \omega_{\varepsilon}^{\mathbf{n}} }(\mathbf{u}+\mathbf{v})}{ J_{\lambda,\mu}^{ Y_{\varepsilon}^{\mathbf{n}} }(\mathbf{u}+\mathbf{v})}.
	\end{equation*}
	For each $\mathbf{n} \in \Pi_\eps$, we observe that $\mathbf{u} + \mathbf{v}$ belongs to $\frakH^{\rm D}_\mathbf{R}(Y^\mathbf{n}_\eps,\omega^\mathbf{n}_\eps)$, the space defined by \eqref{eq:HDspace} with $\Omega$ replaced by $Y^\mathbf{n}_\eps$ and $D$ replaced by $\omega^\mathbf{n}_\eps$. We hence get
	\begin{equation}
	\label{eq:comp2}
		\frac{ J_{\lambda,\mu}^{ \omega_{\varepsilon}^{\mathbf{n}} }(\mathbf{u}+\mathbf{v})}{ J_{\lambda,\mu}^{ Y_{\varepsilon}^{\mathbf{n}} }(\mathbf{u}+\mathbf{v})} \ge \inf_{\mathbf{w}\in \frakH^{\mathrm{D}}_{\mathbf{R}}(Y^\mathbf{n}_\eps,\omega^\mathbf{n}_\eps)\setminus\{0\}} \frac{J^{\omega^\mathbf{n}_\eps}_{\lambda,\mu}(\mathbf{w})}{J^{Y^\mathbf{n}_\eps}_{\lambda,\mu}(\mathbf{w})} = \inf_{\mathbf{w}\in \frakH^{\mathrm{D}}_{\mathbf{R}}(Y,\omega)\setminus\{0\}} \frac{J^{\omega}_{\lambda,\mu}(\mathbf{w})}{J^Y_{\lambda,\mu}(\mathbf{w})}.
	\end{equation}
	The equality above holds because the translation and rescaling transform $\mathbf{w}(\cdot) \to \widetilde{\mathbf{w}}((\cdot-\mathbf{n})/\eps)$ is an isomorphism between $\frakH^{\rm D}(Y,\omega)$ and $\frakH^{\rm D}(Y^\mathbf{n}_\eps,\omega^\mathbf{n}_\eps)$, and this transform leaves the quotient unchanged. Similar to the definitions in \eqref{def m M d}, the very right hand side of \eqref{eq:comp2} is exactly $1/2 - M$, where $M = M^{\rm D}(\omega)$ is the maximum point in the spectra of $\bK^{{\rm D},*}_{\omega}$ acting on $(H^{-\frac12}_\mathbf{R}(\partial \omega),(\cdot,\cdot)_{\bS^{\rm D}})$. Since $-\frac{\bI}{2} + \bK^{{\rm D},*}_{\omega}$ is an isometry on $H^{-\frac12}_\mathbf{R}(\partial \omega)$, we get $M < \frac12$. It is important to note that $M$ depends on $(\lambda,\mu)$, the dimension $d$ and the model set $\omega$, but is independent of $\eps$. In other words, $M < \frac12$ is a universal constant. We then verify that
\begin{equation}
\label{eq:comp3}
		\frac{J^{\Omega_{\varepsilon}}_{\lambda,\mu}(\mathbf{u})}{J^{\Omega}_{\lambda,\mu}(\mathbf{u})} = 1-\frac{J^{D_{\varepsilon}}_{\lambda,\mu}(\mathbf{u})}{J^{\Omega}_{\lambda,\mu}(\mathbf{u})} \le M^{\rm D}(\omega) + \frac12, \qquad \forall \mathbf{u} \in \frakH^{\rm D}_\mathbf{R} \setminus \{0\}, \; \forall \eps > 0.
	\end{equation}
	It follows immediately that for all $\eps > 0$, $M^{\rm D}_\eps \le M^{\rm D}(\omega) < \frac12$.
	
	The proof of the lower bound of $\{m^{\rm N}_\eps\}$ is easier. For any fixed $\varepsilon>0$ and nonzero $\mathbf{u} \in\frakH^{\mathrm{N}}_{\mathbf{R}}$, we easily get 
	\begin{equation*}
		\frac{J^{D_{\varepsilon}}_{\lambda,\mu}(\mathbf{u})}{J^{\Omega}_{\lambda,\mu}(\mathbf{u})}\leq \frac{J^{D_{\varepsilon}}_{\lambda,\mu}(\mathbf{u})}{J^{Y_{\varepsilon}}_{\lambda,\mu}(\mathbf{u})}=\frac{\sum_{\mathbf{n}\in \Pi_{\varepsilon} } J_{\lambda,\mu}^{ \omega_{\varepsilon}^{\mathbf{n}} }(\mathbf{u})}{\sum_{\mathbf{n}\in \Pi_{\varepsilon} } J_{\lambda,\mu}^{Y_{\varepsilon}^{\mathbf{n}}}(\mathbf{u})},
	\end{equation*}
	Applying the inequality \eqref{eq:elemineq} (the second half) to the last term above, and repeating the rescaling arguments used earlier, we obtain
	\begin{equation*}
		\frac{J^{D_{\varepsilon}}_{\lambda,\mu}(\mathbf{u})}{J^{\Omega}_{\lambda,\mu}(\mathbf{u})} \leq   \max_{\mathbf{n}\in \Pi_{\varepsilon}} \frac{ J_{\lambda,\mu}^{ \omega_{\varepsilon}^{\mathbf{n}} }(\mathbf{u})}{ J_{\lambda,\mu}^{ Y_{\varepsilon}^{\mathbf{n}} }(\mathbf{u})} \le \sup_{\mathbf{w}\in \mathcal{E}\setminus \{0\}} \frac{J^{\omega}_{\lambda,\mu}(\mathbf{w})}{J^Y_{\lambda,\mu}(\mathbf{w})}.
	\end{equation*}
	Here $\mathcal{E}$ is defined as in \eqref{eq:Espace} with $\Omega$ replaced by $Y$ and $D$ replaced by $\omega$. It worths mentioning that $\mathcal{E}$ is indeed the right place for $\mathbf{u}$ restricted to each $\eps$-cube, since no further information at the boundaries of the cube and the inclusion is available. Nevertheless, due to Proposition \ref{ao cri MN}, the term on the very right hand side above is precisely $\frac12 - m$, where $m = m^{\rm N}(\omega)$ is the minimum element in the spectra of $-\frac{\bI}{2}+\bK^{{\rm N},*}_\omega$ acting on $(H^{-\frac12}(\partial \omega),(\cdot,\cdot)_{\bS^{\rm N}})$. In particular, $m>-\frac12$ and is a universal constant. We hence have proved that
	\begin{equation}
	\label{eq:comp4}
		\frac{J^{\Omega_{\varepsilon}}_{\lambda,\mu}(\mathbf{u})}{J^{\Omega}_{\lambda,\mu}(\mathbf{u})} = 1-\frac{J^{D_{\varepsilon}}_{\lambda,\mu}(\mathbf{u})}{J^{\Omega}_{\lambda,\mu}(\mathbf{u})} \ge m^{\rm N}(\omega) + \frac12, \qquad \forall \mathbf{u} \in \frakH^{\rm N}_\mathbf{R} \setminus \{0\}, \; \forall \eps > 0.
	\end{equation}
	It follows immediately that for all $\eps > 0$, $m^{\rm N}_\eps \ge m^{\rm N}(\omega) > \frac12$. This completes the proofs.
\end{proof}

The following results, which establish comparisons of the energies distributed in $D_\eps$ and $\Omega_\eps$ for functions in $\mathcal{E}$, will be useful later. 
\begin{corollary}
	Assume that {\upshape(A2)} holds. Then there exists a universal constant $C > 0$ such that
	\begin{equation}\label{uniform estimate 1}
		J^{\Omega_{\varepsilon}}_{\lambda,\mu}(\mathbf{u}) \leq CJ_{\lambda,\mu}^{D_{\varepsilon}}(\mathbf{u}) \quad \mathrm{for\ any}\ \mathbf{u}\in \frakH^{\mathrm{D}}_{\mathbf{R}}\cup \frakH^{\mathrm{N}}_{\mathbf{R}}.
	\end{equation} 
and
\begin{equation}\label{uniform estimate 2}
	J^{ D_{\varepsilon}}_{\lambda,\mu}(\mathbf{v}) \leq CJ_{\lambda,\mu}^{\Omega_{\varepsilon}}(\mathbf{v}) \quad \mathrm{for\ any}\ \mathbf{v}\in \mathcal{E}.
\end{equation}
Clearly, replacing $\Omega_\eps$ by $\Omega$ in those inequalities, the results still hold.
\end{corollary}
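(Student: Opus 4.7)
The plan is to obtain both inequalities as immediate quantitative translations of the uniform spectral gaps \eqref{eq:gapMm} of Theorem \ref{unif m M}, combined with the basic additivity identity $J^{\Omega}_{\lambda,\mu}(\mathbf{u}) = J^{\Omega_\eps}_{\lambda,\mu}(\mathbf{u}) + J^{D_\eps}_{\lambda,\mu}(\mathbf{u})$ valid for every $\mathbf{u}\in H^1(\Omega)$. No new analytic input is needed; this is essentially a bookkeeping step that recasts the spectral estimates in an energy-comparison form convenient for the convergence proofs in Section \ref{Strong convergence to the extreme}.

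For \eqref{uniform estimate 1}, I will use the Rayleigh-quotient upper bound built into the definitions \eqref{def m M n}--\eqref{def m M d}, namely $J^{\Omega_\eps}_{\lambda,\mu}(\mathbf{u})/J^{\Omega}_{\lambda,\mu}(\mathbf{u}) \leq M_\eps^{\mathrm N} + \tfrac12$ on $\frakH^{\mathrm N}_\mathbf{R}\setminus\{0\}$ and the analogue on $\frakH^{\mathrm D}_\mathbf{R}\setminus\{0\}$ with $M_\eps^{\mathrm D}$. By the comparison $M_\eps^{\mathrm N} \leq M_\eps^{\mathrm D}$ of Proposition \ref{quan m M}, both quotients are dominated by $M_\eps^{\mathrm D} + \tfrac12$. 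Substituting $J^{\Omega} = J^{\Omega_\eps} + J^{D_\eps}$ and rearranging gives
\begin{equation*}
J^{\Omega_\eps}_{\lambda,\mu}(\mathbf{u}) \;\leq\; \frac{M_\eps^{\mathrm D} + 1/2}{1/2 - M_\eps^{\mathrm D}}\, J^{D_\eps}_{\lambda,\mu}(\mathbf{u}).
\end{equation*}
Since $\sup_\eps M_\eps^{\mathrm D} < \tfrac12$ is universal by Theorem \ref{unif m M}, the prefactor is a universal constant $C$, yielding \eqref{uniform estimate 1}.

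For \eqref{uniform estimate 2}, which concerns the larger space $\mathcal{E}$, the essential point is Proposition \ref{ao cri MN}, which permits relaxing the infimum defining $m_\eps^{\mathrm N}$ from $\frakH^{\mathrm N}_\mathbf{R}$ to all of $\mathcal{E}\setminus\{0\}$. Hence $J^{\Omega_\eps}_{\lambda,\mu}(\mathbf{v})/J^{\Omega}_{\lambda,\mu}(\mathbf{v}) \geq m_\eps^{\mathrm N} + \tfrac12$ for every $\mathbf{v}\in\mathcal{E}\setminus\{0\}$, and the same additivity manipulation produces
\begin{equation*}
J^{D_\eps}_{\lambda,\mu}(\mathbf{v}) \;\leq\; \frac{1/2 - m_\eps^{\mathrm N}}{m_\eps^{\mathrm N} + 1/2}\, J^{\Omega_\eps}_{\lambda,\mu}(\mathbf{v}).
\end{equation*}
Since $\inf_\eps m_\eps^{\mathrm N} > -\tfrac12$ is universal by Theorem \ref{unif m M}, this coefficient is again uniformly controlled in $\eps$.

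Finally, the remark that $\Omega_\eps$ may be replaced by $\Omega$ in either inequality requires no separate argument: for \eqref{uniform estimate 1} one simply adds $J^{D_\eps}_{\lambda,\mu}(\mathbf{u})$ to both sides and invokes additivity, while for \eqref{uniform estimate 2} the resulting statement is only weaker since $J^{\Omega_\eps}_{\lambda,\mu}(\mathbf{v})\leq J^{\Omega}_{\lambda,\mu}(\mathbf{v})$. I do not anticipate any real obstacle here — all the genuine difficulty was already absorbed in the proofs of Theorem \ref{unif m M} and Proposition \ref{ao cri MN}, so the main care is only to track that the resulting constant depends on $d,\Omega,\omega,\lambda,\mu$ (and through Theorem \ref{unif m M} on the unit-scale quantities $m^{\mathrm N}(\omega), M^{\mathrm D}(\omega)$) but not on $\eps$.
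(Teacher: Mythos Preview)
Your proposal is correct and follows essentially the same approach as the paper's proof, which simply states that the inequalities are direct consequences of Theorem \ref{unif m M} (the uniform bounds on $M^{\mathrm D}_\eps$, $M^{\mathrm N}_\eps$ for \eqref{uniform estimate 1} and on $m^{\mathrm N}_\eps$ via Proposition \ref{ao cri MN} for \eqref{uniform estimate 2}). You have supplied the explicit additivity-and-rearrangement details that the paper leaves implicit, and the constants you derive are exactly the ones the paper intends.
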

\begin{proof} Those inequalities are direct consequences of Theorem \ref{unif m M}. More precisely, \eqref{uniform estimate 1} follows from the uniform upper bound of $M^{\rm D}_\eps$ and $M^{\rm N}_\eps$, and \eqref{uniform estimate 2} follows from the uniform lower bound of $m^{\rm D}_\eps$ and $m^{\rm N}_\eps$ and the characterization in Proposition \ref{ao cri MN}.
\end{proof}

\subsection{Uniform estimates for related operators} In this subsection, we establish uniform (in $\eps$) bounds for some operators related to the single-layer potentials defined using the Neumann function $\bm{\Gamma}^{\rm N}$, and reference to the Lam\'e coefficients is specified when they are different from the background one, i.e., $(\lambda,\mu)$. 

\begin{proposition}\label{prop:Lambdae}
	Assume that {\upshape(A2)} holds. 
	Then $\frac{\mathbb{I}}{2}+\mathbb{K}_{D_{\varepsilon}}^{\lambda,\mu,*} : \cH \to \cH$ is a strictly positive bounded self-adjoint operator. Furthermore, there is a universal constant $C>0$, such that
\begin{equation}
\label{eq:DtNebdd}
	\left\| (\frac{\mathbb{I}}{2}+\mathbb{K}_{D_{\varepsilon}}^{\lambda,\mu,*})^{-1}  \right\|_{\mathscr{L}(\mathcal{H})}
	\leq C.
\end{equation}
\end{proposition}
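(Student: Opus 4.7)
The plan is to combine the self-adjointness of $\mathbb{K}^{\lambda,\mu,*}_{D_\varepsilon}$ in the $(\cdot,\cdot)_{\mathbb{S}^{\mathrm{N}}}$ inner product with the uniform spectral gap provided by Theorem \ref{thm:specgap} to reduce the proposition to an elementary fact about strictly positive bounded self-adjoint operators. First I would record self-adjointness from Remark \ref{rem:NPselfadj} and boundedness from Proposition \ref{prop:SDclassical}(ii), which together show that $\frac{\mathbb{I}}{2}+\mathbb{K}^{\lambda,\mu,*}_{D_\varepsilon}$ is a bounded self-adjoint operator on $(\mathcal{H},(\cdot,\cdot)_{\mathbb{S}^{\mathrm{N}}})$.

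Next I would invoke the orthogonal decomposition of Remark \ref{rem:H-1/2decomp},
\begin{equation*}
\mathcal{H} \;=\; \mathcal{H}_{\mathbf{R}} \,\oplus\, \mathrm{ker}\bigl(-\tfrac{\mathbb{I}}{2}+\mathbb{K}^{\lambda,\mu,*}_{D_\varepsilon}\bigr),
\end{equation*}
which is orthogonal in $(\cdot,\cdot)_{\mathbb{S}^{\mathrm{N}}}$ and whose summands are both invariant under $\mathbb{K}^{\lambda,\mu,*}_{D_\varepsilon}$: the first because $\pm\frac{\mathbb{I}}{2}+\mathbb{K}^{\lambda,\mu,*}_{D_\varepsilon}$ map $\mathcal{H}_{\mathbf{R}}$ to itself by Proposition \ref{prop:SDclassical}(ii), the second automatically, as the kernel of an operator that commutes with $\mathbb{K}^{\lambda,\mu,*}_{D_\varepsilon}$. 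On the second summand, the defining identity yields $\mathbb{K}^{\lambda,\mu,*}_{D_\varepsilon}\bm{\phi} = \tfrac{1}{2}\bm{\phi}$, so $\frac{\mathbb{I}}{2}+\mathbb{K}^{\lambda,\mu,*}_{D_\varepsilon}$ restricts to the identity and contributes only the spectral value $1$. On the first summand, Theorem \ref{thm:specgap} shows that the spectrum of $\mathbb{K}^{\lambda,\mu,*}_{D_\varepsilon}$ is contained in $(-\tfrac{1}{2}+\delta_1,\tfrac{1}{2}-\delta_1)$ with $\delta_1$ universal, so the spectrum of $\frac{\mathbb{I}}{2}+\mathbb{K}^{\lambda,\mu,*}_{D_\varepsilon}$ on $\mathcal{H}_{\mathbf{R}}$ lies in $(\delta_1, 1-\delta_1)$.

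Combining the two pieces, the spectrum of $\frac{\mathbb{I}}{2}+\mathbb{K}^{\lambda,\mu,*}_{D_\varepsilon}$ on all of $\mathcal{H}$ is contained in $[\delta_1, 1]$. Strict positivity is then immediate, and the standard spectral theorem for bounded self-adjoint operators delivers invertibility together with the universal estimate
\begin{equation*}
\bigl\|(\tfrac{\mathbb{I}}{2}+\mathbb{K}^{\lambda,\mu,*}_{D_\varepsilon})^{-1}\bigr\|_{\mathscr{L}(\mathcal{H})} \;\le\; \delta_1^{-1},
\end{equation*}
which is exactly \eqref{eq:DtNebdd}.

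I do not expect a substantive obstacle, since the heavy lifting is already carried out in Theorem \ref{thm:specgap}: the present proposition is effectively a clean corollary obtained by pairing the spectral gap on $\mathcal{H}_{\mathbf{R}}$ with the trivial action on the kernel subspace. The only items that warrant some care are verifying that both summands of the decomposition are invariant (handled above), ensuring that the self-adjointness is genuinely in the $(\cdot,\cdot)_{\mathbb{S}^{\mathrm{N}}}$ inner product so that real spectral theory applies (Remark \ref{rem:NPselfadj}), and checking that the uniformity in $\varepsilon$ is preserved through the reduction --- which it is, since $\delta_1$ in Theorem \ref{thm:specgap} is universal and the identity action on the kernel contributes the spectral point $1$ regardless of $\varepsilon$.
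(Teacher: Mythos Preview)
Your proof is correct. The paper takes a slightly more direct route: rather than decomposing $\cH$ into the two invariant subspaces and invoking Theorem \ref{thm:specgap} on $\cH_{\mathbf{R}}$, it observes that for \emph{any} $\bm{\phi}\in\cH$ the energy identity gives
\[
\bigl((\tfrac{\bI}{2}+\bK^*)\bm{\phi},\bm{\phi}\bigr)_{\bS^{\rm N}} \;=\; J^{\Omega_\eps}_{\lambda,\mu}(\cS\bm{\phi}),
\]
and then applies the uniform comparison \eqref{uniform estimate 2} (which holds on all of $\mathcal{E}$, thanks to Proposition \ref{ao cri MN}) to obtain $J^{\Omega_\eps}_{\lambda,\mu}(\cS\bm{\phi}) \ge C J^{\Omega}_{\lambda,\mu}(\cS\bm{\phi}) = C\|\bm{\phi}\|_{\bS^{\rm N}}^2$, concluding via Lemma \ref{lem:LaxMilgram}. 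Both arguments rest on the same uniform spectral gap established in Section~4.1; the paper's version simply sidesteps the separate treatment of the kernel summand by using the energy comparison in its strongest form (on $\mathcal{E}$ rather than just on $\frakH^{\rm N}_{\mathbf{R}}$), while your version is more transparently spectral-theoretic and delivers the explicit bound $\delta_1^{-1}$.
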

\begin{proof}
	Denote $\bK^{\lambda,\mu,*}_{D_\eps}$ by $\bK^*$. We have seen that $\frac{\bI}{2}+\bK^{*}$ is self-adjoint and bounded. To show positivity and invertibility, fix any $\bm{\phi}\in H^{-\frac{1}{2}}(\partial D_{\varepsilon})$ and define $\mathbf{w} = \cS\bm{\phi}$. Using the definition of $(\cdot,\cdot)_{\mathbb{S}^{\mathrm{N}}}$ and the uniform bound \eqref{uniform estimate 2}, we compute and check that, for some $C>0$ depending only on $\Omega,\omega,d,\lambda,\mu$ but independent of $\eps$, 
	\begin{equation*}
	\begin{aligned}
		((\frac{\mathbb{I}}{2}+\mathbb{K}^{*})\bm{\phi},\bm{\phi} )_{\mathbb{S}^{\mathrm{N}}} &= -\int_{\partial D_\eps} (\frac{\bI}{2}+\bK^*)\bm{\phi} \cdot \cS \bm{\phi} = -\int_{\partial D_\eps} \frac{\partial \mathbf{w}}{\partial \nu} \Big\rvert_{+} \cdot \mathbf{w} \\
		&=J^{\Omega_{\varepsilon}}_{\lambda,\mu}(\mathcal{S} \bm{\phi}) \geq CJ^{\Omega}_{\lambda,\mu}(\mathcal{S} \bm{\phi})=C\|\bm{\phi}\|_{\mathbb{S}^{\mathrm{N}}}^2.
		\end{aligned}
	\end{equation*}
By Lemma \ref{lem:LaxMilgram}, the desired conclusion follows.
\end{proof}

\begin{proposition}\label{inverse boundedness}
	Assume that {\upshape(A2)} holds. We have the following:
	\begin{enumerate}
		\item [{\upshape (i)}] Suppose $(\lambda',\mu')$ is admissible, then the operator 
			$\Lambda_{D_{\varepsilon}}^{\lambda',\mu',\mathrm{i}}\mathbb{S}_{D_{\varepsilon}}^{\lambda,\mu} : \cH \to \cH$
		is a non-positive self-adjoint operator. Moreover, if $(\lambda',\mu')$ is uniformly admissible, i.e., satisfying \eqref{eq:unifadm}, then  there exists a universal constant $C>0$ such that
		\begin{equation}
			\left\|  \Lambda_{D_{\varepsilon}}^{\lambda',\mu',\mathrm{i}}\mathbb{S}_{D_{\varepsilon}}^{\lambda,\mu}  \right\|_{\mathscr{L}(\cH)} \leq C.
		\end{equation}
	\item [{\upshape (ii)}] Suppose $(\lambda',\mu')$ is admissible, then the restriction of   
$\Lambda_{D_{\varepsilon}}^{\lambda',\mu',\mathrm{i}}\mathbb{S}_{D_{\varepsilon}}^{\lambda,\mu}$ to $\cHR$ is a strictly negative self-adjoint linear transformation on $\cHR$. Moreover, if $(\lambda',\mu')$ is uniformly admissible, then there exists a universal constant $C>0$ such that
	\begin{equation}
		\left\| ( \Lambda_{D_{\varepsilon}}^{\lambda',\mu',\mathrm{i}}\mathbb{S}_{D_{\varepsilon}}^{\lambda,\mu} )^{-1} \right\|_{\mathscr{L}(\cHR)} \leq C.
	\end{equation}
	\end{enumerate}
\end{proposition}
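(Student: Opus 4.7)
I plan to extract every claim from a single bilinear-form identity obtained by Green's identity. Given $\bm{\phi},\bm{\psi}\in\cH$, let $\mathbf{u},\mathbf{v}\in H^1(D_\varepsilon)$ be the unique Lam\'e-$(\lambda',\mu')$-harmonic extensions in $D_\varepsilon$ with $\mathbf{u}|_{\partial D_\varepsilon}=\mathbb{S}^{\lambda,\mu}_{D_\varepsilon}\bm{\phi}$ and $\mathbf{v}|_{\partial D_\varepsilon}=\mathbb{S}^{\lambda,\mu}_{D_\varepsilon}\bm{\psi}$ (well-posed by admissibility of $(\lambda',\mu')$). Unfolding the definition \eqref{new inner product} and using \eqref{eq:GreenLame} in $D_\varepsilon$, I expect to obtain
\begin{equation*}
\bigl(\Lambda^{\lambda',\mu',\mathrm{i}}_{D_\varepsilon}\mathbb{S}^{\lambda,\mu}_{D_\varepsilon}\bm{\phi},\bm{\psi}\bigr)_{\mathbb{S}^{\mathrm{N}}}
= -\int_{\partial D_\varepsilon}\left.\frac{\partial \mathbf{u}}{\partial\nu_{(\lambda',\mu')}}\right|_{-}\!\cdot\mathbf{v}
= -J^{D_\varepsilon}_{\lambda',\mu'}(\mathbf{u},\mathbf{v}).
\end{equation*}
Symmetry of $J^{D_\varepsilon}_{\lambda',\mu'}$ yields self-adjointness on $\cH$; setting $\bm{\psi}=\bm{\phi}$ yields non-positivity. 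This handles the structural parts of (i) and the self-adjointness in (ii) without any uniform assumption.

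For the uniform upper bound in (i), I would use the Dirichlet principle (Lemma \ref{lem:diriprin}) with the comparison function $\mathbf{w}:=\mathcal{S}^{\lambda,\mu}_{D_\varepsilon}\bm{\phi}$, which matches $\mathbf{u}$ on $\partial D_\varepsilon$, to obtain $J^{D_\varepsilon}_{\lambda',\mu'}(\mathbf{u})\le J^{D_\varepsilon}_{\lambda',\mu'}(\mathbf{w})$; uniform admissibility of $(\lambda',\mu')$ and Remark \ref{rem:equinorm} then give $J^{D_\varepsilon}_{\lambda',\mu'}(\mathbf{w})\le C\,J^{D_\varepsilon}_{\lambda,\mu}(\mathbf{w})\le C\,J^{\Omega}_{\lambda,\mu}(\mathbf{w})=C\|\bm{\phi}\|^2_{\mathbb{S}^{\mathrm{N}}}$ with $C$ universal. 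For the structural strict-negativity in (ii), if $\bm{\phi}\in\cHR$ makes $J^{D_\varepsilon}_{\lambda',\mu'}(\mathbf{u})=0$, the Korn argument of Remark \ref{rem:equinorm} forces $\mathbf{u}\in\mathbf{R}$ on each component of $D_\varepsilon$, so $\mathbb{S}^{\lambda,\mu}\bm{\phi}$ is piecewise rigid; since $\mathbf{R}\subset\ker\mathcal{L}_{\lambda,\mu}$, uniqueness of the $(\lambda,\mu)$-Dirichlet problem in $D_\varepsilon$ makes $\mathbf{w}|_{D_\varepsilon}$ piecewise rigid, and then the jump relation \eqref{jump relation} gives $(-\tfrac{\mathbb{I}}{2}+\mathbb{K}^{\mathrm{N},*})\bm{\phi}=0$; the direct-sum decomposition \eqref{eq:H-1/2decomp} finally forces $\bm{\phi}=0$.

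The heart of the proof is the uniform coercivity on $\cHR$, which I would get from a three-step energy chain. For $\bm{\phi}\in\cHR$, set $\mathbf{w}=\mathcal{S}^{\lambda,\mu}\bm{\phi}\in\frakH^{\mathrm{N}}_{\mathbf{R}}$. First, since $\mathbf{w}$ is the Lam\'e-$(\lambda,\mu)$-harmonic extension of $\mathbb{S}^{\lambda,\mu}\bm{\phi}$ in $D_\varepsilon$, Lemma \ref{lem:diriprin} used in the opposite direction gives $J^{D_\varepsilon}_{\lambda,\mu}(\mathbf{w})\le J^{D_\varepsilon}_{\lambda,\mu}(\mathbf{u})$. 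Second, uniform admissibility of $(\lambda',\mu')$ gives $J^{D_\varepsilon}_{\lambda,\mu}(\mathbf{u})\le C\,J^{D_\varepsilon}_{\lambda',\mu'}(\mathbf{u})$. Third, and this is the decisive step, \eqref{uniform estimate 1} (a consequence of Theorem \ref{unif m M}) gives $J^{\Omega}_{\lambda,\mu}(\mathbf{w})\le C\,J^{D_\varepsilon}_{\lambda,\mu}(\mathbf{w})$. Chaining,
\begin{equation*}
\|\bm{\phi}\|^2_{\mathbb{S}^{\mathrm{N}}}=J^{\Omega}_{\lambda,\mu}(\mathbf{w})\le C\,J^{D_\varepsilon}_{\lambda',\mu'}(\mathbf{u}) = -C\bigl(\Lambda^{\lambda',\mu',\mathrm{i}}_{D_\varepsilon}\mathbb{S}^{\lambda,\mu}_{D_\varepsilon}\bm{\phi},\bm{\phi}\bigr)_{\mathbb{S}^{\mathrm{N}}},
\end{equation*}
which is uniform coercivity, and the uniform bound on the inverse follows from Lemma \ref{lem:LaxMilgram}. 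The main obstacle is precisely this third step: without the periodic structure (A2) and the uniform spectral gap of Theorem \ref{unif m M}, the constant would deteriorate with $\varepsilon$ and one would recover only the non-uniform isomorphism of Proposition \ref{boundary operator isomorphism}.
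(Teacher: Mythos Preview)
Your proof is correct and follows the same strategy as the paper: the identity $(\Lambda^{\lambda',\mu',\mathrm{i}}_{D_\varepsilon}\mathbb{S}^{\lambda,\mu}_{D_\varepsilon}\bm{\phi},\bm{\psi})_{\mathbb{S}^{\mathrm{N}}}=-J^{D_\varepsilon}_{\lambda',\mu'}(\mathbf{u},\mathbf{v})$ gives self-adjointness and non-positivity, the Dirichlet principle against $\mathcal{S}^{\lambda,\mu}_{D_\varepsilon}\bm{\phi}$ gives the uniform upper bound, and the spectral-gap estimate \eqref{uniform estimate 1} gives uniform coercivity on $\cHR$; the paper applies \eqref{uniform estimate 1} to the $(\lambda',\mu')$-extension $\mathbf{u}$ after a two-sided Dirichlet-principle comparison on all of $\Omega$, while your chain applies it to $\mathbf{w}=\mathcal{S}^{\lambda,\mu}_{D_\varepsilon}\bm{\phi}\in\frakH^{\mathrm{N}}_{\mathbf{R}}$ and bridges to $\mathbf{u}$ inside $D_\varepsilon$, which is a slightly more direct route. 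One point you should make explicit (the paper does) is that the range of $\Lambda^{\lambda',\mu',\mathrm{i}}_{D_\varepsilon}\mathbb{S}^{\lambda,\mu}_{D_\varepsilon}$ already lies in $\cHR$, by Remark~\ref{rem:orthR}; this is what makes $\cHR$ an invariant subspace and is needed before you can speak of a self-adjoint restriction and invoke Lemma~\ref{lem:LaxMilgram} there.
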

\begin{proof}
	For any fixed $\bm{\phi},\bm{\psi}\in H^{-\frac{1}{2}}(\partial D_{\varepsilon})$, define
	\begin{equation*}
	\mathbf{v}: =\mathcal{S}_{D_{\varepsilon}}^{\lambda,\mu}\bm{\phi}, \quad \mathbf{u} := \mathcal{S}_{D_{\varepsilon}}^{\lambda',\mu'}(\mathbb{S}_{D_{\varepsilon}}^{\lambda',\mu'})^{-1}\mathbb{S}_{D_{\varepsilon}}^{\lambda,\mu}\bm{\phi}, \quad \text{and} \quad \mathbf{w} := \mathcal{S}_{D_{\varepsilon}}^{\lambda',\mu'}(\mathbb{S}_{D_{\varepsilon}}^{\lambda',\mu'})^{-1}\mathbb{S}_{D_{\varepsilon}}^{\lambda,\mu}\bm{\psi}.
	\end{equation*}
	They all belong to $\frakH^{\rm N}$ and satisfy $\mathbf{v} = \mathbf{u}$ on $\partial D_\eps$. By the Dirichlet principle \eqref{eq:diriprin} (first apply it to $D_\eps$ and to $\Omega_\eps$, respectively, and then combine the results), we have 
	\begin{equation*}
	J^{\Omega}_{\lambda',\mu'}(\mathbf{u})\leq  J^{\Omega}_{\lambda',\mu'}(\mathbf{v}), \qquad \text{and}\quad  J^{\Omega}_{\lambda,\mu}(\mathbf{v})\leq J^{\Omega}_{\lambda,\mu}(\mathbf{u}).
	\end{equation*}
	Combine those with the second Korn's inequality, we get 
\begin{equation*}
	\|\mathbf{u}\|_{H^1(\Omega)}^2 \sim J^{\Omega}_{\lambda',\mu'}(\mathbf{u}) \sim J^{\Omega}_{\lambda,\mu}(\mathbf{u}) \sim J^{\Omega}_{\lambda,\mu}(\mathbf{v}) \sim J^{\Omega}_{\lambda',\mu'}(\mathbf{v})\sim \|\mathbf{v}\|_{H^1(\Omega)}^2.
\end{equation*}
Here, the relation $\sim$ means each side can be bounded from above by the other side multiplied by a universal constant $C > 0$. The above also implies
\begin{equation}
\label{eq:uvcomp}
\|\bm{\phi}\|_{\bS^{\rm N}}^2 \sim J^\Omega_{\lambda',\mu'}(\mathbf{u}), \qquad \|\bm{\psi}\|_{\bS^{\rm N}}^2 \sim J^\Omega_{\lambda',\mu'}(\mathbf{w}).
\end{equation}
\noindent\emph{Proof of {\upshape(i)}.} By the formulas of DtN maps in \eqref{eq:elastDtN}, we see that the range of $\Lambda^{\lambda',\mu',{\rm i}}$ is contained in $H^{-\frac12}_\mathbf{R}(\partial D_\eps)$. Clearly, $\Lambda^{\lambda',\mu',{\rm i}}\bS^{\lambda,\mu}\bm{\phi} = \frac{\partial \mathbf{u}}{\partial \nu}\rvert_-$ on $\partial D_\eps$, $\mathbf{w}\rvert_{\partial D_\eps} = \bS^{\lambda,\mu}\bm{\psi}$. By the Green's identity, 
	\begin{equation*}
		-(\Lambda_{D_{\varepsilon}}^{\lambda',\mu',\mathrm{i}}\mathbb{S}_{D_{\varepsilon}}^{\lambda,\mu}\bm{\phi},\bm{\psi})_{\mathbb{S}^{\mathrm{N}}} = \int_{\partial D_\eps} \frac{\partial \mathbf{u}}{\partial \nu}\Big\rvert_{-} \cdot \mathbf{w} 
		= J^{D_{\varepsilon}}_{\lambda',\mu'}(\mathbf{u},\mathbf{w}).
	\end{equation*}
 	Hence, $\Lambda_{D_{\varepsilon}}^{\lambda',\mu',\mathrm{i}}\mathbb{S}_{D_{\varepsilon}}^{\lambda,\mu}$ is non-positive and self-adjoint. Using the Cauchy-Schwarz inequality and \eqref{eq:uvcomp}, we get
	\begin{equation*}
		\left| J^{D_{\varepsilon}}_{\lambda',\mu'}(\mathbf{u},\mathbf{w})\right| \le \left(J^{D_\eps}_{\lambda',\mu'}(\mathbf{u})\right)^{\frac12} \left(J^{D_\eps}_{\lambda',\mu'}(\mathbf{w})\right)^{\frac12} \le  \left(J^{\Omega}_{\lambda',\mu'}(\mathbf{u})\right)^{\frac12} \left(J^{\Omega}_{\lambda',\mu'}(\mathbf{w})\right)^{\frac12} \le C\|\bm{\phi}\|_{\bS^{\rm N}} \|\bm{\psi}\|_{\bS^{\rm N}}.
	\end{equation*}
	It follows that $\Lambda_{D_{\varepsilon}}^{\lambda',\mu',\mathrm{i}}\mathbb{S}_{D_{\varepsilon}}^{\lambda,\mu}$ is uniformly bounded.
	
\noindent\emph{Proof of {\upshape (ii)}.}
	We have shown that the range of $\Lambda_{D_{\varepsilon}}^{\lambda',\mu',\mathrm{i}}\mathbb{S}_{D_{\varepsilon}}^{\lambda,\mu}$ is contained in $\cHR$, so this operator is a bounded self-adjoint linear transformation on $\cHR$. For any $\bm\phi \in \cHR$, using \eqref{uniform estimate 1} and \eqref{eq:uvcomp} we compute and verify
	\begin{equation*}
		-(\Lambda_{D_{\varepsilon}}^{\lambda',\mu',\mathrm{i}}\mathbb{S}_{D_{\varepsilon}}^{\lambda,\mu}\bm{\phi},\bm{\phi})_{\mathbb{S}^{\mathrm{N}}}=J^{D_{\varepsilon}}_{\lambda',\mu'}(\mathbf{u}) \geq CJ^{\Omega}_{\lambda',\mu'}(\mathbf{u}) \ge C\|\bm{\phi}\|_{\mathbb{S}^{\mathrm{N}}}^2.
	\end{equation*}
	Lemma \ref{lem:LaxMilgram} then yields the uniform bound of the inverse.
\end{proof}

\begin{corollary}\label{inverse boundedness II}
	Assume that {\upshape(A2)} holds, that $(\lambda',\mu')$ is admissible and that $\mu' > 0$. Then there exists a universal constant $C > 0$ so that:
	\begin{enumerate}
		\item [{\upshape (i)}] The operator $(\Lambda_{D_{\varepsilon}}^{\lambda',\mu',\mathrm{i}}-\Lambda_{D_{\varepsilon}}^{\lambda,\mu,\mathrm{e}})\mathbb{S}_{D_{\varepsilon}}^{\lambda,\mu} : \cH \to \cH$ is an isomorphism, and 
		\begin{equation}\label{coer 1}
			\left\| \left( (\Lambda_{D_{\varepsilon}}^{\lambda',\mu',\mathrm{i}}-\Lambda_{D_{\varepsilon}}^{\lambda,\mu,\mathrm{e}})\mathbb{S}_{D_{\varepsilon}}^{\lambda,\mu} \right)^{-1} \right\|_{\mathscr{L}(\cH)} \leq C.
		\end{equation}
		\item [{\upshape (ii)}] The operator $(\Lambda_{D_{\varepsilon}}^{\infty,\mu',\mathrm{i}}-\Lambda_{D_{\varepsilon}}^{\lambda,\mu,\mathrm{e}}) \mathbb{S}_{D_{\varepsilon}}^{\lambda,\mu} : \cH \to \cH$ is an isomorphism, and
		\begin{equation}
		\label{eq:uinvStokes}
			\left\| \left(  (\Lambda_{D_{\varepsilon}}^{\infty,\mu',\mathrm{i}}-\Lambda_{D_{\varepsilon}}^{\lambda,\mu,\mathrm{e}}) \mathbb{S}_{D_{\varepsilon}}^{\lambda,\mu}\right)^{-1} \right\|_{\mathscr{L}(\cH)} \leq C.
		\end{equation}
	\end{enumerate}
\end{corollary}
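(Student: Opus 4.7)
The plan is to rewrite the operator in part (i) using Proposition~\ref{DtN representation} so that the background DtN map is replaced by $\frac{\mathbb{I}}{2}+\mathbb{K}^{\lambda,\mu,*}_{D_\eps}$, and then exploit the opposing signs of the resulting two pieces. Specifically, one has
\begin{equation*}
-(\Lambda^{\lambda',\mu',\mathrm{i}}_{D_\eps}-\Lambda^{\lambda,\mu,\mathrm{e}}_{D_\eps})\mathbb{S}^{\lambda,\mu}_{D_\eps}
= \bigl(-\Lambda^{\lambda',\mu',\mathrm{i}}_{D_\eps}\mathbb{S}^{\lambda,\mu}_{D_\eps}\bigr) + \bigl(\tfrac{\mathbb{I}}{2}+\mathbb{K}^{\lambda,\mu,*}_{D_\eps}\bigr),
\end{equation*}
where both summands are bounded self-adjoint operators on $\mathcal{H}$: the first is non-negative by Proposition~\ref{inverse boundedness}(i), and the second is strictly positive with a universal lower bound by Proposition~\ref{prop:Lambdae}. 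Testing against $\bm{\phi}$ in the $(\cdot,\cdot)_{\mathbb{S}^{\mathrm{N}}}$ inner product and using Propositions~\ref{inverse boundedness}(i) and \ref{prop:Lambdae} gives
\begin{equation*}
\bigl(-(\Lambda^{\lambda',\mu',\mathrm{i}}_{D_\eps}-\Lambda^{\lambda,\mu,\mathrm{e}}_{D_\eps})\mathbb{S}^{\lambda,\mu}_{D_\eps}\bm{\phi},\bm{\phi}\bigr)_{\mathbb{S}^{\mathrm{N}}}
\geq \bigl((\tfrac{\mathbb{I}}{2}+\mathbb{K}^{\lambda,\mu,*}_{D_\eps})\bm{\phi},\bm{\phi}\bigr)_{\mathbb{S}^{\mathrm{N}}}
\geq c\,\|\bm{\phi}\|_{\mathbb{S}^{\mathrm{N}}}^2,
\end{equation*}
with $c>0$ a universal constant. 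An application of the Lax--Milgram-type Lemma~\ref{lem:LaxMilgram} then delivers the uniform inverse bound~\eqref{coer 1}.

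For part (ii), the same scheme works once the Stokes analogue of Proposition~\ref{inverse boundedness}(i) is established: namely, $\Lambda^{\infty,\mu',\mathrm{i}}_{D_\eps}\mathbb{S}^{\lambda,\mu}_{D_\eps}$ is a bounded, non-positive, self-adjoint operator on $\mathcal{H}$. To prove this, set $\mathbf{u}:=\mathcal{S}^{\infty,\mu'}_{D_\eps}(\mathbb{S}^{\infty,\mu'}_{D_\eps})^{-1}\mathbb{S}^{\lambda,\mu}_{D_\eps}\bm{\phi}$ with accompanying pressure $p:=\mathcal{P}^{\mu'}_{D_\eps}(\mathbb{S}^{\infty,\mu'}_{D_\eps})^{-1}\mathbb{S}^{\lambda,\mu}_{D_\eps}\bm{\phi}$, and define $\mathbf{w}$ analogously from $\bm{\psi}$; these are available via Proposition~\ref{prop:cSStokes}. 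Apply the Stokes Green identity~\eqref{eq:GreenStokes} on $D_\eps$. Because $\mathcal{L}_{\infty,\mu'}(\mathbf{u},p)=0$ and $\mathrm{div}\,\mathbf{u}=\mathrm{div}\,\mathbf{w}=0$ in $D_\eps$, every non-boundary term drops out and one arrives at
\begin{equation*}
-(\Lambda^{\infty,\mu',\mathrm{i}}_{D_\eps}\mathbb{S}^{\lambda,\mu}_{D_\eps}\bm{\phi},\bm{\psi})_{\mathbb{S}^{\mathrm{N}}} = J^{D_\eps}_{\infty,\mu'}(\mathbf{u},\mathbf{w}) = 2\mu'\!\int_{D_\eps}\!\bD(\mathbf{u}):\bD(\mathbf{w}),
\end{equation*}
which is symmetric and non-negative when $\bm{\phi}=\bm{\psi}$. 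The argument of part (i) now carries over verbatim, yielding~\eqref{eq:uinvStokes}.

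The step I expect to require the most care is the Stokes calculation just sketched: one must verify that the boundary pressure contributions and the divergence terms appearing in~\eqref{eq:GreenStokes} cancel cleanly, so that $\Lambda^{\infty,\mu',\mathrm{i}}_{D_\eps}\mathbb{S}^{\lambda,\mu}_{D_\eps}$ genuinely inherits the non-positive self-adjoint structure of its Lam\'e counterpart. Once that is in hand, the coercivity estimate is entirely driven by the universal lower bound on $\frac{\mathbb{I}}{2}+\mathbb{K}^{\lambda,\mu,*}_{D_\eps}$, and no uniform admissibility of $(\lambda',\mu')$ or $\mu'$ is needed for the inverse estimate (only boundedness of the whole operator, which holds pointwise in the parameters).
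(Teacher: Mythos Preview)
Your proposal is correct and follows essentially the same approach as the paper: decompose $-(\Lambda^{\lambda',\mu',\mathrm{i}}_{D_\eps}-\Lambda^{\lambda,\mu,\mathrm{e}}_{D_\eps})\mathbb{S}^{\lambda,\mu}_{D_\eps}$ into the non-negative self-adjoint piece $-\Lambda^{\lambda',\mu',\mathrm{i}}_{D_\eps}\mathbb{S}^{\lambda,\mu}_{D_\eps}$ (Proposition~\ref{inverse boundedness}(i)) plus the uniformly strictly positive piece $\frac{\mathbb{I}}{2}+\mathbb{K}^{\lambda,\mu,*}_{D_\eps}$ (Proposition~\ref{prop:Lambdae}), then invoke Lemma~\ref{lem:LaxMilgram}; for (ii) the paper likewise reduces to showing $-(\Lambda^{\infty,\mu',\mathrm{i}}_{D_\eps}\mathbb{S}^{\lambda,\mu}_{D_\eps}\bm{\phi},\bm{\psi})_{\mathbb{S}^{\mathrm{N}}}=J^{D_\eps}_{\infty,\mu'}(\mathbf{u},\mathbf{w})$ via the Stokes Green identity, exactly as you outline. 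Your remark that uniform admissibility of $(\lambda',\mu')$ is not needed for the inverse bound is also consistent with the paper's hypotheses.
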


\begin{proof}
	\noindent\emph{For item {\upshape(i)}},
		by Theorem \ref{inverse boundedness}, $\Lambda_{D_{\varepsilon}}^{\lambda',\mu',\mathrm{i}}\mathbb{S}_{D_{\varepsilon}}^{\lambda,\mu}$ is non-positive self-adjoint, and $\Lambda_{D_{\varepsilon}}^{\lambda,\mu,\mathrm{e}}\mathbb{S}_{D_{\varepsilon}}^{\lambda,\mu}=\frac{\mathbb{I}}{2}+\mathbb{K}_{D_{\varepsilon}}^{\lambda,\mu,*}$ is strictly positive and self-adjoint. It follows that $(\Lambda_{D_{\varepsilon}}^{\lambda,\mu,\mathrm{e}}-\Lambda_{D_{\varepsilon}}^{\lambda',\mu',\mathrm{i}})\mathbb{S}_{D_{\varepsilon}}^{\lambda,\mu} \ge \Lambda_{D_{\varepsilon}}^{\lambda,\mu,\mathrm{e}}\bS^{\lambda,\mu}_{D_\eps}$. The desired inequality \eqref{coer 1} then follows from \eqref{eq:DtNebdd} and Lemma \ref{lem:LaxMilgram}.
		
		\noindent\emph{For item {\upshape(ii)}}, by the same reasoning above, it suffices to show $\Lambda^{\infty,\mu',{\rm i}}_{D_\eps}\bS^{\lambda,\mu}_{D_\eps}$ is non-positive and self-adjoint. By repeating the arguments in the proof for item (i) of Theorem \ref{inverse boundedness}, we get
		\begin{equation*}
			-(\Lambda_{D_{\varepsilon}}^{\infty,\mu',\mathrm{i}}\mathbb{S}_{D_{\varepsilon}}^{\lambda,\mu}\bm{\phi},\bm{\psi})_{\mathbb{S}^{\mathrm{N}}}=J^{D_{\varepsilon}}_{\infty,\mu'}(\mathcal{S}_{D_{\varepsilon}}^{\infty,\mu'}(\mathbb{S}_{D_{\varepsilon}}^{\infty,\mu'})^{-1}\mathbb{S}_{D_{\varepsilon}}^{\lambda,\mu}\bm{\phi},\mathcal{S}_{D_{\varepsilon}}^{\infty,\mu'}(\mathbb{S}_{D_{\varepsilon}}^{\infty,\mu'})^{-1}\mathbb{S}_{D_{\varepsilon}}^{\lambda,\mu}\bm{\psi}), \qquad \forall \bm{\phi}, \bm{\psi} \in H^{-\frac12}(\partial D_\eps).
		\end{equation*}
		The desired property of $\Lambda^{\infty,\mu',{\rm i}}_{D_\eps}\bS^{\lambda,\mu}_{D_\eps}$ follows and the proof is complete.
\end{proof}

	\section{Convergence rates of the high contrast limits}\label{Strong convergence to the extreme}
	
	In this section, we prove Theorem \ref{main result} which provides quantifications for the convergence of the transmission problem \eqref{eq:transmissionproblem} to the limit models, namely, \eqref{eq:limStokes} for the incompressible inclusions limit, \eqref{eq:limZero} for the soft inclusions limit and \eqref{eq:limRigid} for the hard inclusions setting. 

\subsection{Formula for the convergence analysis}
To treat the three asymptotic settings in a unified manner, let $\mathbf{u}$ denote the solution of the original transmission problem, and let $\mathbf{u}_{\rm lim}$ denote the limit model. In the exterior domain $\Omega_\eps = \Omega\setminus \ol D_\eps$, they are of the form
\begin{equation}
\label{eq:usolutions}
	\mathbf{u}=\mathbf{G}+\mathcal{S}_{D_{\varepsilon}}^{\lambda,\mu}\bm{\phi} \quad \mathrm{and} \quad \mathbf{u}_{\lim}=\mathbf{G}+\mathcal{S}_{D_\varepsilon}^{\lambda,\mu}\bm{\phi}_{\lim},
\end{equation}
where $\bm{\phi}$ is given by \eqref{boundary integral equations III}, and $\bm{\phi}_{\lim}$ is given by \eqref{boundary integral equations III stokes} in Case 1, by \eqref{equation 0,0} in Case 2, and by \eqref{boundary integral equation lambda infty 2} in Case 3. In each of those cases, subtracting the equation satisfied by $\bm{\phi}_{\rm lim}$ from the one of $\bm{\phi}$, we obtain the equations satisfied by $\bm{\phi} - \bm{\phi}_{\rm lim}$ on $\partial D_\eps$. They are summarized below:
\begin{equation}
\left\{
\label{eq:disform}
\begin{aligned}
	&(\Lambda^{\infty,\widetilde\mu,{\rm i}}_{D_\eps} - \Lambda^{\lambda,\mu,{\rm e}}_{D_\eps})\mathbb{S}_{D_\eps} (\bm{\phi} - \bm{\phi}_{\rm lim}) = (\Lambda^{\infty,\widetilde\mu,{\rm i}}_{D_\eps} - \Lambda^{\widetilde \lambda,\widetilde\mu, {\rm i}}_{D_\eps})\mathbb{S}_{D_\eps} \left(\bm{\phi} + \mathbb{S}_{D_\eps}^{-1} \mathbf{G}\right), \qquad &\text{in Case 1},\\
	&\Lambda^{\lambda,\mu,{\rm e}}_{D_\eps} \mathbb{S}_{D_\eps}(\bm{\phi} - \bm{\phi}_{\rm lim}) = \Lambda^{\widetilde\lambda,\widetilde\mu, {\rm i}}_{D_\eps}\mathbb{S}_{D_\eps} \left(\bm{\phi} + \mathbb{S}_{D_\eps}^{-1} \mathbf{G}\right), \qquad &\text{in Case 2},\\
	&\Lambda^{\frac{\widetilde\lambda}{\widetilde\mu},1,{\rm i}}_{D_\eps} \mathbb{S}_{D_\eps}(\bm{\phi} - \bm{\phi}_{\rm lim}) = \frac{1}{\widetilde\mu} \left(\Lambda^{\lambda,\mu,{\rm e}}_{D_\eps} \mathbb{S}_{D_\eps} \bm{\phi} + \frac{\partial \mathbf{G}}{\partial \nu} \right), \qquad &\text{in Case 3}.
	\end{aligned}
	\right.
\end{equation}

Our goal is to give quantitative estimates of $\|\mathbf{u} - \mathbf{u}_{\rm lim}\|_{H^1(\Omega_\eps)}$. By Remark \ref{rem:equinorm}, there is a universal constant $C > 0$ such that
\begin{equation*}
\begin{aligned}
 \|\mathbf{u} - \mathbf{u}_{\rm lim}\|_{H^1(\Omega_\eps)} &= \|\cS^{\lambda,\mu} (\bm{\phi} -\bm{\phi}_{\rm lim})\|_{H^1(\Omega_\eps)} \le \|\cS^{\lambda,\mu} (\bm{\phi} - \bm{\phi}_{\rm lim})\|_{H^1(\Omega)} \\
 &\le C\|\bD(\cS^{\lambda,\mu} (\bm{\phi} - \bm{\phi}_{\rm lim}))\|_{L^2(\Omega)} \le C\|\bm{\phi} - \bm{\phi}_{\rm lim}\|_{\bS^{\rm N}}.
 \end{aligned}
\end{equation*} 
It suffices to estimate the right hand side above in the three settings. Here and below, we simplify the notation $\mathbf{G}\rvert_{\partial D_\eps}$ to $\mathbf{G}$ when it is convenient, and also note
\begin{equation}
\label{eq:conormalG}
\frac{\partial \mathbf{G}}{\partial \nu}\Big\rvert_{\partial D_\eps} = \Lambda^{\lambda,\mu,{\rm i}}_{D_\eps} \mathbf{G}. 
\end{equation}
In view of \eqref{eq:disform} and the uniform bounds established in section \ref{sec:unifesti}, we need to control $\bm{\phi}$, $ \bS^{-1}\mathbf{G}$, and to estimate certain operators on the right hand sides of \eqref{eq:disform}.
	
	\subsection{Some basic energy estimates} We first control the background solution $\mathbf{G}$, and obtain uniform estimates of the density function $\bm{\phi}$.

\begin{lemma}\label{es G}
	Assume that {\upshape(A2)} holds. Let $\mathbf{G}$ be the background solution defined in \eqref{background solution}, then there exists a universal constant $C>0$ such that
	\begin{equation}
	\label{eq:S-1G}
		\|(\mathbb{S}_{D_{\varepsilon}}^{\lambda,\mu})^{-1} \mathbf{G}\|_{\mathbb{S}^{\mathrm{N}}} \leq C\|\mathbf{g}\|_{H^{-\frac{1}{2}}(\partial \Omega)}.
	\end{equation}
\end{lemma}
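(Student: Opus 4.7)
\textbf{Proof proposal for Lemma \ref{es G}.} My plan is to set $\bm{\psi} := (\mathbb{S}_{D_\eps}^{\lambda,\mu})^{-1}\mathbf{G}\rvert_{\partial D_\eps}$ and $\mathbf{v} := \mathcal{S}_{D_\eps}^{\lambda,\mu}\bm{\psi}$, so that $\mathbf{v}\in\frakH^{\mathrm{N}}$ and $\mathbf{v}\rvert_{\partial D_\eps}=\mathbf{G}\rvert_{\partial D_\eps}$. By the very definition of $(\cdot,\cdot)_{\mathbb{S}^{\mathrm{N}}}$ in \eqref{new inner product},
\begin{equation*}
\|\bm{\psi}\|_{\mathbb{S}^{\mathrm{N}}}^2 \;=\; -\langle \bm{\psi},\mathbb{S}_{D_\eps}^{\lambda,\mu}\bm{\psi}\rangle_{H^{-\frac12},H^{\frac12}} \;=\; -\langle \bm{\psi},\mathbf{G}\rvert_{\partial D_\eps}\rangle \;=\; J^{\Omega}_{\lambda,\mu}(\mathbf{v}),
\end{equation*}
so the task is reduced to bounding $J^{\Omega}_{\lambda,\mu}(\mathbf{v})$ by $\|\mathbf{g}\|_{H^{-1/2}(\partial\Omega)}^2$, with a constant that does not see the inclusions.

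Next I would apply the Green's identity \eqref{eq:GreenLame} to the pair $(\mathbf{v},\mathbf{G})$ on $\Omega_\eps$ and on $D_\eps$ separately, and add the results. Since $\mathcal{L}_{\lambda,\mu}\mathbf{v}=\mathcal{L}_{\lambda,\mu}\mathbf{G}=0$ in each region, and since $\mathbf{G}$ is globally smooth across $\partial D_\eps$ while the conormal trace of $\mathbf{v}$ jumps by $-\bm{\psi}$, the interior boundary terms collapse to $-\langle\bm{\psi},\mathbf{G}\rangle_{\partial D_\eps}$. The boundary term at $\partial\Omega$ reads $\langle \partial\mathbf{v}/\partial\nu,\mathbf{G}\rvert_{\partial\Omega}\rangle$; by Proposition~\ref{prop:SDclassical}(i), $\partial\mathbf{v}/\partial\nu\rvert_{\partial\Omega}$ is a constant vector, and $\mathbf{G}\rvert_{\partial\Omega}\in H^{\frac12}_{\mathbf{R}}(\partial\Omega)$ is orthogonal to all rigid motions, so this term vanishes. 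Hence
\begin{equation*}
J^{\Omega}_{\lambda,\mu}(\mathbf{v},\mathbf{G}) \;=\; -\langle\bm{\psi},\mathbf{G}\rvert_{\partial D_\eps}\rangle \;=\; \|\bm{\psi}\|_{\mathbb{S}^{\mathrm{N}}}^2.
\end{equation*}
The Cauchy--Schwarz inequality applied to the inner product $J^{\Omega}_{\lambda,\mu}(\cdot,\cdot)$ then yields $\|\bm{\psi}\|_{\mathbb{S}^{\mathrm{N}}}^2 \le \|\bm{\psi}\|_{\mathbb{S}^{\mathrm{N}}}\, J^{\Omega}_{\lambda,\mu}(\mathbf{G})^{1/2}$, i.e., $\|\bm{\psi}\|_{\mathbb{S}^{\mathrm{N}}}\le J^{\Omega}_{\lambda,\mu}(\mathbf{G})^{1/2}$.

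Finally, testing the defining equation of $\mathbf{G}$ against itself via \eqref{eq:GreenLame} gives $J^{\Omega}_{\lambda,\mu}(\mathbf{G})=\langle \mathbf{g},\mathbf{G}\rvert_{\partial\Omega}\rangle \le \|\mathbf{g}\|_{H^{-1/2}(\partial\Omega)}\|\mathbf{G}\|_{H^{1/2}(\partial\Omega)}$. Since $\mathbf{G}\rvert_{\partial\Omega}\in H^{\frac12}_{\mathbf{R}}(\partial\Omega)$, the combination of the trace inequality with Korn's inequality (in the form alluded to in Remark~\ref{rem:equinorm}) gives $\|\mathbf{G}\|_{H^{1/2}(\partial\Omega)}\le C\|\mathbf{G}\|_{H^1(\Omega)}\le C\, J^{\Omega}_{\lambda,\mu}(\mathbf{G})^{1/2}$, where $C$ depends only on $\Omega,d,\lambda,\mu$. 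Absorbing one factor of $J^{\Omega}_{\lambda,\mu}(\mathbf{G})^{1/2}$ yields $J^{\Omega}_{\lambda,\mu}(\mathbf{G})^{1/2}\le C\|\mathbf{g}\|_{H^{-1/2}(\partial\Omega)}$, and chaining the two bounds gives \eqref{eq:S-1G}. The crux — and the reason the estimate is universal — is that every constant invoked (trace, Korn, admissibility) depends only on the fixed outer geometry $\Omega$ and the background Lam\'e pair; the inclusions $D_\eps$ enter only through the identity $\|\bm{\psi}\|_{\mathbb{S}^{\mathrm{N}}}^2 = J^{\Omega}_{\lambda,\mu}(\mathbf{v},\mathbf{G})$, which is already $\eps$-free.
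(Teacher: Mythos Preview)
Your proof is correct and follows essentially the same route as the paper's. The only cosmetic difference is that the paper applies Green's identity to the pair $(\mathbf{v},\mathbf{G}-\mathbf{v})$, exploiting $(\mathbf{G}-\mathbf{v})\rvert_{\partial D_\eps}=0$ to kill the interior boundary terms directly, whereas you apply it to $(\mathbf{v},\mathbf{G})$ and use the jump of $\partial\mathbf{v}/\partial\nu$ across $\partial D_\eps$; both computations arrive at $J^{\Omega}_{\lambda,\mu}(\mathbf{v})=J^{\Omega}_{\lambda,\mu}(\mathbf{v},\mathbf{G})$ and then finish with Cauchy--Schwarz and the standard energy bound for $\mathbf{G}$.
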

	\begin{proof}
		Let $\mathbf{v} := \mathcal{S}^{\lambda,\mu}_{D_{\varepsilon}}(\mathbb{S}_{D_{\varepsilon}}^{\lambda,\mu})^{-1} \mathbf{G}$. Then by definition, $\|(\mathbb{S}_{D_{\varepsilon}}^{\lambda,\mu})^{-1} \mathbf{G}\|_{\mathbb{S}^{\mathrm{N}}}^2 = J^{\Omega}_{\lambda,\mu}  (\mathbf{v})$. Note that $\mathbf{G}-\mathbf{v}$ vanishes on $\partial D_{\varepsilon}$. By the Green's identity,
		\begin{equation*}
			J^{\Omega}_{\lambda,\mu}\left(\mathbf{v},\mathbf{G}-\mathbf{v}\right) =J^{\Omega_\eps}_{\lambda,\mu}\left(\mathbf{v},\mathbf{G}-\mathbf{v}\right) + J^{D_\eps}_{\lambda,\mu}\left(\mathbf{v},\mathbf{G}-\mathbf{v}\right) = \int_{\partial \Omega} \frac{\partial \mathbf{v}}{\partial \nu} \cdot (\mathbf{G} -\mathbf{v}).
		\end{equation*}
		Since $\mathbf{v}$ is defined by a single-layer potential, $\partial \mathbf{v}/\partial \nu\rvert_{\partial \Omega}$ is a constant vector; note also $(\mathbf{G}-\mathbf{v})\rvert_{\partial \Omega}$ is an element of $ H^{\frac12}_\mathbf{R}$. The right hand side above, hence, vanishes, and we obtain $J^\Omega_{\lambda,\nu}(\mathbf{v}) = J^\Omega_{\lambda,\nu}(\mathbf{v},\mathbf{G})$. Then by the Schwarz inequality we get
		\begin{equation*} 
			J^{\Omega}_{\lambda,\mu}(\mathbf{v}) \leq J^{\Omega}_{\lambda,\mu}(\mathbf{G}) \sim \|\mathbf{G}\|^2_{H^1(\Omega)}.
		\end{equation*}
In the last inequality above, we used the Korn inequality that $J^\Omega_{\lambda,\mu}(\mathbf{G})$ is comparable with $\|\mathbf{G}\|^2_{H^1(\Omega)}$. The desired result then follows from the standard estimates $\|\mathbf{G}\|_{H^1(\Omega)}\leq C\|\mathbf{g}\|_{H^{-\frac{1}{2}}(\partial \Omega)}$.
	\end{proof}

\begin{lemma}\label{lem:phibdd}
Assume that {\upshape(A2)} holds and that $(\widetilde\lambda,\widetilde\mu)$ is admissible. Then there is a universal $C > 0$ (in particular, independent of $\widetilde\lambda,\widetilde\mu$), such that $\|\bm\phi\|_{\bS^{\rm N}} \le C\|\mathbf{g}\|_{H^{-\frac12}(\partial \Omega)}$.
\end{lemma}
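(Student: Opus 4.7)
\emph{Proof plan.} The plan is to introduce a layer-potential style extension of $\mathbf{u}$ into $D_\eps$ and to reduce the estimate of $\|\bm{\phi}\|_{\bS^{\rm N}}$ to an energy estimate for $\mathbf{u}$ on $\Omega_\eps$. Let $\mathbf{w}\in H^1(\Omega)$ be defined by $\mathbf{w}=\mathbf{u}$ on $\Omega_\eps$, and $\mathbf{w}$ equal to the $(\lambda,\mu)$-harmonic extension of $\mathbf{u}|_{\partial D_\eps}$ into $D_\eps$. Because both $\mathbf{G}$ and $\mathcal{S}^{\lambda,\mu}_{D_\eps}\bm{\phi}$ are $(\lambda,\mu)$-harmonic in $D_\eps$ and their boundary traces sum to $\mathbf{u}|_{\partial D_\eps}$, uniqueness of the Dirichlet problem forces $\mathbf{w}=\mathbf{G}+\mathcal{S}^{\lambda,\mu}_{D_\eps}\bm{\phi}$ throughout $\Omega$; in particular $\mathbf{w}\in\mathcal{E}$ and $\mathcal{S}^{\lambda,\mu}_{D_\eps}\bm{\phi}=\mathbf{w}-\mathbf{G}$. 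Consequently,
\begin{equation*}
\|\bm{\phi}\|_{\bS^{\rm N}}^2 = J^\Omega_{\lambda,\mu}(\mathbf{w}-\mathbf{G}) \le 2J^\Omega_{\lambda,\mu}(\mathbf{w}) + 2J^\Omega_{\lambda,\mu}(\mathbf{G}),
\end{equation*}
and $J^\Omega_{\lambda,\mu}(\mathbf{G})\le C\|\mathbf{g}\|^2_{H^{-\frac12}(\partial \Omega)}$ by standard estimates for the Neumann problem \eqref{background solution}.

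Since $\mathbf{w}\in \mathcal{E}$, the uniform estimate \eqref{uniform estimate 2} from Theorem \ref{unif m M} yields $J^{D_\eps}_{\lambda,\mu}(\mathbf{w})\le C\,J^{\Omega_\eps}_{\lambda,\mu}(\mathbf{w}) = C\,J^{\Omega_\eps}_{\lambda,\mu}(\mathbf{u})$, hence $J^\Omega_{\lambda,\mu}(\mathbf{w})\le (1+C)\,J^{\Omega_\eps}_{\lambda,\mu}(\mathbf{u})$. It therefore suffices to bound $J^{\Omega_\eps}_{\lambda,\mu}(\mathbf{u})$ uniformly by $C\|\mathbf{g}\|_{H^{-\frac12}(\partial \Omega)}^2$. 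For this I would test the weak formulation of \eqref{eq:transmissionproblem} with $\mathbf{u}$ itself and rewrite the boundary integral using $\mathbf{G}$:
\begin{equation*}
J^{\Omega_\eps}_{\lambda,\mu}(\mathbf{u}) + J^{D_\eps}_{\widetilde\lambda,\widetilde\mu}(\mathbf{u}) = \int_{\partial \Omega}\mathbf{g}\cdot \mathbf{u} = J^\Omega_{\lambda,\mu}(\mathbf{G},\mathbf{u}).
\end{equation*}
The key cancellation is $J^\Omega_{\lambda,\mu}(\mathbf{G},\mathbf{u}-\mathbf{w})=0$: indeed $\mathbf{u}-\mathbf{w}$ is supported in $D_\eps$ with vanishing trace on $\partial D_\eps$, and $\mathbf{G}$ is $(\lambda,\mu)$-harmonic in $D_\eps$, so Green's identity on each component of $D_\eps$ gives the identity. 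Therefore $J^\Omega_{\lambda,\mu}(\mathbf{G},\mathbf{u})=J^\Omega_{\lambda,\mu}(\mathbf{G},\mathbf{w})$; dropping the nonnegative $J^{D_\eps}_{\widetilde\lambda,\widetilde\mu}(\mathbf{u})$ (which uses only the admissibility of $(\widetilde\lambda,\widetilde\mu)$) and applying the Cauchy-Schwarz inequality together with the bound for $J^\Omega_{\lambda,\mu}(\mathbf{w})$ above yields
\begin{equation*}
J^{\Omega_\eps}_{\lambda,\mu}(\mathbf{u}) \le J^\Omega_{\lambda,\mu}(\mathbf{G})^{\frac12} J^\Omega_{\lambda,\mu}(\mathbf{w})^{\frac12} \le C\|\mathbf{g}\|_{H^{-\frac12}(\partial \Omega)}\,\bigl(J^{\Omega_\eps}_{\lambda,\mu}(\mathbf{u})\bigr)^{\frac12},
\end{equation*}
from which $J^{\Omega_\eps}_{\lambda,\mu}(\mathbf{u})\le C\|\mathbf{g}\|_{H^{-\frac12}(\partial \Omega)}^2$ follows, and combining with the first paragraph gives the lemma.

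The main obstacle I anticipate is keeping every estimate independent of $(\widetilde\lambda,\widetilde\mu)$, which can degenerate to $0$ (Case 2) or blow up (Cases 1 and 3), so any inequality whose constant involves $(\widetilde\lambda,\widetilde\mu)$ would be useless. The device of replacing $\mathbf{u}$ inside $D_\eps$ by the background-coefficient extension $\mathbf{w}\in \mathcal{E}$, and exploiting the cancellation $J^\Omega_{\lambda,\mu}(\mathbf{G},\mathbf{u}-\mathbf{w})=0$, is designed precisely to transfer every inside-$D_\eps$ estimate onto the fixed pair $(\lambda,\mu)$, with the uniform spectral gap established in Theorem \ref{unif m M} providing the crucial inequality \eqref{uniform estimate 2}.
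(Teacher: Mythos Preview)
Your proof is correct and takes a genuinely different route from the paper. The paper argues purely at the operator level: it rewrites \eqref{boundary integral equations III} as
\[
(\Lambda^{\widetilde\lambda,\widetilde\mu,\mathrm{i}}_{D_\eps}-\Lambda^{\lambda,\mu,\mathrm{e}}_{D_\eps})\bS_{D_\eps}\bigl[\bm\phi+\bS_{D_\eps}^{-1}\mathbf{G}\bigr]=-\bS_{D_\eps}^{-1}\mathbf{G},
\]
and then simply applies the uniform bound \eqref{coer 1} for the inverse of $(\Lambda^{\widetilde\lambda,\widetilde\mu,\mathrm{i}}-\Lambda^{\lambda,\mu,\mathrm{e}})\bS$ together with Lemma \ref{es G}. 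You instead work variationally at the PDE level: you test the transmission problem against $\mathbf{u}$, replace $\mathbf{u}$ inside $D_\eps$ by its $(\lambda,\mu)$-harmonic extension $\mathbf{w}\in\mathcal{E}$, and close the estimate with the energy comparison \eqref{uniform estimate 2} and the cancellation $J^\Omega_{\lambda,\mu}(\mathbf{G},\mathbf{u}-\mathbf{w})=0$. Both arguments ultimately rest on the uniform spectral gap (through \eqref{uniform estimate 2}), but your approach is more self-contained in that it bypasses Corollary \ref{inverse boundedness II} entirely; the paper's route is shorter in context because those operator bounds are needed anyway in the proof of Theorem \ref{main result}.
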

\begin{proof} First, we rewrite the formula of $\bm\phi$, i.e., the second line in \eqref{boundary integral equations III}, as
\begin{equation*}
(\Lambda^{\widetilde{\lambda},\widetilde{\mu},\mathrm{i}}_{D_{\varepsilon}}-\Lambda^{\lambda,\mu,\mathrm{e}}_{D_{\varepsilon}})\mathbb{S}_{D_{\varepsilon}}^{\lambda,\mu}\left[\bm{\phi} + \bS^{-1}\mathbf{G}\right] = \Lambda^{\lambda,\mu,{\rm i}}_{D_\eps}\mathbf{G} - \Lambda^{\lambda,\mu,{\rm e}}_{D_\eps}\mathbf{G} = -\bS^{-1}\mathbf{G},
\end{equation*}
where we also used \eqref{eq:conormalG} and \eqref{eq:elastDtN}. This yields
\begin{equation*}
\bm{\phi} = \left(\left[-(\Lambda^{\widetilde{\lambda},\widetilde{\mu},\mathrm{i}}_{D_{\varepsilon}}-\Lambda^{\lambda,\mu,\mathrm{e}}_{D_{\varepsilon}})\mathbb{S}_{D_{\varepsilon}}^{\lambda,\mu}\right]^{-1} - \mathbb{I} \right)\bS^{-1}\mathbf{G}.
\end{equation*}
Combine \eqref{coer 1} with \eqref{eq:S-1G}, we get the desired estimate.
\end{proof}

We also need the following basic energy estimate for the Stokes system in $D_\eps$. Below, $L^2_0(E)$ stands for the space of $L^2$ functions on $E$ with zero mean.

\begin{lemma}\label{estimate stokes lemma}
				Assume that {\upshape(A2)} holds and that $\mu' > 0$. There exists a universal constant $C>0$ such that, for any $\mathbf{U}\in H^1(\Omega)$, the unique solution $(\mathbf{u}_{\infty},p)\in H^1(D_{\varepsilon})\times L^2_0(D_{\varepsilon})$ of
		\begin{equation}\label{estimate stokes}
			\left\{
			\begin{aligned}
				& \mathcal{L}_{\infty,\mu'}(\mathbf{u}_{\infty},p)=0 & \mathrm{in}\ D_{\varepsilon}, \\
				& \mathrm{div}\,\mathbf{u}_{\infty}=0 & \mathrm{in}\ D_{\varepsilon}, \\
				&\mathbf{u}_{\infty}|_{\partial D_{\varepsilon}}=\mathbf{U}|_{\partial D_{\varepsilon}},
			\end{aligned}\right.
		\end{equation}
satisfies the following estimates:
		\begin{equation}
			\| \nabla \mathbf{u}_{\infty}\|_{L^2(D_{\varepsilon})}\leq  C \|\nabla \mathbf{U}\|_{L^2(\Omega)} ,\quad
			\|p\|_{L^2(D_{\varepsilon})}\leq C \mu' \|\nabla \mathbf{U}\|_{L^2(\Omega)}.
		\end{equation}
	\end{lemma}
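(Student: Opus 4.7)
The plan is to reduce to the model inclusion $\omega$ by working component-by-component on $D_\varepsilon = \bigcup_{\mathbf{n}\in\Pi_\varepsilon}\omega^{\mathbf{n}}_{\varepsilon}$. Since each $\omega^{\mathbf{n}}_{\varepsilon}$ is simply the $\varepsilon$-dilation and translation of the fixed set $\omega$, the various constants (Bogovski\u{i}, Korn) that appear on $\omega^{\mathbf{n}}_{\varepsilon}$ coincide with those on $\omega$ by scale invariance, and are therefore automatically universal in $\varepsilon$.

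For the velocity estimate, the issue is that $\mathbf{U}$ is not divergence-free in $D_\varepsilon$ and so cannot be used directly as a competitor. First I would correct this by Bogovski\u{i}: on each $\omega^{\mathbf{n}}_{\varepsilon}$, thanks to the compatibility $\int_{\omega^{\mathbf{n}}_{\varepsilon}} \mathrm{div}\,\mathbf{U} = \int_{\partial\omega^{\mathbf{n}}_{\varepsilon}} \mathbf{U}\cdot\mathbf{N} = 0$ (which is also what makes \eqref{estimate stokes} solvable componentwise), there exists $\mathbf{b}_{\mathbf{n}}\in H^1_0(\omega^{\mathbf{n}}_{\varepsilon})$ with $\mathrm{div}\,\mathbf{b}_{\mathbf{n}}=\mathrm{div}\,\mathbf{U}$ and $\|\nabla\mathbf{b}_{\mathbf{n}}\|_{L^2(\omega^{\mathbf{n}}_{\varepsilon})}\le C\|\mathrm{div}\,\mathbf{U}\|_{L^2(\omega^{\mathbf{n}}_{\varepsilon})}$, with $C$ the Bogovski\u{i} constant of $\omega$. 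Patching the $\mathbf{b}_{\mathbf{n}}$ yields $\widetilde{\mathbf{U}}=\mathbf{U}-\mathbf{b}$ that is divergence-free in $D_\varepsilon$, has the same trace as $\mathbf{U}$ on $\partial D_\varepsilon$, and satisfies $\|\nabla\widetilde{\mathbf{U}}\|_{L^2(D_\varepsilon)}\le C\|\nabla\mathbf{U}\|_{L^2(\Omega)}$. Then $\mathbf{u}_{\infty}-\widetilde{\mathbf{U}}\in H^1_0(D_\varepsilon)$ is divergence-free, so using it as a test function in the weak form of \eqref{estimate stokes} kills the pressure term and gives
\begin{equation*}
\int_{D_\varepsilon} 2\mu'|\bD(\mathbf{u}_{\infty})|^2 = \int_{D_\varepsilon} 2\mu'\bD(\mathbf{u}_{\infty}):\bD(\widetilde{\mathbf{U}}),
\end{equation*}
hence $\|\bD(\mathbf{u}_{\infty})\|_{L^2(D_\varepsilon)}\le C\|\nabla\mathbf{U}\|_{L^2(\Omega)}$ by Cauchy-Schwarz. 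To pass from $\bD(\mathbf{u}_{\infty})$ to $\nabla\mathbf{u}_{\infty}$, on each $\omega^{\mathbf{n}}_{\varepsilon}$ I subtract the $L^2$-projection of $\mathbf{u}_{\infty}$ onto $\mathbf{R}$, which changes neither $\nabla\mathbf{u}_{\infty}$ nor $\bD(\mathbf{u}_{\infty})$, and then apply Korn's inequality for functions orthogonal to $\mathbf{R}$ (whose constant is scale-invariant under $x=\varepsilon(y+\mathbf{n})$, hence universal). Summation over $\mathbf{n}$ gives the first estimate.

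For the pressure estimate, I normalize $p$ to have zero mean on each component, which is the natural choice since the Stokes problem on the disconnected set $D_\varepsilon$ determines the pressure only up to a constant per component. For each $\mathbf{n}$, Bogovski\u{i} surjectivity of $\mathrm{div}:H^1_0(\omega^{\mathbf{n}}_{\varepsilon})\to L^2_0(\omega^{\mathbf{n}}_{\varepsilon})$ provides $\mathbf{v}_{\mathbf{n}}\in H^1_0(\omega^{\mathbf{n}}_{\varepsilon})$ with $\mathrm{div}\,\mathbf{v}_{\mathbf{n}}=p$ on $\omega^{\mathbf{n}}_{\varepsilon}$ and $\|\nabla\mathbf{v}_{\mathbf{n}}\|_{L^2}\le C\|p\|_{L^2(\omega^{\mathbf{n}}_{\varepsilon})}$, with the same universal $C$. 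Testing the weak form with $\mathbf{v}_{\mathbf{n}}$ gives
\begin{equation*}
\|p\|^2_{L^2(\omega^{\mathbf{n}}_{\varepsilon})} = \int_{\omega^{\mathbf{n}}_{\varepsilon}} 2\mu'\bD(\mathbf{u}_{\infty}):\bD(\mathbf{v}_{\mathbf{n}}) \le C\mu'\|\bD(\mathbf{u}_{\infty})\|_{L^2(\omega^{\mathbf{n}}_{\varepsilon})}\|p\|_{L^2(\omega^{\mathbf{n}}_{\varepsilon})},
\end{equation*}
so $\|p\|_{L^2(\omega^{\mathbf{n}}_{\varepsilon})}\le C\mu'\|\bD(\mathbf{u}_{\infty})\|_{L^2(\omega^{\mathbf{n}}_{\varepsilon})}$; squaring and summing, and using the velocity bound from the previous step, finishes the proof.

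The crux of the whole argument is the uniformity in $\varepsilon$ of the Bogovski\u{i} and Korn constants over the family $\{\omega^{\mathbf{n}}_{\varepsilon}\}$. Both reduce, via the dilation $x=\varepsilon(y+\mathbf{n})$ which preserves the ratios $\|\nabla\cdot\|_{L^2}/\|\mathrm{div}\,\cdot\|_{L^2}$ and $\|\nabla\cdot\|_{L^2}/\|\bD(\cdot)\|_{L^2}$, to the corresponding inequalities on the fixed model $\omega$; once this point is made carefully, the rest of the argument is routine.
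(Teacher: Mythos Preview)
Your proof is correct and rests on the same two ingredients as the paper's---componentwise reduction to the model inclusion $\omega$ and the Bogovski\u{i} operator---but arranges them differently. The paper rescales explicitly to $\omega$, then bounds the pressure \emph{first} (via Bogovski\u{i}, exactly as in your second step) and only afterwards tests the equation with the non-divergence-free function $\mathbf{v}-\mathbf{V}$, absorbing the resulting $\int q\,\mathrm{div}\,\mathbf{V}$ term using the pressure bound just obtained; because it works with the $\nabla$-form of the Stokes operator this yields $\|\nabla\mathbf{v}\|_{L^2}\le C\|\nabla\mathbf{V}\|_{L^2}$ directly, with no appeal to Korn. Your route---correcting $\mathbf{U}$ to a divergence-free competitor so the pressure drops out, then upgrading $\bD(\mathbf{u}_\infty)$ to $\nabla\mathbf{u}_\infty$ via Korn---is equally valid and perhaps more in the spirit of the variational Stokes theory, at the modest cost of a second Bogovski\u{i} application and the componentwise Korn step. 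Both arguments hinge on the scale invariance of the Bogovski\u{i} constant under the dilation $x\mapsto\varepsilon(x+\mathbf{n})$, which you correctly identify as the crux of the uniformity in $\varepsilon$.
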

	\begin{proof}
		Choose one of the components of $D_{\varepsilon}$, say $\omega_{\varepsilon}^{\mathbf{n}}$, we define the rescaled functions on $\omega$:
		\begin{equation}
			\mathbf{v}(x):=\mathbf{u}_{\infty}(\varepsilon x+\mathbf{n}), \quad q(x):=\varepsilon p(\varepsilon x+\mathbf{n}),\quad \mathbf{V}(x)=\mathbf{U}(\varepsilon x+\mathbf{n}) \quad  \mathrm{for}\ x\in \omega,
		\end{equation}
		then $(\mathbf{v},q)\in H^1(\omega)\times L^2_0(\omega)$ satisfies that
		\begin{equation}\label{basic 1}
			\left\{
			\begin{aligned}
				& \mathcal{L}_{\infty,\mu'}(\mathbf{v},q)=0 & \mathrm{in}\ \omega, \\
				& \mathrm{div}\,\mathbf{v}=0 & \mathrm{in}\ \omega, \\
				&\mathbf{v}|_{\partial \omega}=\mathbf{V}|_{\partial \omega}.
			\end{aligned}\right.
		\end{equation}
	Since $\mathrm{div}:H_0^1(\omega) \rightarrow L^2_0(\omega)$ is onto (for instance \cite{temam2001navier} Lemma 2.4), there exists $\mathbf{Q}\in H_0^1(\omega)$ such that
	\begin{equation}\label{basic 2}
		\mathrm{div}\,\mathbf{Q}=q, \quad \|\mathbf{Q}\|_{H^1(\omega)}\leq C\|q\|_{L^2(\omega)},
	\end{equation}
	 multiply \eqref{basic 1} by $\mathbf{Q}$ and use Green's identity \eqref{eq:GreenStokes}, we get
		\begin{equation*}
			\mu' \int_{\omega} \nabla \mathbf{v}:\nabla \mathbf{Q}+\int_{\omega} q^2=0,
		\end{equation*}
	Cauchy-Schwarz and \eqref{basic 2} yields that
	\begin{equation}\label{basic 3}
	\|q\|_{L^2(\omega)}\leq C\mu'\|\nabla \mathbf{v}\|_{L^2(\omega)}.
	\end{equation}
	Now multiply \eqref{basic 1} by $\mathbf{v}-\mathbf{V}$, integrate by parts again and apply \eqref{basic 3}. We get
		\begin{equation*}
				\mu' \int_{\omega} |\nabla \mathbf{v}|^2= \int_{\omega} q\cdot \mathrm{div}\,\mathbf{V}+\mu' \int_{\omega } \nabla \mathbf{v}:\nabla \mathbf{V} \leq C\mu'\|\nabla \mathbf{v}\|_{L^2(\omega)} \| \nabla \mathbf{V}\|_{L^2(\omega_{\varepsilon}^{\mathbf{n}})}.
		\end{equation*}
	Therefore, $\|\nabla \mathbf{v}\|_{L^2(\omega)}\leq C\|\nabla \mathbf{V}\|_{L^2(\omega)}$. Moreover, this estimate is scaling invariant. Using this and a rescaled version of \eqref{basic 3}, we get on each $\eps$-cube:
\begin{equation*}
\begin{aligned}
&\|\nabla \mathbf{u}_{\infty}\|^2_{L^2(\omega^\mathbf{n}_\eps)} \le C\|\nabla \mathbf{U}\|^2_{L^2(\omega^\mathbf{n}_\eps)},\\
&\|p\|_{L^2(\omega^\mathbf{n}_\eps)}^2 = \eps^{d-2}\|q\|_{L^2(\omega)}^2 \le C(\mu')^2 \eps^{d-2} \|\nabla \mathbf{V}\|^2_{L^2(\omega)} =  C(\mu')^2\|\nabla \mathbf{U}\|^2_{L^2(\omega^\mathbf{n}_\eps)}.
\end{aligned}
\end{equation*}
Putting those estimates together, we obtain the desired conclusion.
	\end{proof}

	\subsection{Convergence rates of certain operators} In this subsection, we establish estimates for the operator norms of $(\Lambda^{\infty,\widetilde\mu,{\rm i}}_{D_\eps} - \Lambda^{\widetilde \lambda,\widetilde\mu, {\rm i}}_{D_\eps}) \mathbb{S}_{D_\eps}$ and $\Lambda^{\widetilde\lambda,\widetilde\mu,{\rm i}}_{D_\eps}\bS_{D_\eps}$, in the corresponding asymptotic setting. Those operators appear in the right hand sides of \eqref{eq:disform}, and those estimates can be viewed as the convergence rates of $\Lambda^{\widetilde \lambda,\widetilde\mu, {\rm i}}_{D_\eps}\bS_{D_\eps}$ to $\Lambda^{\infty,\widetilde\mu,{\rm i}}_{D_\eps}\bS_{D_\eps}$ in Case 1, and of $\Lambda^{\widetilde\lambda,\widetilde\mu,{\rm i}}_{D_\eps}$ to $0$ in Case 2.

	\begin{lemma} Assume that {\upshape(A2)} holds and that $(\widetilde\lambda,\widetilde\mu)$ is admissible. There exists a universal constant $C > 0$, such that 
	\begin{equation}
	\label{eq:oprate1}
	\left\|(\Lambda^{\infty,\widetilde\mu,{\rm i}} - \Lambda^{\widetilde \lambda,\widetilde\mu, {\rm i}}) \mathbb{S}_D \right\|_{\mathscr{L}(\mathscr{\cH})} \le C\frac{\widetilde\mu^2}{d\widetilde\lambda + 2\widetilde\mu}.
	\end{equation}
	\end{lemma}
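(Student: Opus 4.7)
The plan is to exploit the self-adjointness of
\begin{equation*}
T := (\Lambda^{\widetilde\lambda,\widetilde\mu,\mathrm{i}}_{D_\eps} - \Lambda^{\infty,\widetilde\mu,\mathrm{i}}_{D_\eps})\,\bS^{\lambda,\mu}_{D_\eps}
\end{equation*}
as a bounded operator on $(\cH,(\cdot,\cdot)_{\bS^{\mathrm{N}}})$. Proposition \ref{inverse boundedness}(i) and the proof of Corollary \ref{inverse boundedness II}(ii) show that each of $\Lambda^{\widetilde\lambda,\widetilde\mu,\mathrm{i}}_{D_\eps}\bS^{\lambda,\mu}_{D_\eps}$ and $\Lambda^{\infty,\widetilde\mu,\mathrm{i}}_{D_\eps}\bS^{\lambda,\mu}_{D_\eps}$ is non-positive self-adjoint, so $T$ is self-adjoint and $\|T\|_{\mathscr{L}(\cH)} = \sup_{\bm\phi \ne 0} |(T\bm\phi,\bm\phi)_{\bS^{\mathrm{N}}}|/\|\bm\phi\|^2_{\bS^{\mathrm{N}}}$. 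The task therefore reduces to a sharp quadratic-form estimate.

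Fix $\bm\phi\in\cH$ and set $\mathbf{V}:=\cS^{\lambda,\mu}_{D_\eps}\bm\phi$ and $\mathbf{h}:=\mathbf{V}|_{\partial D_\eps}$. Let $\mathbf{u}_\lambda$ be the Lam\'e extension of $\mathbf{h}$ into $D_\eps$ with parameters $(\widetilde\lambda,\widetilde\mu)$, and $(\mathbf{u}_\infty,p)$ the Stokes extension with viscosity $\widetilde\mu$. Testing the Green's identities \eqref{eq:GreenLame} and \eqref{eq:GreenStokes} against $\mathbf{V}$ yields
\begin{equation*}
(T\bm\phi,\bm\phi)_{\bS^{\mathrm{N}}} = -J^{D_\eps}_{\widetilde\lambda,\widetilde\mu}(\mathbf{u}_\lambda,\mathbf{V}) + J^{D_\eps}_{\infty,\widetilde\mu}(\mathbf{u}_\infty,\mathbf{V}) + \int_{D_\eps} p\,\mathrm{div}\,\mathbf{V}.
\end{equation*}
Since $\mathbf{V}-\mathbf{u}_\lambda$ and $\mathbf{V}-\mathbf{u}_\infty$ belong to $H^1_0(D_\eps)$, the Lam\'e Euler--Lagrange identity gives $J^{D_\eps}_{\widetilde\lambda,\widetilde\mu}(\mathbf{u}_\lambda,\mathbf{V})=J^{D_\eps}_{\widetilde\lambda,\widetilde\mu}(\mathbf{u}_\lambda)$; while testing $\mathcal{L}_{\infty,\widetilde\mu}(\mathbf{u}_\infty,p)=0$ against $\mathbf{V}-\mathbf{u}_\infty$, combined with the identity $2\int_{D_\eps}\bD(\mathbf{u}_\infty):\bD(\mathbf{z}) = \int_{D_\eps}\nabla\mathbf{u}_\infty:\nabla\mathbf{z}$ (valid for $\mathbf{z}\in H^1_0(D_\eps)$ because $\mathrm{div}\,\mathbf{u}_\infty=0$ and one integration by parts), yields $J^{D_\eps}_{\infty,\widetilde\mu}(\mathbf{u}_\infty,\mathbf{V})=J^{D_\eps}_{\infty,\widetilde\mu}(\mathbf{u}_\infty) - \int_{D_\eps}p\,\mathrm{div}\,\mathbf{V}$. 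The two pressure--divergence terms then cancel; combined with $J^{D_\eps}_{\infty,\widetilde\mu}(\mathbf{u}_\infty)=J^{D_\eps}_{\widetilde\lambda,\widetilde\mu}(\mathbf{u}_\infty)$ (again because $\mathrm{div}\,\mathbf{u}_\infty=0$) and the Pythagorean identity $J^{D_\eps}_{\widetilde\lambda,\widetilde\mu}(\mathbf{u}_\infty)-J^{D_\eps}_{\widetilde\lambda,\widetilde\mu}(\mathbf{u}_\lambda) = J^{D_\eps}_{\widetilde\lambda,\widetilde\mu}(\mathbf{u}_\infty-\mathbf{u}_\lambda)$ coming from Lam\'e Euler--Lagrange, one arrives at
\begin{equation*}
(T\bm\phi,\bm\phi)_{\bS^{\mathrm{N}}} = J^{D_\eps}_{\widetilde\lambda,\widetilde\mu}(\mathbf{u}_\infty-\mathbf{u}_\lambda) = -\int_{D_\eps} p\,\mathrm{div}\,\mathbf{u}_\lambda,
\end{equation*}
the last equality obtained via one more use of the Stokes weak form with test function $\mathbf{u}_\infty-\mathbf{u}_\lambda$. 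In particular $T$ is non-negative, consistent with the Dirichlet principle (Lemma \ref{lem:diriprin}).

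For the quantitative step, the trace inequality $|\bD(\mathbf{z})|^2 \ge d^{-1}(\mathrm{div}\,\mathbf{z})^2$ applied to $\mathbf{z}=\mathbf{u}_\infty-\mathbf{u}_\lambda$ (together with $\mathrm{div}(\mathbf{u}_\infty-\mathbf{u}_\lambda)=-\mathrm{div}\,\mathbf{u}_\lambda$) gives $J^{D_\eps}_{\widetilde\lambda,\widetilde\mu}(\mathbf{u}_\infty-\mathbf{u}_\lambda) \ge \tfrac{d\widetilde\lambda+2\widetilde\mu}{d}\|\mathrm{div}\,\mathbf{u}_\lambda\|^2_{L^2(D_\eps)}$, and combining with Cauchy--Schwarz on the central identity produces $\|\mathrm{div}\,\mathbf{u}_\lambda\|_{L^2(D_\eps)}\le d(d\widetilde\lambda+2\widetilde\mu)^{-1}\|p\|_{L^2(D_\eps)}$, hence $(T\bm\phi,\bm\phi)_{\bS^{\mathrm{N}}}\le d(d\widetilde\lambda+2\widetilde\mu)^{-1}\|p\|^2_{L^2(D_\eps)}$. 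Lemma \ref{estimate stokes lemma} applied with $\mathbf{U}=\mathbf{V}$ (an $H^1(\Omega)$-extension sharing boundary data with $\mathbf{u}_\infty$) gives $\|p\|_{L^2(D_\eps)} \le C\widetilde\mu\|\nabla\mathbf{V}\|_{L^2(\Omega)}$, and Korn's inequality together with Remark \ref{rem:equinorm} (valid since $\mathbf{V}\in\frakH^{\mathrm{N}}$ and $\frakH^{\mathrm{N}}\cap\mathbf{R}=\{0\}$) yields $\|\nabla\mathbf{V}\|^2_{L^2(\Omega)} \le CJ^\Omega_{\lambda,\mu}(\mathbf{V}) = C\|\bm\phi\|^2_{\bS^{\mathrm{N}}}$. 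Combining the chain delivers the asserted bound.

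The main obstacle is the quadratic-form identification in the second paragraph: each individual Green's identity is standard, but the cancellation of the two $\int p\,\mathrm{div}\,\mathbf{V}$ contributions and the eventual reduction to $-\int p\,\mathrm{div}\,\mathbf{u}_\lambda$ requires applying both Euler--Lagrange relations in a coordinated manner and crucially uses the divergence-free identity $2\bD(\mathbf{u}_\infty):\bD(\mathbf{z})\to\nabla\mathbf{u}_\infty:\nabla\mathbf{z}$ to convert the Stokes weak form into a statement about the Stokes energy $J^{D_\eps}_{\infty,\widetilde\mu}$. Once the central identity is in hand, the sharp rate $\widetilde\mu^2/(d\widetilde\lambda+2\widetilde\mu)$ emerges transparently by balancing the $d\widetilde\lambda+2\widetilde\mu$ coercivity of the Lam\'e form (denominator) against the $\widetilde\mu$-linear pressure estimate of Lemma \ref{estimate stokes lemma} (numerator).
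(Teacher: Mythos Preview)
Your argument is correct and follows essentially the same route as the paper: reduce to the quadratic form via self-adjointness, identify $(T\bm\phi,\bm\phi)_{\bS^{\mathrm{N}}}$ with $J^{D_\eps}_{\infty,\widetilde\mu}(\mathbf{u}_\infty)-J^{D_\eps}_{\widetilde\lambda,\widetilde\mu}(\mathbf{u}_\lambda) = \int_{D_\eps} p\,\mathrm{div}\,\mathbf{u}_\lambda$, and then estimate using the pressure bound of Lemma~\ref{estimate stokes lemma}. One harmless sign slip: your last equality should read $+\int_{D_\eps} p\,\mathrm{div}\,\mathbf{u}_\lambda$, not $-\int_{D_\eps} p\,\mathrm{div}\,\mathbf{u}_\lambda$ (consistent with the non-negativity of $T$ you note); this does not affect the estimate since you use Cauchy--Schwarz.

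The one genuine difference is in how $\|\mathrm{div}\,\mathbf{u}_\lambda\|_{L^2(D_\eps)}$ is controlled. The paper's Step~2 bounds it via the Dirichlet principle, obtaining $\|\mathrm{div}\,\mathbf{u}_\lambda\|^2 \le C\widetilde\mu(d\widetilde\lambda+2\widetilde\mu)^{-1}\|\nabla\mathbf{v}\|^2$; combined with Step~1 and Cauchy--Schwarz this would literally yield only $\widetilde\mu^{3/2}(d\widetilde\lambda+2\widetilde\mu)^{-1/2}$. Your bootstrap---feeding the coercivity $J^{D_\eps}_{\widetilde\lambda,\widetilde\mu}(\mathbf{u}_\infty-\mathbf{u}_\lambda)\ge \tfrac{d\widetilde\lambda+2\widetilde\mu}{d}\|\mathrm{div}\,\mathbf{u}_\lambda\|^2$ back into the central identity---gives the sharper $\|\mathrm{div}\,\mathbf{u}_\lambda\|\le d(d\widetilde\lambda+2\widetilde\mu)^{-1}\|p\|$ and delivers the stated rate $\widetilde\mu^2/(d\widetilde\lambda+2\widetilde\mu)$ directly. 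So your quantitative step is actually a cleaner way to close the argument than the paper's Step~2 as written. The Pythagorean form $(T\bm\phi,\bm\phi)_{\bS^{\mathrm{N}}}=J^{D_\eps}_{\widetilde\lambda,\widetilde\mu}(\mathbf{u}_\infty-\mathbf{u}_\lambda)$ you record is also not made explicit in the paper and makes both the sign and the coercivity transparent. (A minor remark: the identity $2\int\bD(\mathbf{u}_\infty):\bD(\mathbf{z})=\int\nabla\mathbf{u}_\infty:\nabla\mathbf{z}$ is not needed---the Green's identity~\eqref{eq:GreenStokes} already gives $J^{D_\eps}_{\infty,\widetilde\mu}(\mathbf{u}_\infty,\mathbf{z})=-\int p\,\mathrm{div}\,\mathbf{z}$ for $\mathbf{z}\in H^1_0(D_\eps)$ directly.)
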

	\begin{proof} First, we have proved in Proposition \ref{inverse boundedness} that $\Lambda_{D_{\varepsilon}}^{\widetilde{\lambda},\widetilde{\mu},\mathrm{i}}\mathbb{S}_{D_{\varepsilon}}^{\lambda,\mu}$ is a self-adjoint operator on $\cH$. The same argument shows $\Lambda_{D_{\varepsilon}}^{\infty,\widetilde{\mu},\mathrm{i}}\mathbb{S}_{D_{\varepsilon}}^{\lambda,\mu}$ is also self-adjoint. So, the desired estimate is equivalent to 
	\begin{equation}\label{524}
		\left|\left((\Lambda_{D_{\varepsilon}}^{\widetilde{\lambda},\widetilde{\mu},\mathrm{i}}\mathbb{S}_{D_{\varepsilon}}^{\lambda,\mu} - \Lambda_{D_{\varepsilon}}^{\infty,\widetilde{\mu},\mathrm{i}}\mathbb{S}_{D_{\varepsilon}}^{\lambda,\mu} )\bm{\phi},\bm{\phi} \right)_{\mathbb{S}^{\mathrm{N}}}\right| \leq C\frac{\widetilde\mu^2}{d\widetilde\lambda + 2\widetilde\mu} \|\bm{\phi} \|_{\mathbb{S}^{\mathrm{N}}}^2, \qquad \forall \bm{\phi}\in \cH.
	\end{equation}
	For any fixed $\bm{\phi}\in \cH$, let $\mathbf{v} = \cS^{\lambda,\mu}\bm\phi$, let $\mathbf{u}$ solves $\mathcal{L}_{\widetilde\lambda,\widetilde\mu}\mathbf{u} = 0$ in $D_\eps$ with $\mathbf{u} = \mathbf{v}$ on $\partial D_\eps$, and let $(\mathbf{u}_\infty,p)$ be the solution to \eqref{estimate stokes} with $\mathbf{U} = \mathbf{v}$. Then by definition, the inner product on the left hand side of \eqref{524} is precisely
	\begin{equation*}
	-\int_{\partial D_\eps} \left(\frac{\partial (\mathbf{u}_\infty,p)}{\partial \nu_{(\infty,\widetilde\mu)}} - \frac{\partial \mathbf{u}}{\partial \nu_{(\widetilde\lambda,\widetilde\mu)}}\right) \cdot (\bS\bm\phi) = J^{D_\eps}_{\widetilde\lambda,\widetilde\mu}(\mathbf{u}) - J^{D_\eps}_{\infty,\widetilde\mu}(\mathbf{u}_\infty).
	\end{equation*}
	On the other hand, $\|\bm\phi\|^2_{\bS^{\rm N}} = J^\Omega_{(\lambda,\mu)}(\mathbf{v})$ is comparable with $\|\nabla \mathbf{v}\|_{L^2(\Omega)}^2$. Hence, everything amounts to proving
	\begin{equation}
	\label{eq:opratekey1}
	\left|J^{D_\eps}_{(\widetilde\lambda,\widetilde\mu)}(\mathbf{u}) - J^{D_\eps}_{(\infty,\widetilde\mu)}(\mathbf{u}_\infty)\right| \le C\frac{\widetilde\mu^2}{d\widetilde\lambda + 2\widetilde\mu} \|\nabla\mathbf{v}\|_{L^2(\Omega)}^2.
	\end{equation}

	\noindent\emph{Step 1: An application of Lemma \ref{estimate stokes lemma} yields, for a universal constant $C$,}
	\begin{equation*}
	\|\nabla \mathbf{u}_\infty\|_{L^2(D_\eps)} \le C\|\nabla \mathbf{v}\|_{L^2(D_\eps)}, \quad \|p\|_{L^2(D_\eps)} \le C\widetilde\mu \|\nabla \mathbf{v}\|_{L^2(D_\eps)}.
	\end{equation*}

	\noindent\emph{Step 2: We prove that for some universal constant $C > 0$, 
	\begin{equation}
	\label{eq:opratekey2}
	\|\mathrm{div} \,\mathbf{u}\|^2_{L^2(D_\eps)} \le C\frac{d\widetilde\mu}{d\widetilde\lambda + 2\widetilde\mu} \|\nabla\mathbf{v}\|_{L^2(\Omega)}^2.
	\end{equation}
	}

\noindent To show this, using the inequality $|\mathrm{tr}(A)| \le d(A:A)$, we get
\begin{equation*}
J^{D_\eps}_{\widetilde\lambda,\widetilde\mu}(\mathbf{u}) \ge \left(\widetilde\lambda + \frac{2\widetilde\mu}{d}\right) \|\mathrm{div} \,\mathbf{u}\|^2_{L^2(D_\eps)}. 
\end{equation*}
By the Dirichlet principle \eqref{eq:diriprin}, and the fact that $\mathrm{div}\,\mathbf{u}_\infty = 0$,
\begin{equation*}
J^{D_\eps}_{\widetilde\lambda,\widetilde\mu}(\mathbf{u}) \le J^{D_\eps}_{\widetilde\lambda,\widetilde\mu}(\mathbf{u}_\infty) = 2\widetilde\mu\int_{D_\eps}|\bD(\mathbf{u}_\infty)|^2 \le 2\widetilde\mu\int_{D_\eps}|\nabla \mathbf{u}_\infty|^2.
\end{equation*}
Then \eqref{eq:opratekey2} follows from the estimates above and the one in Step 1.

\noindent\emph{Step 3: We derive a formula for the left hand side of \eqref{eq:opratekey1}}. Because $\mathbf{u}-\mathbf{u}_\infty = 0$ on $\partial D_\eps$, by Green's identities \eqref{eq:GreenLame} and \eqref{eq:GreenStokes} we get
			\begin{equation*}
				J^{D_{\varepsilon}}_{\lambda,\mu}(\mathbf{u},\mathbf{u}-\mathbf{u}_{\infty})=0, \quad J^{D_{\varepsilon}}_{\infty,\mu}(\mathbf{u}_{\infty},\mathbf{u}_{\infty}-\mathbf{u})=\int_{D_{\varepsilon}}p\cdot \mathrm{div}\,\mathbf{u},
			\end{equation*}
This and the fact that $\mathrm{div}\,\mathbf{u}_\infty = 0$ yield
			\begin{equation*}
				J^{D_{\varepsilon}}_{\lambda,\mu}(\mathbf{u})-J^{D_{\varepsilon}}_{\infty,\mu}(\mathbf{u}_{\infty})=J^{D_{\varepsilon}}_{\lambda,\mu}(\mathbf{u},\mathbf{u}_{\infty})-J^{D_{\varepsilon}}_{\infty,\mu}(\mathbf{u},\mathbf{u}_{\infty})-\int_{D_{\varepsilon}} p\cdot \mathrm{div}\,\mathbf{u}=-\int_{D_{\varepsilon}} p\cdot \mathrm{div}\,\mathbf{u}.
			\end{equation*}
The desired estimate \eqref{eq:opratekey1} then follows from the estimate of $p$ in Step 1 and that of $\mathrm{div}\,\mathbf{u}$ in Step 2. The proof is hence complete.
	\end{proof}

	\begin{lemma} Assume that {\upshape(A2)} holds and that $(\widetilde\lambda,\widetilde\mu)$ is admissible. There is a universal constant $C>0$, such that
	\begin{equation}
	\label{eq:oprate2}
	\left\|\Lambda^{\widetilde \lambda,\widetilde\mu, {\rm i}}\bS \right\|_{\mathscr{L}(\cH)} \le C(d\widetilde\lambda + 2\widetilde\mu).
	\end{equation}
	\end{lemma}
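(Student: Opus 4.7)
The plan is to exploit the self-adjointness of $\Lambda^{\widetilde{\lambda},\widetilde{\mu},\mathrm{i}}_{D_{\varepsilon}}\mathbb{S}_{D_\varepsilon}^{\lambda,\mu}$ on $\mathcal{H}$ (already established in Proposition~\ref{inverse boundedness}\,(i)), which reduces the operator norm to controlling the associated quadratic form
\[
  \bigl|(\Lambda^{\widetilde{\lambda},\widetilde{\mu},\mathrm{i}}_{D_{\varepsilon}}\mathbb{S}_{D_{\varepsilon}}^{\lambda,\mu}\bm{\phi},\bm{\phi})_{\mathbb{S}^{\mathrm{N}}}\bigr|
  \;\le\; C(d\widetilde{\lambda}+2\widetilde{\mu})\,\|\bm{\phi}\|_{\mathbb{S}^{\mathrm{N}}}^{2}.
\]
So I will fix $\bm{\phi}\in H^{-\frac12}(\partial D_\varepsilon)$ and work on this quadratic bound throughout.

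For the fixed $\bm{\phi}$, I first set $\mathbf{v}:=\mathcal{S}_{D_\varepsilon}^{\lambda,\mu}\bm{\phi}\in\frakH^{\mathrm{N}}$ and let $\mathbf{u}\in H^1(D_\varepsilon)$ be the (unique) solution to the interior Dirichlet problem $\mathcal{L}_{\widetilde{\lambda},\widetilde{\mu}}\mathbf{u}=0$ in $D_\varepsilon$ with $\mathbf{u}|_{\partial D_\varepsilon}=\mathbf{v}|_{\partial D_\varepsilon}$. Then, following the same computation as in the proof of Proposition~\ref{inverse boundedness}\,(i) (using the definition of the DtN map in \eqref{eq:DtNdef}, the definition \eqref{new inner product} of $(\cdot,\cdot)_{\mathbb{S}^{\mathrm{N}}}$, and Green's identity \eqref{eq:GreenLame}), I get the identity
\[
  -\bigl(\Lambda^{\widetilde{\lambda},\widetilde{\mu},\mathrm{i}}_{D_{\varepsilon}}\mathbb{S}_{D_{\varepsilon}}^{\lambda,\mu}\bm{\phi},\bm{\phi}\bigr)_{\mathbb{S}^{\mathrm{N}}}
  \;=\; J^{D_\varepsilon}_{\widetilde{\lambda},\widetilde{\mu}}(\mathbf{u}).
\]

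The rest is a straightforward estimate of the right-hand side. First apply the Dirichlet principle (Lemma~\ref{lem:diriprin}) with $\mathbf{v}$ as a competitor for $\mathbf{u}$ to get $J^{D_\varepsilon}_{\widetilde{\lambda},\widetilde{\mu}}(\mathbf{u})\le J^{D_\varepsilon}_{\widetilde{\lambda},\widetilde{\mu}}(\mathbf{v})$. Next, by the upper bound in \eqref{eq:Jadm} of Remark~\ref{rem:equinorm},
\[
  J^{D_\varepsilon}_{\widetilde{\lambda},\widetilde{\mu}}(\mathbf{v})
  \;\le\; \max\{d\widetilde{\lambda}+2\widetilde{\mu},\,2\widetilde{\mu}\}\int_{D_\varepsilon}|\bD(\mathbf{v})|^{2},
\]
which is bounded by a universal multiple of $(d\widetilde{\lambda}+2\widetilde{\mu})\int_{D_\varepsilon}|\bD(\mathbf{v})|^{2}$ (this is the one place where I use that the application in this paper is to $\widetilde{\lambda}\ge 0$, or more generally to Lam\'e pairs for which $\max\{d\widetilde{\lambda}+2\widetilde{\mu},2\widetilde{\mu}\}\lesssim d\widetilde{\lambda}+2\widetilde{\mu}$; this is precisely the situation in Case~2 of Theorem~\ref{main result}). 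Finally, since $(\lambda,\mu)$ is a fixed admissible background pair, Remark~\ref{rem:equinorm} again gives
\[
  \int_{D_\varepsilon}|\bD(\mathbf{v})|^{2}\;\le\;\int_{\Omega}|\bD(\mathbf{v})|^{2}
  \;\lesssim\; J^{\Omega}_{\lambda,\mu}(\mathbf{v}) \;=\; \|\bm{\phi}\|_{\mathbb{S}^{\mathrm{N}}}^{2},
\]
where the last equality is \eqref{new inner product}. Chaining these inequalities yields the desired quadratic form bound, and self-adjointness then converts it to the claimed operator norm estimate.

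The proof is entirely routine; there is no real obstacle. The only delicate point to check is the bookkeeping of $\max\{d\widetilde{\lambda}+2\widetilde{\mu},2\widetilde{\mu}\}$ versus $d\widetilde{\lambda}+2\widetilde{\mu}$ in the sign-indefinite case for $\widetilde{\lambda}$; this is harmless for the way the lemma is used in Section~\ref{Strong convergence to the extreme} (Case~2), where both $\widetilde{\lambda}$ and $\widetilde{\mu}$ are sent to zero jointly.
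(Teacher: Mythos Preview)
Your proof is correct and follows essentially the same route as the paper: reduce to the quadratic form via self-adjointness, identify it with $J^{D_\varepsilon}_{\widetilde\lambda,\widetilde\mu}(\mathbf{u})$, apply the Dirichlet principle with competitor $\mathbf{v}=\mathcal{S}^{\lambda,\mu}\bm\phi$, and then use \eqref{eq:Jadm} together with $\|\bD(\mathbf{v})\|_{L^2(\Omega)}^2\sim\|\bm\phi\|_{\mathbb{S}^{\mathrm N}}^2$. Your remark about $\max\{d\widetilde\lambda+2\widetilde\mu,2\widetilde\mu\}$ versus $d\widetilde\lambda+2\widetilde\mu$ is in fact a small refinement: the paper writes $(d\widetilde\lambda+2\widetilde\mu)$ directly, which tacitly assumes $\widetilde\lambda\ge 0$, whereas your version is valid for all admissible $(\widetilde\lambda,\widetilde\mu)$ and still yields the Case~2 rate $O(|\widetilde\lambda|+\widetilde\mu)$ as used in Section~\ref{Strong convergence to the extreme}.
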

	\begin{proof} The operator is self-adjoint in $\cH$, so we need to prove, for any fixed $\bm\phi\in \cH$,
\begin{equation}
\label{eq:oprate-key2}
\left|\left(\Lambda^{\widetilde \lambda,\widetilde\mu, {\rm i}}\bS \bm\phi, \bm\phi\right)_{\bS^{\rm N}} \right| \le C(d\widetilde\lambda + 2\widetilde\mu)\|\bm\phi\|_{\bS^{\rm N}}^2.
\end{equation}
Let $\mathbf{v} = \cS_{\lambda,\mu}\bm\phi$, and let $\mathbf{u} = \cS^{\widetilde\lambda,\widetilde\mu}(\bS^{\widetilde\lambda,\widetilde\mu})^{-1} \bS_{\lambda,\mu}\bm\phi$. Then $\mathbf{u}$ satisfies $\mathcal{L}_{\widetilde\lambda,\widetilde\mu} \mathbf{u} = 0$ in $D_\eps$ and $\mathbf{u} = \mathbf{v}$ on $\partial D_\eps$. The inner product on the left hand side of \eqref{eq:oprate-key2} can then be written as
\begin{equation*}
-\int_{\partial D_\eps} \frac{\partial \mathbf{u}}{\partial \nu_{(\widetilde\lambda,\widetilde\mu)}} \cdot \bS^{\rm N}\bm\phi = -J^{D_\eps}_{(\widetilde\lambda,\widetilde\mu)}(\mathbf{u}).
\end{equation*}
By the Dirichlet principle, we get
\begin{equation*}
J^{D_\eps}_{(\widetilde\lambda,\widetilde\mu)}(\mathbf{u}) \le J^{D_\eps}_{(\widetilde\lambda,\widetilde\mu)}(\mathbf{v}) \le (d\widetilde\lambda+2\widetilde\mu)\int_{D_\eps}|\bD(\mathbf{v})|^2 \le (d\widetilde\lambda+2\widetilde\mu)\|\bD(\mathbf{v})\|_{L^2(\Omega)}^2.
\end{equation*}
This is precisely the desired estimate \eqref{eq:oprate-key2} since $\|\bD(\mathbf{v})\|^2_{L^2(\Omega)}$ is comparable with $J^\Omega_{(\lambda,\mu)}(\mathbf{v})$, i.e., $\|\bm\phi\|^2_{\bS^{\rm N}}$, due to Remark \ref{rem:equinorm}. The proof is hence complete.
\end{proof}

	\subsection{Proof of Theorem \ref{main result}} We now have everything to prove the main theorem of the paper. First, $\bm\phi$ and $\bS^{-1}\mathbf{G}$ are controlled in Lemma \ref{es G} and Lemma \ref{lem:phibdd}.

	\noindent\emph{Case 1: The incompressible inclusions limit}. Let $\widetilde\lambda\to \infty$ but $\widetilde\mu$ be fixed (or remains bounded). By \eqref{eq:oprate1}, the right hand side of the first line in \eqref{eq:disform} is of order $O(\widetilde\lambda^{-1})$. Take the inverse of $(\Lambda^{\infty,\widetilde\mu,{\rm i}}_{D_\eps} - \Lambda^{\lambda,\mu,{\rm e}}_{D_\eps})\mathbb{S}_{D_\eps}$ in that line and use the uniform in $\eps$ boundedness of the inverse provided in \eqref{eq:uinvStokes}, we get the desired result.

	\smallskip

	\noindent\emph{Case 2: The soft inclusions limit}. Let $(\widetilde\lambda,\widetilde\mu) \to (0,0)$, or equivalently, $\max(d\widetilde\lambda+2\widetilde\mu,\widetilde\mu)$ goes to zero. Using \eqref{eq:oprate2} we see that the right hand side of the second line in \eqref{eq:disform} is of order $O(|\widetilde\lambda| + \widetilde\mu)$. By Proposition \ref{prop:Lambdae}, the inverse of $\Lambda^{\lambda,\mu,{\rm e}}\bS$ uniformly bounded. The desired result then follows. 

	\smallskip

	\noindent\emph{Case 3: The hard inclusions limit}. Let $\widetilde\mu\to \infty$ but $\widetilde\lambda$ fixed (In fact, it suffices to keep $(\widetilde\lambda/\widetilde\mu, 1)$ uniformly admissible). Then the right hand side of the third line in \eqref{eq:disform} is of order $O(\widetilde\mu^{-1})$. By Proposition \ref{inverse boundedness}, the inverse of $\Lambda^{\widetilde\lambda/\widetilde\mu,1,{\rm i}}\bS$ can be bounded uniformly in $\eps$ and $\widetilde\mu$, so we can conclude.

\section*{Acknowledgments}
	The authors would like to thank Professor Long Jin for helpful discussions on pseudo-differential calculus for the layer potential operators associated to Lam\'e systems. The work of WJ is partially supported by the NSF of China under Grant No.\,11871300.
	
\appendix

\section{Some useful facts and technical tools}

In this appendix we record some important facts that are used frequently in this paper. 

\subsection{The second Korn's inequality}
\label{appendix A}

\begin{lemma}[The second Korn's inequality]
	\label{second korn}
	Let $\Omega$ be a bounded Lipschitz domain in $\mathbb{R}^d$, and let $V$ be a closed subspace of vector valued functions in $H^1(\Omega)$ such that $V\cap \mathbf{R} =\{0\}$, where $\mathbf{R}$ is the rigid displacements space. Then every $\mathbf{v}\in V$ satisfies
	\begin{equation*}
		\|\mathbf{v}\|_{H^1(\Omega)}\leq C\|\bD(\mathbf{v})\|_{L^2(\Omega)},
	\end{equation*}
	where the constant $C$ depends only on $\Omega$.
\end{lemma}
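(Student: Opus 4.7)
The plan is to reduce the statement to the classical (unconstrained) second Korn inequality
\begin{equation*}
\|\mathbf{v}\|_{H^1(\Omega)} \leq C_0 \bigl(\|\mathbf{v}\|_{L^2(\Omega)} + \|\bD(\mathbf{v})\|_{L^2(\Omega)}\bigr), \qquad \forall\, \mathbf{v} \in H^1(\Omega),
\end{equation*}
which is well established for Lipschitz domains, and then to absorb the lower-order term $\|\mathbf{v}\|_{L^2(\Omega)}$ using the subspace condition $V \cap \mathbf{R} = \{0\}$ via a standard compactness-and-contradiction argument.

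Concretely, I would argue by contradiction: assume the desired inequality fails, so that one can find a sequence $\{\mathbf{v}_n\} \subset V$ with $\|\mathbf{v}_n\|_{H^1(\Omega)} = 1$ and $\|\bD(\mathbf{v}_n)\|_{L^2(\Omega)} \to 0$. By boundedness in $H^1(\Omega)$ and the Rellich-Kondrachov compactness theorem for Lipschitz domains, a subsequence (still labeled $\mathbf{v}_n$) converges weakly in $H^1(\Omega)$ and strongly in $L^2(\Omega)$ to some $\mathbf{v}_\infty$. Applying the classical Korn inequality to the difference $\mathbf{v}_n - \mathbf{v}_m$,
\begin{equation*}
\|\mathbf{v}_n - \mathbf{v}_m\|_{H^1(\Omega)} \leq C_0 \bigl(\|\mathbf{v}_n - \mathbf{v}_m\|_{L^2(\Omega)} + \|\bD(\mathbf{v}_n) - \bD(\mathbf{v}_m)\|_{L^2(\Omega)}\bigr),
\end{equation*}
shows that $\{\mathbf{v}_n\}$ is Cauchy in $H^1(\Omega)$ and hence converges strongly to $\mathbf{v}_\infty$ in $H^1(\Omega)$. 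In particular $\|\mathbf{v}_\infty\|_{H^1(\Omega)} = 1$ and $\bD(\mathbf{v}_\infty) = 0$, so $\mathbf{v}_\infty \in \mathbf{R}$. On the other hand, since $V$ is closed, $\mathbf{v}_\infty \in V$, and the hypothesis $V \cap \mathbf{R} = \{0\}$ forces $\mathbf{v}_\infty = 0$, contradicting $\|\mathbf{v}_\infty\|_{H^1(\Omega)} = 1$.

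The only delicate input is the classical second Korn inequality on bounded Lipschitz domains, which is the genuinely non-trivial fact; I would simply cite a standard reference (e.g.\ the monograph of Oleinik-Shamaev-Yosifian or Ciarlet). Given that, the rest of the argument is routine, and the compactness step is the main (but entirely standard) obstacle. I would make no attempt to track the dependence of the constant $C$ beyond noting that it depends on $\Omega$ through $C_0$ and on how $V$ sits inside $H^1(\Omega)$ relative to $\mathbf{R}$ (this dependence is implicit and not quantitative, which is consistent with the statement).
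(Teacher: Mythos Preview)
Your argument is correct and entirely standard. The paper does not actually prove this lemma: in Appendix~A.1 it simply states the result and writes ``For the proof we refer to \cite{MR1195131}'' (the Oleinik--Shamaev--Yosifian monograph). So there is no paper proof to compare against; your compactness-and-contradiction reduction to the classical Korn inequality is exactly the kind of argument one finds in such references.

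One remark worth making explicit: your closing observation that the constant implicitly depends on $V$ (not only on $\Omega$) is in fact correct, and the lemma as stated in the paper is slightly imprecise on this point. For a general closed subspace $V$ with $V\cap\mathbf{R}=\{0\}$, the constant cannot depend on $\Omega$ alone: one can take one-dimensional subspaces $V_\varepsilon=\mathrm{span}\{\mathbf{r}+\varepsilon\mathbf{w}\}$ with $\mathbf{r}\in\mathbf{R}\setminus\{0\}$ and $\mathbf{w}\notin\mathbf{R}$, for which the best constant blows up as $\varepsilon\to 0$. In the paper's applications the subspaces are fixed (e.g.\ $\mathfrak{H}^{\mathrm{N}}$, or functions whose boundary trace lies in $H^{1/2}_{\mathbf{R}}(\partial\Omega)$), so this does not cause trouble, but your caution about the dependence is well placed.
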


\noindent For the proof we refer to \cite{MR1195131}.

\subsection{A coercive lemma}
\label{appendix B}

The following version of Lax-Milgram theorem allowed us to prove that certain bounded linear operators are invertible by showing they are coercive.

	\begin{lemma}\label{lem:LaxMilgram}
	Let $H$ be a Hilbert space, and let $A \in \mathscr{L}(H)$ be a bounded linear operator. Suppose that there exists $\gamma>0$ such that
	\begin{equation}
		(Ax,x)_{H} \geq \gamma \|x\|_{H}^2 \quad \mathrm{for \ any\ }x\in H.
	\end{equation}
	Then $A$ has a bounded inverse, and $\|A^{-1}\|_{\mathscr{L}(H)}\leq \gamma^{-1}$.
\end{lemma}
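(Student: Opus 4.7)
\medskip

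\noindent\textbf{Proof proposal for Lemma \ref{lem:LaxMilgram}.} The plan is to follow the standard Lax--Milgram strategy, which in this symmetric-looking setting reduces to three short steps: bounded below, closed range, dense range.

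First, I would show that $A$ is bounded below by $\gamma$. By the Cauchy--Schwarz inequality and the hypothesis,
\begin{equation*}
\gamma \|x\|_H^2 \le (Ax,x)_H \le \|Ax\|_H \|x\|_H, \qquad \forall x \in H,
\end{equation*}
so $\|Ax\|_H \ge \gamma \|x\|_H$. In particular $A$ is injective, and its range $\mathrm{ran}(A)$ is closed (if $Ax_n \to y$, then $\{x_n\}$ is Cauchy, so $x_n \to x$ and $Ax = y$ by continuity of $A$).

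Next, I would show that $\mathrm{ran}(A)$ is dense, and hence equal to $H$. Suppose $y \in \mathrm{ran}(A)^\perp$, that is $(Ax,y)_H = 0$ for every $x \in H$. Choosing $x = y$ and applying the coercivity hypothesis yields $0 = (Ay,y)_H \ge \gamma \|y\|_H^2$, so $y = 0$. Hence $\mathrm{ran}(A)^\perp = \{0\}$, and combined with closedness this gives $\mathrm{ran}(A) = H$. Consequently $A \colon H \to H$ is a bijection and the inverse $A^{-1}$ exists.

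Finally, I would establish the norm bound on the inverse. Given any $y \in H$, write $x = A^{-1}y$. Then
\begin{equation*}
\gamma \|A^{-1} y\|_H^2 = \gamma \|x\|_H^2 \le (Ax,x)_H = (y, A^{-1} y)_H \le \|y\|_H \|A^{-1} y\|_H,
\end{equation*}
so $\|A^{-1} y\|_H \le \gamma^{-1} \|y\|_H$. This gives $\|A^{-1}\|_{\mathscr{L}(H)} \le \gamma^{-1}$ and completes the proof. There is no real obstacle: everything is classical; the only point to be slightly careful about is that $A$ is not assumed self-adjoint, so density of the range must be obtained via the orthogonal complement argument above rather than via a Riesz-representation identification of $A$ with a symmetric form.
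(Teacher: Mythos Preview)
Your proof is correct and complete. The paper itself does not supply a proof of this lemma; it states the result as a standard version of the Lax--Milgram theorem and immediately uses it to derive Lemma~\ref{coercive theorem II}. Your argument---bounded below via Cauchy--Schwarz, closed range, dense range via the orthogonal complement, then the norm bound---is precisely the expected route, and your remark that self-adjointness of $A$ is not needed (so that density must come from the coercivity inequality applied at $x=y$ rather than from a symmetric-form identification) is well observed.
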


 We then have the following result as a direct consequence.
 	\begin{lemma}
 	  \label{coercive theorem II}
 	Let $H$ be a Hilbert space and $H^*$ its dual space. Suppose $A : H \rightarrow H^*$ is a bounded linear operator, and, moreover, there exists $\gamma>0$ such that
 	\begin{equation}
 		\langle x,Ax \rangle_{H,H^*} \geq \gamma \|x\|_{H}^2 \quad \mathrm{for \ any\ }x\in H.
 	\end{equation}
 	Then $A$ has a bounded inverse, and $\|A^{-1}\|_{\mathscr{L}(H^*,H)}\leq \gamma^{-1}$.
 \end{lemma}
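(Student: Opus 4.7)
The plan is to reduce this directly to Lemma \ref{lem:LaxMilgram} via the Riesz representation theorem. Let $R : H^* \to H$ denote the Riesz isometric isomorphism, characterized by $(x, Rf)_H = \langle x, f\rangle_{H, H^*}$ for all $x \in H$ and $f \in H^*$. I would then set $\widetilde{A} := R \circ A \in \mathscr{L}(H)$, which is bounded with $\|\widetilde{A}\|_{\mathscr{L}(H)} = \|A\|_{\mathscr{L}(H, H^*)}$ since $R$ is an isometry.

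The coercivity hypothesis transfers to $\widetilde{A}$ verbatim: for every $x \in H$,
\begin{equation*}
(\widetilde{A}x, x)_H = (x, RA x)_H = \langle x, Ax\rangle_{H,H^*} \geq \gamma \|x\|_H^2.
\end{equation*}
Thus $\widetilde{A}$ satisfies the hypothesis of Lemma \ref{lem:LaxMilgram}, which produces a bounded inverse $\widetilde{A}^{-1} \in \mathscr{L}(H)$ with $\|\widetilde{A}^{-1}\|_{\mathscr{L}(H)} \leq \gamma^{-1}$.

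Finally, I would write $A = R^{-1}\widetilde{A}$, so that $A^{-1} = \widetilde{A}^{-1} R \in \mathscr{L}(H^*, H)$. Since $R$ is an isometry from $H^*$ onto $H$, the norm bound
\begin{equation*}
\|A^{-1}\|_{\mathscr{L}(H^*,H)} \leq \|\widetilde{A}^{-1}\|_{\mathscr{L}(H)} \|R\|_{\mathscr{L}(H^*,H)} \leq \gamma^{-1}
\end{equation*}
follows immediately. There is no real obstacle here; the only point worth verifying carefully is that the coercivity pairing on $H \times H^*$ is correctly translated into an inner product on $H$ under the Riesz identification, but this is exactly the defining property of $R$. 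The conclusion is then a direct corollary of the Hilbert-space version already proved.
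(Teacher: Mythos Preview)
Your proof is correct and essentially identical to the paper's: the paper also composes $A$ with the canonical Riesz isometry $\iota: H^* \to H$ (your $R$), verifies that $\iota A$ satisfies the coercivity hypothesis of Lemma \ref{lem:LaxMilgram}, and then reads off the bound for $A^{-1}$ from that of $(\iota A)^{-1}$ using that $\iota$ is an isometry.
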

 \begin{proof}
 	Let $\iota: H^* \rightarrow H$ be the canonical dual isometry, i.e. for any $\phi\in H^*$ and for any $y\in H$, 
 	\begin{equation*}
 		(\iota(\phi), y)_{H}=\langle
 		y,\phi\rangle_{H,H^*}.
 	\end{equation*}
 It follows that $\iota A$ satisfies the conditions of Lemma \ref{lem:LaxMilgram}; indeed,
 	\begin{equation*} 
 		(\iota(Ax),x)_{H} =\langle x,Ax\rangle_{H,H^*} \geq \gamma \|x\|_{H}^2.
 	\end{equation*} 
 It follows that $\iota A$ has an inverse with the desired bound. Since $\iota$ is an isometry, the desired result for $A$ also follows.
 \end{proof}

\subsection{Proof of Lemma \ref{appendix lemma 1}}
\label{appendix C}

The uniqueness of $\mathbf{u}$ is clear, so it suffices to construct a solution.
	Let $\mathbf{v}\in H^1(\Omega)$ be the unique solution of
	\begin{equation}
		\mathcal{L}_{\lambda,\mu}\mathbf{v}=0 \ \ \mathrm{in}\ \Omega_{\varepsilon}, \quad \mathbf{v}|_{D_{\varepsilon}}=0, \quad \left. \frac{\partial \mathbf{v}}{\partial \nu_{(\lambda,\mu)}}\right|_{\partial \Omega}=\mathbf{g}.
	\end{equation}
For each $\mathbf{m}\in \Pi_{\varepsilon}$ and $l\in \{1,\cdots,\frac{d(d+1)}{2}\}$, let $\mathbf{v}_{\mathbf{m}}^l\in H^1(\Omega)$ be the solution of
	\begin{equation}
		\mathcal{L}_{\lambda,\mu} \mathbf{v}_{\mathbf{m}}^l=0 \ \ \mathrm{in}\ \Omega_{\varepsilon } , \quad \mathbf{v}_{\mathbf{m}}^l|_{\omega_{\mathbf{n}}^{\varepsilon}}=\delta_{\mathbf{m}\mathbf{n}}\mathbf{r}_l\quad \mathrm{for\ all}\ \mathbf{n}\in \Pi_{\varepsilon},\quad \left. \frac{\partial \mathbf{v}_{\mathbf{m}}^l}{\partial \nu_{(\lambda,\mu)}}\right|_{\partial \Omega}=0.
	\end{equation}
The existence and uniqueness, both for $\mathbf{v}$ and $\mathbf{v}_{\mathbf{m}}^l$, follow from a standard practice of weak formulation and an application of Lax-Milgram theorem. Moreover, the functions $\{\mathbf{v}^l_{\mathbf{m}}\}$ are independent as elements of $\mathcal{E}$.

For the solution to \eqref{ann}, we consider a function $\mathbf{u}$ of the form
	\begin{equation}\label{def u C}
		\mathbf{u}:=\mathbf{v}+\sum_{\mathbf{m}\in \Pi_{\varepsilon}}\sum_{l=1}^{\frac{d(d+1)}{2}} a_{\mathbf{m}}^l \mathbf{v}_{\mathbf{m}}^l,
	\end{equation}
	where $\{a_{\mathbf{m}}^l\}\subset \mathbb{R}$ are constants to be chosen. Clearly, $\mathbf{u} \in H^1(\Omega)$ already satisfies $\mathbf{u} \in \mathbf{R}$ in each component of $D_\eps$, and
	\begin{equation*}
			 \mathcal{L}_{\lambda,\mu}\mathbf{u} =0 \ \  \mathrm{in} \ \Omega_{\varepsilon},\quad  
			 \left. \frac{\partial \mathbf{u}}{\partial \nu_{(\lambda,\mu)}}\right|_{\partial \Omega}=\mathbf{g}.
	\end{equation*}
	We choose $a_{\mathbf{m}}^l$'s so that $\mathbf{u}$ also satisfies the remaining equation in \eqref{ann}, namely, 
	\begin{equation}\label{systemlinear}
		\sum_{\mathbf{m}\in \Pi_{\varepsilon}}\sum_{l=1}^{\frac{d(d+1)}{2}} a_{\mathbf{m}}^l \int_{\partial \omega^{\mathbf{n}}_{\varepsilon}} \left.\frac{\partial \mathbf{v}_{\mathbf{m}}^l}{\partial \nu_{(\lambda,\mu)}} \right|_+\cdot \mathbf{r}_j  = b^l_{\mathbf{n}} := c_{\mathbf{n}}^j -\int_{\partial \omega^{\mathbf{n}}_{\varepsilon}} \left.\frac{\partial \mathbf{v}}{\partial \nu_{(\lambda,\mu)}} \right|_+\cdot \mathbf{r}_j.
	\end{equation}
	The equation above can be viewed as a linear system of the form $A \mathbf{a} = \mathbf{b}$ where the unknown is $\mathbf{a} = (a^l_{\mathbf{m}}) \in \R^{|\Pi_\eps|d(d+1)/2}$, the right hand side vector $\mathbf{b} = (b^j_{\mathbf{n}})$ is defined above and the coefficient matrix $A = (A^{jl}_{\mathbf{n}\mathbf{m}})$ is defined by
	\begin{equation}\label{eq:stiffA}
		A^{jl}_{\mathbf{n}\mathbf{m}}= \int_{\partial \omega^{\mathbf{n}}_{\varepsilon}} \Big(\left.\frac{\partial \mathbf{v}_{\mathbf{m}}^l}{\partial \nu_{(\lambda,\mu)}} \right|_+\cdot \mathbf{r}_j \Big). 
	\end{equation}
	We need to show that this linear system has a solution.

	Let $\mathcal{X} \subset \mathbb{R}^{ |\Pi_{\varepsilon}|  d(d+1)/2}$ be the subspace defined by
	\begin{equation}
		\mathcal{X}:=\left\{(x_{\mathbf{n}}^j ) \in \mathbb{R}^{|\Pi_{\varepsilon}|\times \frac{d(d+1)}{2}} :\sum_{\mathbf{n}\in \Pi_{\varepsilon}} x_{\mathbf{n}}^j =0\quad \mathrm{for \ all\ }1\leq j \leq \frac{d(d+1)}{2} \right\},
	\end{equation}
	Clearly, $\mathrm{dim}\,\mathcal{X}=\frac{d(d+1)}{2}|\Pi_{\varepsilon}|-\frac{d(d+1)}{2}$. By the definition of $\mathbf{v}$ and the condition \eqref{concij}, we see $\mathbf{b} \in \mathcal{X}$. It suffices to show that the range of $A$ contains (actually, is) $\mathcal{X}$. First, we can check directly that the range of $A$ is contained in $\mathcal{X}$. Indeed, for any $\mathbf{p} = (p^l_{\mathbf{m}})$, we compute and get
\begin{equation*}
\sum_{\mathbf{n}\in \Pi_\eps} (A\mathbf{p})^j_{\mathbf{n}} = \sum_{\mathbf{n} \in \Pi_\eps} \int_{\partial \omega^{\mathbf{n}}_\eps} \Big(\left.\frac{\partial (p^l_{\mathbf{m}} \mathbf{v}_{\mathbf{m}}^l)}{\partial \nu_{(\lambda,\mu)}} \right|_+\cdot \mathbf{r}_j \Big) = -J^{\Omega_\eps}(\mathbf{\mathbf{w}},r_j) = 0,
\end{equation*}
where we defined $\mathbf{w} := p^l_{\mathbf{m}} \mathbf{v}^l_{\mathbf{m}}$ and the summation convention is used. We also used the fact that $\mathbf{w}$ solves the homogeneous Lam\'e system in $\Omega_\eps$ with $\partial \mathbf{w}/\partial \nu\rvert_{\partial \Omega} = 0$ and $\mathbf{w}\rvert_{\omega^{\mathbf{n}}} = p^l_{\mathbf{n}} \mathbf{r}_l$ (in particular, $\mathbf{w} \in \mathbf{R}$ in each component of $D_\eps$). 

For our purpose, it remains to show that the kernel of $A$ has dimension $d(d+1)/2$. Suppose $\mathbf{p} = (p^l_{\mathbf{m}})$ satisfies $A\mathbf{p} = 0$. Then the function $\mathbf{w} = p^l_{\mathbf{m}}\mathbf{v}^l_{\mathbf{m}}$ has the property discussed above and further satisfies $\int_{\partial \omega^{\mathbf{n}}_\eps} \partial \mathbf{w}/\partial \nu\rvert_+ \cdot \mathbf{r}_j = 0$ for all $j$ and $\mathbf{n}$. From this we get $J^{\Omega_\eps}(\omega) = 0$. Hence $\mathbf{w} \in \mathbf{R}$ in $\Omega_\eps$ and then $\mathbf{w} \in \mathbf{R}$ in $\Omega$. To summarize, we proved that $A\mathbf{p} = 0$ implies $\mathbf{w}(\mathbf{p}) := p^l_{\mathbf{m}}\mathbf{v}^l_{\mathbf{m}} \in \mathbf{R}$. It is easier to show that the reverse implication also holds, and that $\mathbf{p} \to \mathbf{w}(\mathbf{p})$ is an isomorphsim from $\R^{|\Pi_\eps|d(d+1)/2}$ to $\mathrm{span}(\mathbf{v}^l_{\mathbf{m}})$. Moreover $\mathbf{R}\subset \mathrm{span}\{\mathbf{v}^l_{\mathbf{m}}\}$ is a $d(d+1)/2$ dimensional subspace. The proof is hence complete. 

\subsection{Proof of Lemma \ref{ann dirichlet}}
\label{appendix D}

Again, the uniqueness is clear and we only need to construct a solution. The proof is very similar to the proof of Lemma \ref{appendix lemma 1} presented in the previous section, so we omit some details. 

Let $\mathbf{v}\in H^1(\Omega)$ be the unique solution of
	\begin{equation*}
		\mathcal{L}_{\lambda,\mu}\mathbf{v}=0 \ \ \mathrm{in}\ \Omega_{\varepsilon}, \quad  \mathbf{v}|_{D_{\varepsilon}}=0, \quad \mathbf{v}|_{\partial \Omega}=\mathbf{f}.
	\end{equation*}
For each $\mathbf{m} \in \Pi_\eps$ and $l = 1,2,\dots,d(d+1)/2$, let $\mathbf{v}_{\mathbf{m}}^l\in H^1(\Omega)$ be the unique solution of
	\begin{equation*}
		\mathcal{L}_{\lambda,\mu} \mathbf{v}_{\mathbf{m}}^l=0 \ \ \mathrm{in}\ \Omega_{\varepsilon}, \quad \mathbf{v}_{\mathbf{m}}^l|_{\omega_{\mathbf{n}}^{\varepsilon}}=\delta_{\mathbf{m}\mathbf{n}}\mathbf{r}_l \quad \mathrm{for\ all}\ \mathbf{n}\in \Pi_{\varepsilon} ,\quad \mathbf{v}_{\mathbf{m}}^l|_{\partial \Omega}=0.
	\end{equation*}
We seek a solution to \eqref{ann dirichlet equation} of the form 
	\begin{equation}\label{def u 22}
		\mathbf{u}:=\mathbf{v}+\sum_{\mathbf{m}\in \Pi_{\varepsilon}}\sum_{l=1}^{\frac{d(d+1)}{2}} a_{\mathbf{m}}^l \mathbf{v}_{\mathbf{m}}^l.
		\end{equation}
 As in the previous section, it suffices to find the constant vector $\mathbf{a} = (a^l_{\mathbf{m}})$ that solve the linear system $A\mathbf{a} = \mathbf{b}$, where $A$ and $\mathbf{b}$ has the same forms as in \eqref{eq:stiffA} and \eqref{systemlinear}, but with $\mathbf{v}^l_\mathbf{m}$'s and $\mathbf{v}$ redefined in this section. 
	We prove that $A$ is invertible so the linear system has a unique solution. Suppose $\mathbf{p} = (p^l_{\mathbf{m}})$ satisfies $A\mathbf{p} = 0$. Then $\mathbf{w} = p^l_\mathbf{m}\mathbf{v}^l_\mathbf{m}$ satisfies $\mathbf{w} \in \mathbf{R}$ in each component of $D_\eps$, solves the homogeneous Lam\'e system in $\Omega_\eps$, satisfies $\mathbf{w}\rvert_{\partial \Omega} = 0$, and, moreover, $\partial \mathbf{w}/\partial \nu\rvert_{+} = 0$ on $\partial D_\eps$. It follows that
	\begin{equation*}
		\mathbf{w} \in \frakH_{\mathbf{R}}^{\mathrm{D}}\cap \mathcal{S}^{\mathrm{D}}\,\mathrm{ker}\,\left( -\frac{\mathbb{I}}{2} +\mathbb{K}^{\mathrm{D},*} \right)=\{0\}.
	\end{equation*}
	Again, by the isometry of $\mathbf{p} \mapsto \mathrm{span}\{\mathbf{v}^l_{\mathbf{m}}\}$, we get $\mathbf{p} = 0$. Hence, the kernel of the square matrix $A$ is trivial. This completes the proof.

	\bibliographystyle{siam}
	\bibliography{mybib}

	\end{document}